\newtheorem{theorem}{Theorem}[section]{\bf}{\it}
\newtheorem{lemma}[theorem]{Lemma}{\bf}{\it}
\newtheorem{proposition}[theorem]{Proposition}{\bf}{\it}
\newtheorem{corollary}[theorem]{Corollary}{\bf}{\it}
{\bf}{\it} 
{\bf}{\it}
{\bf}{\it}
\newtheorem{example}[theorem]{Example}{\bf}{\it}
\newtheorem{remark}[theorem]{Remark}{\bf}{\it}
\newtheorem{definition}[theorem]{Definition}{\bf}{\it}
\newtheorem*{convention}{Convention}{\bf}{\it}
\newtheorem*{namedtheorem}{\theoremname}
\newcommand{\theoremname}{testing}
\numberwithin{equation}{section}
\newcommand{\diam}{\operatorname{diam}}
\newcommand{\R}{\mathbb R}
\newcommand{\C}{\mathbb C}
\newcommand{\dist}{{\operatorname{dist}\,}}
\newcommand{\id}{{\operatorname{id}}}
\newdimen\vintkern\vintkern11pt
\def\vint{-\kern-\vintkern\int}
\newcommand{\norm}[1]{\lVert #1 \rVert}
\newcommand{\bS}{\mathbb{S}}
\newcommand{\haus}{\mathcal{H}}
\newcommand{\cA}{\mathcal{A}}
\newcommand{\interior}{\mathrm{int}\;}
\newcommand{\Z}{\mathbb{Z}}
\newcommand{\bX}{\mathbb{X}}
\newcommand{\cT}{\mathcal{T}}
\newcommand{\cM}{\mathcal{M}}
\newcommand{\cC}{\mathcal{C}}
\newcommand{\cD}{\mathcal{D}}
\newcommand{\cN}{\mathcal{N}}
\newcommand{\cP}{\mathcal{P}}
\newcommand{\cL}{\mathcal{L}}
\newcommand{\cF}{\mathcal{F}}
\newcommand{\cQ}{\mathcal{Q}}
\newcommand{\cS}{\mathcal{S}}
\newcommand{\fB}{\mathfrak{B}}
\newcommand{\fP}{\mathfrak{P}}
\newcommand{\bU}{\mathbf{U}}
\newcommand{\bV}{\mathbf{V}}
\newcommand{\bW}{\mathbf{W}}
\newcommand{\bB}{\mathbb{B}}
\newcommand{\cB}{\mathcal{B}}
\newcommand{\fL}{\mathfrak{L}}
\newcommand{\cJ}{\mathcal{J}}
\newcommand{\cE}{\mathcal{E}}
\renewcommand{\cA}{\mathcal{A}}
\newcommand{\bfOmega}{\mathbf{\Omega}}
\newcommand{\Cone}{\mathrm{Cone}}
\newcommand{\hull}{\mathrm{hull}}
\newcommand{\cl}{\mathrm{cl}}
\renewcommand{\emptyset}{\varnothing}
\newcommand{\wh}{\widehat}
\newcommand{\wt}{\widetilde}
\newcommand{\icl}{\mathrm{int}}
\newcommand{\ecl}{\mathrm{ext}}
\newcommand{\atom}{\mathrm{atom}}
\newcommand{\LE}{\cA}
\begin{document}

\title{Sharpness of Rickman's Picard theorem in all dimensions}
\date{\today}
\author{David Drasin \and Pekka Pankka}
\address{Department of Mathematics, Purdue University,
150 N. University Street, West Lafayette, IN 47907-2067, USA}
\address{Department of Mathematics and Statistics, P.O. Box 68 (Gustaf H\"allstr\"omin katu 2b), FI-00014 University of Helsinki, Finland \and Department of Mathematics and Statistics, P.O. Box 35, FI-40014 University of Jyv\"askyl\"a, Finland}

\thanks{P.P. has been partly supported by NSF grant DMS-0757732 and the Academy of Finland projects 126836 and 256228.}  
\subjclass[2010]{30C65}
\begin{abstract}
We show that given $n\ge 3$, $q\ge 1$, and a finite set $\{y_1,\ldots, y_q\}$ in $\R^n$ there exists a quasiregular mapping $\R^n \to \R^n$ omitting exactly points $y_1,\ldots, y_q$. 
\end{abstract}

\maketitle

\section{Introduction}
\label{sec:intro}

By the classical Picard theorem an entire holomorphic map $\C \to \C$ omits at most one point if non-constant. The characteristic example of an entire holomorphic map omitting a point is, of course, the exponential function $z\mapsto e^z$, since every entire holomorphic map $\C \to \C$ omitting a point factors through the exponential map.

Liouville's theorem asserts that all entire conformal maps $\R^n \to \R^n$ are M\"obius transformations and, in particular, homeomorphisms for $n\ge 3$. This rigidity of spatial conformal geometry no longer persists in quasiconformal geometry. Reshetnyak in the late 1960's and Martio--Rickman--V\"ais\"al\"a in the early 1970's showed that the rich theory of \emph{mappings of bounded distortion}, or so-called \emph{quasiregular mappings}, is a natural replacement for holomorphic functions in higher dimensions. This advancement raised the question of the existence of Picard type theorems for quasiregular mappings; see e.g.\;Zorich \cite{ZorichV:TheMAL} or V\"ais\"al\"a's survey \cite{VaisalaJ:Surqm}.

Already in his 1967 paper \cite{ZorichV:TheMAL} Zorich gave an example of a quasiregular mapping $\R^n \to \R^n$ omitting the origin. This so-called \emph{Zorich map} is the natural higher-dimensional analog of the exponential function although the mapping is not a local homeomorphism. The branching of the map cannot be avoided by Zorich's \emph{Global Homeomorphism Theorem} from the same article: \emph{For $n\ge 3$, quasiregular local homeomorphisms $\R^n \to \R^n$ are homeomorphisms}. Recall that by Reshetnyak's theorem quasiregular mappings are (generalized) branched covers, that is, discrete and open mappings and hence local homeomorphisms modulo an exceptional set of (topological) codimension at least $2$; we refer to Rickman's monograph \cite{RickmanS:Quam} for the general theory of quasiregular mappings.

A counterpart of Picard's theorem for quasiregular mappings is due to Rickman \cite{RickmanS:Ontnoo}: \emph{Given $K>1$ and $n\ge 2$ there exists $q$ depending only on $K$ and $n$ so that a non-constant $K$-quasiregular mapping $\R^n \to \R^n$ omits at most $q$ points}. The sharpness of Rickman's Picard theorem is known in dimension $n=3$ and is also due to Rickman. In \cite{R} he shows the following existence result: \emph{Given any finite set $P$ in $\R^3$ there exists a quasiregular mapping $\R^3 \to \R^3$ omitting exactly $P$}. 

Holopainen and Rickman generalized the Picard theorem to quasiregular mappings into manifolds with many ends in \cite{HolopainenI:Picttf} and \emph{a fortiori} to quasiregular mappings between manifolds in \cite{HolopainenI:Ricchf}; note also similar results in the sub-Riemannian geometry \cite{HolopainenI:QuamHg}. These result stem from potential theoretic proofs of Rickman's Picard theorem due to Lewis \cite{LewisJ:Pictar} and Eremenko--Lewis \cite{EremenkoA:Unilof}. It can be said that the ramifications of these methods are now well-understood. Recently, Rajala generalized Rickman's Picard theorem to mappings of finite distortion \cite{RajalaK:MapfdR}. Whereas the aforementioned potential-theoretic methods are difficult to adapt to this more general class of mappings, Rajala shows that value distribution theory based on modulus methods is still at our disposal.

The sharpness of these theorems, however, is still mostly unknown and Rickman's three-dimensional construction in \cite{R} provides essentially the only method to produce examples.

In this article we show the precision of Rickman's Picard theorem in all dimensions.

\begin{theorem}
\label{thm:1}
Given $n\ge 3$, $q\ge 2$, and points $y_1,\ldots, y_q$ in $\R^n$ there exists a quasiregular mapping $\R^n\to \R^n$ omitting exactly $y_1,\ldots, y_q$. 
\end{theorem}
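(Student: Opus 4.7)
The first step is to reduce to a canonical configuration of omitted points. Any two ordered $q$-tuples in $\R^n$ can be mapped to one another by a quasiconformal self-homeomorphism of $\R^n$, so it suffices to prove the theorem for a single convenient choice of $P=\{y_1,\ldots,y_q\}$; one may for instance place the points on the $x_n$-axis, or as vertices of a regular simplex centered at the origin. Indeed, if $h\colon\R^n\to\R^n$ is a quasiconformal map sending a fixed model configuration $P_0$ to the desired $P$, and $f_0\colon\R^n\to\R^n\setminus P_0$ is quasiregular, then $h\circ f_0$ is quasiregular and omits exactly $P$.

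For the core construction I would build a Zorich-type quasiregular branched cover $f\colon\R^n\to\R^n\setminus P$ directly. The plan is to select a fundamental building block, say a cube $Q\subset\R^n$, and construct a quasiregular sheet map $\g\colon Q\to V$ onto a polyhedral region $V\subset\R^n\setminus P$ whose complement is a bounded neighborhood of $P$. One then extends $f$ to all of $\R^n$ by tiling with translated and reflected copies of $Q$, alternating $\g$ with its reflections, so that the boundary values match continuously across cell faces and the local distortion bound propagates to a global one. This would generalize the way in which the classical Zorich map (the case $q=1$) is assembled from a single slab and its reflections across hyperplanes.

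The key geometric ingredient is the "cave" $V$ together with an associated polyhedral branched cover $\partial Q\to\partial V$ whose winding data around the punctures encodes the prescribed omitted points. This is where dimensions $n\ge 4$ become genuinely harder than Rickman's three-dimensional case: in $\R^3$ the branch set is one-dimensional and can be handled by Rickman's deformation construction, whereas for $n\ge 4$ the branch set has dimension $n-2\ge 2$ and the combinatorics of folding along these higher-dimensional cells is more delicate. I would attempt to build $V$ and the associated polyhedral cover inductively on dimension, using a suitable simplicial or cubical complex whose combinatorial structure records the desired winding around each $y_i$.

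The main obstacle, and the crux of the argument, is to verify that the piecewise-linear model map on $Q$ can be smoothed to a genuinely quasiregular map with a distortion constant independent of the number of refinements used. This requires a quantitative deformation theorem that replaces piecewise-linear branched covers by quasiregular ones with controlled distortion, together with a combinatorial design of the branch set so that distortion does not accumulate on successive faces of $Q$. Once these ingredients are in place, the tiling procedure yields a quasiregular map $\R^n\to\R^n\setminus P$ omitting exactly the prescribed points, establishing the theorem in full generality via the reduction above.
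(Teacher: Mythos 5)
Your reduction to a normalized configuration of points is fine (a compactly supported bilipschitz homeomorphism moves any $q$-tuple onto a model one, and post-composition preserves quasiregularity; the paper makes the same normalization). But the rest of the proposal does not contain a proof: it names the crux --- a ``quantitative deformation theorem'' replacing PL branched covers by quasiregular maps with uniform distortion, together with a combinatorial design of the branch set and of the cave $V$ --- and then assumes it. No such deformation theory is available in dimensions $n\ge 4$; Rickman's argument for $n=3$ rests on a deformation theory of $2$-dimensional branched covers that uses the discreteness/low dimension of the branch set, and constructing its higher-dimensional analogue is precisely the open difficulty. The paper's actual route deliberately avoids it: it replaces deformation of branched covers by the (essentially trivial) BLD extension over ``Zorich extension domains'' --- one only needs an essential partition $\bfOmega=(\Omega_0,\ldots,\Omega_p)$ of $\R^n$ into pieces BLD-equivalent to half-spaces and a BLD branched cover $f\colon\partial_\cup\bfOmega\to\partial_\cup{\bf E}$ with $f(\partial\Omega_{i,j})=\partial E_i$, after which $f$ extends cylindrically by $h_{i,j}\times\id$. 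All the work then goes into the explicit combinatorial construction of such Rickman partitions (building blocks, the tripod property, the inductive $\cC$-, $\cD$-, $\cN$-modifications, and the pillow construction), none of which your sketch supplies or replaces.

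There is also a concrete flaw in the proposed global scheme. If the fundamental block is a compact cube $Q$ mapped onto a fixed polyhedral region $V$ whose complement is a bounded neighborhood of $P$, and the global map is assembled from translates/reflections of this single sheet map, then the image of every tile stays a definite distance away from the points of $P$; the resulting map omits an open neighborhood of $P$, not exactly $P$. To omit exactly the $q$ points you must build in an asymptotic end structure, with unbounded pieces of the domain mapping onto punctured (cylindrical) neighborhoods of the $y_i$ --- this is exactly the content of Theorem \ref{thm:2} and of the requirement that the pieces $\Omega_{i,j}$ be half-spaces modulo boundary, so that the cylinder ends $\bS^{n-1}\times[0,\infty)$ of $E_i\setminus\{y_i\}$ are covered. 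Your sketch gestures at the Zorich map as a model but the stated fundamental-block-plus-tiling scheme does not reproduce this structure, so even granting a uniform PL-to-quasiregular smoothing the construction as described would not yield a map omitting exactly $y_1,\ldots,y_q$.
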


It has already been mentioned that the case of dimension $n=3$ was settled by Rickman. For $n=2$ the number of omitted points is at most $1$ by Picard's theorem and the Sto\"ilow factorization; see e.g.\;book of Astala, Iwaniec, and Martin \cite[Section 5.5]{AstalaK:Ellpde}. As discussed above, the case $q=1$ is given by the Zorich map for all $n\ge 3$. Therefore we may restrict to cases $n\ge 4$ and $q\ge 2$. However, it is natural to include $n=3$.

As will become apparent in the following outline of the proof, the proof of Theorem \ref{thm:1} is independent of the analytic theory of quasiregular mappings. 

The general outline follows the idea of Rickman's construction in \cite{R} and both proofs stem from PL-topology. Rickman's original method relies on a very delicate deformation theory of $2$-dimensional branched covers (\cite[Section 5]{R}) which leads to an extension theory of $2$-dimensional branched covers; we refer to \cite{DrasinD:PictRc} for an exposition on Rickman's main ideas. These arguments rely essentially on the discrete nature of the branch set in dimension $2$.  Already when $n=3$, the corresponding deformation theory is much more complicated due to the non-trivial topology of the branch set; see however application of Piergallini's method in \cite{PiergalliniR:Fou4bc} to obtain a quasiregular map $\R^4 \to \bS^2\times \bS^2 \# \bS^2\times \bS^2$ in \cite{RickmanS:Simcqe}. We are not aware of similar deformation theory, based on a detailed analysis of the branch set, in higher dimensions.

The required extension theory is, however, essentially trivial in all dimensions for BLD-mappings. Recall that a mapping $f \colon X\to Y$ between metric spaces $X$ and $Y$ is a \emph{mapping of bounded length distortion} (or a \emph{BLD-map}, for short) if $f$ is open and discrete, and there exists a constant $L\ge 1$ satisfying
\begin{equation}
\label{eq:BLD}
\frac{1}{L} \ell(\gamma) \le \ell(f\circ \gamma) \le L \ell(\gamma)
\end{equation}
for all paths $\gamma$ in $X$, where $\ell(\gamma)$ is the length of $\gamma$. We refer to the seminal paper of Martio and V\"ais\"al\"a \cite{MartioO:Ellqmb} for the discussion of the special r\^ole of BLD-mappings among quasiregular mappings; see also Heinonen--Rickman \cite{HeinonenJ:Geobcb} for the metric theory. 

The BLD-theory in the proof of Theorem \ref{thm:1} brings forth an alternative, and slightly stronger, formulation. We denote by $\bS^n$ and $\bS^{n-1}$ the Euclidean unit spheres in $\R^{n+1}$ and $\R^n$, respectively, and by $B^n(y,\delta)$ the metric ball in $\bS^n$ in the inherited metric.

\begin{theorem}
\label{thm:2}
Let $n\ge 3$, $p\ge 2$, and $y_0,\ldots, y_p$ be points in $\bS^n$. Let also $g$ be a Riemannian metric on $M:=\bS^n\setminus \{y_0,\ldots, y_p\}$ for which $B^n(y_i,\delta)\setminus \{y_i\}$ is isometric, in metric $g$, to $\bS^{n-1}(\delta) \times (0,\infty)$ for some $\delta>0$ and all $0\le i \le p$. Then there exists a surjective BLD-mapping $\R^n \to (M,g)$.
\end{theorem}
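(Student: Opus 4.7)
The plan is to construct $F\colon \R^n \to (M,g)$ by tiling $\R^n$ with uniformly bilipschitz copies of a bounded polyhedral ``atom,'' each carrying a BLD map onto a fixed compact core of $M$, and gluing these maps consistently across shared faces. This parallels the strategy of Rickman's three-dimensional construction, but exploits the flexibility of BLD theory to bypass his intricate two-dimensional deformation analysis.

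Concretely, I would first decompose the target as $M = K \cup E_0 \cup \cdots \cup E_p$, where each $E_i \cong \bS^{n-1}(\delta) \times (0,\infty)$ is the cylindrical end at $y_i$ and $K$ is the compact core whose boundary is the disjoint union of $p+1$ round $(n-1)$-spheres $\Sigma_0,\ldots,\Sigma_p$. I would then build a bounded polyhedron $\Omega \subset \R^n$ with $p+1$ distinguished boundary portals $P_0,\ldots,P_p$, each an $(n-1)$-sphere, together with a BLD map $f_\Omega\colon \Omega \to K$ sending each $P_i$ onto $\Sigma_i$ by a uniform local isometry. The decisive input here is the remark from the introduction that BLD extension is \emph{essentially trivial}: once a suitable boundary BLD cover $\partial\Omega \to \partial K$ is fixed, extending it to a BLD map on $\Omega$ is possible with uniform constants, independently of the (potentially nontrivial) topology of $K$.

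Next, I would arrange infinitely many copies of $\Omega$, together with cylindrical tubes $\bS^{n-1}(\delta) \times [0,1]$ mapping into slices of the ends $E_i$, into a tiling of $\R^n$ whose combinatorial structure realizes the desired branching: adjacent atoms are glued along portals of matching type $i$ with compatible boundary maps, and long concatenations of tubes emanating from portals of type $i$ cover the entire end $E_i$, approaching but never reaching $y_i$. Since each tile has uniformly bounded size and the local combinatorial complexity of the tiling is bounded, the resulting piecewise-defined map $F$ is globally BLD with constants depending only on $n$, $p$, and the geometry of $g$, and it is surjective because every point of $K$ lies in the image of every atom while every point of each end lies in the image of some tube-chain. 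By construction the image is contained in $K\cup\bigcup_i E_i = M$, so no $y_i$ is attained.

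The main obstacle, and the heart of the proof, is producing a combinatorial tiling of $\R^n$ that realizes the required $(p+1)$-ended branching structure while remaining topologically one-ended. The naive dual graph suggested by the atoms, namely the $(p+1)$-regular tree, has exponential vertex growth and $p+1 \ge 3$ ends, so it cannot serve as the adjacency graph of a uniform tiling of $\R^n$. Overcoming this requires a self-similar or inductive arrangement in which the tree-like branching is realized locally at many scales but the global topology remains that of $\R^n$; the extra geometric room afforded by $n \ge 3$ is essential, and I expect the bulk of the technical work to be concentrated here.
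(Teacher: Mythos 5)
Your plan has a genuine gap, and in fact two of its load-bearing claims do not survive scrutiny. First, the "essentially trivial BLD extension" you invoke is much narrower than you use it. In the paper that triviality means only this: if a piece of the partition of $\R^n$ is a half-space modulo boundary and its target is a punctured $n$-cell $E_i\setminus\{y_i\}$, bilipschitz to the cylinder $\bS^{n-1}\times[0,\infty)$, then a branched cover $\R^{n-1}\to\bS^{n-1}$ prescribed on the boundary extends by taking the product with the identity. This is why the pieces in the paper are \emph{unbounded} and why no compact core ever appears: each piece absorbs an entire end. In your setup the atom $\Omega$ is bounded and the target is the compact core $K$, a sphere with $p+1$ boundary spheres; extending prescribed boundary data $\partial\Omega\to\partial K$ to a discrete, open (let alone BLD) map $\Omega\to K$ is exactly the nontrivial branched-cover extension problem that Rickman's two-dimensional deformation theory was built to solve in dimension three, and nothing in BLD theory makes it trivial "independently of the topology of $K$". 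Worse, as stated your boundary data kill the construction outright: a local isometry from a portal $(n-1)$-sphere onto the round sphere $\Sigma_i$ is a homeomorphism for $n\ge 3$ (simple connectivity of $\bS^{n-1}$), so $f_\Omega$ would have degree one on every boundary component, hence degree one globally, and a proper discrete open degree-one map is a homeomorphism; your atom is then just a bilipschitz copy of $K$. Gluing simply connected copies of $K$ along $(n-1)$-spheres gives, by van Kampen, fundamental group free on the cycles of the dual graph, so realizing $\R^n$ forces the dual graph to be a tree; a tree in which every atom uses all $p+1\ge 3$ portals has uncountably many ends, while $\R^n$ has one. So with degree-one portals no tiling of the kind you describe exists, and to repair this you must allow branched, higher-degree identifications along the interfaces — at which point the interior extension into the atom is no longer trivial either.

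Second, the obstacle you correctly flag at the end — producing the global arrangement that realizes $(p+1)$-fold branching inside the one-ended space $\R^n$ — is not a technical appendix to be supplied later; it \emph{is} the theorem. In the paper it occupies Theorem \ref{thm:partition_simple} together with Theorem \ref{thm:3} and Proposition \ref{prop:4}: one must build, at all scales (the pieces are not uniformly bounded tiles but dented molecules scaled by powers of $3$), an essential partition whose pairwise common boundary is a branched hypersurface satisfying the tripod property (Sections \ref{sec:LRA}--\ref{sec:RP}), and then refine it by the pillow construction (Section \ref{sec:pillows}) so that this hypersurface carries a BLD Alexander-type map onto $\partial_\cup{\bf E}$; only then does the cylinder extension produce the map of Theorem \ref{thm:2}. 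Your proposal replaces all of this with "a self-similar or inductive arrangement", which is a restatement of the problem rather than an argument; combined with the degree obstruction above, the proof as proposed does not go through without being restructured into essentially the decomposition the paper uses.
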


Theorem \ref{thm:2} clearly yields Theorem \ref{thm:1} as a corollary. Indeed, let $y_1,\ldots, y_q$ be points in $\R^n$. After identifying $\R^n$ with $\bS^n\setminus \{e_{n+1}\}$ by stereographic projection, we may fix a Riemannian metric $g$ on $M:=\bS^n\setminus\{e_{n+1}, y_1,\ldots,y_q\}$ and a BLD-mapping $f \colon \R^n \to (M,g)$ as in Theorem \ref{thm:2}. It is now easy to verify that the identity map $(M,g) \to \bS^{n}\setminus \{e_{n+1}, y_1,\ldots, y_q\}$ is quasiconformal. Thus $f \colon \R^n \to \R^n\setminus \{y_1,\ldots, y_q\}$ is quasiregular.

We are not aware of other methods of producing examples of BLD-mappings from $\R^n$ into Riemannian manifolds with many ends.

\subsection{Outline of the proof}

Using the framework of Theorem \ref{thm:2}, we outline the construction of a BLD-map $F\colon \R^n \to \bS^n\setminus \{y_0,\ldots, y_p\}$ for $p>2$, and again identify $\R^n$ with $\bS^n\setminus \{e_{n+1}\}$ by stereographic projection. It is no restriction to assume that $y_0 = e_{n+1}$ and  $y_i=(0,t_i)\in \R^{n-1}\times \R \subset \bS^n$ for $-1<t_1<t_2<\ldots < t_p<1$ and we will assume so from now on. 

Setting aside  geometric aspects of the construction, we give first the topological description of $F\colon \R^n \to \bS^n\setminus \{y_0,\ldots, y_p\}$. This description is based on certain essential partitions of $\R^n$ and $\bS^n$. Given a closed set $X$ in $\R^n$ (or in $\bS^n$), we say that a finite collection of closed sets $X_1,\ldots, X_m$ forms an \emph{essential partition of $X$} if $X_1\cup \cdots \cup X_m = X$ and sets $X_i$ have pair-wise disjoint interiors. 

In the target $\bS^n\setminus \{y_0,\ldots, y_p\}$, we fix an essential partition $E_0,\ldots, E_p$ of $\bS^n$ into $n$-cells for which $y_i \in \interior E_i$ for each $0\le i \le p$ and so that
$E_1\cup \cdots \cup E_p = \bar B^n$ and $E_0=\bS^n\setminus B^n$.
We also assume that $E_{i-1}\cap E_i \cap E_{i+1} = \bS^{n-2}$ and $E_i \cap E_{i+1}$ is an $(n-1)$-cell for all $i$ ($\mathrm{mod}\; p+1$); see Figure \ref{fig:Gp_cells_intro}. Denote ${\bf E} = (E_0,\ldots, E_p)$.

\begin{figure}[h!]
\includegraphics[scale=0.60]{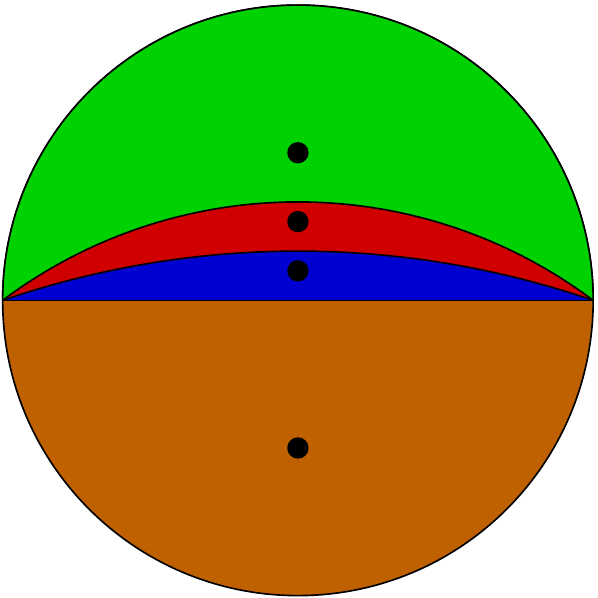}
\caption{Cells $E_1,\ldots, E_4$ with (marked) points $y_1,\ldots, y_4$ for $p=4$ (and $n=2$).} 
\label{fig:Gp_cells_intro}
\end{figure}

The $F$-induced essential partition of $\R^n$ is more complicated. Denote $\R^n_+ = \R^{n-1}\times [0,\infty)$.

Let $E'\subset \R^n$ be a closed set satisfying $E' = \mathrm{cl}(\mathrm{int}E')$. We say that a mapping $\varphi \colon \R^n_+ \to E'$ is a \emph{homeomorphism modulo boundary} if $\varphi|\mathrm{int}\R^n_+\colon \R^{n-1}\times (0,\infty) \to \mathrm{int}E'$ is a homeomorphism and, for every branched cover $\psi \colon \partial E' \to \bS^{n-1}$, the mapping $\psi \circ \varphi|\partial \R^n_+ \colon \R^{n-1}\times \{0\} \to \bS^{n-1}$ is a branched cover. Furthermore, we say that $E'$ is an \emph{half-space modulo boundary} if there exists a homeomorphism modulo boundary $\varphi \colon \R^n_+ \to E'$. Note that  $\partial E'$ need not be homeomorphic to $\R^n_+$; see Figure \ref{fig:cell_mod_boundary}.

\begin{figure}[h!]
\includegraphics[scale=0.45]{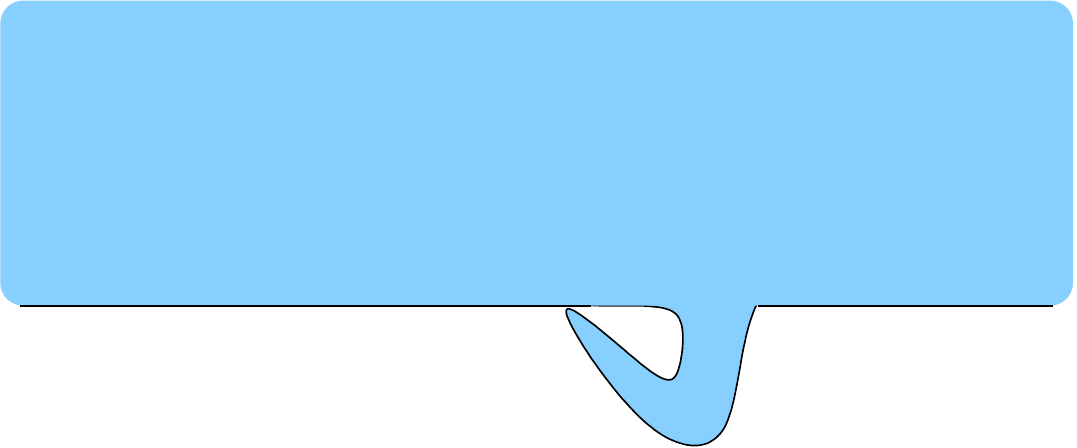}
\caption{A half-space modulo boundary.}
\label{fig:cell_mod_boundary}
\end{figure}

Suppose, for the sake of argument, there is an essential partition $\Omega_0,\ldots, \Omega_p$ of $\R^n$ into closed sets and each $\Omega_i$ has an essential partition $\Omega_{i,1},\ldots, \Omega_{i,j_i}$ into half-spaces modulo boundary. We reduce first the existence of a branched cover $F\colon \R^n \to \bS^n\setminus \{y_0,\ldots, y_p\}$ to an existence of a branched cover $f\colon \partial_\cup \bfOmega \to \partial_\cup {\bf E}$ satisfying $f(\partial \Omega_{i,j})=\partial E_i$. Here, and in what follows, the notation 
\[
\partial_\cup {\bf X} = \bigcup_{i\ne j} X_i \cap X_j
\]
is used whenever ${\bf X} =(X_0,\ldots, X_p)$ is an essential partition.

Suppose $f\colon \partial_\cup \bfOmega \to \partial_\cup {\bf E}$ is a branched cover satisfying the additional condition that $f(\partial \Omega_{i,j}) = \partial E_i$ for every $i=0,\ldots, p$ and $1\le j \le j_i$. Since $\Omega_{i,j}$ is a half-space modulo boundary and $E_i$ is an $n$-cell, we observe that each branched cover $f_{i,j} = f|\partial \Omega_{i,j}$ extends to a branched cover $F_{i,j} \colon \Omega_{i,j} \to E_i\setminus \{y_i\}$. Indeed, we may fix, for every $i$ and $j$, a homeomorphism modulo boundary $\varphi_{i,j} \colon \R^n_+ \to \Omega_{i,j}$ as well as a homeomorphism $\psi_i \colon \bS^{n-1}\times [0,\infty) \to E_i\setminus \{y_i\}$. This means that  $h_{i,j} = \psi_i^{-1} \circ f_{i,j} \circ \varphi_{i,j}|\partial \R^n_+ \colon \R^{n-1}\times \{0\} \to \bS^{n-1}$ is a branched cover. The (trivial) extension $h_{i,j} \times \id \colon \R^n_+ \to \bS^{n-1}\times [0,\infty)$ of $h_{i,j}$ now yields the required extension of $f_{i,j}$ after pre- and post-composition with $\psi_i$ and $\varphi_{i,j}^{-1}|\mathrm{int}\Omega_{i,j}$, respectively. Thus \emph{$f$ extends to a branched cover $F\colon \R^n \to \bS^n\setminus \{y_1,\ldots, y_p\}$}.

Observe also that in forthcoming constructions we may view $\partial_\cup \bfOmega$ and $\partial_\cup {\bf E}$ as branched codimension-$1$ hypersurfaces in $\R^n$ and the map $f$ as a (generalized) Alexander map. In particular, the Zorich map is of this character when $p=2$.

It is crucial that this simple extension is also available for BLD-mappings. It is a simple exercise to observe that the extension $F\colon \R^n \to \bS^n\setminus \{y_0,\ldots, y_p\}$ constructed above will be a BLD-mapping with respect to Riemannian metric $g$ in $\bS^n\setminus \{y_0,\ldots, y_p\}$ if 
\begin{itemize}
\item[(i)] $f\colon \partial_\cup \bfOmega \to \partial_\cup {\bf E}$ is a BLD-map,
\item[(ii)] $\varphi_i \colon \R^n_+ \to \Omega_i$ is BLD modulo boundary and $\varphi_i|\mathrm{int}\R^n_+$ is an embedding,  
\item[(iii)] $\psi_i \colon \bS^{n-1}\times [0,\infty) \to (E_i\setminus \{y_i\},g)$ is bilipschitz.
\end{itemize}
Here and in what follows, we say that a mapping $\varphi \colon \R^n_+ \to \Omega$, where $\Omega$ is a closed set in $\R^n$ with $\Omega = \mathrm{cl}(\mathrm{int}\Omega)$, is \emph{BLD modulo boundary} if the restriction $f|\mathrm{int}\R^n_+ \colon \mathrm{int}\R^n_+ \to \mathrm{int}\Omega$ is BLD, and for every BLD-map $\psi \colon \partial \Omega \to \bS^{n-1}$, the map $\psi \circ \varphi|\partial \R^n_+ \colon \R^{n-1}\times \{0\} \to \bS^{n-1}$ is BLD.

For Riemannian metrics $g$ with cylindrical ends as in Theorem \ref{thm:2}, it is easy to construct homeomorphisms $\psi_i$ satisfying condition (iii), and so this extension argument reduces the proof of Theorem \ref{thm:2} to Theorem \ref{thm:partition_simple}.

A closed set $\Omega$ in $\R^n$ is a \emph{Zorich extension domain} if there exists a map $\R^n_+ \to \Omega$ which is BLD modulo boundary and a homeomorphism in the interior. 

\begin{theorem}
\label{thm:partition_simple}
Given $n\ge 3$ and $p\ge 2$ there is an essential partition $\bfOmega = (\Omega_0,\ldots,\Omega_p)$ of $\R^n$ for which   
\begin{itemize}
\item[(a)] the sets $\Omega_i$ have essential partitions into Zorich extension domains, and
\item[(b)] there exists a BLD-map $f\colon \partial_\cup \bfOmega \to \partial_\cup {\bf E}$ satisfying $f(\partial \Omega_i) = \partial E_i$ for all $i=0,\ldots, p$.
\end{itemize}
\end{theorem}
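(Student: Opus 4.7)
The plan is to construct $\bfOmega$ via a hierarchical grid construction. I would tile $\R^n$ by a lattice of unit cubes and equip each cube with a combinatorial \emph{cave}: an essential partition of the cube into finitely many sub-cells, each labelled by an element of $\{0,1,\ldots,p\}$ in such a way that labels of adjacent sub-cells are adjacent in the cyclic graph $E_0 \leftrightarrow E_1 \leftrightarrow \cdots \leftrightarrow E_p \leftrightarrow E_0$ induced by the target partition ${\bf E}$. I would then take $\Omega_i$ to be the closure of the union of all sub-cells carrying label $i$, and obtain the essential partition of $\Omega_i$ into Zorich extension domains by declaring each maximal vertical column of stacked same-labelled sub-cells to be a single extension domain.

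The local model should generalize the classical Zorich construction. For $p=1$ the cave is simply $[0,1]^n$ split by a horizontal hyperplane into two half-cubes labelled $0$ and $1$, whose common face realizes the familiar pillow branched cover $\R^{n-1}\to\bS^{n-1}$. For $p \ge 2$ the cave must be refined: it should feature a codimension-$2$ \emph{spine} along which labels cycle through $0,1,\ldots,p$, with sub-cells arranged as $p+1$ petals meeting along this spine, mirroring the combinatorics of ${\bf E}$. A periodic arrangement of such caves across the cubical grid, compatible across shared cube faces, then realizes the required global essential partition $\bfOmega$ of $\R^n$.

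The BLD-map $f$ on $\partial_\cup \bfOmega$ would be built stratum by stratum. On each codimension-$1$ face between sub-cells labelled $i$ and $i+1$, $f$ is a PL (hence BLD) branched cover onto $\partial E_i \cap \partial E_{i+1}$; on codimension-$2$ strata, $f$ is an Alexander-type branched cover onto $\bS^{n-2} = E_{i-1}\cap E_i \cap E_{i+1}$. Compatibility across cube boundaries is ensured by designing the cave so that the induced boundary labelling is periodic and matches neighboring cubes exactly; the resulting global map is BLD with a uniform constant because only finitely many local model cubes are used and the construction is invariant under the cubical lattice.

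The main obstacle is the simultaneous realization of the cave combinatorics for general $p$: the labelling must admit a consistent global gluing respecting the cyclic adjacency constraint, each maximal same-labelled column must be BLD modulo boundary (parametrized by $\R^n_+$ via such a map with image the whole $\Omega_{i,j}$), and the codimension-$1$ branched cover on $\partial_\cup \bfOmega$ must exist with uniformly bounded multiplicity. This interlocking combinatorial-geometric problem is, I expect, where the bulk of the technical work lies; it is precisely what Rickman's three-dimensional construction addressed via the delicate deformation theory of two-dimensional branched covers, and finding a direct BLD-based replacement in all dimensions is the principal innovation required here.
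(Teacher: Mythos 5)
Your plan has a fatal structural gap in requirement (a), and it lies exactly where you place the construction's main convenience: periodicity. You propose a lattice-invariant decomposition of $\R^n$ into unit-cube ``caves'' and then declare each maximal vertical column of same-labelled sub-cells to be one of the extension domains $\Omega_{i,j}$. But a Zorich extension domain is by definition a closed set admitting a map from $\R^{n-1}\times[0,\infty)$ that is BLD modulo boundary and a homeomorphism in the interior; such a map is bilipschitz in the inner metric on the interior, so the image of a ball of radius $d$ centered at a point at depth $d$ in the half-space is a subset of $\Omega_{i,j}$ of diameter at most $Ld$ and volume at least $L^{-n}c_n d^n$. A column of bounded cross-section (or any bounded stack of sub-cells) contains no such sets once $d$ is large: any subset of diameter $D$ in it has volume $O(D)$. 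So the pieces you designate can never be Zorich extension domains, no matter how the cave combinatorics are arranged; and this is not repaired by cleverer labelling, because in a strictly lattice-periodic pattern every same-labelled component is either bounded or has the wrong (linear) volume growth in the direction you stack. This is precisely why the actual construction cannot be periodic: each $\Omega_{i,j}$ must be a half-space modulo boundary, i.e.\ as ``thick'' as $\R^{n-1}\times[0,\infty)$ at every scale, while all $p+1$ labels simultaneously stay within bounded Hausdorff distance of one another.

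The paper resolves this tension with a hierarchical, multi-scale construction rather than a tiling: rough Rickman partitions are built by an inductive scale-by-$3$ rearrangement (atoms, molecules, dented molecules, and the $\cC$-, $\cD$-, $\cN$-modifications of Sections \ref{sec:LRA}--\ref{sec:RP}), which makes each piece uniformly bilipschitz to larger and larger cubes and, in the limit, bilipschitz to a half-space (Theorem \ref{thm:RP}, Proposition \ref{prop:adap_cont_2}), while the tripod property keeps $\partial_\cup\bfOmega$ within bounded distance of $\partial_\cap\bfOmega$; the cyclic adjacency for general $p$ is handled not by ``petals around a spine'' in a unit cube but by skew partitions of atoms into $p-1$ linearly stacked cells inside this hierarchy (Proposition \ref{prop:3_skew}). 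Only after that does the map appear: the pillow construction of Section \ref{sec:pillows} replaces pairs of boundary simplices by sheets, performing the local shuffle needed because near a generic boundary point only two of the domains meet while in $\partial_\cup{\bf E}$ all $p+1$ cells share $\bS^{n-2}$ — a parity/orientation issue your sketch does not address. In short, the interlocking problem you correctly flag as ``where the bulk of the technical work lies'' is the entire content of the theorem, and the periodic-cave route you propose for it is blocked already at the level of condition (a).
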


Essential partitions $\bfOmega$ satisfying both conditions (a) and (b) in Theorem \ref{thm:partition_simple} are called \emph{Rickman partitions}, since the pair-wise common boundary $\partial_\cup \bfOmega$ is analogous to the $2$-dimensional complex Rickman constructs in \cite{R}. The reader may find it interesting to compare Sections \ref{sec:LRA} and \ref{sec:RP} with \cite[Sections 2 and 3]{R}.

The partition in Theorem \ref{thm:partition_simple} is achieved in two stages, with rough Rickman partitions playing an intermediate r\^ole: an essential partition $\wt \bfOmega=(\wt \Omega_0,\ldots, \wt \Omega_p)$ of $\R^n$ is a \emph{rough Rickman partition} if 
\begin{itemize}
\item[(a$'$)] each $\wt \Omega_i$ has an essential partition $(\wt\Omega_{i,1},\ldots, \wt\Omega_{i,j_i})$ with each $\wt\Omega_{i,j}$ BLD-homeomorphic to $\R^{n-1}\times [0,\infty)$, and
\item[(b$'$)] the sets $\partial_\cup \wt\bfOmega$ and $\partial_\cap \wt\bfOmega$ have finite Hausdorff distance, where
\[
\partial_\cap {\wt\bfOmega} = \bigcap_i \wt\Omega_i
\]
is the \emph{common boundary of the partition $\wt\bfOmega$}; $\partial_\cup \wt\bfOmega$ is called the \emph{pair-wise common boundary of $\wt\bfOmega$}.
\end{itemize}

In general, rough Rickman partitions $\wt\bfOmega$ do not admit branched covers $\partial_\cup \wt\bfOmega \to \partial_\cup {\bf E}$. To refine our rough Rickman partition $\wt\bfOmega$ to a Rickman partition $\bfOmega$, we impose an additional compatibility condition, called the tripod property; see Definition \ref{def:tripod} for its precise formulation. These particular rough Rickman partitions, together with a modification of Rickman's \emph{sheet construction} in \cite[Section 7]{R}, then yield the required global partition $\bfOmega$.

In Rickman's original terminology, the construction of rough Rickman partitions is called the \emph{cave construction} and the notion of \emph{cave bases} corresponds to the subdivisions provided by the tripod property. 

We summarize the two parts of the proof of Theorem \ref{thm:partition_simple} as follows. First, we prove the existence of suitable rough Rickman partitions by direct construction.
\begin{theorem}
\label{thm:3}
Given $n\ge 3$ and $p\ge 2$ there exists a rough Rickman partition $\wt\bfOmega = (\wt\Omega_0,\ldots, \wt\Omega_p)$ supporting the tripod property.
\end{theorem}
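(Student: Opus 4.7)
The plan is to build $\wt\bfOmega$ by a hierarchical cube-complex construction. Fix a standard unit cubulation $\G$ of $\R^n$ together with its dyadic refinements; the partition $\wt\bfOmega$ will be encoded by a coloring $c\colon\G\to\{0,\ldots,p\}$ of the top-dimensional cubes, setting $\wt\Omega_i=\bigcup\{Q\in\G:c(Q)=i\}$, and the sub-partition into half-space pieces $\wt\Omega_{i,1},\ldots,\wt\Omega_{i,j_i}$ will be read off from connected components of a refinement. The fundamental combinatorial object is a finite colored cube complex, a \emph{cave module}, designed so that around its central $(n-2)$-face all $p+1$ colors appear in the cyclic pattern dictated by $\mathbf{E}$, and such that any three consecutive colors always meet along an $(n-1)$-face through the local pattern required by the tripod property of Definition \ref{def:tripod}.

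First I would construct this cave module at a fixed scale and verify that it admits dyadic rescalings, and that it can be glued to a monochromatic cube complex along its exterior $(n-1)$-faces without destroying the BLD geometry. Then I would fill $\R^n$ telescopically: cover $\R^n$ by an essentially disjoint sequence of dyadic shells $S_k=[-2^{k+1},2^{k+1}]^n\setminus(-2^k,2^k)^n$ and place inside each $S_k$ a cave whose diameter is comparable to $2^k$, connected to the adjacent shells by monochromatic collars assigned by a fixed background rule compatible with $\mathbf{E}$. Each cave contributes a point of the total common boundary $\partial_\cap\wt\bfOmega$, while $\mathrm{diam}(S_k)\lesssim 2^{k+1}$; hence every point of the pair-wise boundary $\partial_\cup\wt\bfOmega$ lies within uniformly bounded distance of $\partial_\cap\wt\bfOmega$, giving condition (b$'$). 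The tripod property follows automatically from the built-in combinatorics of the cave module.

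For condition (a$'$), one checks that each connected piece $\wt\Omega_{i,j}$ is a PL $n$-submanifold with boundary whose local combinatorial complexity in each shell $S_k$ is bounded by a constant depending only on $n$ and $p$. The BLD-homeomorphism $\wt\Omega_{i,j}\to\R^{n-1}\times[0,\infty)$ is then assembled scale by scale: start from the obvious flat model outside a large ball, and modify it by bounded, cube-by-cube perturbations within each successive shell. The main obstacle will be the simultaneous balancing of two competing demands: the caves must be dense enough at every scale to enforce (b$'$), which forces branching of $\partial_\cup\wt\bfOmega$ throughout $\R^n$, yet each monochromatic piece $\wt\Omega_{i,j}$ must remain BLD-equivalent to a flat half-space, which forbids the accumulation of complicated geometry in any single piece. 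Reconciling these requires arranging the caves so that each one contributes only a uniformly bounded number of ``handles'' to each $\wt\Omega_{i,j}$ it meets, reducing the total perturbation of $\wt\Omega_{i,j}$ from a flat half-space to a controlled telescoping sum. This is the combinatorial heart of the argument, where Rickman's three-dimensional cave construction in \cite{R} must be redesigned to function in all dimensions $n\ge 3$.
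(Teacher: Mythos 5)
There is a genuine gap, and it sits exactly where your outline does the real work. Your verification of (b$'$) does not go through: you place one cave of diameter $\sim 2^k$ inside the shell $S_k$ and then claim that since $\diam(S_k)\lesssim 2^{k+1}$, every point of $\partial_\cup\wt\bfOmega$ is within \emph{uniformly} bounded distance of $\partial_\cap\wt\bfOmega$. But $2^{k+1}$ is not a uniform bound; with a single cave per shell and monochromatic collars in between, the pair-wise boundary runs throughout $S_k$ while the triple points are confined to the cave, so points of $\partial_\cup\wt\bfOmega$ in $S_k$ can lie at distance comparable to $2^k$ from $\partial_\cap\wt\bfOmega$, and the Hausdorff distance is infinite. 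To get (b$'$) you need triple points at \emph{unit} density along all of $\partial_\cup\wt\bfOmega$ at every scale --- this is precisely what the tripod property encodes and what the paper enforces by performing the local rearrangements ($\cC$-, $\cD$-, $\cN$- and secondary modifications) in \emph{every} cube of a list covering $\partial_\cup\bfOmega_m$ after each scaling by $3$, which yields $\dist_\haus(\partial_\cup\bfOmega_m,\partial_\cap\bfOmega_m)\le 6$ independently of $m$. The sparse shell architecture also endangers (a$'$): a monochromatic collar between consecutive shells disconnects all but one color across the collar, so most $\wt\Omega_i$ would acquire infinitely many components, whereas (a$'$) demands a \emph{finite} essential partition into half-spaces modulo BLD.

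Once you repair the density of caves, you run straight into the tension you yourself flag and then defer: branching everywhere at every scale must coexist with each piece being BLD-equivalent to $\R^{n-1}\times[0,\infty)$. Your proposal offers no mechanism for this beyond ``bounded, cube-by-cube perturbations,'' but this is the actual content of the theorem. In the paper it is carried by the atom/molecule calculus: the John property and $\lambda$-collapsibility give uniform bilipschitz contractions of molecules onto their roots (Proposition \ref{prop:fRt}), dented atoms are uniformly restored to their hulls (Proposition \ref{prop:fRt_flat}), the counting lemmas (Lemmas \ref{lemma:mods_meet_exts}, \ref{lemma:hull_regularity}, \ref{lemma:entry_counting}) keep the valence and collapsibility constants dimensional, and Proposition \ref{prop:adap_cont_2} converts the resulting monotone union of uniformly controlled molecules into a bilipschitz copy of a half-space; stability of the construction on $3^{m-2}|\bfOmega_0|$ is what lets the partitions of the exhausting cells be pasted into a partition of $\R^n$. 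Finally, note that the paper proves the theorem for general $p\ge 2$ not by inserting all $p+1$ colors into one cave module, but by first settling $p=2$ and then passing to skew partitions of atoms and molecules into $p-1$ cells with coarsification (Proposition \ref{prop:3_skew}); if you want all colors to meet cyclically as in ${\bf E}$ while keeping the adjacency combinatorics and the collapsibility bounds, some device of this kind is needed and is absent from your sketch.
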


As in \cite{R} we begin the proof of Theorem \ref{thm:3} by partitioning $\R^n$ with an essential partition $\bfOmega'=(\Omega'_1,\Omega'_2,\Omega'_3)$ with $\Omega'_1$ and $\Omega'_2$ BLD-homeomorphic to $\R^{n-1}\times [0,\infty)$ and $\Omega'_3$ having a partition $(\Omega'_{3,1},\ldots, \Omega'_{3,2^{n-1}})$ into pair-wise disjoint sets, where each $\Omega'_{3,j}$ is BLD-homeomorphic to $\R^{n-1}\times [0,\infty)$. All sets $\Omega'_i$ are unions of unit $n$-cubes $[0,1]^n + v$ where $v\in \Z^n$, and $(\Omega'_1,\Omega'_2,\Omega'_3)$ satisfies the tripod property. This occupies Section \ref{sec:RP}.
The final step in the proof of Theorem \ref{thm:3} is a generalization of this argument. This step is discussed in Section \ref{sec:FT}; see Proposition \ref{prop:3_skew}.

The essential partition $\widetilde{\bfOmega}$ (as well as $\bfOmega'$) has the following geometric property.  Let $X$  be any of the sets $\wt\Omega_0,\wt\Omega_1$ or $\wt\Omega_{2,j}$ for some $1\le j \le 2^{n-1}$, and for each $k\ge 0$ write 
\[
X_k = 3^{-k}X.
\]
By passing to a subsequence if necessary, the sets $X_k$ and their boundaries $\partial X_k\subset \bS^n$ converge in the Hausdorff sense respectively to $X_\infty$ and $\partial X_\infty$, where $\partial X_\infty$ is a ``generalized Alexander horned sphere in $\bS^n$ with infinitely many horns.'' Under the normalization $\wt\bfOmega_k = 3^{-k}\wt\bfOmega$ for $k\ge 0$, in fact $\partial_\cup \bfOmega_\infty = \partial_\cap \bfOmega_\infty$ for any sublimit $\bfOmega_\infty$ of the partitions $\wt\bfOmega_k$, in the Hausdorff sense. This may be considered a \emph{coarse Lakes of Wada} property for the pair-wise common boundary of $\wt\bfOmega$. Of course, this observation applies also to Rickman's original cave construction. We do not discuss this feature of  $\wt\bfOmega$ in more detail, and leave these details to the interested reader.

The second part of the proof of Theorem \ref{thm:partition_simple} is the refinement of rough Rickman partitions to Rickman partitions. This formalizes the effect of the sheet construction (called \emph{pillows} in Section \ref{sec:pillows}) as follows.
\begin{proposition}
\label{prop:4}
Given a rough Rickman partition $\wt\bfOmega = (\wt\Omega_0,\ldots, \wt\Omega_p)$ supporting the tripod property there exists a Rickman partition $\bfOmega = (\Omega_0,\ldots, \Omega_p)$ for which the Hausdorff distance of $\partial_\cup \bfOmega$ and $\partial_\cup \wt\bfOmega$ is at most $1$.
\end{proposition}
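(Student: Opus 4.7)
The plan is to modify $\wt\bfOmega$ only within a small tubular neighborhood of its pair-wise common boundary $\partial_\cup \wt\bfOmega$, leaving the partition unchanged outside. Since every point of the modified region lies within distance $1$ of $\partial_\cup \wt\bfOmega$, the Hausdorff bound will be automatic once the scale of the insertions is fixed to $1$. The starting point is the tripod property, which supplies a locally finite cover of $\partial_\cup \wt\bfOmega$ by charts of two kinds: \emph{face charts} modeled on $\R^{n-1}\times[-1,1]$ that parametrize a neighborhood of the codimension-$1$ smooth part $\partial_\cup\wt\bfOmega\setminus\partial_\cap\wt\bfOmega$, and \emph{tripod charts} modeled on $Y\times[-1,1]^{n-2}$, where $Y\subset\R^2$ is a standard $Y$-shape, that parametrize a neighborhood of a codimension-$2$ piece of $\partial_\cap\wt\bfOmega$ along which three regions of $\wt\bfOmega$ meet. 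The tripod property simultaneously gives a labelling of each chart by the indices of the cells of the target partition ${\bf E}$ that should be realized inside it.

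Within each chart I would insert a \emph{pillow}: a bounded $n$-dimensional set $P\subset\R^n$ which is a Zorich extension domain, hence a half-space modulo boundary with a BLD Zorich-type parametrization from $\R^n_+$, and which comes equipped with a BLD branched cover $\partial P\to\partial E_i$ onto the prescribed target face. In a face chart one pillow realizes a controlled multi-sheeted Zorich-type cover of $\partial E_i$, while in a tripod chart three pillows meeting along an $(n-2)$-cell jointly realize the local branched-cover structure of $E_{i-1}\cup E_i\cup E_{i+1}$ near their common $(n-2)$-face $\bS^{n-2}=E_{i-1}\cap E_i\cap E_{i+1}$. Construction of each individual pillow reduces to standard Zorich/Alexander-type maps on $\R^n_+$, and the overall scheme is a higher-dimensional adaptation of Rickman's sheet construction of \cite[Section 7]{R}.

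Replacing $\wt\bfOmega$ inside each chart by the corresponding pillow arrangement, and gluing across chart overlaps, yields the new partition $\bfOmega=(\Omega_0,\ldots,\Omega_p)$. Each $\Omega_i$ is a union of pillows (from the charts it meets) together with unaltered exterior pieces; the former are Zorich extension domains by construction while the latter are already BLD-homeomorphic to $\R^{n-1}\times[0,\infty)$ by the rough Rickman property (a$'$), so together they form an essential partition of $\Omega_i$ into Zorich extension domains, giving condition (a). The local pillow branched covers then assemble into a global BLD branched cover $f\colon\partial_\cup\bfOmega\to\partial_\cup{\bf E}$ with $f(\partial\Omega_i)=\partial E_i$, giving (b); BLD-ness is preserved under gluing because the pieces are compact and finite in number per unit ball, and matching along boundaries has been arranged.

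The principal obstacle is the coherent assembly of the pillow labelling: a pillow in a single chart is straightforward to build, but one must match the cyclic face-adjacency pattern $E_{i-1}\cap E_i\cap E_{i+1}=\bS^{n-2}$ of ${\bf E}$ along every tripod spine and the $2$-fold face-incidence of ${\bf E}$ along every face chart simultaneously, and ensure that the resulting labelling closes up into a well-defined branched cover rather than a multi-valued map. The tripod property is engineered precisely for this purpose: it partitions $\partial_\cap\wt\bfOmega$ into tripod-spine pieces to which a consistent labelling by ordered triples of indices can be assigned, while the absence of higher-order intersections (implicit in the tripod normal form) rules out combinatorial obstructions at codimension $\ge 3$. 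The bulk of the proof is the verification that such a labelling exists and extends to the face charts, so that the pillow covers combine into the required BLD branched cover $f$.
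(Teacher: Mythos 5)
Your overall plan is recognizably the one the paper follows (replace $\partial_\cup \wt\bfOmega$ locally by Rickman-type sheets/pillows, organized by the tripod property, and read off the new partition from the complementary components), but as written it has a genuine gap, and you have located it yourself without filling it: the ``coherent assembly of the pillow labelling'' is not a verification one can defer, it is the actual content of the proposition, and the tripod property alone does not supply it. In the paper the mechanism is an orientation-based \emph{parity function} on the $(n-1)$-simplices of a standard triangulation of $\partial_\cup \wt\bfOmega$ (Section \ref{sec:triangulation}), together with the lemma describing exactly when the parity flips across adjacent simplices (Lemma \ref{lemma:parity}). The parity dictates that adjacent simplices carry \emph{different numbers of sheets} ($2$ versus $4$ when $p=2$, and $p$ versus $p+2$ in general), and the pillow functions are built with ``openings'' and ``shuffles'' on entry/exit faces chosen along a directed maximal tree of the simplex adjacency graph, subject to a compatibility condition on common faces. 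It is precisely this alternation plus compatibility that makes the complementary components of the sheeted hypersurface reconnect into exactly $p+1$ domains with the correct cyclic identification, and that makes the simplexwise maps satisfy (S1)--(S3) and glue into a single-valued BLD branched cover onto $\partial_\cup {\bf E}$. None of this appears in your proposal; the tripod property only provides the grouping of the $(n-1)$-cells of $\partial_\cup\wt\bfOmega$ into $(p+1)$-tuples meeting along $(n-2)$-cells, which is where the ``missing'' domains are pulled through (the passage from six local complementary components to three in Section \ref{sec:PC_RP}); it does not rule out the global labelling obstruction, and without the parity bookkeeping your local covers need not close up into a branched cover at all.

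Two further points. First, your local models are tied to $p=2$: along a spine of $\partial_\cap\wt\bfOmega$ for general $p$ one has $p+1$ cells of the tripod partition (not three) meeting along an $(n-2)$-cell, and the paper handles this with the generalized (cyclic) shuffle of $p+1$ domains using $p$ and $p+2$ sheets, built on a skew partition; a $Y\times[-1,1]^{n-2}$ chart with three pillows does not model this. Second, your verification of condition (a) is asserted rather than proved: the new domains are not simply ``old pieces with pillows attached'' in any way that makes them automatically Zorich extension domains. In the paper this step is the statement that each $\wt\Omega_m \to \Omega_m$ is bilipschitz in the inner metrics with a BLD extension to the closures, which relies on the uniform Lipschitz control of the pillows (finiteness of congruence classes) and on Lemmas \ref{lemma:planar_complements}--\ref{lemma:planar_BLD}; an argument of this kind is needed in your scheme as well before you can quote property (a$'$) of the rough partition.
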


We do not explore the geometry of the domains $\Omega_0,\ldots, \Omega_p$ further. However, we do observe that the domains in the Rickman partition can be taken to be uniform domains; see Corollaries \ref{cor:John-domains} and \ref{cor:John-final}.

As discussed in this introduction, Theorem \ref{thm:3} and Proposition \ref{prop:4} together prove Theorem \ref{thm:partition_simple}, and we obtain Theorem \ref{thm:2} from Theorem \ref{thm:partition_simple} and the observation on the existence of BLD-extensions.

\vspace*{5mm}
{\bf Acknowledgments}

Our investigation had as initial impetus many long discussions with Seppo Rickman about his remarkable work \cite{R}; his unflagging patience and optimism provided important stimulus.
The authors are also grateful to Juha Heinonen for his steady encouragement to explore Picard constructions. We thank Pietro Poggi-Corradini, and Kai Rajala for their discussions and interest and we have profited from many questions and suggestions from John Lewis and the referee. Our colleague Jang-Mei Wu warrants special thanks for extensive discussions, notes and suggestions, whose influence is manifest throughout this manuscript.

\setcounter{tocdepth}{2}
\tableofcontents


\section{Preliminaries}
\label{sec:n:pre}

In this section we introduce general metric and combinatorial notions used in the construction. Most discussion is in the ambient space $\R^n$ for some fixed $n\ge 3$.

\subsection{Metric notions}

In $\R^n$, let $d_\infty$ be the $\sup$-metric 
\[
d_\infty(x,y) = \norm{x-y}_\infty
\]
given by the \emph{supremum norm} 
\[
\norm{(x_1,\ldots, x_n)}_\infty = \max_{i} |x_i|.
\]
The metric ball $B_\infty(p,r) = \{ x\in \R^n \colon \norm{p-x}_\infty<r\}$ of radius $r>0$ about $p\in \R^n$ in this metric then is the open cube 
\[
B_\infty(p,r) = p + (-r,r)^n.
\]
Similarly, $\bar B_\infty(p,r) = p + [-r,r]^n$.

Diverting from standard terminology, we apply the term `cube' exclusively to closed $n$-balls $\bar B_\infty(p,r)$. The point $p$ is the center of the cube $\bar B_\infty(p,r)$ and of course the side length of $\bar B_\infty(p,r)$ is $2r$. 

The set $E$ in $\R^n$ is \emph{rectifiably connected} if for all $x,y\in E$ there exists a path $\gamma\colon [0,1]\to E$ of finite length so that $x,y\in \gamma[0,1]$. In this situation, $\gamma$ \emph{connects $x$ and $y$ in $E$}. When $E$ is rectifiably connected in $\R^n$, $d_E$ is its inner metric in $E$; that is, for all $x,y\in E$,
\[
d_E(x,y) = \inf_\gamma \ell(\gamma),
\]
over all paths $\gamma$ connecting $x$ and $y$ in $E$, with $\ell(\gamma)$ the length of $\gamma$. Note that the length of $\gamma$ is in terms of Euclidean distance. The notion of inner metric gives the following characterization of BLD-homeomorphisms: \emph{A homeomorphism $f\colon E\to E'$ between rectifiably connected sets $E$ and $E'$ in $\R^n$ is BLD if and only if $f \colon E \to E'$ is bilipschitz in the inner metric.}

\subsection{Complexes}
\label{sec:complexes}

For a detailed discussion on simplicial complexes we refer to \cite{Hudson} and \cite{RourkeC:Intplt} and merely recall some notation and terminology. Given a simplicial complex $P$ in $\R^n$, $P^{(k)}$ is its \emph{$k$-skeleton}, that is, the collection of all $k$-simplices in $P$.  If $m$ is the largest dimension of simplices in $P$, then $P$ has \emph{dimension} $m$, $m=\dim P$. We consider only \emph{homogeneous} simplicial complexes, that is, every simplex in $P$ is contained in a simplex of dimension $\dim P$. We denote by $|P^{(k)}|$ the subset in $\R^n$ which is the union of all simplices in $P^{(k)}$; thus, $|P|=|P^{(m)}|$. 

Recall that every $k$-simplex $\sigma$ has a standard structure as a simplicial complex having $\sigma$ as its only $k$-simplex and the vertices of $\sigma$ as the $0$-skeleton. The $i$-simplices of this complex form the \emph{$i$-faces} of $\sigma$.

We mainly consider cubical complexes. Much as simplices have a natural structure as a complex, the $k$-dimensional faces of a cube $Q=\bar B_\infty(x,r)$ determine a natural CW complex structure for $Q$. The $k$-dimensional faces of $Q$ are \emph{$k$-cubes}, and a CW complex $P$ is a \emph{cubical complex} if its cells are cubes. Note in particular, that given $i$-cube $Q$ and $j$-cube $Q'$ the intersection $Q\cap Q'$ is a $k$-dimensional, $k\le \min\{i,j\}$, face of both cubes. The $k$-skeleton and its realization are defined for cubical complexes in a manner analogous to simplicial complexes.

A homogeneous cubical complex of dimension $k$ is usually referred to as a \emph{cubical $k$-complex}. A set $E\subset \R^n$ is a \emph{cubical $k$-set} if there is a cubical $k$-complex $P$ with $|P|=E$. Cubical $k$-sets $E$ and $E'$ are \emph{essentially disjoint} if $E\cap E'$ is a cubical set of lower dimension. Given two cubical sets $E$ and $E'$, write
\[
E-E' = \cl(E\setminus E'),
\]
where $\cl(E\setminus E')$ is the closure of $E\setminus E'$. Clearly, $E-E'=E$ if $E'$ has lower dimension than $E$.

A cubical $k$-complex $P$ is \emph{$r$-fine} if all $k$-cubes in $P$ have side length $r$, i.e.\;are congruent to $[0,r]^k\subset \R^k \subset \R^n$. Similarly, a set $E$ in $\R^n$ is \emph{$r$-fine} if $r>0$ is the largest integer for which there exists an $r$-fine cubical complex $P$ with $E=|P|$, and $r$ is called the \emph{side length $\rho(E)$ of $E$}. In what follows, we assume that all cubical complexes are $r$-fine for some integer $r>0$. Given an $r$-fine set $E=|P|$, we tacitly assume that its underlying complex $P$ is also $r$-fine.

Let $P$ be a $3k$-fine cubical $n$-complex for $k\ge 1$, and $\Omega=|P|$. We denote by $\Omega^*$ the subdivision of $\Omega$ into cubes of side length $3$. More formally, there exists a unique $3$-fine cubical $n$-complex $\wt P$ satisfying $\Omega=|\wt P|$;  we denote $\Omega^* = \wt P^{(n)}$ and refer to $\Omega^*$ as the \emph{$3$-fine subdivision of $\Omega$}. We will also need $\Omega^\#$, the \emph{$1$-fine subdivision of $\Omega$}, i.e.\;subdivision of $\Omega$ into unit cubes, and call $\Omega^\#$   the \emph{unit subdivision of $\Omega$}.
In what follows, if $A\subset \R^m$ and $r>0$, we write
\[
rA = \{ rx\in \R^n \colon x \in A\}.
\]

\subsection{Essential partitions}
\label{sec:ep}
Cubical $k$-sets $U_1\ldots,U_m$ induce the \emph{essential partition $\{U_1,\ldots, U_m\}$ of the  cubical set $U$} if $U=U_1\cup\cdots\cup U_m$ and the sets $U_i$ are pairwise essentially disjoint. If the sets $U_1,\ldots, U_m$, and $U$ are $n$-cells, we usually consider the essential partition ordered and denote it $\bU = (U_1,\ldots, U_m)$ as in the introduction.

To simplify notation, for $r>0$ we also denote $r\bU = (rU_1,\ldots,rU_m)$, and given an $n$-cell $E\subset U$, write $\bU \cap E = (U_1 \cap  E,\ldots, U_m\cap E)$ and $\bU - E = (U_1 - E,\ldots, U_m-E)$.

\subsection{Graphs, forests, and adjacency}
\label{sec:gfa}

The pair $G=(V,E)$ is \emph{a graph} if $V$ is a countable set and $E$ is a collection of unoriented pairs of points in $V$; $V$ is the set of \emph{vertices} and $E$ the \emph{edges} of $G$. Note we only allow one edge between two distinct vertices and, in particular, our graphs do not have \emph{loops}, i.e.\;edges from a vertex to itself. 

We use repeatedly the standard fact that a graph contains a maximal tree, that is, given a graph $G=(V,E)$ there is a subtree $T=(V,E')$ containing all vertices of $G$. The length $\ell(G)$ of $G$ is the number of vertices of $G$, the \emph{valence of $G$ at $v$} is $\nu(G,v)$ and $\nu(G) = \max_{v\in G} \nu(G,v)$ is the \emph{(maximal) valence of $G$}. We denote by $d_G(v,v')$ the graph distance of $v$ and $v'$ in $G$, that is, the length of the shortest edge path between $v$ and $v'$ in $G$.

Given a distinguished vertex $v\in G$, the pair $(G,v)$ is called a \emph{rooted graph} and $v$ the \emph{root} of this graph. The \emph{radius $r(G,v)$ of $G$ at $v$} is the largest graph distance between $v$ and a leaf of $G$; a vertex $w\in G$ is a \emph{leaf} if it belongs to exactly one edge, or equivalently, has valence $1$. A vertex which is neither a leaf nor the root is an \emph{inner vertex}.
A subtree $\Gamma\subset G$ connecting the root $v$ to a leaf $w$ of $G$ is a \emph{branch} when all vertices in $\Gamma$ other than $v$ and $w$ have valence $2$.

Let $(G,v)$ be a finite rooted tree and $v'\ne v$ a vertex in $G$. We define the \emph{subtree behind $v'$ in $(G,v)$} as follows. Since $G$ is a tree, there exists unique $v'' \in G$ for which $e=\{v'',v'\}$ is the last edge in the shortest path from $v$ to $v'$. The graph $(V,E\setminus \{e\})$ has two connected components $\Gamma_v$ and $\Gamma_{v'}$ containing $v$ and $v'$, respectively.  Both component are trees; $\Gamma_{v'}$ is the \emph{subtree behind $v'$ in $(G,v)$}. 

A graph $G$ is a \emph{forest} if all of its components are trees. A forest $F\subset G$ is \emph{maximal} if components of $F$ are maximal trees in components of $G$ and $F$ contains all vertices of $G$.

A function $u\colon G \to \R$ on a tree $G$ has the \emph{John property in $G$} if given $v$ and $v'$ in $G$ there exists $0\le j \le d=d_G(v,v')$ so that $u$ is (strictly) increasing on $v_0,\ldots, v_j$ and (strictly) decreasing on $v_{j+1},\ldots, v_d$, where $v=v_0, v_1,\ldots, v_d = v'$ is the unique shortest edge path from $v$ to $v'$ in $G$.

Most graphs we consider are adjacency graphs of collections of $k$-cells in $\R^n$. A set $E\subset \R^n$ is a \emph{$k$-cell} if $E$ is homeomorphic to the closed cube $[0,1]^k$ in $\R^k$; $E$ is a \emph{cubical $k$-cell} if for some $r\ge 1$ there is an $r$-fine homogeneous cubical complex $P$ for which $E=|P|$.

Two $k$-cells $E$ and $E'$ are \emph{adjacent} if $E\cap E'$ is an $(k-1)$-cell. We recall from PL theory that given two adjacent PL $k$-cells $E$ and $E'$ there exists a PL homeomorphism $E\cup E'\to E$ which is identity on $\partial(E\cup E') \cap E$, and refer to \cite{HudsonJ:Piclt} or \cite[Chapter 3]{RourkeC:Intplt} for this and similar results in PL theory.

A collection $\cP$ of $k$-cells in $\R^n$ has the adjacency graph 
\[
\Gamma(\cP)=(\cP,\left\{ \{E,E'\} \colon E\in \cP\ \mathrm{and}\ E'\in \cP\ \mathrm{are\ adjacent}\right\}).
\]
Given a subgraph $\Gamma\subset \Gamma(P)$, we denote $|\Gamma|=\bigcup_{E\in \Gamma} E$; in particular, $|\Gamma(P)|=|P|$. 

\begin{figure}[h!]
\includegraphics[scale=1.2]{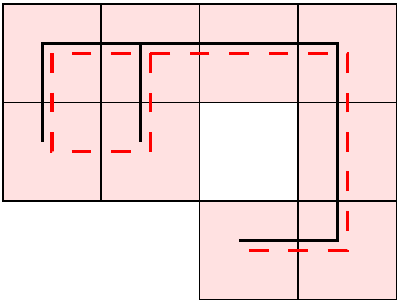}
\caption{A cubical $2$-complex with its adjacency graph and a choice of a maximal tree.}
\label{fig:AG}
\end{figure}

\subsection{Remarks on figures}

Although we consider $n$-cells for $n\ge 3$, we use two-dimensional illustrations related to three-dimensional example configurations, so that often a three-dimensional situation is seen relative to one of its faces. The figures displayed here often have orientations different than suggested by their coordinates in $\R^3$.

In particular, 'fold-out' diagrams illustrate particular cubical $(n-1)$-complexes. To formalize this, suppose $E$ is a cubical $(n-1)$-cell in $\R^n$ with an essential partition $\{E_1,\ldots, E_s\}$ into unit $(n-1)$-cubes and let $\Gamma$ be a maximal tree in $\Gamma(\{E_1,\ldots, E_s\})$. An $(n-1)$-cell $E'$ in $\R^{n-1}$ then is a \emph{fold-out of $E$ (along $\Gamma$)} if $E'$ has a partition $\{E'_1,\ldots, E'_s\}$ with adjacency graph $\Gamma(\{E'_1,\ldots, E'_s\})$ isomorphic to $\Gamma$ and there exists a map $\psi \colon E' \to  E$ which sends each cube $E'_i$ isometrically to $E_i$. We call $\psi$ a \emph{bending of $E'$}. Sometimes, as in Figure \ref{fig:AG2}, a fold will be indicated by a dashed line. 

\begin{figure}[h!]
\includegraphics[scale=0.30]{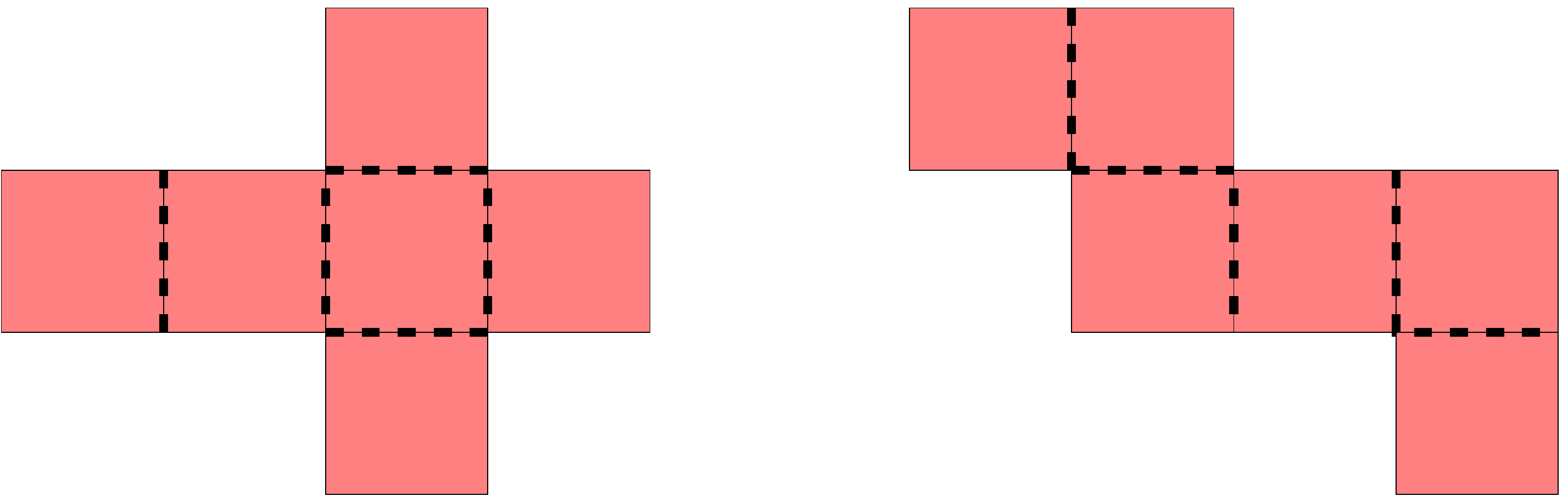}
\caption{Two different fold-outs of faces of a $3$-cube along non-isomorphic maximal trees.}
\label{fig:AG2}
\end{figure}

Fold-out figures, in particular, illustrate $3$-cells contained in $3$-cubes. Most of our figures of this type, e.g.\;in Sections \ref{sec:LRA} and \ref{sec:RP}, are akin to the following two simple examples.

Consider the cube $Q=[0,3]^3$. Then $F=[0,3]^2\times \{0\}$ is a face of $Q$ and the unit cube $q=[1,2]^2\times [0,1]$ is contained in $Q$ and meets $F$ in the face $f=[1,2]^2\times \{0\}$. We illustrate the fact that $q$ meets $F$ by identifying $f$ in $F$ as in Figure \ref{fig:figure_example_1}.

\begin{figure}[h!]
\includegraphics[scale=0.5]{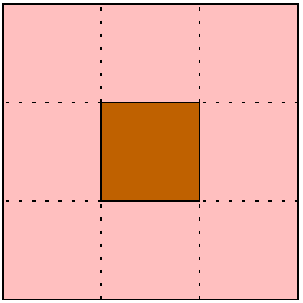}
\caption{Cube $q$ in $Q$ realized as a square $f$ in $F$.}
\label{fig:figure_example_1}
\end{figure}

In our second example, $Q$ and $F$ remain the cube $[0,3]^3$ and its face $[0,3]^2\times \{0\}$ respectively, but $q=[0,1]\times [1,2]\times [0,1]$. Let also $F'$ be the face $\{0\}\times [0,3]^2$ of $Q$. Then $q\cap  (F\cup F')$ is a union of two faces $f=[0,1]\times [1,2]\times \{0\}$ and $f'=\{0\}\times [1,2]\times [0,1]$ of $q$. To indicate how $q$ meets $F\cup F'$ in more than one face, we use the symbol '\emph{\textsf{x}}' to indicate one of the two faces which correspond to $q$ in $Q$ as in Figure \ref{fig:figure_example_2}.
\begin{figure}[h!]
\includegraphics[scale=0.5]{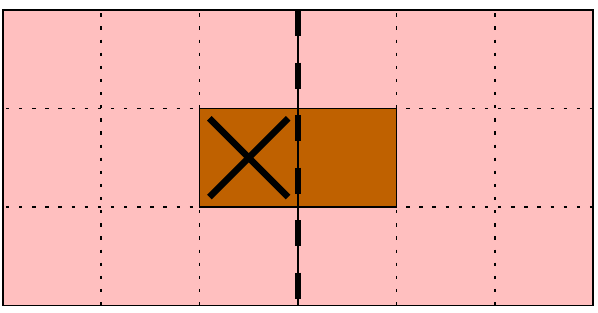}
\caption{Cube $q$ in $Q$ meeting faces $F\cup F'$.}
\label{fig:figure_example_2}
\end{figure}


\section{Atoms and molecules}
\label{sec:RT}

In this section we discuss the elementary BLD-theory of certain cubical $n$-cells. We call these classes of cells \emph{atoms}, \emph{molecules}, \emph{dented atoms} and \emph{dented molecules}.

\begin{definition}
We say that a cubical $n$-cell $A=|P|$ in $\R^n$ is an \emph{atom of length $\ell$} if $A$ is $r$-fine and the adjacency graph $\Gamma(P)$ is a tree of length $\ell$.
\end{definition}

Given an atom $A=|P|$, we denote by $\ell(A)$ its length; i.e.\;$\ell(A)=\ell(\Gamma(P))$. Note also that every $r$-fine atom $A$ has uniquely determined $r$-fine complex $P_A$ with $A=|P_A|$.

\begin{figure}[h!]
\includegraphics[scale=0.5]{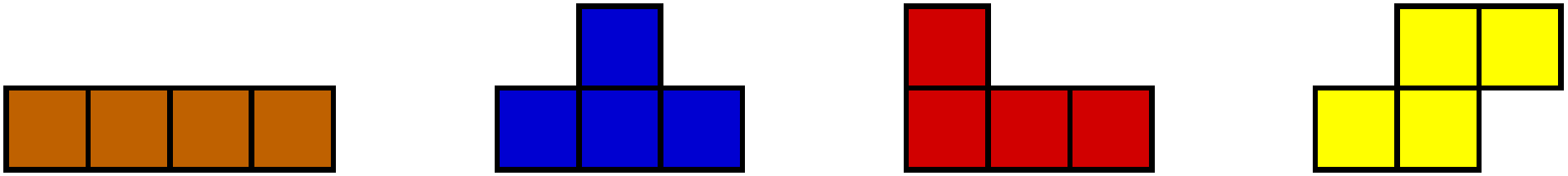}
\caption{Some atoms of length $4$.}
\label{fig:SA}
\end{figure}

Clearly, by finiteness of adjacency trees, every $r$-fine atom of length $\ell$ is uniformly $L$-bilipschitz to the $n$-cube $[0,r]^n$ with $L$ depending only on $n$ and $\ell$. In what follows, we define more complicated cells, using atoms as building blocks. The hierarchy between atoms in these constructions is given by the notion of proper adjacency. Atoms $A=|P|$ and $A'=|P'|$ are \emph{properly $h$-adjacent}, for $h>1$, if
\begin{itemize}
\item[(1)] the side lengths of $A$ and $A'$ satisfy $\rho(A) \ge h\rho(A')$ or $\rho(A')\ge h\rho(A)$, and
\item[(2)] there exist $n$-cubes $Q\in P^{(n)}$ and $Q'\in (P')^{(n)}$ for which $A\cap A' = Q\cap Q'$.
\end{itemize}

Let $\cA$ be a finite collection of properly adjacent atoms so that $\Gamma(\cA)$ is a tree. Suppose also that $|\Gamma(\cA)|$ is \emph{John}, that is, the function $A \mapsto \rho(A)$ is a John function on $\Gamma(\cA)$, so there is a unique $\hat A\in \cA$ with  $\rho(\hat A) = \max_{A\in \cA} \rho(A)$, called the \emph{root} of $\cA$.

We exploit the John property to produce bilipschitz mappings from $|\Gamma(\cA)|$ to proper subdomains of $|\Gamma(\cA)|$. Especially, we construct bilipschitz maps $|\Gamma(\cA)|\to \hat A$, where $\hat A$ is the root of $\cA$. To obtain uniform bounds for the bilipschitz constants, we define a collapsibility condition and introduce a class of $n$-cells called molecules; see Proposition \ref{prop:fRt} for the first bilipschitz contractibility statement for molecules.

Let $A\in \Gamma(\cA)$ be an inner vertex in $(\Gamma(\cA),\hat A)$ and let $\cN(A)$ be neighbors of $A$ in $\Gamma(\cA)$. For each $a\in \cN(A)$, let $q_a\in P_a^{(n-1)}$ be the unique cube satisfying $q_a \cap A = a\cap A$, and denote by $F_A(a)$ the face of $q_a$ containing $q_a\cap A$. Note that, since $|\Gamma(\cA)|$ is John, there exists a unique $A' \in \cN(A)$ so that $\rho(A')>\rho(A)$.

\begin{definition}
\label{def:lambda}
Let $A\in \Gamma(\cA)$ be an inner vertex in $(\Gamma(\cA),\hat A)$ and $A'$ a neighbor of $A$ with $\rho(A')>\rho(A)$. Vertex $A$ is \emph{$\lambda$-collapsible for $\lambda>1$} if  there exists a collection $\{f_a \subset F_{A}(A') \colon a\in \cN(A)\setminus \{A'\}\}$ of essentially pair-wise disjoint $(n-1)$-cubes with $\rho(f_a) = \lambda \rho(F_A(a))$. 
\end{definition}

\begin{definition}
Let $M=|\Gamma(\cA)|=\bigcup_{A\in \cA} A$ be a cubical $n$-cell having an essential partition into finite collection $\cA$ of atoms, $\nu \ge 1$ and $\lambda>1$. Then $M$ is a \emph{$(\nu,\lambda)$-molecule} if
\begin{itemize}
\item[(a)] the adjacency graph $\Gamma(\cA)$ is a tree,
\item[(b)] adjacent atoms in $\cA$ are properly $3$-adjacent,
\item[(c)] $\Gamma(\cA)$ is John, 
\item[(d)] $\Gamma(\cA)$ has valence at most $\nu$, and
\item[(e)] each inner vertex of $(\Gamma(\cA),\hat A)$ is $\lambda$-collapsible, where $\hat A$ is the root of $\cA$.
\end{itemize}
\end{definition}

\begin{remark}
\label{rmk:John}
By (c), $M = |\Gamma(\cA)|$ is a John domain; see e.g.\;\cite{MartioO:Injtps} or \cite{VaisalaJ:UnioJd} for terminology.
\end{remark}

\begin{figure}[h!]
\includegraphics[scale=0.3]{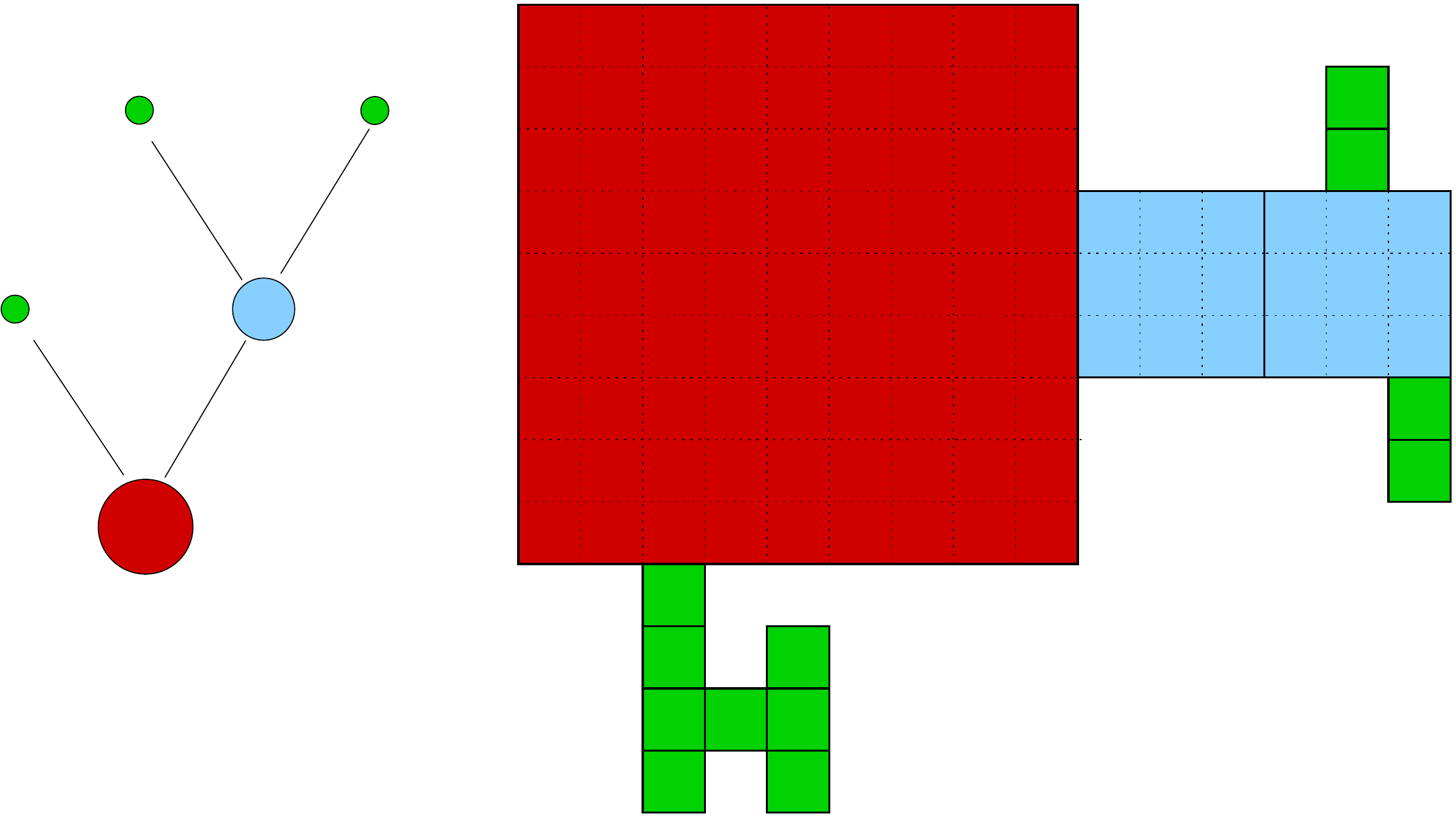}
\caption{Example of a tree $\Gamma(\cA)$ and molecule $M=|\Gamma(\cA)|$.}
\label{fig:molecule}
\end{figure}

Let $M=|\Gamma(\cA)|$ be a molecule. By (b), the atoms in $\cA$ and the tree $\Gamma(\cA)$ are uniquely determined. The tree $\Gamma(M)=\Gamma(\cA)$ is the \emph{atom tree of $M$}, and the root $\hat A$ of $\cA$ is called the \emph{root of $M$}. The tree $\Gamma^\icl(M) = \Gamma\left( \bigcup_{A\in \cA} P_A^{(n)}\right)$ is the \emph{internal tree $\Gamma^\icl(M)$ of $M$}.
In addition, 
\[
\ell_\atom(M) = \max_{A\in \Gamma(\cA)} \ell(A)
\] 
is the \emph{atom length of $M$}, and 
\[
\ell(M) = \ell(\Gamma(\cA))
\]
 is the \emph{(external) length of $M$}.  The \emph{(maximal) side length of $M$} is
\[
\rho(M) = \max_{A\in \Gamma(\cA)} \rho(A).
\]

The main result on molecules is the following bilipschitz contraction property. 
\begin{proposition}
\label{prop:fRt}
Let $M$ be a $(\nu,\lambda)$-molecule with root $\hat A$ in $\R^n$. Then there exists an $L$-bilipschitz homeomorphism 
\[
\phi \colon \left(M, d_M\right) \to (\hat A,d_{\hat A})
\]
which is the identity on $\hat A\cap \partial M$, where $L$ depends only on $n$, $\nu$, $\lambda$, and $\ell_\atom(M)$. 
\end{proposition}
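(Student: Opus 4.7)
I would prove the proposition by a direct atom-by-atom construction rather than by induction on $\ell(M)$. The plan is to assign to every atom $A\in\cA$ an explicit cubical subregion $E_A\subset\hat A$ so that the collection $\{E_A\}_{A\in\cA}$ is an essential partition of $\hat A$, and then build $\phi$ by gluing PL bilipschitz pieces $\phi|_A\colon A\to E_A$ along matching attachment faces. A straight induction on $\ell(M)$ would accumulate bilipschitz constants exponentially with the depth of the atom tree, so a uniform bound depending only on $n,\nu,\lambda,\ell_\atom$ forces the construction to be essentially local.

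\textbf{Nested columns.} For each $A\in\cA$ I would recursively construct a cubical column $C_A\subset\hat A$, starting with $C_{\hat A}=\hat A$. If $A$ is an inner vertex of $(\Gamma(\cA),\hat A)$ with parent $A'$ and children $a_1,\ldots,a_k\in\cN(A)\setminus\{A'\}$, the $\lambda$-collapsibility supplies essentially disjoint $(n-1)$-cubes $f_{a_i}\subset F_A(A')$ of side $\lambda\rho(F_A(a_i))$. Inside the already-built column $C_A$, near its entry face, I place disjoint cubical columns $C_{a_i}$ based on translates of the $f_{a_i}$, extending into $C_A$ with $\rho(C_{a_i})$ comparable to $\rho(a_i)$. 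The slack $\lambda>1$ together with the John condition $\rho(a_i)\le\rho(A)/3$ guarantees the room required for disjoint placement. Setting $E_A=C_A-\bigcup_i C_{a_i}$ produces the desired essential partition $\{E_A\}_{A\in\cA}$ of $\hat A$.

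\textbf{Local bilipschitz maps.} Each $E_A$ is a cubical $n$-cell of combinatorial complexity bounded in terms of $n,\nu,\lambda,\ell_\atom$. I would construct $\phi|_A\colon A\to E_A$ as a PL bilipschitz homeomorphism with prescribed piecewise-isometric boundary data: on each attachment face $A\cap A'$ and $A\cap a_i$, $\phi|_A$ equals the canonical isometric identification with the corresponding face of $C_A$ and $C_{a_i}$; for $A=\hat A$, additionally $\phi|_{\hat A}\equiv\id$ on $\partial\hat A\cap\partial M$. Since $A$ and $E_A$ are PL $n$-cells of bounded complexity with matching prescribed boundaries, such a straightening exists with bilipschitz constant $L_0=L_0(n,\nu,\lambda,\ell_\atom)$ by standard PL techniques. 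Boundary compatibility then glues the local maps into a continuous homeomorphism $\phi\colon M\to\hat A$ with the required boundary behaviour.

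\textbf{Main obstacle.} The crux is bounding the inner-metric bilipschitz constant of $\phi$ by a quantity depending only on $n,\nu,\lambda,\ell_\atom$, and not on $|\cA|$ or on the tree depth. Given $x\in A$ and $y\in B$ in $M$, any near-geodesic $\gamma$ in $(M,d_M)$ from $x$ to $y$ crosses attachment faces along the unique tree path $A=A_0,A_1,\ldots,A_m=B$; by the John property the sequence $\rho(A_j)$ is unimodal with geometric decay on each side of its maximum, so $\ell(\gamma)$ is comparable to $\max_j\rho(A_j)$. An identical analysis applies to $\phi\circ\gamma$ inside the nested columns in $\hat A$, and combining with the uniform local estimate on each $\phi|_{A_j}$ yields $\ell(\phi\circ\gamma)\le L\,\ell(\gamma)$ for $L=L(n,\nu,\lambda,\ell_\atom)$; the reverse inequality follows symmetrically from the inverse local maps. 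Thus the $\lambda$-slack in the collapsibility condition and the geometric decay from proper $3$-adjacency together absorb what would otherwise be an exponential blow-up.
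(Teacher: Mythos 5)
Your construction is, in essence, the paper's proof run in reverse. The paper argues by induction on the tree: it repeatedly collapses a deepest subtree into the adjacent cube of the parent atom by a PL shelling lemma (Lemma \ref{lemma:fRt_PL}), and the composition does not accumulate distortion because each collapse is an \emph{isometry} on the joins $F_L\star\{x_{Q_L}\}$ that carry everything collapsed at earlier stages. Your nested columns are exactly the image regions that this composition produces, described directly; so your opening premise, that an induction on $\ell(M)$ must lose control of the constant exponentially, is precisely what the isometry-on-joins device defeats. Apart from this, the two arguments buy the same thing: your direct description makes the uniformity of the constant more transparent, at the price of verifying by hand that the child columns are genuinely pairwise disjoint (use the $\lambda$-slack to shrink each base to a concentric subcube, as the paper does with $F''_i\subset F'_i$) and that the glued local homeomorphisms define a bijection onto $\hat A$.

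Two steps need repair. First, your ``main obstacle'' paragraph is both unnecessary and, as stated, unjustified. Once each $\phi|_A\colon (A,d_A)\to (E_A,d_{E_A})$ is $L_0$-bilipschitz and consecutive pieces agree on the attachment faces, any path in $M$ decomposes into subpaths lying in single atoms and lengths add, so $\ell(\phi\circ\gamma)\le L_0\,\ell(\gamma)$ for \emph{every} path, and symmetrically for $\phi^{-1}$; no geodesic analysis is needed. Moreover the comparison $\ell(\gamma)\asymp\max_j\rho(A_j)$ has no valid lower bound: the two attachment faces of the largest atom on the tree path may share an $(n-2)$-cell, and then points deep in the two hanging subtrees can be joined inside $M$ by a path much shorter than $\max_j\rho(A_j)$. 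Second, your recursion is silent at the root: $\hat A$ is not an inner vertex, so no collapsibility hypothesis is available there, and the requirement $\phi=\id$ on $\hat A\cap \partial M$ together with injectivity forces the columns of the root's children to be based at the actual attachment faces $\hat A\cap a_i$ rather than at relocated translates, with each column kept inside the corresponding cube of $\hat A$ (for instance a cone toward the cube's barycenter) so that children attached near a common edge of $\hat A$ do not produce overlapping columns. Both repairs are routine, but they are exactly the points where the paper's joins and its final applications of Lemma \ref{lemma:fRt_PL} at the root do the work.
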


This proposition should not surprise any expert. Its proof is based on the bounded local structure of $\Gamma(M)$ and bilipschitz equivalence of atoms of uniformly bounded length. Due to the specific nature of the statement and its fundamental r\^ole in our arguments, we discuss its proof in detail. We gratefully acknowledge work of Semmes, especially \cite{SemmesS:Fincgs}, as the main source of these ideas.

The proof of Proposition \ref{prop:fRt} is by induction on the size of the tree $\Gamma(M)$. We begin with a lemma corresponding to the induction step of this proof. Given sets $X$ and $Y$ in $\R^n$, the set
\[
X\star Y = \{ tx+(1-t)y \in \R^n \colon x\in X,\ y\in Y,\ t\in [0,1]\},
\]
is the \emph{join of $X$ and $Y$}. If $Q$ is an $n$-cube in $\R^n$, $x_Q$ is its barycenter, that is, $Q=B_\infty(x_Q,r_Q)$, where $r_Q>0$. For an $(n-1)$-cube $F$, the barycenter $x_F$ is defined as the average of the vertices of $F$. Both definitions coincide for $n$-cubes.

\begin{lemma}
\label{lemma:fRt_PL}
Let $Q$ be an $n$-cube and let $M$ be a molecule properly adjacent to $Q$ with $\rho(Q)>\rho(M)$, and let $\nu\ge 1$ and $\lambda>1$. 

Let $F$ be the face of $Q$ containing $M\cap Q$ and let $F_1,\ldots, F_\nu \subset \partial M-Q$ be pair-wise disjoint faces of $n$-cubes $Q_1,\ldots, Q_\nu$ in $\Gamma^\icl(M)$, respectively.  Suppose there exist essentially pair-wise disjoint $(n-1)$-cubes $F'_1,\ldots, F'_\nu$ in $F$ satisfying $\rho(F'_i) = \lambda \rho(F_i)$ for every $i=1,\ldots, \nu$.

Then there exist $L = L(n, \ell_{\atom}(M), \ell(M), \nu,\lambda) \ge 1$ and an $L$-bilipschitz homeomorphism 
\[
\phi \colon (M\cup Q,d_{M\cup Q}) \to Q,
\]
which is the identity on $Q-(F\star \{x_Q\})$ and an isometry on each $F_i \star \{x_{Q_i}\}$. 
\end{lemma}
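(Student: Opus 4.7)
The plan is to construct $\phi$ explicitly as a piecewise PL bilipschitz map, decomposing the domain $M\cup Q$ and the target $Q$ into cells of controlled combinatorial type and geometric shape.

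\emph{Geometric setup.} Inside the target pyramid $P=F\star\{x_Q\}\subset Q$, subdivide by the landing-zone pyramids $F'_i\star\{x_Q\}$ and the complementary pyramid $(F-\bigcup_i F'_i)\star\{x_Q\}$. The condition $\rho(F'_i)=\lambda\rho(F_i)$ with $\lambda>1$ provides lateral slack in the base $F'_i$, and proper $3$-adjacency $\rho(Q)\ge 3\rho(M)$ provides vertical slack along the apex direction; together these allow an isometric placement $\hat P_i\subset F'_i\star\{x_Q\}$ of the pyramid $F_i\star\{x_{Q_i}\}$, with base $\hat F_i$ a translate of $F_i$ centered in $F'_i$ and parallel to $F$. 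Let $\iota_i\colon F_i\star\{x_{Q_i}\}\to\hat P_i$ denote the corresponding isometry.

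\emph{Definition of $\phi$.} With these placements fixed, set $\phi=\id$ on $Q-P$ and $\phi=\iota_i$ on each pyramid $F_i\star\{x_{Q_i}\}\subset M$. The core task is to extend $\phi$ to a PL bilipschitz homeomorphism of the remainder
\[
R=(M\cup P)\setminus\bigcup_i(F_i\star\{x_{Q_i}\})
\]
onto $P\setminus\bigcup_i\hat P_i$, matching the identity on the relevant part of $\partial P$ and each $\iota_i$ on the relevant part of $\partial\hat P_i$. Both $R$ and $P\setminus\bigcup_i\hat P_i$ are PL $n$-manifolds with boundary whose combinatorial complexity is bounded in terms of $n$, $\ell_\atom(M)$, $\ell(M)$, $\nu$, $\lambda$: the number of atoms of $M$ is at most $\ell(M)$, each has atom length at most $\ell_\atom(M)$, and the pyramidal subdivision of $P$ has $\nu+1$ pieces. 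I would produce matching PL cell decompositions of the two sides, using the atom decomposition of $M$ together with cones from $x_Q$ on the $Q$-side, and then assemble $\phi|_R$ cell by cell from uniform PL bilipschitz building blocks, checking boundary compatibility along each interface.

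\emph{Main obstacle.} The delicate issue is uniform control of the bilipschitz constant across the transitions. The $\lambda$-collapsibility hypothesis is precisely what ensures every atom has a landing zone sufficiently large relative to each child face, preventing degenerate thin slivers at the interfaces, while proper $3$-adjacency forces adjacent atoms to differ in scale by a bounded factor. I expect the cleanest implementation to be an induction over the atom tree of $M$: peel off leaves of $\Gamma(M)$ one at a time, at each step absorbing the leaf atom, together with any attached isometric pyramids, into its parent via a local version of the same construction inside the parent cube, with a per-step constant depending only on $n$, $\ell_\atom(M)$, $\nu$, $\lambda$. Since the tree has depth at most $\ell(M)$, the accumulated constant remains controlled, and the John property guarantees monotone scale along the path from any $F_i$ back to the docking face $F\cap M$, so that the induction closes without essential loss.
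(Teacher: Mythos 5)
Your geometric setup is the same as the paper's: the paper takes $F''_i\subset F'_i$ concentric with $F'_i$ and of side length $\rho(F_i)$, lets $Q''_i$ be the cube over $F''_i$, and asks for a map that is the identity on $Q-(F\star\{x_Q\})$ and an isometry $F_i\star\{x_{Q_i}\}\to F''_i\star\{x_{Q''_i}\}$ — exactly your placement of isometric copies in the landing zones. But the paper then finishes in one stroke: a shelling argument (Rourke--Sanderson, Lemma 3.25) yields a PL homeomorphism $M\cup Q\to Q$ with this prescribed boundary behaviour, and the uniform bilipschitz bound comes from the observation that, given the bounds on $n$, $\ell_{\atom}(M)$, $\ell(M)$, $\nu$, $\lambda$, only finitely many combinatorial types of triangulations and PL homeomorphisms need to be considered. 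Your first suggestion (matching cell decompositions assembled from finitely many uniform building blocks) is this finiteness principle in disguise, but the actual existence of a compatible cellwise homeomorphism — which is precisely what the shelling argument delivers — is left as ``checking boundary compatibility along each interface,'' i.e.\ the entire content of the proof is deferred.

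The inductive variant you propose has a concrete gap. The lemma's hypotheses provide landing zones $F'_i$ only on the single face $F$ of $Q$; at intermediate interfaces of $M$ you invoke $\lambda$-collapsibility, but collapsibility supplies, per child atom $a$, a single $(n-1)$-cube of side $\lambda\,\rho(F_A(a))$ on the contact face. A leaf atom of side length $r$ may carry several pairwise disjoint faces $F_i$ of side $r$ (distinct cubes of the leaf, or distinct faces of one cube), and for $\lambda<2$ one cannot park two or more disjoint isometric copies of $(n-1)$-cubes of side $r$ inside a cube of side $\lambda r$; so the step ``absorb the leaf together with its attached isometric pyramids into the parent'' fails as described. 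The joins must instead be parked elsewhere in the $3$-times larger parent cube and tracked through all later steps until they reach their prescribed cubes $F'_i$ on $F$, and this bookkeeping — which is what Proposition \ref{prop:fRt} does one join per leaf, and what the shelling argument renders unnecessary here — is missing. A smaller inaccuracy: the claimed containment $\hat P_i\subset F'_i\star\{x_Q\}$ can fail for $\lambda$ close to $1$ when $F'_i$ lies near $\partial F$ (the cone over $F'_i$ slants toward $x_Q$); what is needed, and what does hold, is only $\hat P_i\subset F\star\{x_Q\}$.
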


\begin{proof}
Let $i\in \{1,\ldots, \nu\}$ and set $F''_i = B_\infty(x_{F'_i},\rho(F_i)/2)\cap F\subset F'_i$. Then $F''_i$ is an $(n-1)$-cube in $F$ with the same barycenter as $F'_i$ and the same side length as $F_i$. We denote by $Q''_i\subset Q$ the $n$-cube having $F''_i$ as a face, and set $\Delta_i = F_i \star \{x_{Q_i}\}$, $\Delta''_i = F''_i\star \{x_{Q''_i}\}$. 

By a shelling argument, there exists a PL-homeomorphism $\phi \colon M\cup Q \to Q$ which is the identity in $Q\setminus (F\star \{x_Q\})$ and restricts to an isometry $\phi|\Delta_i \colon \Delta_i \to \Delta''_i$ for every $i=1,\ldots, \nu$; see e.g.\;\cite[Lemma 3.25]{RourkeC:Intplt}. Since it suffices to consider only a finite number of triangulations and PL-homeomorphisms, $\phi$ is uniformly bilipschitz with a constant depending only on $n$, $\ell_\atom(M)$, $\ell(M)$, $\nu$, and $\lambda$.
\end{proof}

\begin{proof}[Proof of Proposition \ref{prop:fRt}]
Let $M=|\Gamma(\cA)|$ be a $(\nu,\lambda)$-molecule with root $\hat A$; see Figure \ref{fig:RC4}. We may assume that $M\ne \hat A$ and, more precisely, that $\Gamma(\cA)$ has inner vertices, since otherwise the claim follows from Lemma \ref{lemma:fRt_PL}.

\begin{figure}[h!]
\includegraphics[scale=0.30]{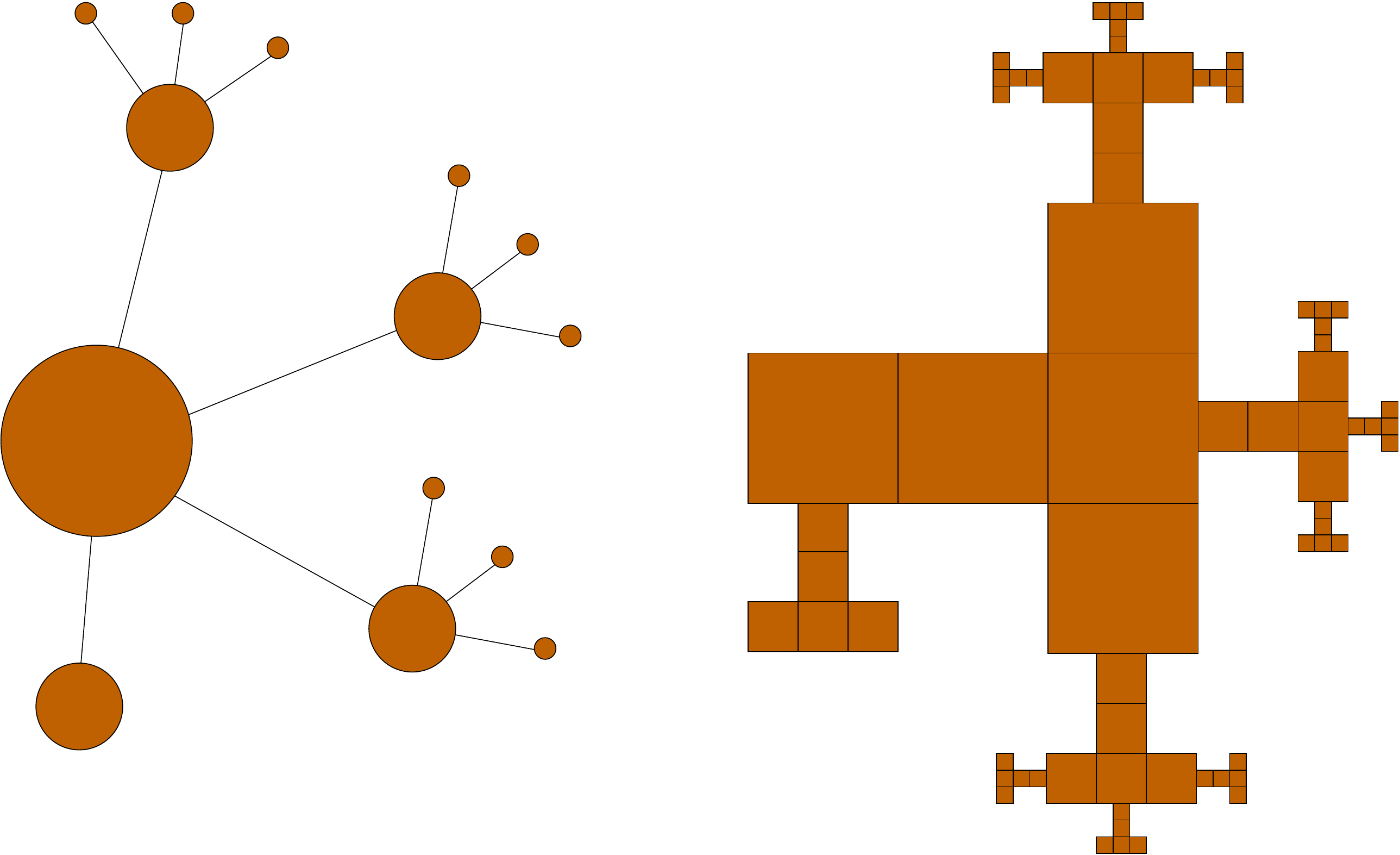}
\caption{A tree $\Gamma(\cA)$ and molecule $M=|\Gamma(\cA)|$.}
\label{fig:RC4}
\end{figure}

To begin the induction, denote $\Gamma_0 = \Gamma(\cA)$, $M_0 = M$, and to each leaf $L\in \Gamma_0$ associate a face $F_L$ of an $n$-cube $Q_L\in \Gamma^\icl(L)$ with $F_L\subset \partial M_0 \cap L$. We denote the set of these chosen faces by $\cF_0$, and for every leaf $L\in\Gamma_0$ set $J_L = F_L \star \{x_{Q_L}\}$.

Fix an atom $A'_0\in \Gamma_0$ which is an inner vertex in $\Gamma_0$ so that the rooted subtree $\Gamma'_0= \Gamma_{A'_0}$ behind $A'_0$ in $(\Gamma_0,\hat A)$ consists of leaves of $\Gamma_0$. Also choose an atom $A_0\in \Gamma_0\setminus \Gamma'_0$ adjacent to $A'_0$ in $\Gamma_0$. Let $Q_0$ be the unique $n$-cube in $A_0$ and $F_0$ the unique face of $Q_0$ which contains $A_0\cap A'_0$; denote $J_0 = F_0 \star \{x_{Q_0}\}$ and $\cF'_0=\{ F_L \colon L \in \Gamma'_0\}$. 

Since $M=|\Gamma(\cA)|$ is a $(\nu,\lambda)$-molecule and $A'_0$ is an inner vertex in $\Gamma(\cA)$, $A'_0$ is $\lambda$-collapsible. Thus there exists a collection $\{F'_L\colon L\in \Gamma'_0\}$ of pair-wise disjoint $(n-1)$-cubes satisfying $\rho(F'_L) = \lambda \rho(F_L)$ for every $L\in \Gamma'_0$. 

By Lemma \ref{lemma:fRt_PL}, there exist a constant $L\ge 1$, depending only on $n$, $\nu$, $\delta$, $\ell_\atom(M)$, and $\ell(M)$, and an $L$-bilipschitz homeomorphism 
\[
\phi_0 \colon (|\Gamma'_0|\cup Q_0, d_{|\Gamma'_0|\cup Q_0}) \to (Q_0,d_{Q_0}),
\]
which is the identity on $Q-(F_0\star \{x_{Q_0}\})$ and an isometry on each join $J_L$ for $L\in \Gamma'_0$.

We now define $\Gamma_1 = \Gamma_0 \setminus \Gamma'_0$ and $\cF_1 = (\cF_0\setminus \cF'_0)\cup \{F_0\}$. Then $M_1=|\Gamma_1|$ is a $(\nu,\lambda)$-molecule with root $\hat A$. In terms of this notation, $\phi_0$ extends, by identity, to an $L$-bilipschitz homeomorphism 
\[
\phi_0 \colon (M_0,d_{M_0}) \to (M_1,d_{M_1}),
\]
which is an isometry on each join $J_L=F_L \star \{x_{Q_L}\}$ for $L\in \Gamma'_0$.

Clearly, $\ell(M_1)<\ell(M_0)$. We iterate this step to obtain a descending sequence of subgraphs $\Gamma_0,\ldots, \Gamma_i$ of $\Gamma(\cA)$ so that every $\Gamma_j$ has at least one vertex fewer than $\Gamma_{j-1}$ for $j=1,\ldots, i$; see Figure \ref{fig:RC5}. Since $\Gamma(\cA)$ is a finite tree, there exists $i_0\ge 1$ depending on $r(\Gamma(\cA),\hat A)$ so that $\Gamma_{i_0}$ consists of only $\hat A$.

\begin{figure}[h!]
\includegraphics[scale=0.3]{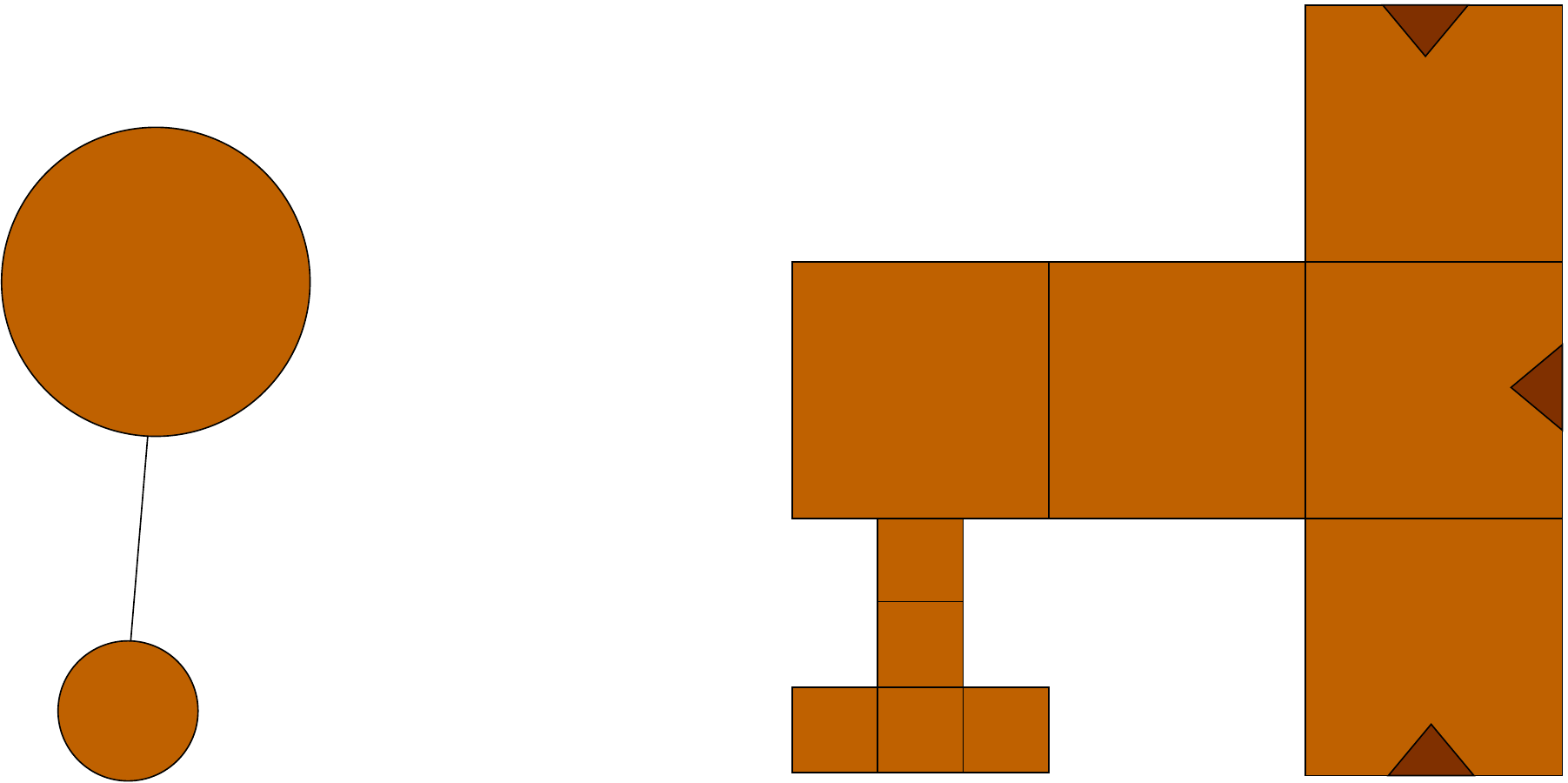}
\caption{An intermediate tree $\Gamma_i$ and cell $|\Gamma_i|$.}
\label{fig:RC5}
\end{figure}

For $i=0,\ldots, i_0$, we also obtain collections of faces $\cF_0,\ldots, \cF_i$ on leaves of graphs $\Gamma_0,\ldots, \Gamma_i$, and $L$-bilipschitz homeomorphisms 
\[
\phi_{j-1} \colon (|\Gamma_{j-1}|,d_{|\Gamma_{j-1}|}) \to (|\Gamma_j|,d_{|\Gamma_j|})
\]
which are isometries on the joins over the faces in $\cF_{j-1}$ for every $j=1,\ldots, i_0$. As in the construction above, $\phi_i(|\Gamma_{i-1}|)$ is contained in a join over a face in $\cF_i$. Thus 
\[
\phi_i \circ \cdots \circ \phi_0 \colon (|\Gamma_0|,d_{|\Gamma_0|}) \to (|\Gamma_i|,d_{|\Gamma_i|})
\]
is $L$-bilipschitz for every $i=0,\ldots, i_0$, where $L$ depends only on $n$, $\nu$, $\lambda$, and $\ell_\atom(M)$, and so
\[
\phi_{i_0} \circ \cdots \circ \phi_0 \colon (|\Gamma_0|,d_{|\Gamma_0|}) \to (\hat A, d_{\hat A})
\]
satisfies the conditions of the claim. This concludes the proof.
\end{proof}

\begin{corollary}
\label{cor:fRt}
Let $M=|\Gamma(\cA)|$ be a $(\nu,\lambda)$-molecule and let $\Gamma\subset \Gamma(\cA)$ be a subtree containing the root $\hat A$ of $M$. Then there exist an $L\ge 1$ depending only on $n$, $\nu$, $\lambda$, and $\ell_\atom(M)$, and an $L$-bilipschitz homeomorphism $\phi \colon (M,d_M) \to (|\Gamma|,d_{|\Gamma|})$ which is the identity on $|\Gamma|\cap \partial M$.
\end{corollary}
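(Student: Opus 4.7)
The plan is to follow the inductive construction in the proof of Proposition~\ref{prop:fRt}, modified so that no vertex of $\Gamma$ is ever pruned. Concretely, I will build a strictly descending chain of subtrees
\[
\Gamma(\cA) = \Gamma_0 \supsetneq \Gamma_1 \supsetneq \cdots \supsetneq \Gamma_{i_0} = \Gamma,
\]
each containing $\Gamma$, with $|\Gamma_{i+1}|$ obtained from $|\Gamma_i|$ by pruning a single leaf of $\Gamma_i$ lying outside $\Gamma$. Such a leaf $L_i$ exists at every step: let $v$ be a vertex of $\Gamma_i \setminus \Gamma$ at maximal graph distance from $\hat A$; any neighbor of $v$ in $\Gamma_i$ at greater depth would either violate this maximality (if lying in $\Gamma_i \setminus \Gamma$) or force $v \in \Gamma$ (if lying in $\Gamma$, since the path from that neighbor to $\hat A$ must pass through $v$ and $\Gamma$ is a subtree containing $\hat A$), so $v$ is a leaf of $\Gamma_i$ and one may set $L_i = v$.

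Let $P_i$ be the unique neighbor of $L_i$ in $\Gamma_i$ and $Q_i$ the cube of $P_i$ with $L_i \cap P_i = L_i \cap Q_i$ provided by proper $3$-adjacency. The John property of $\rho$ restricted from $\Gamma(\cA)$ to $\Gamma_i$ yields $\rho(Q_i) = \rho(P_i) > \rho(L_i)$. Applying Lemma~\ref{lemma:fRt_PL} to the trivial molecule $M = L_i$, with $Q = Q_i$ and no attached faces $F_j$, produces an $L$-bilipschitz homeomorphism $L_i \cup Q_i \to Q_i$, identity off the join $F_i \star \{x_{Q_i}\}$ where $F_i = L_i \cap Q_i$, and with $L$ depending only on $n, \nu, \lambda, \ell_\atom(M)$. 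Extending by the identity on $|\Gamma_i| \setminus (L_i \cup Q_i)$ yields an $L$-bilipschitz homeomorphism $\phi_i \colon |\Gamma_i| \to |\Gamma_{i+1}|$ with $\Gamma_{i+1} = \Gamma_i \setminus \{L_i\}$. The non-trivial region of $\phi_i$ is contained in $L_i \cup Q_i$ with image in $Q_i$; in particular it does not touch $\partial M$ since the base $F_i$ of the join is an internal face of the molecule and $x_{Q_i}$ lies in $\interior Q_i$.

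The composition $\phi = \phi_{i_0 - 1} \circ \cdots \circ \phi_0 \colon M \to |\Gamma|$ is then the desired map. A uniform bilipschitz constant for $\phi$, depending only on $n, \nu, \lambda, \ell_\atom(M)$ and independent of $i_0$, follows exactly as in the proof of Proposition~\ref{prop:fRt}: the successive modification regions are either nested inside a previously modified join or live in disjoint cubes of the evolving molecule, so the bound does not accumulate multiplicatively along the chain. Since every $\phi_i$ is the identity on $\partial M$, the composition $\phi$ is in particular the identity on $|\Gamma| \cap \partial M$. The one real verification beyond the modifications above is the uniform bilipschitz bound for the chain of maps, but this is identical to the argument already run in the original Proposition, and in fact simpler here since each pruning step removes only a single leaf.
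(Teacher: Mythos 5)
Your leaf-pruning scheme has a genuine gap at exactly the point you wave off: the uniformity of the bilipschitz constant of the composition. Each map $\phi_i$ produced by Lemma \ref{lemma:fRt_PL} \emph{with no attached faces} is only $L$-bilipschitz on $L_i\cup (F_i\star\{x_{Q_i}\})$, with no control beyond that; in particular, when a vertex $P$ outside $\Gamma$ has absorbed collapsed material from its pruned children, that material sits inside $P$ and is distorted \emph{again} (by another factor $L$) at the later step in which $P$ is itself pruned as a leaf. A point lying in an atom at depth $k$ below $\Gamma$ is therefore genuinely distorted by up to $k$ of the maps $\phi_i$, and the only available bound on the composition is $L^{k}$, where $k$ is controlled by $\ell(M)$ and not by $n,\nu,\lambda,\ell_\atom(M)$. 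The Corollary requires a constant independent of $\ell(M)$, so the argument as written does not close. The mechanism in the proof of Proposition \ref{prop:fRt} that prevents this accumulation is precisely what you discarded: there the collections $\cF_i$ of faces are tracked, each $\phi_i$ is required to be an \emph{isometry} on the joins $J_L$ over those faces, and the $\lambda$-collapsibility furnishes the essentially disjoint slots $F'_L$ of side length $\lambda\rho(F_L)$ into which these joins are moved isometrically; consequently previously collapsed material is only translated after its first distortion. Your sentence ``the successive modification regions are either nested inside a previously modified join or live in disjoint cubes'' is exactly the statement that needs this bookkeeping, and with ``no attached faces'' it is false in general. A repaired version of your argument would have to reinstate the face/join tracking (using that a pruned vertex which was an inner vertex of $\Gamma(\cA)$ is $\lambda$-collapsible), at which point you are essentially re-proving Proposition \ref{prop:fRt}.

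Note that the paper avoids redoing any induction: each component $\Gamma'$ of $\Gamma(\cA)\setminus\Gamma$, rooted at its vertex adjacent to $\Gamma$, is itself a $(\nu,\lambda)$-molecule, so Proposition \ref{prop:fRt} (whose constant already depends only on $n,\nu,\lambda,\ell_\atom(M)$) collapses it onto its root, and one application of Lemma \ref{lemma:fRt_PL} then absorbs that root into the adjacent cube of $|\Gamma|$; the modifications for different components are disjoint, so the uniform constant is inherited rather than re-derived.
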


\begin{proof}
Let $\Gamma'$ be a component of $\Gamma(\cA)\setminus \Gamma$. Then $|\Gamma|$ is an $(\nu,\lambda)$-molecule. Thus the claim follows by applying Proposition \ref{prop:fRt} to components of $\Gamma(\cA)\setminus \Gamma$ followed by Lemma \ref{lemma:fRt_PL} on the roots of these trees.
\end{proof}

Before introducing dented atoms, we record a uniform bilipschitz equivalence result in spirit of Proposition \ref{prop:fRt}. A half-space in $\R^n$ appears as the normalized target; full details of the proof are left to the interested reader.

\begin{proposition}
\label{prop:adap_cont_2}
Let $\nu \ge 1$, $\lambda >1$, $\ell\ge 1$, and let $(M_m)$ be an increasing sequence of $(\nu,\lambda)$-molecules so that, for every $m\ge 1$,
\begin{itemize}
\item [(1)] $M_m - M_{m-1}$ is connected and contains the root of $M_m$,  
\item[(2)] $\ell_\atom(M_m) \le \ell$, and
\item[(3)] if $A$ and $A'$ are adjacent in $\Gamma(M_m)$ with $\rho(A)<\rho(A')$ then $\rho(A')=3\rho(A)$.
\end{itemize}
Let $M = \bigcup_{m\ge 0} M_m$. Then $(M,d_M)$ is $L$-bilipschitz equivalent to $\R^{n-1}\times [0,\infty)$, where $L$ depends only on $n$, $\nu$, $\lambda$, and $\ell$.
\end{proposition}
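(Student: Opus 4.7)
The plan is to build a bilipschitz homeomorphism $\Phi \colon (M,d_M) \to \R^{n-1}\times[0,\infty)$ as a direct limit of compatible bilipschitz maps $\Phi_m \colon (M_m,d_{M_m}) \to (T_m,d_{T_m})$, where $T_0 \subset T_1 \subset \cdots$ is a nested family of cubical subsets of $[0,\infty)^n$ whose union is bilipschitz equivalent to the half-space, and where the bilipschitz constants of $\Phi_m$ are kept uniform by conditions (1)--(3). The underlying idea is that condition (1) isolates a connected \emph{layer} $M_m - M_{m-1}$ around a new root at each stage, condition (2) bounds its complexity, and condition (3) enforces a geometric scale ratio, so each layer can be bilipschitz-straightened with constants depending only on $n$, $\nu$, $\lambda$, and $\ell$.

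First I set up the target. By condition (1) each root $\hat A_m$ lies in $M_m - M_{m-1}$, so $r_m := \rho(\hat A_m)$ is strictly increasing; by condition (3) the quotient $r_m/r_{m-1}$ is a power of $3$. I fix cubical cells $T_m \subset [0,\infty)^n$ with $T_m - T_{m-1}$ an $n$-cell of diameter comparable to $r_m$ and sharing a single face with $T_{m-1}$, arranged so that $\bigcup_m T_m$ is bilipschitz to $\R^{n-1}\times [0,\infty)$ with a constant depending only on $n$. For the base case, Proposition \ref{prop:fRt} produces a uniformly bilipschitz identification $(M_0,d_{M_0}) \to (\hat A_0,d_{\hat A_0})$, which composes with a finite-model bilipschitz identification of $\hat A_0$ with $T_0$; the latter is uniformly controlled because atoms of length at most $\ell$ fall into finitely many PL-types up to similarity.

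For the inductive step, assume $\Phi_{m-1} \colon M_{m-1} \to T_{m-1}$ has been built. The layer $M_m - M_{m-1}$ is connected (by (1)), contains the new root $\hat A_m$, and by (2)--(3) together with the molecule structure of $M_m$ inherits the structure of a $(\nu,\lambda)$-sub-molecule rooted at $\hat A_m$. Applying Proposition \ref{prop:fRt} to this sub-molecule contracts the layer uniformly bilipschitzly onto $\hat A_m$, relative to the attaching face shared with $M_{m-1}$. A single application of Lemma \ref{lemma:fRt_PL} then matches this contracted model to the target slab $T_m - T_{m-1}$ through a bilipschitz homeomorphism that is the identity on the shared face, allowing $\Phi_{m-1}$ to extend to $\Phi_m \colon M_m \to T_m$ without altering its values on $M_{m-1}$. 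The hardest step is coordinating the attaching geometry: the single attaching face between $M_m - M_{m-1}$ and $M_{m-1}$ sits inside a specific atom whose size is related to $\rho(\hat A_m)$ by a bounded power of $3$, and this face's image in $T_{m-1}$ under $\Phi_{m-1}$ must be the prescribed attaching face of $T_m - T_{m-1}$; this is precisely what $\lambda$-collapsibility and the geometric scale ratio of (3) are designed to guarantee.

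Since the bilipschitz constant at each inductive stage depends only on $n$, $\nu$, $\lambda$, $\ell$ and is obtained from Proposition \ref{prop:fRt}, Lemma \ref{lemma:fRt_PL}, and the finitely many PL-types, the direct limit $\Phi = \bigcup_m \Phi_m \colon (M,d_M) \to \bigcup_m T_m$ is uniformly bilipschitz for an $L$ with the required dependence. Composing with the bilipschitz identification $\bigcup_m T_m \to \R^{n-1}\times[0,\infty)$ yields the stated equivalence. The main obstacle throughout is keeping the attaching-face geometry aligned across layers so that $\Phi_m$ extends $\Phi_{m-1}$ exactly; everything else follows from the finite-model principle applied at a single layer together with the results already proved for molecules.
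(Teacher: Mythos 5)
Your overall strategy (layer-by-layer contraction with uniform constants coming from Proposition \ref{prop:fRt} and the finiteness of models) is in the right spirit, but the central inductive step has a genuine gap: the interface matching. For $\Phi_m$ to extend $\Phi_{m-1}$, the map you build on the layer $M_m-M_{m-1}$ must agree with $\Phi_{m-1}$ on the attaching face $F=(M_m-M_{m-1})\cap M_{m-1}$, and the slab $T_m-T_{m-1}$ must meet $T_{m-1}$ exactly in $\Phi_{m-1}(F)$. You fixed the targets $T_m$ in advance, with $T_m-T_{m-1}$ attached along ``a single face'' of diameter comparable to $r_m$; but the location of $F$ inside $M_{m-1}$ (hence of $\Phi_{m-1}(F)$ inside $\partial T_{m-1}$) is dictated by the geometry of $M_m$, which is not known when $\Phi_{m-1}$ is built, and $\Phi_{m-1}|F$ is an arbitrary uniformly bilipschitz embedding, not the identity. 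Lemma \ref{lemma:fRt_PL} produces homeomorphisms that are the identity off a join and isometries on prescribed joins; it does not extend prescribed bilipschitz boundary data, and $\lambda$-collapsibility only guarantees room inside a face to receive contracted sub-molecules -- it says nothing about matching a given boundary parametrization. To repair the scheme you would need an additional inductive normalization (for instance, that $\Phi_{m-1}$ restricted to the root atom $\hat A_{m-1}$ is a fixed similarity onto a standard piece of $\partial T_{m-1}$), together with the structural fact -- true, but neither stated nor proved by you -- that the new layer attaches to $M_{m-1}$ precisely along a cube face of the root $\hat A_{m-1}$: along the tree path from $\hat A_m$ to $\hat A_{m-1}$ the side lengths strictly decrease by the John property, so the unique edge joining $\Gamma(M_m-M_{m-1})$ to $\Gamma(M_{m-1})$ must end at $\hat A_{m-1}$. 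Without this, there is no reason the interface has size comparable to $\diam M_{m-1}$, which is what your slab picture silently uses.

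A second, related problem is your claim that the attaching atom's size ``is related to $\rho(\hat A_m)$ by a bounded power of $3$.'' Conditions (1)--(3) do not bound the tree distance between consecutive roots, so $r_m/r_{m-1}=3^{d_m}$ may be unbounded; then the finite-model principle no longer covers the slab-matching step, and the uniform bilipschitz equivalence of $\bigcup_m T_m$ with the half-space also needs an argument rather than an assertion. Note how the paper's sketch avoids both issues: it never leaves $M$ until the very end, building compatible contractions $\psi_m\colon M_m\to|\Gamma'_m|$ onto the branch $\Gamma'$ through all the roots (following Corollary \ref{cor:fRt}, where compatibility is easy because the new atoms lie beyond the old root), and only afterwards identifies $|\Gamma'|$ -- a chain of adjacent atoms of bounded length whose side lengths triple at each step by (3) -- with a cone and hence with $\R^{n-1}\times[0,\infty)$. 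That ordering removes any need to match a previously constructed boundary map, which is exactly the step your version leaves unresolved.
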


\begin{proof}[Sketch of proof]
Let $\Gamma$ be the tree $\bigcup_{m\ge 0} \Gamma(M_m)$, and let $\Gamma'$ be the unique branch passing through all roots $\hat M_m$ of $M_m$ for $m\ge 0$. We may consider $\Gamma'$ as a sequence of atoms with increasing side length, and for every $m\ge 0$ denote by $\Gamma'_m$ the part of $\Gamma'$ contained in $\Gamma(M_m)$.  

Following the idea of Corollary \ref{cor:fRt}, we obtain a sequence $(\psi_m)$ of $L'$-bilipschitz contractions $\psi_m \colon (M_m,d_{M_m}) \to (|\Gamma'_m|,d_{|\Gamma'_m|})$ so that $\psi_{m+1}|M_m=\psi_m$ for every $m\ge 0$, where $L'$ depends only on $n$, $\nu$, $\lambda$, and $\ell$. This produces an $L$-bilipschitz map $\psi \colon (M,d_M) \to (|\Gamma'|,d_{|\Gamma'|})$. 

It remains now to show that $(|\Gamma'|,d_{|\Gamma'|})$ is $L''$-bilipschitz equivalent to $\R^{n-1}\times [0,\infty)$, where $L''$ depends only on $n$ and $\ell$.

Let $A$ be the unique vertex in $\Gamma'$ with valence $1$. Since $\Gamma'$ is a branch, we may now enumerate the vertices in $\Gamma'$ as $A=a_0,a_1,a_2,\ldots$ with $a_k$ adjacent to $a_{k+1}$. By (3), $\rho(a_{k+1}) = 3\rho(a_k)$ for every $k\ge 0$. Thus $(|\Gamma'|,d_{|\Gamma'|})$ is $L'''$-bilipschitz equivalent, $L'''=L'''(n,\ell)$, to a cone 
\[
\{ (x_1,\ldots, x_n)\in \R^n\colon x_n^2 \le x^2_1 + \cdots x^2_{n-1} \},
\]
and hence $L''$-bilipschitz equivalent to $\R^{n-1}\times [0,\infty)$, where $L''$ depends only on $n$ and $\ell$.
\end{proof}

\subsection{Dented atoms}
\label{sec:dented_atoms}

\begin{definition}
Let $A$ be an atom in $\R^n$. A molecule $M$ contained in $A$ is \emph{on the boundary of $A$} if $A-M$ is an $n$-cell and for each $Q\in \Gamma^\icl(M)$
\begin{itemize}
\item[(i)] $Q$ is contained in a strictly larger cube of $\Gamma^\icl(A)$, and 
\item[(ii)] $Q\cap \partial A$ contains a face of $Q$.
\end{itemize} 
\end{definition}

\begin{definition}
\label{def:dRc_A}
Let $A$ be an atom in $\R^n$ and let $M_1,\ldots, M_\nu \subset A$ be pair-wise disjoint molecules on the boundary of $A$ each having side length at most $3^{-2} \rho(A)$. The $n$-cell $D=A-\bigcup_{i} M_i$ is a \emph{dented atom} if
\begin{itemize}
\item[(i)] each $M_i$ is contained in an $n$-cube in $\Gamma(A)$, and  
\item[(ii)] $\dist(Q,Q')\ge \min \{ \rho(Q),\rho(Q')\}$ for all $Q\in \Gamma^{\icl}(M_i)$ and $Q'\in \Gamma^{\icl}(M_j)$ for $i\ne j$.
\end{itemize}
The molecules $M_1,\ldots,M_\nu$ are called \emph{dents of $A$}, and $A$ is the \emph{hull of $D$},  $\hull(D)$.
\end{definition}

\begin{remark}
The reader may find the constant $3^{-2}$ curious, but this explicit constant is chosen to be compatible with constructions in Section \ref{sec:RP}, more specifically, Section \ref{sec:IC_init}. These constructions also have the property that each cube in $\Gamma(A)$ has at most $2$ dents. 
\end{remark}

By (ii), the hull and the dents of a dented atom are unique. Given a dented atom $D=A-\bigcup_{i=1}^\nu M_i$, we write $\Sigma(D) = \bigcup_i \Gamma(M_i)$, $\Sigma^\icl(D) = \bigcup_i \Gamma^\icl(M_i)$, and $\rho(D) = \rho(\hull(D))$. For notational consistency, we consider every atom as a (trivially) dented atom and define $\hull(A)=A$ for every molecule $A$. When $\hull(D)$ is a cube, $D$ is a \emph{dented cube}.

The main result on dented atoms is the following uniform bilipschitz restoration result. We note that neither the internal geometry of the hull nor the geometry of dents have a r\^ole in the statement. This is a consequence of confining the dents to be in cubes of the hull and the local nature of the construction of the homeomorphism.

\begin{proposition}
\label{prop:fRt_flat}
Suppose $D$ is a dented atom with hull $A$. Then there exists $L=L(n)$ and an $L$-bilipschitz homeomorphism $\phi \colon (D,d_D) \to (A,d_A)$ which is the identity on $D\cap \partial A$. 
\end{proposition}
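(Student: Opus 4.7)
The plan is to construct $\phi$ locally near each dent, exploiting the separation provided by Definition~\ref{def:dRc_A}(ii). First, I would reduce to the single-dent case: since condition (ii) forces cubes belonging to distinct molecules $M_i$ and $M_j$ to lie at positive distance comparable to their sizes, I can fix pairwise disjoint neighborhoods $U_i \supset M_i$, each contained in the cube of $\Gamma(A)$ that houses $M_i$. It then suffices to construct, for each $i$, a bilipschitz homeomorphism $\phi_i \colon U_i - M_i \to U_i$ that is the identity on $\partial U_i - M_i$ and sends $\partial M_i \cap \interior A$ onto $M_i \cap \partial A$; gluing the $\phi_i$ with the identity on the remainder of $D$ produces the desired $\phi$. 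This reduction uses nothing about the internal geometry of $A$ beyond the cube containing $M_i$, matching the remark preceding the proposition.

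For the single-dent case, I would process the atoms of $M$ inductively along the atom tree $\Gamma(M)$, moving from the leaves toward the root in the spirit of the proof of Proposition~\ref{prop:fRt}. The crucial geometric fact is that, because $M$ is a molecule on the boundary of $A$, every cube in $\Gamma^\icl(M)$ has a face on $\partial A$; this face provides the escape route. At each inductive step, a leaf atom $\alpha$ of the current residual tree is absorbed by a local PL shelling map — the reverse analogue of Lemma~\ref{lemma:fRt_PL} — which pushes material from the surrounding region of $U - M$ into $\alpha$ by coning across $\alpha$'s $\partial A$-face. This replaces $M$ by $M$ with a leaf removed, producing a bilipschitz map with identity behavior away from a small neighborhood of $\alpha$. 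Iterating collapses $M$ to its root atom, and a final shelling step absorbs that root through its own face on $\partial A$.

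The main obstacle is verifying that the bilipschitz constant depends only on $n$, despite the atom length, valence, and depth of $\Gamma(M)$ being unbounded. I expect this to come from arranging the successive PL absorptions to have essentially disjoint supports, each a small neighborhood of the absorbed atom inside $Q$: the budget $\rho(M)\le \rho(Q)/9$ gives ample room within $Q$, and each atom carries its own escape face on $\partial A$, so such a disjoint arrangement is available. Each individual shelling step then involves only the bounded combinatorial data of a single cube being pushed across one of its faces — data controlled purely by $n$ — so the per-step distortion depends only on $n$, and essentially disjoint supports prevent the distortions from composing multiplicatively. It is this locality, rather than any bound on the internal complexity of $M$, that delivers the uniform constant $L = L(n)$.
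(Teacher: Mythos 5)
Your overall strategy -- separate the dents using Definition \ref{def:dRc_A}(ii), then fill each dent in by an induction that removes leaves with locally supported PL maps -- is the same as the paper's, but the step where the constant $L=L(n)$ is actually extracted has a genuine gap, in two respects. First, your induction unit is inconsistent. You absorb a whole leaf \emph{atom} $\alpha$ of the dent in a single shelling step, and later assert that each step ``involves only the bounded combinatorial data of a single cube being pushed across one of its faces.'' Nothing in the definition of a dented atom bounds the atom length or the length of the dents (the remark preceding the proposition insists that the geometry of the dents is irrelevant, which is exactly why the statement is delicate), so a single PL map swallowing an entire atom has a bilipschitz constant depending on $\ell(\alpha)$ and $\ell_\atom$, as in Lemma \ref{lemma:fRt_PL} and Proposition \ref{prop:fRt} -- not on $n$ alone. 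The induction must run cube by cube over the forest of cubes of the dents, which is what the paper does.

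Second, the uniformity of the composed map rests on your claim that the supports of the successive steps can be made essentially disjoint, and this is asserted rather than proved -- and it is precisely where the difficulty sits. The budget $\rho(M_i)\le 3^{-2}\rho(A)$ and condition (ii) of Definition \ref{def:dRc_A} only separate cubes of \emph{different} dents (and different hull cubes); they say nothing about adjacent cubes inside one dent, which share faces, can form long chains or stars, can be stacked along an edge of the hull cube where $\partial A$ contains two faces, and can have many smaller atoms attached to a larger cube. A disjoint-support scheme would require, for every cube of every dent simultaneously, a private source region of definite relative size adjacent to one of its free faces, disjoint from all other source regions; establishing this for all admissible configurations is a nontrivial combinatorial claim you do not verify, and it is also unnecessary. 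The paper's mechanism is weaker and suffices: each per-cube step is supported in the truncated conical neighbourhood $\Cone(q,Q)$, these cones are \emph{allowed} to overlap for adjacent cubes of the same dent, and Lemma \ref{lemma:local_boundedness} shows the overlap multiplicity is at most $3^n$; bounded multiplicity, not disjointness, gives $L=L(n)$ for the composition. Note also that the inflow of material cannot be ``by coning across $\alpha$'s $\partial A$-face'': the map must be the identity on $D\cap\partial A$, so the support has to pinch toward $\partial A$ (this is the truncation in $\Cone(q,Q)$) and material enters through the free faces interior to $A$. To repair your argument, replace the disjointness claim by a bounded-overlap count for per-cube supports anchored at the $\partial A$-faces -- that is, by Lemma \ref{lemma:local_boundedness}.
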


For the proof, we introduce a useful neighborhood for cubes contained in the dents. Let $Q$ and $q=B_\infty(x_q,r_q)$ be $n$-cubes in $\R^n$ so that $q\subset Q$ and $q$ has a face in $\partial Q$. The set 
\[
\Cone(q,Q)= \{ x\in B_\infty(x_q,(7/6)r_q)\cap Q \colon 2\dist(x,q) \le \dist(x,\partial Q)\}
\]
is the \emph{truncated conical neighborhood of $q$ in $Q$}. 

\begin{figure}[h!]
\includegraphics[scale=0.60]{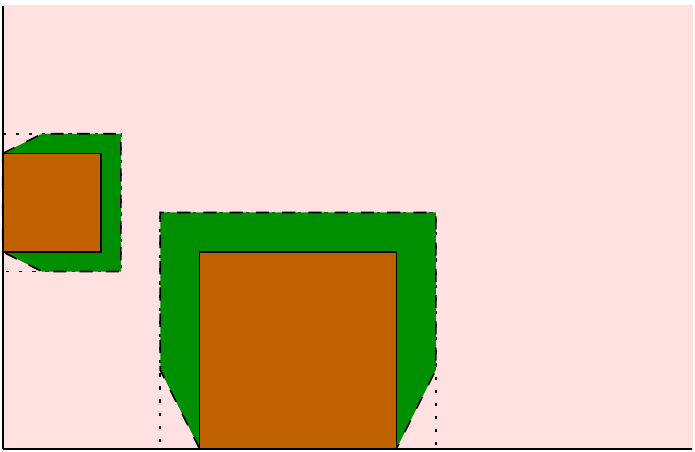}
\caption{Two cubes and their (truncated) conical neighborhoods in a larger cube.}
\label{fig:cones_xyz}
\end{figure}

\begin{lemma}
\label{lemma:local_boundedness}
Let $D = A-\bigcup_i M_i$ be a dented atom in $\R^n$. Then there exists $\mu>0$ depending only on $n$ so that 
\[
\# \{ q' \in \Sigma(D) \colon \Cone(q',Q) \cap \Cone(q,Q) \ne \emptyset\} \le \mu
\]
for all $q\in \Sigma(D)$.
\end{lemma}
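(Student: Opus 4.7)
Plan. The argument is a packing estimate based on the inclusion $\Cone(q, Q) \subset B_\infty(x_q, (7/6) r_q)$, which immediately gives
\[
\Cone(q, Q) \cap \Cone(q', Q) \ne \emptyset \Longrightarrow d_\infty(q, q') \le \tfrac{1}{6}(r_q + r_{q'}).
\]
I would first reduce to the case that $q, q' \subset Q$ for a single ambient cube $Q \in \Gamma(A)$. Cubes in the same molecule automatically share an ambient cube by Definition \ref{def:dRc_A}(i). For cubes in different molecules, Definition \ref{def:dRc_A}(ii) yields $\dist(q, q') \ge 2 \min\{r_q, r_{q'}\} > 0$; if they additionally sit in distinct $Q_1 \ne Q_2$ in $\Gamma(A)$, any cone-intersection point $x$ lies in $Q_1 \cap Q_2 \subset \partial Q_1 \cap \partial Q_2$, and the cone inequality $2 \dist(x, q) \le \dist(x, \partial Q_1) = 0$ forces $x \in q$ and symmetrically $x \in q'$, contradicting $q \cap q' = \emptyset$. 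Combining the displayed estimate with the dent separation then forces any two competing $q, q'$ in different molecules inside a common $Q$ to satisfy $\max\{r_q, r_{q'}\} \ge 11 \min\{r_q, r_{q'}\}$.

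Next I would split $S = \{q' \in \Sigma(D) : \Cone(q, Q) \cap \Cone(q', Q) \ne \emptyset\}$ into three size regimes. In the comparable regime $r_q/3 \le r_{q'} \le 3 r_q$, essential disjointness of the cubes in $\Sigma(D)$ together with the containment $q' \subset B_\infty(x_q, \tfrac{7}{6} r_q + \tfrac{13}{6} r_{q'})$ yield an $\ell^\infty$-volume count of at most $C_1(n)$ cubes; note that by the factor-$11$ gap, all such competitors must belong to the same molecule as $q$. In the large regime $r_{q'} > 3 r_q$, $x_q$ lies in the enlargement $B_\infty(x_{q'}, \tfrac{7}{3} r_{q'})$, and since any two such large cubes in different molecules differ in size by a further factor at least $5$ (applying the dent-separation argument to the pair $q', q''$), the number of essentially disjoint large cubes whose enlargements contain $x_q$ is bounded by $C_2(n)$. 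The small regime $r_{q'} < r_q/3$ splits into a different-molecule subfamily handled symmetrically, and a same-molecule subfamily treated below.

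The main obstacle is the small same-molecule subfamily: a molecule can have atoms of arbitrarily small size, so a priori many cubes in $\Gamma^\icl(M_i)$ could accumulate near $q$. The key geometric input is that every cube in $\Gamma^\icl(M_i)$ carries a face on the $(n-1)$-wall $\partial A \cap Q \subset \partial Q$, so their faces form an essentially disjoint family of $(n-1)$-cubes on the wall. Proper $3$-adjacency in the molecule's atom tree spaces the sizes of the near-$q$ atoms in integer powers of $3$, and the closeness bound $d_\infty(q, q') \le r_q/3$ confines the competitors' feet to a fixed $O(r_q)$-cube on the wall around the foot of $q$. At each size level $r_q \cdot 3^{-k}$ essential disjointness on the wall bounds the number of feet by $C(n)$, and summing the resulting convergent geometric series in $k$ yields a universal bound. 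Combining the three regimes produces $\mu \le \mu(n)$ as required.
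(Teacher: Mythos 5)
Your plan (reduce to one ambient cube, then a multi-scale packing count split into size regimes) is a genuinely different route from the paper's, but as written it has a real gap, and in both problematic regimes the missing ingredient is the same: you never use the second defining inequality of the truncated cone, $2\dist(x,q')\le \dist(x,\partial Q)$, together with the fact that every cube of $\Sigma(D)$ has a face on $\partial Q$. The only consequence of the cone definition you exploit is the containment $\Cone(q',Q)\subset B_\infty(x_{q'},\tfrac76 r_{q'})$, i.e. $\dist(q,q')\le\tfrac16(r_q+r_{q'})$, and that is too weak. In the large regime it allows one competitor per scale: a chain of essentially disjoint cubes of radii $3r_q,9r_q,27r_q,\dots$ receding from $q$ inside a single dent (say the cube of radius $3^k r_q$ occupying $[3^kr_q,3^{k+1}r_q]$ in one coordinate) satisfies ``$x_q\in B_\infty(x_{q'},\tfrac73 r_{q'})$'' at every scale, and your factor-$5$ size-gap observation applies only to pairs in \emph{different} molecules; so the asserted bound $C_2(n)$ simply does not follow from the facts you state. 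The correct localization is that for $x\in\Cone(q,Q)$ one has $\dist(x,\partial Q)\le\tfrac{13}{6}r_q$ (because $q$ meets $\partial Q$ in a face), whence the cone condition for $q'$ gives $\dist(q,q')<\rho(q)$; this confines all large competitors to an $O(r_q)$-neighborhood of $q$, after which a single volume count over all scales finishes. The small regime is also not justified as written: in a window of diameter $O(r_q)$ on the wall, essential disjointness bounds the number of feet of side $3^{-k}\rho(q)$ by roughly $3^{k(n-1)}$, not by $C(n)$, and even if the per-level count were a constant, summing a constant over all levels $k$ is not a convergent geometric series. Moreover, the small different-molecule family you propose to treat ``symmetrically'' cannot be handled by your displayed estimate at all ($\dist\le\frac16(r_q+r_{q'})$ against separation $\rho(q')$ gives no contradiction); the cone-versus-wall inequality shows that family is in fact empty.

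For comparison, the paper's proof avoids all packing: it first observes that two disjoint same-size cubes with a face on $\partial Q$ have disjoint cones, and that cubes with side ratio at least $3$ whose distance is at least the smaller side length have disjoint cones; combined with the separation condition (ii) of Definition \ref{def:dRc_A}, this yields that $\Cone(q,Q)\cap\Cone(q',Q)\ne\emptyset$ only when $q\cap q'\ne\emptyset$, and the lemma follows by counting the cubes meeting $q$, with $\mu=3^n$. Your reduction to a single cube of $\Gamma(A)$ and your comparable-size volume count are fine; to repair the proposal, replace the crude ball containment in the large and small regimes by this cone-versus-wall estimate, which is the heart of the lemma.
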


\begin{proof}
Let $q$ and $q'$ be pair-wise disjoint $n$-cubes in an $n$-cube $Q$ so that $q$ and $q'$ have a face in $\partial Q$. If either $\rho(q) = \rho(q')$ or $\rho(q)\ge 3\rho(q')$ and $\dist_\infty(q,q')\ge \rho(q')$, then Definitions 3.9 and \ref{def:dRc_A} show that $\Cone(q,Q)\cap \Cone(q',Q)= \emptyset$.

Suppose now that $D = A - \bigcup_i M_i$ is a dented atom, and $n$-cubes $q$ and $q'$ in $\Sigma(D)$ are contained in $Q\in \Gamma(A)$. Then, by definition of dented atom and the first observation, $\Cone(q,Q)\cap \Cone(q',Q)\ne \emptyset$ if and only if $q\cap q'\ne \emptyset$.
Hence it suffices that $\mu$ be larger than the number of neighbors of $q$ of the same side length, so we may take $\mu = 3^n$.
\end{proof}

\begin{remark}
Let $D$ be a dented atom and consider cubes $Q,Q'\in \Gamma(\hull(D))$, $Q\ne Q'$. Then if $q, q' \in \Sigma(D)$ with $q\subset Q$ and $q'\subset Q'$ we have that $\Cone(q,Q)\cap \Cone(q',Q')= \emptyset$. 
\end{remark}

\begin{proof}[Proof of Proposition \ref{prop:fRt_flat}]
Recall that the atom $A$ is the hull of $D$ and that $A-D$ is a pair-wise disjoint union of molecules. The proof is an inductive collapsing of $A-D$ along the forest $\Sigma(D)$ removing leaves one by one. Let $m=\# \Sigma(D)$. 

Let $\Sigma$ be a subforest of $\Sigma(D)$, $q\in \Sigma$ a leaf, $Q\in \Gamma(A)$ be the cube containing $q$, and denote $\Sigma' = \Sigma\setminus \{q\}$. Then there exists a PL homeomorphism $\phi_{\Sigma,q} \colon A-|\Sigma| \to A-|\Sigma'|$ having support in $\Cone(q,Q)$; that is $\phi_{\Sigma,q}(x)=x$ for $x\not \in \Cone(q,Q)$. Clearly, we may take $\phi_{\Sigma,q}$ $L$-bilipschitz with $L$ depending only on $n$.

Using this observation, we find a sequence $\Sigma(D) = \Sigma_0 \supset \cdots \supset \Sigma_m = \emptyset$ of forests and $L$-bilipschitz PL-homeomorphisms $\phi_i \colon A-|\Sigma_{i-1}| \to A-|\Sigma_i|$ having support in the conical neighborhood of the leaf $\Sigma_{i-1}\setminus \Sigma_i$ for every $i=1,\ldots, m$.

Lemma \ref{lemma:local_boundedness} shows that the number of cones over cubes in $\Sigma$ is locally bounded, and thus
\[
\phi = \phi_m\circ \cdots \circ \phi_0 \colon (D,d_D) \to (A,d_A)
\]
is a bilipschitz homeomorphism with a bilipschitz constant depending only on $n$.
\end{proof}

\subsection{Dented molecules}
\label{sec:dented_molecules}

We end this section by defining dented molecules, which relate to dented atoms as molecules relate to atoms.

\begin{definition}
\label{def:prop_adj}
A dented atom $D'$ is \emph{properly adjacent to a dented atom $D$} if $\hull(D')\cup \hull(D)$ is a molecule and either 
\begin{itemize}
\item[(1)] $\hull(D')\subset \hull(D)$ and $D'\cap D = \hull(D')\cap D$, or
\item[(2)] $\hull(D')\cap \hull(D) = D'\cap D$.
\end{itemize}
\end{definition}

Note from (1) that proper adjacency is not a symmetric relation. However, we symmetrize this relation by saying that dented atoms $D$ and $D'$ are \emph{properly adjacent} if $D'$ is properly adjacent to $D$ or $D$ is properly adjacent to $D'$.

Let $\cD$ be a finite collection of dented atoms so that each pair of atoms in $\cD$ is either properly adjacent or pair-wise disjoint. Since dented atoms are $n$-cells, the adjacency tree $\Gamma(\cD)$ is well-defined. Let $U=|\Gamma(\cD)|$ and let $M = \bigcup_{D\in \cD} \hull(D)$. By proper adjacency of the dented atoms, $M$ is a molecule. 

\begin{definition}
\label{def:dented_molecule}
An $n$-cell $U$ is a \emph{dented molecule} if there exists a finite collection $\cD$ of pair-wise properly adjacent dented atoms so that $\Gamma(\cD)$ is a tree with $U=|\Gamma(\cD)|$. The $n$-cell $\hull(U)=\bigcup_{D\in \cD} \hull(D)$ is the \emph{hull of $U$}. The vertex $\hat D\in \cD$ is the \emph{root of $U$} if $\hull(\hat D)$ is the root of $\hull(U)$.
\end{definition}

\begin{figure}[h!]
\includegraphics[scale=0.35]{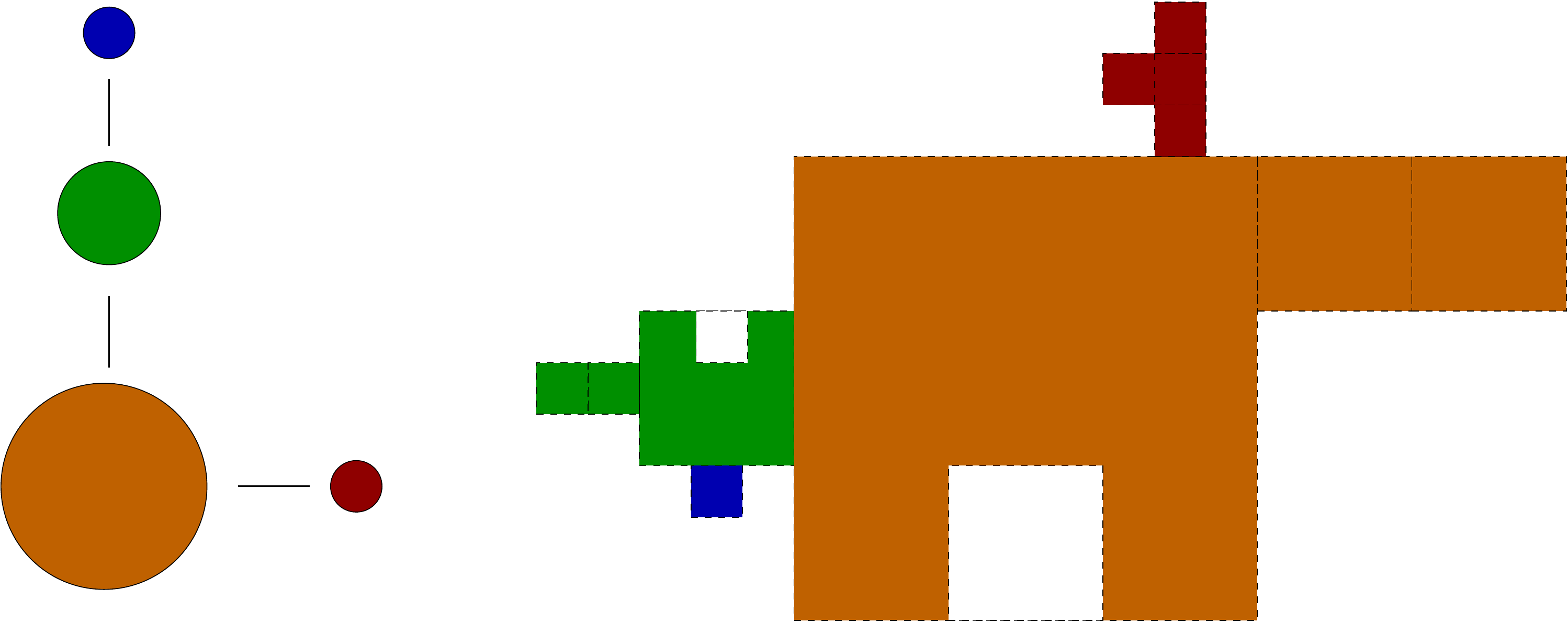}
\caption{A dented molecule $U$ with a tree $\Gamma(U)$.}
\label{fig:RT1}
\end{figure}

\begin{remark} 
Note that, given a dented molecule $U=|\Gamma(\cD)|$, the collection $\cD$ is uniquely determined. We call elements of $\cD$ the \emph{dented atoms of $U$} and  define $\Gamma(U)=\Gamma(\cD)$.
\end{remark}

Let $U$ be a dented molecule. We define internal and external vertices of $\Gamma(U)$ as follows.

\begin{definition}
\label{def:int_ext}
A dented atom $D\in \Gamma(U)$ is \emph{internal} if there exists strictly larger $D'\in \Gamma(U)$ whose hull contains $D$, i.e.\;$\rho(D')>\rho(D)$ and $D\subset \hull(D')$. A dented atom in $\Gamma(U)$ is \emph{external} if it is not internal. Denote by $\Gamma_I(U)$ the set of internal vertices of $\Gamma(U)$ and by $\Gamma_E(U)$ the set of external vertices.
\end{definition}

The motivation for this dichotomy is the following easy observation, which we record as a lemma.
\begin{lemma}
\label{lemma:dichotomy}
Let $U$ be a dented molecule. Then $D\mapsto \hull(D)$ is a tree isomorphism $\Gamma_E(U) \to \Gamma(\hull(U))$. In particular,
\[
\hull(U) = \bigcup_{D\in \Gamma_E(U)} \hull(D).
\]
\end{lemma}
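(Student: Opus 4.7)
The heart of the argument is that internal vertices of $\Gamma(U)$ are precisely those dented atoms whose hulls sit inside hulls of larger dented atoms in $\cD$; these contribute nothing new to the union $\bigcup_{D \in \cD} \hull(D)$, so only external vertices produce the atoms of the molecule $\hull(U)$.

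First I would verify the identity $\hull(U) = \bigcup_{D \in \Gamma_E(U)} \hull(D)$ by reducing every internal vertex to an external one. Fix $D \in \Gamma_I(U)$ and, by Definition \ref{def:int_ext}, choose $D_1 \in \Gamma(U)$ with $\rho(D_1) > \rho(D)$ and $D \subset \hull(D_1)$. Since $D$ is $n$-dimensional and sits inside $\hull(D_1)$, the dented atoms $D$ and $D_1$ are not pair-wise disjoint, hence properly adjacent. Case (2) of Definition \ref{def:prop_adj} would force $\hull(D) \cap \hull(D_1) = D \cap D_1$ to be at most $(n-1)$-dimensional, incompatible with $D \subset \hull(D_1)$; so case (1) must apply and $\hull(D) \subset \hull(D_1)$. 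If $D_1$ is itself internal, iterate; finiteness of $\cD$ and strictly increasing side lengths force termination at some $D^* \in \Gamma_E(U)$ with $\hull(D) \subset \hull(D^*)$.

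Next I would establish the bijection $D \mapsto \hull(D) \colon \Gamma_E(U) \to \Gamma(\hull(U))$. By Definition \ref{def:dented_molecule} and the discussion preceding it, $\hull(U)$ is a molecule whose atoms are the maximal elements of $\{\hull(D) : D \in \cD\}$ under inclusion. The previous step identifies these maximal elements as exactly $\{\hull(D) : D \in \Gamma_E(U)\}$, giving surjectivity. For injectivity, if $D_1 \ne D_2$ in $\Gamma_E(U)$ had $\hull(D_1) = \hull(D_2)$, then $D_1 \cap D_2$ would contain an $n$-dimensional subset of this common hull, contradicting the dichotomy that every pair in $\cD$ is either properly adjacent (with intersections at most $(n-1)$-dimensional) or pair-wise disjoint.

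For the tree isomorphism, I would argue adjacency matches in both directions. If $D_1, D_2 \in \Gamma_E(U)$ are adjacent in $\Gamma(U)$, they are properly adjacent and must satisfy case (2) of Definition \ref{def:prop_adj} (case (1) would place one hull inside the other, making that vertex internal), so $\hull(D_1) \cap \hull(D_2) = D_1 \cap D_2$ is an $(n-1)$-cell, i.e.\;the hulls are adjacent in $\Gamma(\hull(U))$. The main delicate point, and likely the principal obstacle, is verifying that restricting $\Gamma(U)$ to external vertices yields a tree naturally identified with $\Gamma(\hull(U))$; this should follow because internal vertices hang in maximal subtrees off each external vertex — each such subtree corresponds to the nested contents of a single dent — so pruning them preserves both connectivity and adjacency among external vertices, matching the tree $\Gamma(\hull(U))$ through the bijection.
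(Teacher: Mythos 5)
The paper records this lemma without proof (it is offered as an ``easy observation''), so your argument must stand on the definitions alone, and there it has one genuine gap: the step asserting that $D\subset \hull(D_1)$ forces $D\cap D_1\ne\emptyset$, hence proper adjacency of the pair $(D,D_1)$, after which you invoke the dichotomy of Definition \ref{def:prop_adj}. Internality (Definition \ref{def:int_ext}) only requires $D$ to lie in the \emph{hull} of the larger dented atom, and pairs in $\cD$ are allowed to be disjoint: since $\interior D$ is disjoint from $\interior D_1$, the containment $D\subset\hull(D_1)$ puts $D$ inside $\cl\bigl(\hull(D_1)\setminus D_1\bigr)$, i.e.\ inside a dent of $D_1$, and nothing prevents $D$ from sitting strictly inside that dent without meeting $D_1$ at all. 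This is in fact the typical situation in the construction: by Remark \ref{rmk:dents} and the proof of Lemma \ref{lemma:mods_meet_exts}, an internal vertex is usually joined to the large vertex whose hull contains it only through a chain of intermediate vertices lying in the dent. Consequently Definition \ref{def:prop_adj} cannot be applied to the pair $(D,D_1)$, and the key inclusion $\hull(D)\subset\hull(D_1)$ --- on which your iteration, the identification of external vertices with maximal hulls, and the displayed identity all rest --- is left unproved.

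The inclusion is true, but it needs a different argument, for instance: $D$ is connected and the dents of $D_1$ are pairwise disjoint, so $D$ lies in a single dent $M$, which by Definition \ref{def:dRc_A}(i) is contained in one cube $Q\in\Gamma(\hull(D_1))$ of side length $\rho(D_1)>\rho(D)$; the barycenter of each cube $Q'\in\Gamma(\hull(D))$ lies in $D$ (the dents of $D$ stay within distance $3^{-2}\rho(D)$ of $\partial\hull(D)$), hence in $Q$, and since all cubes in sight are triadic grid cubes of smaller side length this gives $Q'\subset Q$, so $\hull(D)\subset Q\subset\hull(D_1)$. With that repaired, your injectivity/surjectivity and the forward adjacency implication (case (2) of Definition \ref{def:prop_adj} for two external, tree-adjacent vertices) are fine; note however that your final paragraph still only asserts, rather than proves, the converse direction --- that pruning internal vertices leaves exactly $\Gamma(\hull(U))$, in particular that hulls of two external vertices cannot share an $(n-1)$-cell unless the dented atoms themselves are adjacent in $\Gamma(U)$ --- and this point deserves an explicit argument as well.
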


We finish this section by introducing terminology related to dented molecules. Let $D$ be a dented molecule. 
\begin{definition}
\label{def:expanding}
A vertex $d\in \Gamma(D)$ is \emph{expanding in $D$} if the subtree $\Gamma(D)_{d}$ behind $d$ in $\Gamma(D)$ consists of atoms.
\end{definition}

Note that, if $d$ is expanding in $D$ then $d$ is an atom, since $d\in \Gamma(D)_d$.

\begin{definition}
\label{def:ph}
A dented molecule $D'$ is a \emph{partial hull of $D$} if there exist vertices $d_1,\ldots, d_m$ of $\Gamma(D)$ for which
\[
D' = D \cup \bigcup_{k=1}^m \hull(d_k).
\]
\end{definition}

\begin{remark}
In Section \ref{sec:RP} (e.g.\;in Section \ref{sec:IC_init}), we consider a sequence of dented molecules $(U_i)$ for which $\hull(U_i)$ is a $(\nu,\lambda)$-molecule with $\nu$ and $\lambda$ depending only on $n$, although the adjacency tree $\Gamma(U_i)$ no longer has uniformly bounded valence. 

We show there exist $L$-bilipschitz maps $U_i \to \hull(U_i)$ with $L$ depending only on $n$. This proof is based on a sequence of partial hulls from $U_i$ to $\hull(U_i)$.

Since we prove this statement only for particular dented molecules based on notions in the following section, we postpone this statement to Section \ref{sec:RP}. Nevertheless, we invite the interested reader to consider a general statement along the lines of Propositions \ref{prop:fRt} and \ref{prop:fRt_flat}.
\end{remark}


\section{Local rearrangements and the tripod property}
\label{sec:LRA}

In this section we develop tools to produce rough Rickman partitions; recall Section \ref{sec:intro}. Throughout this section we consider different kinds of repartitions in a single cube. These rearrangements are only tangentially related to the final essential partitions introduced in Section \ref{sec:RP}, so the reader may find these constructions unmotivated. Our aim is to simplify these later discussions by introducing these local modifications and their properties here before exploiting them later. Thus the reader should consider this section as preparation for Section \ref{sec:RP}. 

To motivate the r\^ole of our tools, consider the following example. Let $D_1$, $D_2$, and $D_3$ be the cubes $[0,1]^{n-1}\times [0,1]$, $[0,1]^{n-1}\times [-1,0]$, and $[1,2]\times [0,1]^{n-2}\times [0,1]$, respectively, $\mathbf D$ the essential partition $(D_1,D_2,D_3)$ of their union. 
\begin{figure}[h!]
\includegraphics[scale=0.7]{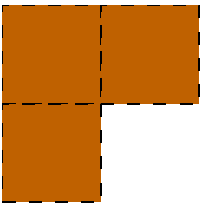}
\caption{Essential partition $\mathbf D$.}
\label{fig:EP_init}
\end{figure}

The Hausdorff distance of the common boundary $\partial_\cap \mathbf D$ and the pair-wise common boundary $\partial_\cup \mathbf D$ satisfy
\begin{equation}
\label{eq:haus_0}
\dist_\haus(\partial_\cup \mathbf D, \partial_\cap \mathbf D) = 1
\end{equation}
in the sup-metric.

Let $k>0$ and consider now the sets $V_i=3^kD_i$ for $i=1,2,3$, and associated essential partition $\mathbf V = (V_1,V_2,V_3)$. Of course, topological properties and bilipschitz equivalence of the cubes remain invariant under this scaling. The Hausdorff-distances in \eqref{eq:haus_0} scale accordingly, and so
\begin{equation}
\label{eq:haus_k}
\dist_\haus(\partial_\cup \mathbf V, \partial_\cap \mathbf V) = 3^k.
\end{equation}

We will show that in this case, as well as in more general situations, there exists an essential partition $\mathbf W = (W_1,W_2,W_3)$ of $\bigcup_i V_i$ into $n$-cells $(W_i,d_{W_i})$ uniformly bilipschitz to $[0,3^k]^n$ with
\begin{equation}
\label{eq:haus_W}
\dist_\haus(\partial_\cup \mathbf W, \partial_\cap \mathbf W) \le 6
\end{equation}
in the sup-metric.

Property \eqref{eq:haus_W} is a consequence of the so-called tripod property, informally mentioned in the introduction, which we now formally define. In further sections, we discuss other structures related to partitions. 

We first need an equivalence relation. Let $U$ be a $3$-fine cubical $n$-set in $\R^n$ and let $U^*$ be a $3$-fine subdivision of $U$. Suppose $\bU = (U_1,U_2,U_3)$ is an essential partition of $U$, and let $(\partial_\cup \bU)^\#$ be the unit subdivision of $\partial_\cup \bU$ as defined in Section \ref{sec:complexes}. Let $\Gamma_\cup(\bU)$ be the subgraph of the adjacency graph $\Gamma((\partial_\cup \bU)^\#)$ composed of vertices of $\Gamma((\partial_\cup \bU)^\#)$ and all edges $\{q,q'\}\in \Gamma((\partial_\cup \bU)^\#)$ for which $q\cup q'\subset U_i\cap U_j$ for a pair $i\ne j$. 
\begin{example}
In the discussion accompanying Figure \ref{fig:EP_init}, $\Gamma_\cup(\mathbf D)$ consists of two vertices $\{ [0,1]^{n-1}\times \{0\}, \{1\}\times [0,1]^{n-1}\}$ and has no edges, whereas $\Gamma_\cup(3^k\mathbf D)$, for $k\ge 1$, is a pair-wise disjoint union of two connected subgraphs. 
\end{example}

\begin{definition}
\label{def:U-eqv}
Cubes $q$ and $q'$ in $(\partial_\cup \bU)^\#$ are \emph{$\bU$-equivalent} if
\begin{itemize}
\item[(a)] $q$ and $q'$ are in the same component of $\Gamma_\cup(\bU)$ and
\item[(b)] $q\cup q'\subset Q$ for some $Q\in U^*$.
\end{itemize}
\end{definition}

Denote by $[q]$ the $\bU$-equivalence class of $q\in (\partial_\cup \bU)^\#$ and by $|[q]|$ the union $\bigcup_{q'\in [q]} q'$. For each pair $(i,j)$, $i\ne j$, the $\bU$-equivalence class $[q]$ of $q\in (\partial_\cup \bU)^\#$ is said to be \emph{between $U_i$ and $U_j$} when $q\subset U_i\cap U_j$.

\begin{remark}
Condition (b) in Definition \ref{def:U-eqv} implies that the equivalence class $[q]$ of $q\in (\partial_\cup \bU)^\#$ has diameter at most $3$ in the sup-metric. Note that equivalence classes are cubical $1$-fine sets of dimension $n-1$, and that the number of $(n-1)$-cubes in $[q]$ is uniformly bounded by a constant depending only on $n$.
\end{remark}

\begin{definition}
\label{def:tripod}
An essential partition $\bU$ of $U$ has the \emph{tripod property} if there exists an essential partition $\Delta$ of $\partial_\cup \bU$ into cubical $(n-1)$-cells satisfying 
\begin{itemize}
\item[($\Delta$1)] each $c\in \Delta$ is contained in a $\bU$-equivalence class, and
\item[($\Delta$2)] to each $c_1\in \Delta$ corresponds a unique pair $c_2,c_3\in \Delta$ for which $c_1\cap c_2\cap c_3$ contains an $(n-2)$-cell in $\partial_\cap \bU$ with  $c_1$, $c_2$, $c_3$  contained in different $\bU$-equivalence classes.
\end{itemize}
\end{definition}

The tripod property of an essential partition is most conveniently verified using the following local tripod property.
\begin{definition}
\label{def:tripod_Q}
Given an essential partition $\bU$ and a cube $Q\subset |\bU|$ of side length at least $3$, we say that $\bU$ \emph{has the tripod property relative to $Q$} if there exists an essential partition $\Delta$ of $Q\cap \partial_\cup \bU$ into $(n-1)$-cells satisfying ($\Delta$1) and ($\Delta$2).
\end{definition}

\begin{example}
\label{ex:tripod}
To give a simple example of an essential partition $\bU$ satisfying the tripod property we consider $\bU=(Q-A,A,Q')$, where $Q=[0,3]^3$, $Q'=[0,3]^2\times [-3,0]$, and $A$ the atom $A=\bigcup_{r=1}^4 q_r$, where $q_r = [r-1,r]\times [1,2]\times [0,1]$ for $r=1,2,3$ and $q_4 = [1,2]\times [2,3]\times [0,1]$; see Figure \ref{fig:block_4}.

\begin{figure}[h!]
\includegraphics[scale=0.6]{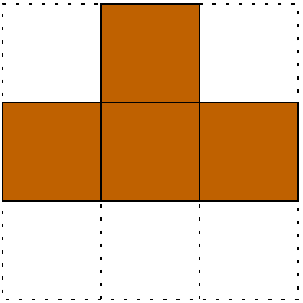}
\caption{Profile of $q_1$, $q_2$, $q_3$, $q_4$ on the face common to $Q$
and $Q'$.}
\label{fig:block_4}
\end{figure}

Note first that $(Q-A)\cap Q'$ has three components $f_1=[0,1]\times [2,3]\times \{0\}$, $f_2=[0,3]\times [0,1]\times \{0\}$, and $f_3=[2,3]\times [2,3]\times \{0\}$, whereas $A\cap (Q-A)$ and $A\cap Q'$ are $2$-cells. We organize the essential partition $\Delta$ of $\partial_\cup \bU$ into three triples $\Delta_1$, $\Delta_2$, and $\Delta_3$ by subdividing cells $A\cap (Q-A)$ and $A\cap Q'$ as follows.

For $r = 1, 3$, we set $\Delta_r = \{ f_r, q_r \cap (Q-A), q_r \cap Q'\}$. Let $\Delta_2 = \{ f_2, (q_2\cup q_4)\cap (Q-A), (q_2\cup q_4)\cap Q'\}$. For each $r$, we directly check that $\Delta_r$ is a triple of $(n-1)$-cells. In addition, $\bigcap_{c \in \Delta_r} c$ is an $(n-2)$-cell for every $r=1,2,3$. Hence $\Delta = \bigcup_{r=1}^3 \Delta_r$ is an essential partition of $\partial_\cup \bU$ satisfying conditions $(\Delta 1)$ and $(\Delta 2)$.
\end{example}

\subsection{Building blocks}
\label{sec:bb}

We introduce the elementary atoms which generate rough Rickman partitions. 

An $(n-1)$-cell $F$ in $\R^n$ is \emph{planar} if $F$ is congruent to an $(n-1)$-cell in $\R^{n-1}$. Suppose $P$ is an $r$-fine $n$-cell and $F$ a planar $(n-1)$-cell. Then $P$  is \emph{$F$-based} if there exists an $(n-1)$-cell $F'$ in $\R^{n-1}$ and a cubical $(n-1)$-cell $P'\subset F'$ so that $P\cup F$ is congruent to $(P'\times [0,r]) \cup F'\subset \R^n$.

Let $T_n=\{0, \pm e_1,\ldots, \pm e_n\}$ and let $\cT_n$ be the graph with vertices $T_n$ and edges $\{0,e_i\}$ and $\{0,-e_i\}$ for $i=1,\ldots, n$. 
\begin{definition}
\label{def:bb}
An atom $A$ is an \emph{($n$-dimensional) building block} if $\Gamma(A)$ is isomorphic to a proper subtree of $\cT_{n-1}$ having at least two vertices.
\end{definition}

\begin{figure}[h!]
\includegraphics[scale=0.6]{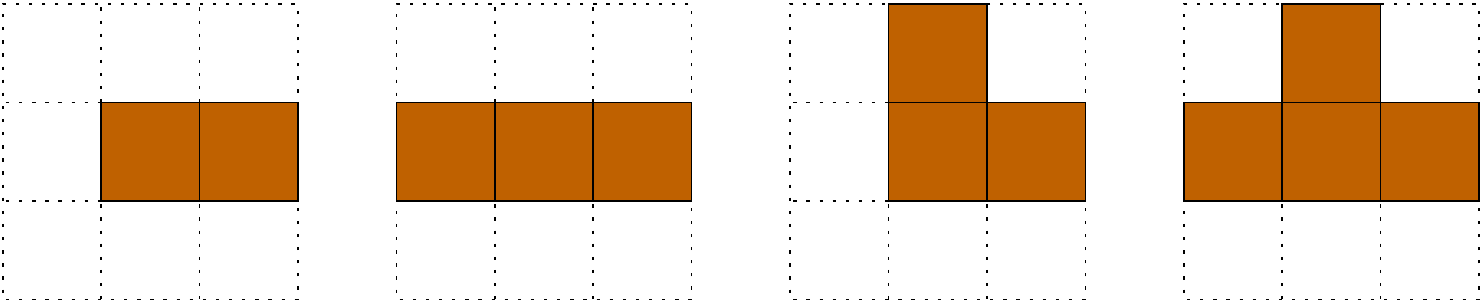}
\caption{Congruence classes of building blocks for $n=3$.}
\label{fig:blocks}
\end{figure}

The fundamental property used in what follows is that an $n$-dimensional building block is an $n$-cell. We record now some observations based on the combinatorial structure of building blocks.

Let $B$ be a building block in $\R^n$. Since $\Gamma(B)$ is a proper subtree of $\cT_{n-1}$, we observe that, for all $q\in \Gamma(B)$, the cubical set $q\cap \partial B$ is an $(n-1)$-cell which induces an essential partition to the faces of $q$, and the adjacency graph $\Gamma(q\cap \partial B)$ of these faces is connected. Moreover, $\Gamma(B)$ has valence less than $2(n-1)$ and contains at most one vertex $q\in \Gamma(B)$ having valence greater than $1$. Further, if $\#\Gamma(B)>2$ there exits a unique $n$-cube $q_B$ in $B$ which is a vertex in $\Gamma(B)$ with valence greater than $1$: this unique cube $q_B$ is the \emph{center of $B$}. 

A building block $B$ in $\R^n$ is \emph{$r$-fine} if $B$ is an $r$-fine atom for some $r>0$. 

Suppose $Q$ is a cube of side length $3r$ containing an $r$-fine building block $B$ along a face $F$ of $Q$. Then, for every cube $q\in \Gamma(B)$, $q\cap F$ is an $(n-1)$-cube and a face of $q$. For the following definition, recall (as in Section \ref{sec:RT}) that a \emph{barycenter of a $k$-cube $C$} is the unique point in $C$ equidistant from all vertices of $C$.

\begin{definition}
Suppose $Q\subset \R^n$ is an $n$-cube of side length $3r$ containing an $F$-based $r$-fine building block $B\subset Q$, where $F$ is a face of $Q$. Let $x_F$ be the barycenter of $F$.  The building block $B$ is \emph{centered in $Q$} if either of the following conditions is satisfied:
\begin{enumerate}
\item if $B$ has a center $q_B$ then $x_F$ is the barycenter of $q_B\cap F$, or
\item if $\#\Gamma(B)=2$, then $\Gamma(B)$ contains the cube $q$ with $x_F$ the barycenter of $q\cap F$. 
\end{enumerate}
\end{definition}

The significance of centered building block is motivated by the following observation. 
\begin{remark}
Let $Q\subset \R^n$ be a cube side length $3$ and $B$ a $1$-fine centered building block contained in $Q$ along the face $F$ of $Q$. Since $B$ is centered, the barycenter $x_F$ of $F$ is the barycenter $x_{f_0}$ of a face $f_0$ of a unique cube $q_0$ in $\Gamma(B)$. Suppose that $q\in\Gamma(B)$ is a cube adjacent to $q_0$. Since $Q$ has side length $3$ and the barycenter of $q_0$ is $x_F$, we have that $q\cap (\partial Q -F)$ is a face of $q$. In particular, the components of $B\cap (\partial Q - F)$ are unit $(n-1)$-cubes, which are in one to one correspondence with cubes in $B-q_0$, cf. Figure \ref{fig:blocks}.
\end{remark}

\begin{convention}
We assume from now on that every $r$-fine building block $B$ in a cube $Q$ is centered and based on a face of $Q$ whenever $Q$ has side length $3r$. We extend the notion of \emph{center} by defining  the unique cube in $B$ containing the barycenter of $F$ on its boundary to be the \emph{center of $B$}.
\end{convention}

Building blocks give rise to a local tripod property of the following form.
\begin{proposition}
\label{prop:bb_tripod}
Let $n\ge 3$, and let $Q$ and $Q'$ be $n$-cubes of side length $3$ with a common face $F=Q\cap Q'$, and let $B$ be an $F$-based building block in $Q$. Then $\bU = (Q-B,B,Q')$ has the tripod property.
\end{proposition}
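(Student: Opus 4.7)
The plan is to construct an explicit essential partition $\Delta$ of $\partial_\cup \bU$ into $(n-1)$-cells satisfying $(\Delta 1)$ and $(\Delta 2)$, modeled on Example \ref{ex:tripod}. The starting point is the natural decomposition $\partial_\cup \bU = T \cup R \cup \Phi$, where $T = (Q-B) \cap B$ is the boundary $\partial B$ of the building block inside $Q$, $R = F - B$ is the remainder of $F$ outside the footprint, and $\Phi = B \cap F$ is the footprint. These three cubical $(n-1)$-complexes lie respectively in the pair-intersections $U_1 \cap U_2$, $U_1 \cap U_3$, and $U_2 \cap U_3$, and they meet precisely along $\partial_\cap \bU$, which is the union of rim edges of the footprint $\Phi$ strictly interior to $F$; this $(n-2)$-set will host the tripod joints.

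I would then enumerate the connected components $f_1, \ldots, f_k$ of $R$ and partition the tree $\Gamma(B)$ into $k$ connected subtrees $C_1, \ldots, C_k$ (with the designated center $q_B$ belonging to $C_1$) in such a way that for each $j$ some cube $q \in C_j$ has footprint $q \cap F$ sharing an $(n-2)$-face with $f_j$. Given such a partition, set
\[
\Delta_j = \Big\{ f_j,\ \bigcup_{q \in C_j}\bigl(q \cap (Q-B)\bigr),\ \bigcup_{q \in C_j}(q \cap F) \Big\}
\]
and $\Delta = \bigcup_{j=1}^{k}\Delta_j$. Connectedness of $C_j$ inside $\Gamma(B)$ guarantees that each cell of $\Delta_j$ is a single connected cubical $(n-1)$-cell, since adjacent cubes in $C_j$ share a vertical face whose bounding $(n-2)$-edges link up the top and base pieces respectively. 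Condition $(\Delta 1)$ is routine: each cell lies in a single pair-intersection and can be placed in a single element of $U^* = \{Q, Q'\}$ (the top pieces in $Q$; the cells on $F$ in either $Q$ or $Q'$). Condition $(\Delta 2)$ follows because the three cells of $\Delta_j$ meet along the $(n-2)$-edge of $F$ between $f_j$ and the footprint of the adjacent cube in $C_j$, an edge that belongs to $\partial_\cap \bU$, and the three cells lie in three distinct equivalence classes.

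The main obstacle is the combinatorial existence of the compatible tree-partition $\{C_1, \ldots, C_k\}$. Because $\Gamma(B) \subset \cT_{n-1}$ is a star consisting of the center $q_B$ and arms $\{q_B + d : d \in D\}$ with $D \subsetneq \{\pm e_1, \ldots, \pm e_{n-1}\}$, every connected partition of $\Gamma(B)$ has exactly one central subtree $C_1 \ni q_B$ (containing a subset of arms) together with single-arm subtrees. Proper inclusion of $\Gamma(B)$ in $\cT_{n-1}$ always provides at least one missing direction $d_0 \notin D$; the cell $q_B + d_0$ of $F$ sits in a component of $R$ adjacent to $q_B$, which I take as $f_1$. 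The remaining matching of single-arm subtrees to the remaining components of $R$ is produced by a Hall-type marriage argument on the bipartite adjacency graph of arms versus components, feasible because $k \le \#\Gamma(B)$ in every case (a finite verification via the combinatorial structure of the $3^{n-1}$-grid), or alternatively by induction on $\#\Gamma(B)$, with base case $\#\Gamma(B) = 2$ where $R$ is connected ($k = 1$) and the whole of $\partial_\cup \bU$ forms a single tripod. Example \ref{ex:tripod} is the T-shape prototype for this construction, and the general case follows the same scheme.
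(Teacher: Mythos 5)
There is a genuine gap, and it is concentrated in one assumption: you take the cells of your partition $\Delta$ on the $(Q-B)\cap Q'$ part to be the \emph{connected components} $f_1,\ldots,f_k$ of $R=F-B$. For $n=3$ this is harmless, since every component of the complement of the footprint in the $3\times 3$ square is a $2$-cell, and in that case your construction essentially reproduces Example \ref{ex:tripod}. But for $n\ge 4$ the face $F$ has dimension at least $3$, and then $F-B$ is typically connected while failing to be an $(n-1)$-cell. Concretely, for $n=4$ take the footprint to be the center cube of $F\cong[0,3]^3$ together with two opposite arms: the footprint is a straight tunnel through $F$, and $F-B$ is a solid-torus-like complement, connected but not a $3$-cell; already with two perpendicular arms (an L-shaped channel joining two boundary faces of $F$) the complement is again a genus-one handlebody. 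In these cases your $\Delta_1=\{f_1,\dots\}$ is not a partition into $(n-1)$-cells, so Definition \ref{def:tripod} is not verified, and your induction on $\#\Gamma(B)$ breaks at $\#\Gamma(B)=3$ rather than being settled by the base case $\#\Gamma(B)=2$. Your bookkeeping ``one triple per component of $R$'' is therefore the wrong invariant in high dimensions: the number of triples must match a partition of $F-B$ into cells, which in general has strictly more members than $F-B$ has components.

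This missing step is precisely what the paper supplies with Lemma \ref{lemma:pre_tt} and Corollary \ref{cor:tt}: by induction on the dimension one produces an essential partition $\cP$ of $F-B$ into cubical $(n-1)$-cells (in general finer than the decomposition into components), together with \emph{distinct} unit cubes $q_C\in B^\#$, one for each $C\in\cP$, such that $C\cap q_C$ contains an $(n-2)$-cube. Then $B$ is split into atoms $A_C\supset q_C$ (one per cell of $\cP$, not per component of $R$), and the triples $\Delta_C=\{A_C\cap(Q-B),\,A_C\cap Q',\,C\}$ give the tripod partition. Your overall scheme (subdivide $B$ along a subdivision of $F-B$ and form triples meeting along rim $(n-2)$-cells of the footprint) is the same as the paper's; what is missing is the dimensional-induction cell decomposition of $F-B$ with its marked cubes, and, secondarily, a proof of your Hall-type matching claim, which you only assert ``via finite verification.'' Repairing the argument essentially forces you to prove something equivalent to Lemma \ref{lemma:pre_tt}.
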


We begin the proof of Proposition \ref{prop:bb_tripod} with a partition lemma.
\begin{lemma}
\label{lemma:pre_tt}
Let $n\ge 2$, and let $A$ be a $1$-fine atom in $Q=[0,3]^n$ containing the cube $[1,2]^n$ and  $\Gamma(A)$ isomorphic to a subgraph of $\cT_n$, with $1<\# \Gamma(A) \le 2n$. Then $Q - A$ has an essential partition $\cP$ into $n$-cells. Moreover, there exist cubes $\cC_{\cP} = \{q_C\in A^\#\colon C\in \cP\}$ so that $q_C\ne q_{C'}$ for cells $C\ne C'$ in $\cP$ and $q_C\cap C$ contains an $(n-1)$-cube for every $C\in \cP$.  
\end{lemma}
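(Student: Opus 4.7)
I would prove the lemma by induction on $k := \#J = \#\Gamma(A) - 1$, where I write $A = q_0 \cup \bigcup_{j \in J} q_j$ with $q_0 = [1,2]^n$ the central unit cube and $q_j = q_0 + j$ for $j \in \{\pm e_1, \ldots, \pm e_n\}$, so that $J \subsetneq \{\pm e_1, \ldots, \pm e_n\}$ with $1 \le k \le 2n-1$. The hypothesis $\#\Gamma(A) \le 2n$ guarantees an \emph{open direction} $j^* \notin J$; after a symmetry of $Q$ fixing $q_0$, I assume $j^* = -e_n$, so that $q_{-e_n} = [1,2]^{n-1}\times[0,1]$ lies in $Q - A$. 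This open direction will serve as a ``drainage reservoir'' keeping residual cells connected and simply connected throughout the induction.

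\textbf{Base case $k=1$.} Here $A = q_0 \cup q_{j_1}$ is a $1 \times 2$ brick attached to $\partial Q$ through the outward face of $q_{j_1}$. A direct inspection shows that $A \cap \partial Q$ is a single $(n-1)$-disk, and consequently $\partial(Q-A)$ is an $(n-1)$-sphere, from which one concludes that $Q-A$ is itself an $n$-cell (a cube with a rectangular dent). Setting $\cP = \{Q-A\}$ and $q_C = q_0$ works: any of the $2n-1$ faces of $q_0$ not shared with $q_{j_1}$ is an $(n-1)$-cube contained in $q_0 \cap (Q-A)$.

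\textbf{Inductive step.} Assume the claim for atoms with fewer than $k$ leaves. Given $A$ with $\#J = k \ge 2$, pick a leaf $j \in J$ and set $A_0 = A \setminus \{q_j\}$. By induction there is a partition $\cP_0$ of $Q - A_0$ with injective assignment $C \mapsto q_C$ into $A_0^\#$. The cube $q_j$ lies in a unique cell $C^* \in \cP_0$. I form $\cP$ from $\cP_0$ by choosing a lateral $(n-1)$-face $f$ of $q_j$ (one of the $2(n-1)$ faces not shared with $q_0$ and not lying in $\partial Q$), letting $C_j$ denote the unit cube of $Q$ on the other side of $f$ from $q_j$, and replacing $C^*$ in $\cP_0$ by the two pieces $C^* \setminus (q_j \cup C_j)$ and $C_j$. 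I assign $C_j \mapsto q_j$, a cube freshly available in $A^\# \setminus A_0^\#$, so injectivity is preserved. The cube $C_j$ shares the face $f$ with $q_j$, providing the required $(n-1)$-cube of adjacency.

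\textbf{Main obstacle and how to overcome it.} The critical verification is that the residual $C^* \setminus (q_j \cup C_j)$ is an $n$-cell. Removing $q_j$ and $C_j$ from the $n$-cell $C^*$ creates a compound ``dent'' on the boundary of $C^*$; this dent preserves the $n$-cell property precisely when the union $(q_j \cup C_j) \cap \partial C^*$ is connected and simply connected on $\partial C^*$. The freedom to choose $f$ among the $2(n-1)$ lateral faces of $q_j$, combined with the persistent open direction $j^* = -e_n$ (ensuring that $C^*$ always retains a ``bulk'' large enough to absorb boundary modifications), allows one such choice in every case. Concretely, one orders the leaves of $J$ so that the leaf $j$ treated at each stage has a lateral face pointing toward $j^*$; the cube $C_j$ is then a corner cube of $Q$ lying between $q_j$ and the open drainage region, and the attached dent $(q_j \cup C_j) \cap \partial C^*$ is a connected cubical $(n-1)$-disk. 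Verifying this claim reduces to a finite combinatorial case analysis over the local configurations of $A_0$ around $q_j$, which is the main bookkeeping of the argument.
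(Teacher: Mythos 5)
Your induction is on the wrong quantity to close as written, and the one step that carries all the difficulty is left unproved. The inductive hypothesis you invoke is merely the \emph{existence} of some partition $\cP_0$ of $Q-A_0$ with an injective assignment; it gives you no control over the shape of its cells. Yet your modification step needs precisely such control: you need $q_j$ to lie in a single cell $C^*$, you need the carved cube $C_j$ to lie in that same cell $C^*$ (otherwise ``replacing $C^*$ by $C^*\setminus(q_j\cup C_j)$ and $C_j$'' double-covers or misses $C_j$), you need the old witness $(n-1)$-cube of $q_{C^*}\cap C^*$ not to have been the face $q_0\cap q_j$ that disappears, and above all you need $C^*\setminus(q_j\cup C_j)$ to again be an $n$-cell. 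None of these follow from the bare existence statement; to get them you would have to strengthen the hypothesis to an explicit construction and then verify the cell property at every step --- which is exactly the ``finite combinatorial case analysis'' you defer. That verification is the entire content of the lemma, so as it stands the proposal is a plan rather than a proof.

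Moreover, the concrete rule you do propose (carve the corner cube of $Q$ between $q_j$ and the open drainage direction $j^*$) is false already for $n=2$. Take $J=\{+e_2,+e_1,-e_1\}$, $j^*=-e_2$, base leaf $q_{+e_2}$, and add the leaf $q_{+e_1}$: your rule carves $C_{+e_1}=[2,3]\times[0,1]$, and the residual $Q-(q_0\cup q_{+e_2}\cup q_{+e_1}\cup C_{+e_1})$ splits into an L-shaped piece and the unit square $[2,3]\times[2,3]$, which has no $(n-1)$-face in common with the rest of the residual; so the residual is not a $2$-cell. The correct carve here is the corner \emph{between the two occupied leaves} $q_{+e_1}$ and $q_{+e_2}$, i.e.\ the choice depends on which neighbouring directions of $J$ are occupied, not on the drainage direction --- this is where the real bookkeeping lives, and it grows with $n$ since $A$ may have up to $2n-1$ leaves. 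For comparison, the paper avoids leaf-by-leaf carving altogether: it inducts on the dimension, observing that $A\cap([0,3]^{n-1}\times[1,2])=A'\times[1,2]$ for an $(n-1)$-dimensional atom $A'$, applies the lemma to $A'$ in $[0,3]^{n-1}$, takes the products $C'\times[0,3]$ of the resulting cells, and absorbs the few leftover unit cubes of $(A'\times[0,3])-A$ into these products; the injective assignment is then $C'\mapsto q_{C'}\times[1,2]$. If you want to salvage your route, you must build the partition explicitly with enough invariants (cubical cells, prescribed adjacencies to the remaining cubes of $A$) to make the carving step checkable, and then actually carry out the case analysis over the occupied directions around each leaf.
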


\begin{proof}
In the special case $\#\Gamma(A)=2$, we may take $\cP=\{Q-A\}$ and $\cC_\cP = \{[1,2]^n\}$.

The proof in the general case is by induction on the dimension $n$. The claim clearly holds for $n=2$; consider e.g\;variations of Example \ref{ex:tripod}. Suppose that $n\ge 3$ is a dimension for which the claim holds for $n-1$.

Let $A$ be a $1$-fine atom in $Q=[0,3]^n$ containing $[1,2]^n$ with $\Gamma(A)$ isomorphic to a subtree of $\cT_n$ and $1<\# \Gamma(A)\le 2n$. By rotation, we may assume that $[1,2]^n + e_1 \in \Gamma(A)$. Let $F=[0,3]^{n-1}$. Then $A\cap (F\times [1,2]) = A'\times [1,2]$, where $A'$ is an $(n-1)$-dimensional atom in $F$ where $1<\#\Gamma(A')\le 2(n-1)$. The adjacency graph $\Gamma(A')$ is isomorphic to a subgraph of $\cT_{n-1}$. By induction, $F-A'$ has an essential partition $\cP'$ into $(n-1)$-cells and, for each $C'\in \cP'$, we may fix $q_{C'}\in \cC_{\cP'}\subset (A')^\#$ so that each $C'\cap q_{C'}$ contains an $(n-2)$-cube.

Let $\cP'' = \{ C' \times [0,3] \colon C' \in \cP'\}$. We observe that $Q-(|\cP''|\cup A)$ consists of unit cubes in $(A'\times [0,3]-A)^\#$. It is now easy to find, for each $C' \in \cP'$ a cubical $n$-cell $\Omega_{C'}$ so that $C'\times [0,3]\subset \Omega_{C'}$, $\bigcup_{C'\in \cP'} \Omega_{C'} = Q-A$, and that the sets $\Omega_{C'}$ are pair-wise essentially disjoint. We set $\cP = \{ \Omega_{C'} \colon C'\in \cP'\}$ and $\cC_{\cP} = \{ q_{C'} \times [1,2]\colon C'\in \cP'\}$.
\end{proof}

The following corollary encapsulates the key consequence of Lemma \ref{lemma:pre_tt}.
\begin{corollary}
\label{cor:tt}
Let $n\ge 3$, $Q$ an $n$-cube of side length $3$ and $F$ a face of $Q$. Given an $F$-based building block $B$ in $Q$, the set $F-B$ has an essential partition $\cP$ into cubical $(n-1)$-cells and there exists a collection $\cC_{\cP}=\{q_C\in B^\#\colon C\in \cP\}$ of pair-wise essentially disjoint unit $n$-cubes so that $C\cap q_C$ contains an $(n-2)$-cube for every $C\in \cP$. 
\end{corollary}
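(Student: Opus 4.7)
My plan is to deduce Corollary~\ref{cor:tt} from Lemma~\ref{lemma:pre_tt} applied one dimension lower, inside the face $F$. The point is that the $F$-based hypothesis lets one pass freely between the $n$-cube $Q$ and its $(n-1)$-face $F$, replacing $B$ with its base $B\cap F$ and replacing unit $n$-cubes in $B$ with their faces in $F$.

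First I would set $B' := B \cap F$. Since $B$ is $F$-based, the defining congruence $B\cup F \cong (P'\times [0,1])\cup F'$ identifies $B$ with $B'\times [0,1]$ sitting on top of $F$. Thus $B'$ is a $1$-fine cubical $(n-1)$-cell in $F$, and $q\mapsto q\cap F$ is an adjacency-preserving bijection between the unit $n$-cubes of $B^\#$ and the unit $(n-1)$-cubes of $(B')^\#$. In particular $\Gamma(B')\cong \Gamma(B)$ is isomorphic to a proper subtree of $\cT_{n-1}$ with $1<\#\Gamma(B')\le 2(n-1)$, and crucially the centeredness convention ensures that the unique cube of $B$ meeting $x_F$ contributes a face $q\cap F\subset B'$ whose barycenter is $x_F$; that is, $B'$ contains the central unit $(n-1)$-cube of $F$. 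After an isometric identification of $F$ with $[0,3]^{n-1}\subset \R^{n-1}$, the atom $B'$ satisfies the hypotheses of Lemma~\ref{lemma:pre_tt} in dimension $n-1$.

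Second I would invoke Lemma~\ref{lemma:pre_tt} for $B'\subset F$, obtaining an essential partition $\cP$ of $F-B'$ into cubical $(n-1)$-cells together with a collection $\{q'_C\in (B')^\# : C\in\cP\}$ of pair-wise distinct unit $(n-1)$-cubes with $q'_C\cap C$ containing an $(n-2)$-cube for every $C\in\cP$. Since $B\cap F=B'$, we have $F-B=F-B'$, so $\cP$ is simultaneously the required essential partition of $F-B$.

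Finally, for each $C\in\cP$ let $q_C\in B^\#$ be the unique unit $n$-cube whose face in $F$ equals $q'_C$; this is well defined by the bijection of the first step. Distinct $q'_C$'s lie in distinct cubes of $B'$, so the lifted cubes $q_C$ are pair-wise distinct and hence pair-wise essentially disjoint, and the identity $C\cap q_C = C\cap (q_C\cap F) = C\cap q'_C$ shows $C\cap q_C$ contains an $(n-2)$-cube. Setting $\cC_{\cP}=\{q_C\}$ completes the proof. I do not anticipate a real obstacle here: the whole argument is bookkeeping on top of Lemma~\ref{lemma:pre_tt}, and the only subtle input is the centeredness convention, which is precisely what forces the central cube of $F$ to lie in $B'$ so that the hypotheses of Lemma~\ref{lemma:pre_tt} are met.
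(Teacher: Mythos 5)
Your proposal is correct and follows essentially the same route as the paper: the paper's proof also identifies $F\cap B$ as an $(n-1)$-dimensional atom containing the central cube $[1,2]^{n-1}$ with adjacency tree a proper subtree of $\cT_{n-1}$, and applies Lemma \ref{lemma:pre_tt} one dimension down. Your write-up merely makes explicit the bookkeeping (the bijection $q\mapsto q\cap F$ and the lift of the $(n-1)$-cubes $q'_C$ back to unit $n$-cubes of $B^\#$) that the paper leaves implicit, and correctly pinpoints the centeredness convention as the reason the hypotheses of the lemma hold.
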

\begin{proof}
We may assume $Q=[0,3]^n$ and $F=[0,3]^{n-1}$. Since $F\cap B$ is an $(n-1)$-dimensional atom containing $[1,2]^{n-1}$ and having an adjacency tree isomorphic to a (proper) subtree of $\cT_{n-1}$ with at least two vertices, the claim follows from Lemma \ref{lemma:pre_tt}.
\end{proof}

\begin{proof}[Proof of Proposition \ref{prop:bb_tripod}]
Clearly $\partial_\cup \bU$ consists of $\bU$-equivalence classes $(Q-B)\cap B$, $B\cap Q'$, and $(Q-B)\cap Q'$. The classes $(Q-B)\cap B$ and $B\cap Q'$ are $(n-1)$-cells meeting $\partial_\cap \bU$ in an $(n-2)$-cell. We construct now an essential partition of $\partial_\cup \bU$ into $(n-1)$-cells as required.

Let $\cP$ and $\cC_{\cP}$ be as in Corollary \ref{cor:tt}. Then there exists an essential partition $\{A_C\colon C\in \cP\}$ of $B$ into atoms $A_C$ satisfying $q_C\subset A_C$; consider, for example, the components of the graph $\Gamma(B^\#\setminus \cP)$. For every $C\in \cP$, take $\Delta_C = \{ A_C \cap (Q-B), A_C \cap Q', C\}$. Then $\Delta = \bigcup_{C\in \cP} \Delta_C$ is the required partition of $\partial_\cup \bU$.
\end{proof}

In what follows, Proposition \ref{prop:bb_tripod} is used to verify the tripod property for essential partitions obtained by rearrangements based on building blocks. 


\subsection{Flat (planar) rearrangements}
\label{sec:pc}

Although the notion of atom admits a large variety of possible constructions, we restrict ourselves to only a few basic constructions, all of which appear in this section. These choices yield a double edged sword: we avoid self-intersections and thus preserve the topology of the original essential partition after rearrangement, as a penalty we create neglected faces (discussed in Section \ref{sec:NF}).

In the next two sections we discuss local rearrangements, based on centered building blocks in a single $n$-cube. This section concerns \emph{flat rearrangements}, in that atoms are extended across a single face of a cube. The following section considers the case that atoms are extended across several faces of a cube.

With this objective in mind, we say that an atom $A$, which is a pair-wise essentially disjoint union of building blocks, \emph{consists of building blocks}. Note that planar atoms admit unique partitions into building blocks, but essential partitions of non-planar atoms into building blocks are not unique. Indeed, in each corner where two planar parts of a non-planar atom meet, there are two possible partitions if one of the building blocks consists of two cubes. This ambiguity is, however, not significant in our considerations, since in these cases we may take any possible partition. Keeping this ambiguity in mind, we give the following definition.

\begin{definition}
\label{def:acb}
Given an atom $A$ consisting of builing blocks, we denote by $\widetilde{\Gamma}(A)$ the adjacency graph $\Gamma(\cB)$, where $\cB$ is an essential partition of $A$ into building blocks. We also denote $\ell_{\mathrm{bb}}(A) = \ell(\Gamma(\cB))$.
\end{definition}

Thus, when the essential partition of $A$ into building blocks is clear from the context, we denote this adjacency graph by $\wt \Gamma(A)$. Note that $\wt\Gamma(A)$ is always a tree.  

We consider different cases, starting from simple and heading to more complicated constructions. 

Let $Q$ be an $n$-cube of side length $9$ and $F$ a face of $Q$. We subdivide $Q$ into $3^n$ congruent $n$-cubes of side length $3$, i.e.\;, we consider $Q^*$.
Then $Q^*$ induces a subdivision of $F$ into $3^{n-1}$ congruent $(n-1)$-cubes of side length $3$.  The collection of these $(n-1)$-cubes is $F^*$. Let $\cQ(Q;F)$ be the subset of cubes in $Q^*$ with a face in $F^*$.

\begin{definition}
\label{def:ID_planar}
A quadruple $(Q,F,\cQ'_0,q_0)$ forms \emph{initial data} if 
\begin{itemize}
\item[(a)] $q_0$ is an $n$-cube of side length $3$ so that $q_0 \cap Q$ is a face of $q_0$ and $q_0\cap F$ is an $(n-2)$-cube, and
\item[(b)] $\cQ'_0 \subset \cQ(Q;F)$ is a collection with
\begin{itemize}
\item[(i)] $\Gamma(\cQ'_0)$ connected and
\item[(ii)] $q_0\cap |\cQ'_0| = q_0 \cap Q$. 
\end{itemize}
\end{itemize}
\end{definition}

\begin{figure}[h!]
\includegraphics[scale=0.25]{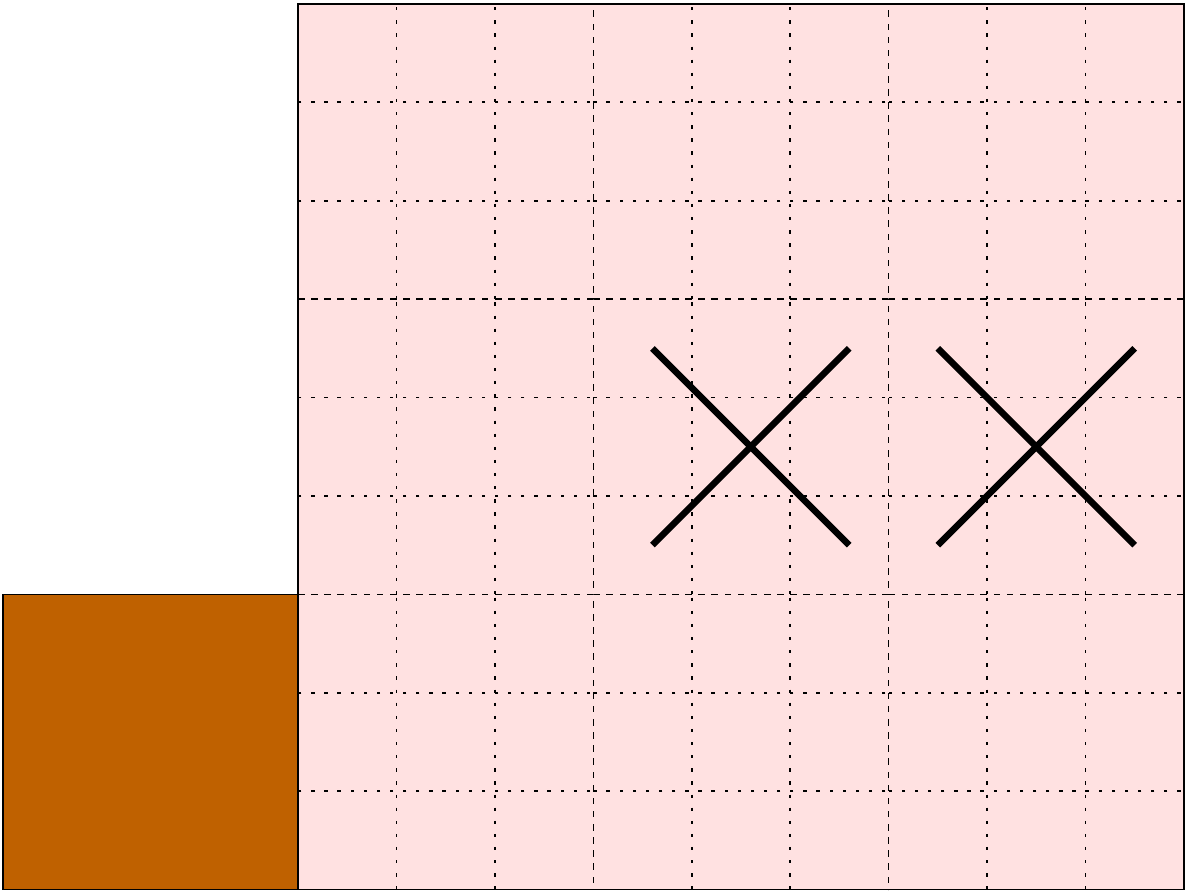}
\caption{An example of an initial data $(Q, F, \cQ'_0, q_0)$. The face $F$ (side length $9$) and cube $q_0$ (side length $3$) are viewed from above, cubes in $\cQ(Q;F)\setminus \cQ'_0$ marked with '\textsf{x}'; $n=3$.}
\label{fig:1face_init}
\end{figure}

\begin{definition}
\label{def:SF_planar}
Let $(Q,F,\cQ'_0,q_0)$ be initial data. A maximal tree $\Gamma\subset \Gamma(\cQ'_0\cup\{q_0\})$ is a \emph{spanning tree associated to this initial data} if $\Gamma$ has valence less than $2(n-1)$.
\end{definition}

The valence bound $2(n-1)$ in Definition \ref{def:SF_planar} was already anticipated in our valence bound for building blocks, recall Definition \ref{def:bb}.

The following simple lemma shows the existence of spanning trees in the configurations we consider here. Let $q_F$ be the unique cube of side length $3$ in $\cQ(Q;F)$ having valence $2(n-1)$ in $\Gamma(\cQ(Q;F))$; thus the barycenter of $q_F\cap F$ is the barycenter of $F$.

\begin{lemma}
\label{lemma:gv}
Suppose $(Q,F,\cQ'_0,q_0)$ forms initial data and $\Gamma(\cQ'_0\setminus \{q_F\})$ is connected. Then there exists a spanning tree $\Gamma\subset \Gamma(\cQ'_0)$.
\end{lemma}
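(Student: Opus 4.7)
The argument rests on the valence structure of the grid $\cQ(Q;F)$. View $\cQ(Q;F)$ as a $3^{n-1}$-grid, labelling cubes by their positions $(x_1,\ldots,x_{n-1})\in\{1,2,3\}^{n-1}$. A direct count shows that the valence in $\Gamma(\cQ(Q;F))$ of a cube at position $(x_1,\ldots,x_{n-1})$ equals $k+(n-1)$, where $k=\#\{i : x_i=2\}$. Hence $q_F$ is the unique cube of maximal valence $2(n-1)$; each of its $2(n-1)$ grid-neighbors has valence exactly $2n-3$; and every other cube in $\cQ(Q;F)$ has valence at most $2n-4$.

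The proof then proceeds by a simple case split. If $q_F\notin\cQ'_0$, every vertex of $\Gamma(\cQ'_0)$ has valence at most $2n-3<2(n-1)$, so any maximal tree in $\Gamma(\cQ'_0)$ already satisfies the valence condition. Otherwise $q_F\in\cQ'_0$. The case $\cQ'_0=\{q_F\}$ is trivial; in all remaining cases I use the connectivity hypothesis to pick a maximal tree $T_1$ in $\Gamma(\cQ'_0\setminus\{q_F\})$, and observe that, since $\Gamma(\cQ'_0)$ is connected, $q_F$ has at least one grid-neighbor $v_F\in\cQ'_0\setminus\{q_F\}$. Then I set $\Gamma=T_1\cup\{\{q_F,v_F\}\}$, a maximal tree in $\Gamma(\cQ'_0)$.

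It remains to verify the valence bound for $\Gamma$. The vertex $q_F$ has $\Gamma$-valence $1$. The vertex $v_F$ is a grid-neighbor of $q_F$, hence has grid-valence $2n-3$ in $\cQ(Q;F)$; since the edge to $q_F$ is absent in $\Gamma(\cQ'_0\setminus\{q_F\})$, its degree in that subgraph is at most $2n-4$, so its $T_1$-valence is at most $2n-4$ and its $\Gamma$-valence at most $2n-3$. Every other vertex $v$ retains its $T_1$-valence in $\Gamma$; this is at most $2n-4$ whether $v$ is a grid-neighbor of $q_F$ (grid-valence $2n-3$, diminished by one upon deleting the edge to $q_F$) or not (grid-valence already at most $2n-4$). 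Hence the maximum valence of $\Gamma$ is at most $2n-3<2(n-1)$, as required. The only nontrivial ingredient is the grid-valence count, and I do not foresee any essential obstacle beyond bookkeeping.
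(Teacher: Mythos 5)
Your proof is correct and follows essentially the same route as the paper's: take a maximal tree in $\Gamma(\cQ'_0\setminus\{q_F\})$, and if $q_F\in\cQ'_0$ attach it by a single edge to a neighbor, whose valence in the smaller tree is at most $2n-4$ because $q_F$ is the unique vertex of valence $2(n-1)$ in $\Gamma(\cQ(Q;F))$. Your explicit grid-valence count $(n-1)+\#\{i:x_i=2\}$ merely spells out the fact the paper states without proof, so the two arguments coincide.
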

\begin{proof}
Let $\Gamma'$ be a maximal tree in $\Gamma(\cQ'_0\setminus \{q_F\})$. Since $\Gamma(\cQ'_0\setminus \{q_F\}) \subset \Gamma(\cQ(Q;F))$ and $q_F$ is the unique vertex in $\Gamma(\cQ(Q;F))$ having valence $2(n-1)$, $\Gamma'$ is a spanning tree of $\Gamma(\cQ'_0\setminus \{q_F\})$. If $q_F\not \in \cQ'_0$, we may take $\Gamma=\Gamma'$.

If $q_F\in \cQ'_0$, let $q'\in \Gamma(\cQ'_0)$ be a vertex adjacent to $q_F$. We extend $\Gamma'$ to a tree $\Gamma$ containing $q_F$ by adding the edge $\{q',q_F\}$. Since the valence of $q'$ in $\Gamma'$ is less than $2(n-1)-1$, the claim follows.
\end{proof}

Spanning trees repartition $Q$ using atoms. 
 
\begin{lemma}
\label{lemma:ra_1}
Given initial data $(Q,F,\cQ'_0,q_0)$ and a spanning tree $\Gamma$, there exists a $1$-fine atom $A_\Gamma$ in $Q$ with the following properties:
\begin{itemize}
\item[(1)] $A_\Gamma\cap q'$ is an $F$-based building block for every $q'\in \cQ'_0$, 
\item[(2)] the adjacency graph $\wt\Gamma(A_\Gamma)$ of building blocks is $\Gamma\setminus \{q_0\}$,
\item[(3)] $A_\Gamma\cup q_0$ is an $n$-cell, and
\item[(4)] $A_\Gamma\cap \partial Q \subset F\cup q_0$.
\end{itemize}
\end{lemma}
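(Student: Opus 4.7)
The plan is to construct $A_\Gamma$ block-by-block: for each cube $q' \in \cQ'_0$ I will place inside $q'$ a centered $F$-based building block $B(q')$ whose arms are indexed by the neighbors of $q'$ in the spanning tree $\Gamma$, and then take $A_\Gamma = \bigcup_{q' \in \cQ'_0} B(q')$.

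Before making the construction I would first record the key observation that $q_0$ is a leaf of $\Gamma$. Since $q_0 \cap F$ is an $(n-2)$-cube, the face $q_0 \cap Q$ is an $(n-1)$-face of $q_0$ lying on a side face of $Q$ perpendicular to $F$, and since it has side length $3$ it coincides exactly with the side face of a single cube in $\cQ(Q;F)$; the condition $q_0 \cap |\cQ'_0| = q_0 \cap Q$ then forces this cube to lie in $\cQ'_0$. Hence $q_0$ has a unique neighbor $q'_0 \in \cQ'_0$ in $\Gamma(\cQ'_0 \cup \{q_0\})$, so $q_0$ is a leaf of $\Gamma$ and $\Gamma \setminus \{q_0\}$ is a spanning tree of $\cQ'_0$.

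The building block $B(q')$ will be defined as follows. Let $c(q')$ be the unique unit cube in $q'$ with a face on $F$ whose barycenter equals that of $q' \cap F$, and let $N_\Gamma(q')$ denote the set of neighbors of $q'$ in $\Gamma$. Each $q'' \in N_\Gamma(q')$ is adjacent to $q'$ across an $(n-1)$-face perpendicular to $F$, which picks out a unit direction parallel to $F$ from $c(q')$; let $a(q',q'')$ be the unit cube in $q'$ adjacent to $c(q')$ in that direction, and put
\[
B(q') = c(q') \cup \bigcup_{q'' \in N_\Gamma(q')} a(q',q'').
\]
The valence bound $\nu(\Gamma) < 2(n-1)$ gives $|N_\Gamma(q')| \le 2(n-1)-1$, so $\Gamma(B(q'))$ is a proper subtree of $\cT_{n-1}$, while connectedness of $\Gamma$ gives $|N_\Gamma(q')| \ge 1$; thus $B(q')$ is a centered $F$-based building block.

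Properties (1), (2), and (4) should fall out by direct inspection: (1) because for $q'' \ne q'$ the overlap $B(q'') \cap q'$ lies in the shared face $q' \cap q''$ and is already covered by $a(q',q'')$ whenever that arm exists; (2) because adjacent blocks $B(q'), B(q'')$ with $\{q',q''\} \in \Gamma \setminus \{q_0\}$ share exactly the unit $(n-1)$-face between $a(q',q'')$ and $a(q'',q')$, and this is the only possible block-to-block adjacency; and (4) because the outward face of any arm cube lies on $F$ or, only for $a(q'_0, q_0)$, on $q_0 \cap Q \subset q_0$, while the center cubes only touch $\partial Q$ on $F$. The main point to address is (3). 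Here I would argue that the internal adjacency graph of $A_\Gamma$ is obtained from the tree $\Gamma \setminus \{q_0\}$ by replacing each vertex $q'$ by the star $\Gamma(B(q'))$ and each edge $\{q',q''\}$ by the arm-to-arm identification $a(q',q'') \sim a(q'',q')$; this operation preserves the tree property, so $A_\Gamma$ is an atom and in particular an $n$-cell. Since $A_\Gamma$ meets $q_0$ only in the single unit $(n-1)$-face shared by $a(q'_0, q_0)$ and the adjacent unit subcube of $q_0$, $A_\Gamma \cup q_0$ is the union of two $n$-cells glued along an $(n-1)$-cell, hence itself an $n$-cell.
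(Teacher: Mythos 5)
Your proof is correct and takes essentially the same route as the paper: there, $A_\Gamma$ is built from the centered $F$-based building blocks $B_{q'}$ obtained as translates of the scaled stars $(1/3)|\Gamma_{q'}|$ of the vertices $q'\in \cQ'_0$ in $\Gamma$, which is precisely your center-plus-arms construction, and properties (1)--(4) are then read off exactly as you do. Your extra details (that $q_0$ is a leaf, and that the only block-to-block contacts are the matching arm faces) simply make explicit what the paper records as clear from the construction.
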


We call $A_\Gamma$ the \emph{(unique) atom associated with spanning tree $\Gamma$ (and initial data $(Q,F,\cQ'_0,q_0)$)}.

\begin{figure}[h!]
\includegraphics[scale=0.25]{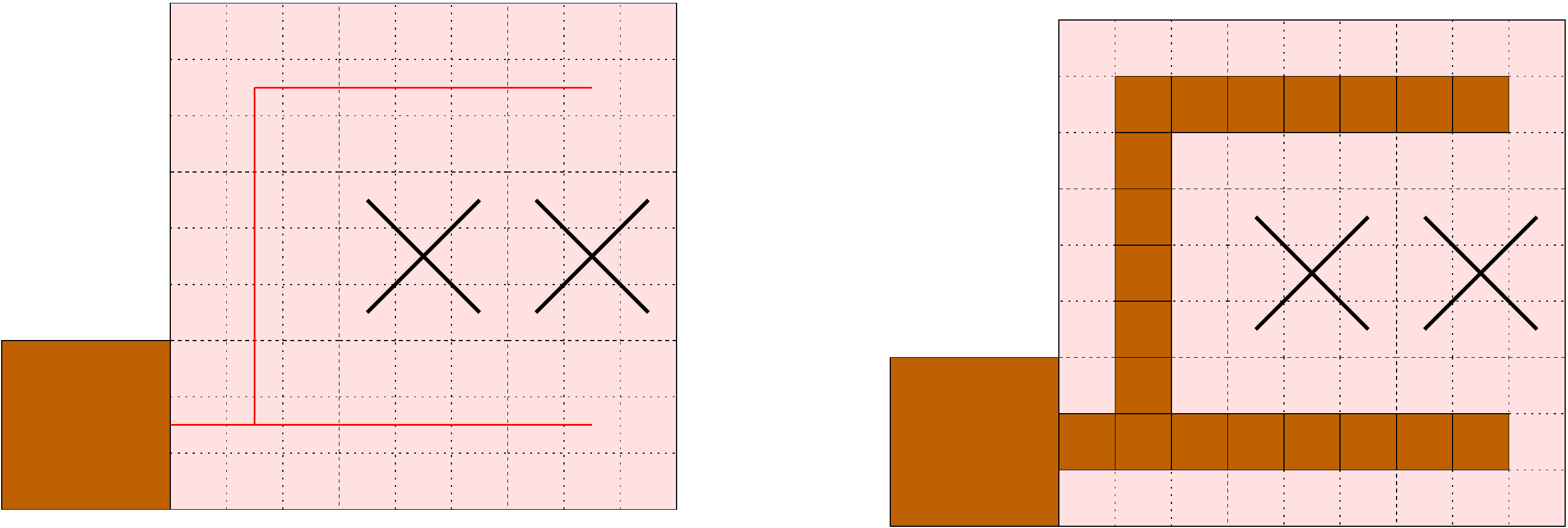}
\caption{A spanning tree (left) and the corresponding atom (right) associated to the initial data in Figure \ref{fig:1face_init}.}
\label{fig:1face}
\end{figure}

\begin{remark}
\label{rmk:ra_1}
Note that the atom $A_\Gamma$ in Lemma \ref{lemma:ra_1} is on the boundary of $Q$ as defined in Section \ref{sec:dented_atoms}. Thus $Q-A_\Gamma$ is a dented cube and, in particular, an $n$-cell.
\end{remark}

\begin{proof}[Proof of Lemma \ref{lemma:ra_1}]
To obtain the building blocks, we make the following observation. 

Suppose $q'\in \Gamma$ is a vertex other than $q_0$. Let $\Gamma_{q'}$ be the star of $q'$ in $\Gamma$, that is, the subgraph of $\Gamma$ containing only edges connecting to $q'$ and all vertices on these edges. We denote $E_{q'}=|\Gamma_{q'}|$. Then $E_{q'}$ is a building block.

To each $q'\in \cQ'_0$ corresponds a unique $F$-based centered building block $B_{q'}\subset q'$ which is a translation of $(1/3)E_{q'}$. These building blocks form an essential partition of the atom $A_\Gamma=\bigcup_{q'\in \cQ_0'} B_{q'}$, whose adjacency graph $\tilde \Gamma(A_\Gamma) = \Gamma(\{B_{q'}\colon q'\in \cQ'_0\})$ is isomorphic to $\Gamma$. 

Conditions (1), (2), and (4) are clearly satisfied by the construction. Since $\Gamma$ is a tree, $A_\Gamma$ is an atom. Since $q_0$ is a leaf in $\Gamma$ and $A_\Gamma\cap q_0$ is an $(n-1)$-cube, $A_\Gamma \cup q_0$ is an $n$-cell and (3) holds.
\end{proof}

Atoms associated to initial data and spanning trees immediately yield a local tripod property. 
\begin{lemma}
\label{lemma:tripod_planar}
Let $Q$ and $Q'$ be $n$-cubes of side length $9$ sharing the face $F$. Suppose $(Q,F,\cQ(Q;F),q_0)$ forms initial data with spanning tree $\Gamma$. Let $A_\Gamma$ be the atom associated to $\Gamma$ and $(Q,F,\cQ(Q;F),q_0)$. Then the essential partition $\bU=(Q-A_\Gamma,A_\Gamma,Q')$ of $Q\cup Q'$ has the tripod property.
\end{lemma}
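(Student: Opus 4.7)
The plan is to reduce the lemma to Proposition \ref{prop:bb_tripod} applied locally to each $3$-cube pair straddling $F$, and then to paste the resulting local tripod partitions together. For $q' \in \cQ(Q;F)$, denote by $q''$ the unique $3$-cube in $(Q')^*$ sharing the face $q' \cap F$ with $q'$. The triple $\bU_{q'} = (q' - B_{q'},\, B_{q'},\, q'')$ is an essential partition of the $3$-cube pair $q' \cup q''$, and $B_{q'}$ is a centered $(q'\cap F)$-based building block, so Proposition \ref{prop:bb_tripod} supplies an essential partition $\Delta_{q'}$ of $\partial_\cup \bU_{q'}$ with the tripod property. Its triples have the form $\{A_C \cap (q' - B_{q'}),\, A_C \cap q'',\, C\}$ for $C$ ranging over an essential partition of $(q' \cap F) - B_{q'}$, with common $(n-2)$-cell contained in $q_C \cap C \subset \partial(B_{q'} \cap F)$.

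Setting $\Delta = \bigcup_{q' \in \cQ(Q;F)} \Delta_{q'}$, one verifies that $\partial_\cup \bU$ decomposes as the essentially disjoint union $\bigsqcup_{q'} \partial_\cup \bU_{q'}$. The face $F = \bigsqcup_{q'} (q' \cap q'')$ splits both $A_\Gamma \cap Q' = \bigsqcup (B_{q'} \cap F)$ and $(Q - A_\Gamma) \cap Q' = \bigsqcup ((q' \cap F) - B_{q'})$ cube-by-cube. The internal portion $(Q - A_\Gamma) \cap A_\Gamma$ lies inside $\bigsqcup_{q'} ((q' - B_{q'}) \cap B_{q'})$ because each $B_{q'}$ is contained in $q'$ and, by the spanning-tree structure of $\Gamma$, $B_{q'}$ has an arm toward a neighbor $\tilde q$ in $Q$ exactly when $\{q', \tilde q\} \in \Gamma$; when such an arm exists, the matching arm of $B_{\tilde q}$ also does, making the shared arm face internal to $A_\Gamma$ and hence absent from $\partial_\cup \bU$. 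Arms directed toward $q_0$ terminate on $\partial Q$ outside $Q \cup Q'$ and contribute nothing to $\partial_\cup \bU$ either.

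Finally, $(\Delta 1)$ and $(\Delta 2)$ pass from local to global. Each cell of $\Delta_{q'}$ lies in $q'$ or $q''$, hence in a single element of $U^*$, and is connected of a single pair-wise type, so it lies in a $\bU$-equivalence class; the three members of each triple lie in three distinct pair-wise classes. Since no $B_{\tilde q}$ with $\tilde q \ne q'$ enters the interior of $q' \cap F$, one has $A_\Gamma \cap (q' \cap F) = B_{q'} \cap F$ there, and so the local $(n-2)$-cell $q_C \cap C$ lies on $\partial(A_\Gamma \cap F) = \partial_\cap \bU$ globally. The principal obstacle is precisely this decomposition step: ruling out unmatched arm faces that would leave part of $\partial_\cup \bU$ uncovered by $\Delta$, and ensuring that the local $(n-2)$-cells witnessing tripods persist in the global common boundary. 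Both are guaranteed by $\Gamma$ being a tree inside $\Gamma(\cQ(Q;F) \cup \{q_0\})$.
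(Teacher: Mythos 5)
Your proposal is correct and follows essentially the same route as the paper: apply Proposition \ref{prop:bb_tripod} to each pair $(q'-B_{q'},B_{q'},q'')$ of side-$3$ cubes straddling $F$ and take $\Delta$ to be the union of the resulting local partitions. The extra verifications you supply (that arm faces between $\Gamma$-adjacent blocks and toward $q_0$ drop out of $\partial_\cup\bU$, and that the witnessing $(n-2)$-cells survive in $\partial_\cap\bU$) are exactly the gluing details the paper compresses into its final sentence, so nothing is missing.
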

\begin{proof}
Let $q$ be a cube in $\cQ(Q;F)$ and let $q_-$ be the unique cube in $Q'$ sharing a face with $q$. Denote by $B_q$ the building block $q \cap A_\Gamma$. By Proposition \ref{prop:bb_tripod}, $(q-B_q, B_q, q_-)$ satisfies the tripod property. Let $\Delta_q$ be an essential partition of $(\partial_\cup \bU) \cap q$ as in Definition \ref{def:tripod}. Since $\cQ(Q;F)$ is an essential partition of a cubical set having $\partial_\cup \bU$ (essentially) in its interior, $\Delta=\bigcup_{q\in \cQ(Q;F)} \Delta_q$ is a required essential partition of $\partial_\cup \bU$. 
\end{proof}

More generally, we may consider initial data $(Q,F,\cQ'_0,q_0)$, where $q_0\in \cQ(Q;F)$; this means $q_0\subset Q$ with $q_0\cap F$ a face of $q_0$. Initial data of this type is called \emph{internal initial data}. This notion of initial data is especially useful for extending a $3$-fine building block inside a cube of side length $9$. We formulate now this rearrangement procedure. 

\begin{corollary}
\label{cor:ra_1}
Let $Q$ be a cube of side length $9$, $F$ a face of $Q$. 
Let also $q_1,\ldots,q_p$ be pair-wise essentially disjoint cubes in $\cQ(Q;F)$. 
Suppose, for $1\le r \le p$, each $(Q,F,\cQ'_r,q_r)$ forms internal initial data with $\cQ'_r \subset \cQ(Q;F)$ and $\cQ'_t \cap \cQ'_s = \emptyset$ for $t\ne s$. Suppose $\Gamma_1,\ldots, \Gamma_p$, respectively, are spanning trees for these initial data. Then there exist pair-wise disjoint $1$-fine atoms $A_r$ associated to initial data $(Q,F,\cQ'_r,q_r)$ for $r=1,\ldots, p$. 
\end{corollary}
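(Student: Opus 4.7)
My plan is to apply the construction from the proof of Lemma \ref{lemma:ra_1} to each initial data $(Q,F,\cQ'_r,q_r)$ separately, obtaining atoms $A_r$, and then exploit the hypothesis $\cQ'_t\cap\cQ'_s=\emptyset$ for $t\ne s$ to verify pair-wise disjointness.

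I first observe that the assumption of internal initial data forces $q_r\in\cQ'_r$. Indeed, condition (b)(ii) of Definition \ref{def:ID_planar} reads $q_r\cap|\cQ'_r|=q_r\cap Q=q_r$ (since $q_r\subset Q$), so $q_r\subset|\cQ'_r|$; as the $3$-cubes in $\cQ(Q;F)$ are pair-wise essentially disjoint and $q_r$ is one of them, this forces $q_r\in\cQ'_r$. Consequently each spanning tree $\Gamma_r$ is a maximal tree in $\Gamma(\cQ'_r)$, and its vertex set coincides with $\cQ'_r$. Following the proof of Lemma \ref{lemma:ra_1}, I place in each cube $q'\in\cQ'_r$ the $F$-based centered $1$-fine building block $B_{q'}\subset q'$ which is the scaled translate of the star of $q'$ in $\Gamma_r$, and set $A_r=\bigcup_{q'\in\cQ'_r}B_{q'}$. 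The arguments of Lemma \ref{lemma:ra_1} then show that each $A_r$ is a $1$-fine atom associated to $(Q,F,\cQ'_r,q_r)$ and $\Gamma_r$.

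For pair-wise disjointness, fix $r\ne s$. By construction $A_r\subset|\cQ'_r|$ and $A_s\subset|\cQ'_s|$. Since $\cQ'_r\cap\cQ'_s=\emptyset$ and the cubes in $\cQ(Q;F)$ are pair-wise essentially disjoint, any point of $A_r\cap A_s$ must lie on a shared $(n-1)$-face $f=q\cap q'$ between some $q\in\cQ'_r$ and adjacent $q'\in\cQ'_s$; such an $f$ is perpendicular to $F$. A centered building block meets the perpendicular faces of its ambient $3$-cube only through arms, and $B_q$ has an arm meeting $f$ if and only if $\{q,q'\}$ is an edge of $\Gamma_r$. But the vertex set of $\Gamma_r$ is $\cQ'_r$, and $q'\in\cQ'_s$ implies $q'\notin\cQ'_r$, so no such edge exists. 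Symmetrically, $B_{q'}$ has no arm meeting $f$. Hence neither building block touches $f$, and $A_r\cap A_s=\emptyset$.

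I regard the main subtlety as the first step: verifying that internal initial data forces $q_r\in\cQ'_r$, so that the vertex set of each spanning tree $\Gamma_r$ lies inside $\cQ'_r$. Once this is secured, pair-wise disjointness reduces to the clean combinatorial observation that arms of building blocks in cubes from distinct collections $\cQ'_r$ cannot align across a shared perpendicular face.
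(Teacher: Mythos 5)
Your overall strategy---run the construction of Lemma \ref{lemma:ra_1} once for each quadruple $(Q,F,\cQ'_r,q_r)$ and then check disjointness combinatorially via the arms of the centered building blocks---is exactly the intended one; the paper treats Corollary \ref{cor:ra_1} as immediate from that construction. The problem is the step you single out as the main subtlety. You read condition (b)(ii) of Definition \ref{def:ID_planar} verbatim for an internal anchor and conclude $q_r\in\cQ'_r$, so that the vertex set of $\Gamma_r$ is $\cQ'_r$. This is the opposite of how the paper uses internal initial data: in the proof of Lemma \ref{lemma:ra_b} the anchor cubes $Q_1,\ldots,Q_p$ are chosen \emph{inside} the pre-existing atom $3A$, while the collections $\cQ'_r$ partition $(D^*)'$, the cubes meeting $F'=3F-3A$; hence there $q_r\notin\cQ'_r$, and under your reading Corollary \ref{cor:ra_1} could not be applied in that proof at all (similarly in Lemma \ref{lemma:ra_1_ext}, where $3q_B$ lies in $3A_\Sigma$ and the collection lies over the basin, disjoint from $A_\Sigma$). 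The intended meaning is only that $q_0\in\cQ(Q;F)$, with (b)(ii) read in the natural analogue form that $q_0\cap|\cQ'_0|$ is a face of $q_0$: the anchor is adjacent to $|\cQ'_0|$, the atom is built only in the cubes of $\cQ'_0$, and $A_{\Gamma_r}\cup q_r$ is an $n$-cell. Forcing $q_r\in\cQ'_r$ would place a building block inside $q_r$ itself, which defeats the purpose of the anchor when $q_r$ sits inside the $3$-fine atom being extended.

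Your construction of the $A_r$ survives the correct reading unchanged, but the disjointness argument as written does not quite: the vertex set of $\Gamma_r$ is $\cQ'_r\cup\{q_r\}$, so for a shared face $f=q\cap q'$ with $q\in\cQ'_r$ and $q'\in\cQ'_s$, the conclusion ``$\{q,q'\}$ is not an edge of $\Gamma_r$'' requires, in addition to $\cQ'_r\cap\cQ'_s=\emptyset$, that $q'\ne q_r$, i.e.\ that $q_r\notin\cQ'_s$ (and symmetrically $q_s\notin\cQ'_r$). The hypotheses of the corollary do not literally exclude this degenerate configuration, but it never occurs in the paper's applications, where all anchors lie in a region disjoint from every $\cQ'_s$; adding this harmless assumption (or observing it in context) closes the gap. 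Two further small points: you should also dispose of contacts of $A_r$ and $A_s$ along the $(n-2)$-dimensional parts of $|\cQ'_r|\cap|\cQ'_s|$ (diagonally adjacent cubes and the edges in $F$), which is easy since centered $1$-fine blocks stay at distance $1$ from the vertical edges of the side faces and reach the bottom edges only through the same tree-edge condition; and note that with the correct reading $\Gamma_r$ is a maximal tree in $\Gamma(\cQ'_r\cup\{q_r\})$, not in $\Gamma(\cQ'_r)$.
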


\begin{figure}[h!]
\includegraphics[scale=0.25]{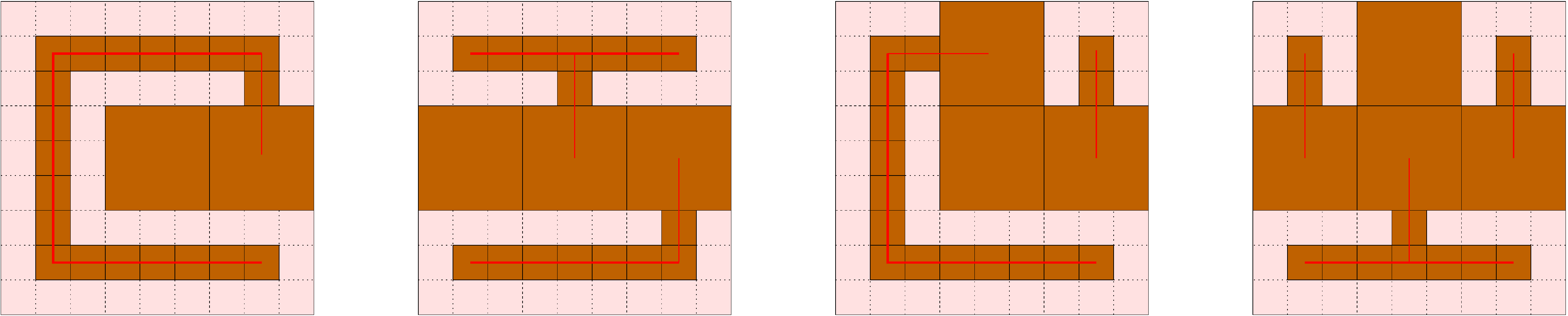}
\caption{Some examples of atoms $A_r$ for $r=1,\ldots, p$ each associated to an internal initial data; here $p=1,2,2,3$.}
\label{fig:1face3_molecule}
\end{figure}

It is easy to obtain a local tripod property for these repartitions. We leave the details, similar to those of the proof of Lemma \ref{lemma:tripod_planar}, to the interested reader.  
\begin{corollary}
\label{cor:tripod_planar}
Let $Q$ and $Q'$ be $n$-cubes of side length $9$ sharing the face $F$, and suppose that, for each $1\le r\le p$, $(Q,F,\cQ'_r,q_r)$ forms internal initial data as in Corollary \ref{cor:ra_1} so that in addition
\[
B:=|\cQ(Q;F)| - \bigcup_{r=1}^p |\cQ'_r|
\]
is a building block of side length $3$. For each $1\le r\le p$ let $\Gamma_r$ be a spanning tree for $(Q,F,\cQ'_r,q_r)$,  associate an atom $A_r $ to $\Gamma_r$ as in Corollary \ref{cor:ra_1} and define $A$ as the (disjoint) union of the atoms $A_r$. Then the essential partition
\[
\bU=(Q-(B\cup A), B\cup A, Q')
\]
of $Q\cup Q'$ has the tripod property.
\end{corollary}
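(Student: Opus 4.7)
The plan is to follow Lemma~\ref{lemma:tripod_planar} and construct the required tripod partition $\Delta$ of $\partial_\cup \bU$ by combining local tripods arising from each building-block component of $B \cup A$: the side-$1$ blocks $B_q$ comprising $A$, and the side-$3$ block $B$. For each $q \in \bigcup_r \cQ'_r$, Proposition~\ref{prop:bb_tripod} applied to the local triple $(q - B_q, B_q, q_-)$, where $q_-$ is the unique cube in $Q'$ sharing $q\cap F$ with $q$, yields a tripod partition $\Delta_q$ of $\partial_\cup(q - B_q, B_q, q_-)$ whose cells lie in $q \cup q_-$ and whose $(n-2)$-spines lie on $\partial(B_q \cap F) \cap \mathrm{int}(q \cap F)$. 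Since $B$ is a side-$3$ building block inside the side-$9$ cube $Q$, Proposition~\ref{prop:bb_tripod} applied at the side-$3$ scale to $(Q - B, B, Q')$ similarly yields a tripod partition $\Delta_B$ of $\partial_\cup(Q - B, B, Q')$ whose spines lie on $\partial(|B|\cap F)$, that is, on edges $\partial q \cap F$ shared between some $q \in \bigcup_r\cQ'_r$ and a cube of $B$.

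These two families of spines occupy disjoint subsets of $F$---the interiors of the faces $q\cap F$ for $\Delta_q$, versus the boundaries $\partial q \cap F$ for $\Delta_B$---so the two tripod families can in principle be assembled into a single partition $\Delta$ of $\partial_\cup \bU$, after suitable refinement. Specifically, the cells of $\Delta_B$ on $F - |B|$ must be restricted to the non-footprint portion $F - ((|B|\cap F)\cup\bigcup_q(B_q\cap F))$ and subdivided to align with the partition of $(q\cap F) - B_q$ given by $\Delta_q$, since in $\bU$ the set $F - |B|$ contains the side-$1$ footprints $B_q \cap F$, which lie in $\Omega_1 \cap \Omega_2$ rather than in $\Omega_0 \cap \Omega_2$. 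Likewise, the cells of $\Delta_B$ on $\partial B \cap Q$ must be trimmed to exclude portions that coincide with a side of some $B_q$ reaching a vertical face shared with a $B$-cube. The flexibility afforded by Corollary~\ref{cor:tt} in choosing the underlying partition $\cP$ at each scale makes these refinements possible.

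The principal obstacle is the combinatorial bookkeeping at the interfaces where $\cQ'_r$-cubes adjoin $B$-cubes and cells from both tripod families converge; here one must verify that each refined cell sits inside a single $\bU$-equivalence class and is paired with a unique tripod triple. Once the refinements are chosen consistently, conditions $(\Delta 1)$ and $(\Delta 2)$ of Definition~\ref{def:tripod_Q} follow from the local verification supplied by Proposition~\ref{prop:bb_tripod} at the two relevant scales, completing the argument in the spirit of Lemma~\ref{lemma:tripod_planar}.
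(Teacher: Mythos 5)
Your unit-scale step---applying Proposition \ref{prop:bb_tripod} in each $q\in\bigcup_r\cQ'_r$ to the triple $(q-B_q,B_q,q_-)$---is exactly the argument of Lemma \ref{lemma:tripod_planar}, which is all the paper itself offers for this corollary. The genuine gap is in your treatment of $B$. The tripod property is not scale invariant: by Definition \ref{def:U-eqv} (and the remark following it) a $\bU$-equivalence class is a union of \emph{unit} $(n-1)$-cubes contained in a single cube of the $3$-fine subdivision $U^*$, hence has diameter at most $3$, and ($\Delta$1) imposes this bound on every cell of $\Delta$ no matter how large the configuration is. Applying Proposition \ref{prop:bb_tripod} ``at the side-$3$ scale'' to $(Q-B,B,Q')$ produces cells $A_C\cap(Q-B)$, $A_C\cap Q'$ and $C\in\cP$ in which the atoms $A_C$ may consist of several side-$3$ cubes of $B$; worse, the floor cells must cover all of $F-B$, which consists of $3^{n-1}-\#\Gamma(B)\ge 5$ side-$3$ squares, while there are at most $\#\Gamma(B)\le 2n-2$ of them, so some floor cell necessarily spans several side-$3$ squares. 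Such cells cannot lie in any $\bU$-equivalence class, however you trim them against the footprints of the atoms $A_r$; the rescaled family $\Delta_B$ is inadmissible from the start, not merely in need of adjustment at the interfaces.

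The proposed remedy---subdividing the cells of $\Delta_B$ to align with the $\Delta_q$---is not a construction: refining a cell destroys the triple structure, since each new piece requires its own unique partner pair in ($\Delta$2), and the floor regions $(q\cap F)-B_q$ would then be claimed twice, once by the triples of $\Delta_q$ and once by the refined floor cells of $\Delta_B$, contradicting the uniqueness demanded by ($\Delta$2). What is needed is a single, jointly chosen partition: within each cube $c$ of $B$ take as cells the part of $\partial c$ lying in $\partial_\cup\bU$ (walls and ceiling) and the footprint $c\cap F$, both of which do lie in single $3$-cubes, and when choosing the partitions of $(q\cap F)-B_q$ in the adjacent $\cQ'$-cubes via Corollary \ref{cor:tt}, reserve for each such $c$ a floor piece meeting the bottom edge of one of its walls in an $(n-2)$-cell and distinct from the pieces serving the triples of $B_q$ (the unit squares of $\partial B_q\cap\partial q$ facing $Q-(B\cup A)$ across faces of neighbouring cubes can be absorbed into the corresponding wall cells), and finally verify uniqueness of the triples. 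You label precisely this bookkeeping ``the principal obstacle'' and leave it unverified; since that is where the corollary is actually proved, the proposal as written is incomplete.
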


\begin{convention}
Henceforth we do not differentiate between initial data and internal initial data, and refer to both as initial data.
\end{convention}

\subsection{Non-flat (non-planar) rearrangements}
\label{sec:npc}

We consider now local rearrangements in the non-flat case. For our purposes it suffices to consider rearrangements which occur in a single cube.

Let $Q$ be an $n$-cube of side length $9$ and $\cF$ a subset of the collection of all faces of $Q$. Let $\cF$ be partitioned into sets $\cF^1$ and $\cF^2$ so that $|\cF^r|$ is an $(n-1)$-cell for $r=1,2$. Note that each $|\cF^r|$, in particular, is a union of faces of $Q$.

Let $\cQ(Q;\cF)\subset Q^*$ be the cubes having a face in $|\cF|$; we denote by $\cQ(Q;\cF^r)\subset \cQ(Q;\cF)$ those with a face in $|\cF^r|$. Note that $\{\cQ(Q;\cF^1),\cQ(Q;\cF^2)\}$ is not (necessarily) a partition of $\cQ(Q;\cF)$. The following definition generalizes Definition \ref{def:ID_planar}.

\begin{definition}
\label{def:nfid}
A triple 
\[
(Q, (\cF^1,\cQ''_1,q_1), (\cF^2,\cQ''_2,q_2))
\]
forms \emph{non-flat initial data} if the following conditions are satisfied:
\begin{itemize}
\item[(a)] for every $r=1,2$, $q_r\subset \R^n-Q$ is an $n$-cube of side length $3$ with $Q\cap q_r$ a face of $q_r$ and $q_r \cap |\cF^r|$ an $(n-2)$-cube. 
\item[(b)] $\{\cQ''_1,\cQ''_2\}$ is a partition of $\cQ(Q;\cF)$ and for $r=1,2$ satisfies  
\begin{itemize}
\item[(0)] $\cQ''_r \subset \cQ(Q;\cF^r)$,
\item[(1)] $\Gamma(\cQ''_r)$ is connected, 
\item[(2)] $q_r\cap |\cQ''_r|$ is a face of $q_r$, and
\item[(3)] $q_r \cap |\cF^r|\cap |\cQ''_r|$ is an $(n-2)$-cube.
\end{itemize}
\end{itemize}
\end{definition}

\begin{remark}
Let $(Q,(\cF^1,\cQ''_1,q_1), (\cF^2,\cQ''_2,q_2))$ be as in Definition \ref{def:nfid}, and let $q_1^+$ and $q_2^+$ be the $n$-cubes in $Q^*$ sharing a face with $q_1$ and $q_2$, respectively. Since $q_1\cap Q$ is a face of $q_1$, condition (2) in (b) shows that $q_1^+\in \cQ''_1$. Clearly, the same argument holds for $q_2$ and we also have $q_2^+\in \cQ''_2$.
\end{remark}

Let $r\in \{1,2\}$, $\wh\Gamma \subset \Gamma(\cF^r)$ be a maximal tree and $\cQ'\subset \cQ(Q;\cF)\cup\{q_r\}$. A subgraph $\Gamma\subset \Gamma(\cQ')$ is \emph{dominated by $\wh\Gamma$} if, for each vertex $q\in \Gamma$ and the star $\Gamma_q$ of $q$ in $\Gamma$, either there exists a vertex $F_q\in \Gamma(\cF^r)$ satisfying $\Gamma_q \setminus \{q_r\} \subset \cQ(Q;F_q)$ or there exists an edge $\{F_q,F'_q\} \in \wh\Gamma$ satisfying $\Gamma_q\setminus\{q_r\}\subset \cQ(Q;F_q)\cup \cQ(Q;F'_q)$.

\begin{definition}
\label{def:sf}
Let $(Q, (\cF^1,\cQ''_1,q_1), (\cF^2,\cQ''_2,q_2))$ form non-planar initial data. A maximal forest $\Sigma=\Gamma_1\cup \Gamma_2 \subset \Gamma(\cQ''\cup \{q_1,q_2\})$ is a \emph{spanning forest associated to this data} if 
\begin{itemize}
\item[(i)] $\Sigma$ has valence less than $2(n-1)$,
\item[(ii)] for $r=1,2$, $\Gamma_r$ is a maximal tree in $\Gamma(\cQ''_r\cup \{q_r\})$ dominated by a maximal tree of $\Gamma(\cF^r)$.
\end{itemize}
\end{definition}

The proof of the following existence result for spanning forests is analogous to Lemma \ref{lemma:gv}, and we omit the details. Let $\cQ'_c(Q;\cF)$ be the collection of all cubes in $\cQ(Q;\cF)$ having valence $2(n-1)$. 
\begin{lemma}
\label{lemma:gv_non-flat}
Suppose $(Q, (\cF^1,\cQ''_1,q_1), (\cF^2,\cQ''_2,q_2))$ forms non-planar initial data for which $\Gamma(\cQ''_r\setminus \cQ_c(Q;\cF))$ is connected for $r=1,2$. Then there exists a spanning forest $\Sigma$ associated to $(Q, (\cF^1,\cQ''_1,q_1), (\cF^2,\cQ''_2,q_2))$. 
\end{lemma}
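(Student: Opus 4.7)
The plan is to mirror Lemma 4.6 per face and then glue using the face tree. Because $\{\cQ''_1,\cQ''_2\}$ partitions $\cQ(Q;\cF)$ and $q_r\not\in Q$ for $r=1,2$, the vertex sets of $\Gamma(\cQ''_r\cup\{q_r\})$ for $r=1$ and $r=2$ are disjoint. Hence the valence of $\Sigma=\Gamma_1\cup\Gamma_2$ at any vertex equals its valence in the unique $\Gamma_r$ containing it, and it suffices to construct each $\Gamma_r$ separately with valence strictly less than $2(n-1)$.

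Fix $r\in\{1,2\}$. Since $|\cF^r|$ is an $(n-1)$-cell, $\Gamma(\cF^r)$ is connected, and I fix a maximal tree $\wh\Gamma_r\subset\Gamma(\cF^r)$. For each face $F\in\cF^r$ set $\cQ''_{r,F}=\cQ''_r\cap\cQ(Q;F)$; within $\Gamma(\cQ(Q;F))$ only the central cube $q_F$ attains valence $2(n-1)$. Using the hypothesis that $\Gamma(\cQ''_r\setminus\cQ_c(Q;\cF))$ is connected, I first build a spanning tree $\Gamma_r'$ on $\cQ''_r\setminus\cQ_c(Q;\cF)$: for each $F\in\cF^r$ take a maximal tree locally on $\cQ''_{r,F}\setminus\cQ_c$, and for each edge $\{F,F'\}\in\wh\Gamma_r$ add a single adjacency between non-central cubes on $F$ and $F'$, choosing endpoints on each face to be distinct for distinct stitches. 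All vertices of $\Gamma_r'$ then have valence strictly less than $2(n-1)$, precisely as in Lemma 4.6.

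Next I reattach the central cubes: each $q_F\in\cQ''_r\cap\cQ_c(Q;\cF)$ is added via a single edge to one of its non-central face-neighbors on $F$ (since all of its $2(n-1)$ in-face neighbors are necessarily non-central). The new vertex $q_F$ has valence $1$; the target vertex absorbs at most one extra edge per face it touches, so it may be chosen to keep valence strictly below $2(n-1)$. Finally I add the leaf edge $\{q_r^+,q_r\}$, legitimate by condition (b)(2) of Definition \ref{def:nfid} since $q_r^+\in\cQ''_r$. Dominance by $\wh\Gamma_r$ holds by construction: the star of a vertex $q\in\Gamma_r$ lies either in a single $\cQ(Q;F_q)$ (the default case) or in $\cQ(Q;F_q)\cup\cQ(Q;F_q')$ for precisely one edge $\{F_q,F_q'\}\in\wh\Gamma_r$ chosen to use $q$ as an endpoint.

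The main obstacle is the valence accounting at the two stitching steps, where one must ensure that an admissible target with slack remains at every insertion, and simultaneously that each vertex is used by at most one face-crossing stitch (so as not to break dominance). Both constraints reduce to a purely local, finite check on the $3^n$ cubes of side length $3$ packed into $Q$ of side length $9$, and amount to a mild elaboration of the counting already carried out for Lemma \ref{lemma:gv}.
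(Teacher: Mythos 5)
There is a genuine gap, and it sits exactly where you defer to a ``purely local, finite check.'' Your construction of $\Gamma'_r$ assembles per-face maximal trees on the sets $\cQ''_r\cap\cQ(Q;F)\setminus\cQ_c(Q;\cF)$ and then adds one stitch per edge of $\wh\Gamma_r$. But these per-face sets are not disjoint: a cube along the ridge of two faces of $Q$ lies in $\cQ(Q;F)\cap\cQ(Q;F')$, so the union of the per-face trees need not be a forest, the valence at such a cube is not controlled by any single face, and its star can receive edges from two per-face trees whose faces are \emph{not} $\wh\Gamma_r$-adjacent, which threatens dominance. Worse, the hypothesis only says that the \emph{global} graph $\Gamma(\cQ''_r\setminus\cQ_c(Q;\cF))$ is connected; the per-face pieces may well be disconnected, with their components joined only through cubes based on other faces, in which case a single adjacency per edge of $\wh\Gamma_r$ (even granting that an adjacency between ``non-central'' cubes of $\cQ''_r$ on the two faces exists at all, which is not guaranteed) does not yield a connected spanning subgraph. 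So the hypothesis is invoked but never actually does any work in your argument. The route the paper intends (this is what ``analogous to Lemma \ref{lemma:gv}'' means; the details are omitted there) is to take \emph{one} maximal tree of the connected graph $\Gamma(\cQ''_r\setminus\cQ_c(Q;\cF))$ -- this is precisely where the connectivity hypothesis enters, and it gives the valence bound $<2(n-1)$ for free, since every vertex of maximal valence has been deleted -- and only then reattach the deleted cubes and $q_r$ as leaves.

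A second, related error: you identify $\cQ_c(Q;\cF)$ with the face-centres $q_F$. By definition $\cQ_c(Q;\cF)$ consists of all cubes of valence $2(n-1)$ in $\Gamma(\cQ(Q;\cF))$, and when two adjacent faces of $Q$ both lie in $\cF$ the ridge cube adjacent to both face-centres also attains this valence (for $n=3$: its four neighbours in $\cQ(Q;\cF)$ are the two face-centres and two corner cubes). In particular a face-centre can have \emph{all} of its neighbours in $\cQ_c(Q;\cF)$, so your parenthetical claim that every cube of $\cQ_c$ has only non-central in-face neighbours is false, and the reattachment step as written can have no admissible target in $\cQ''_r\setminus\cQ_c(Q;\cF)$: the cubes of $\cQ_c$ may have to be attached in chains to one another, and one must check that this can be done within the same colour class $\cQ''_r$, with valence slack at each target and without forcing a star to spread over two faces that are not joined in $\wh\Gamma_r$. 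That bookkeeping is the actual content of the lemma beyond Lemma \ref{lemma:gv}, and it is exactly what your closing sentence waves away.
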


\begin{figure}[h!]
\includegraphics[scale=0.24]{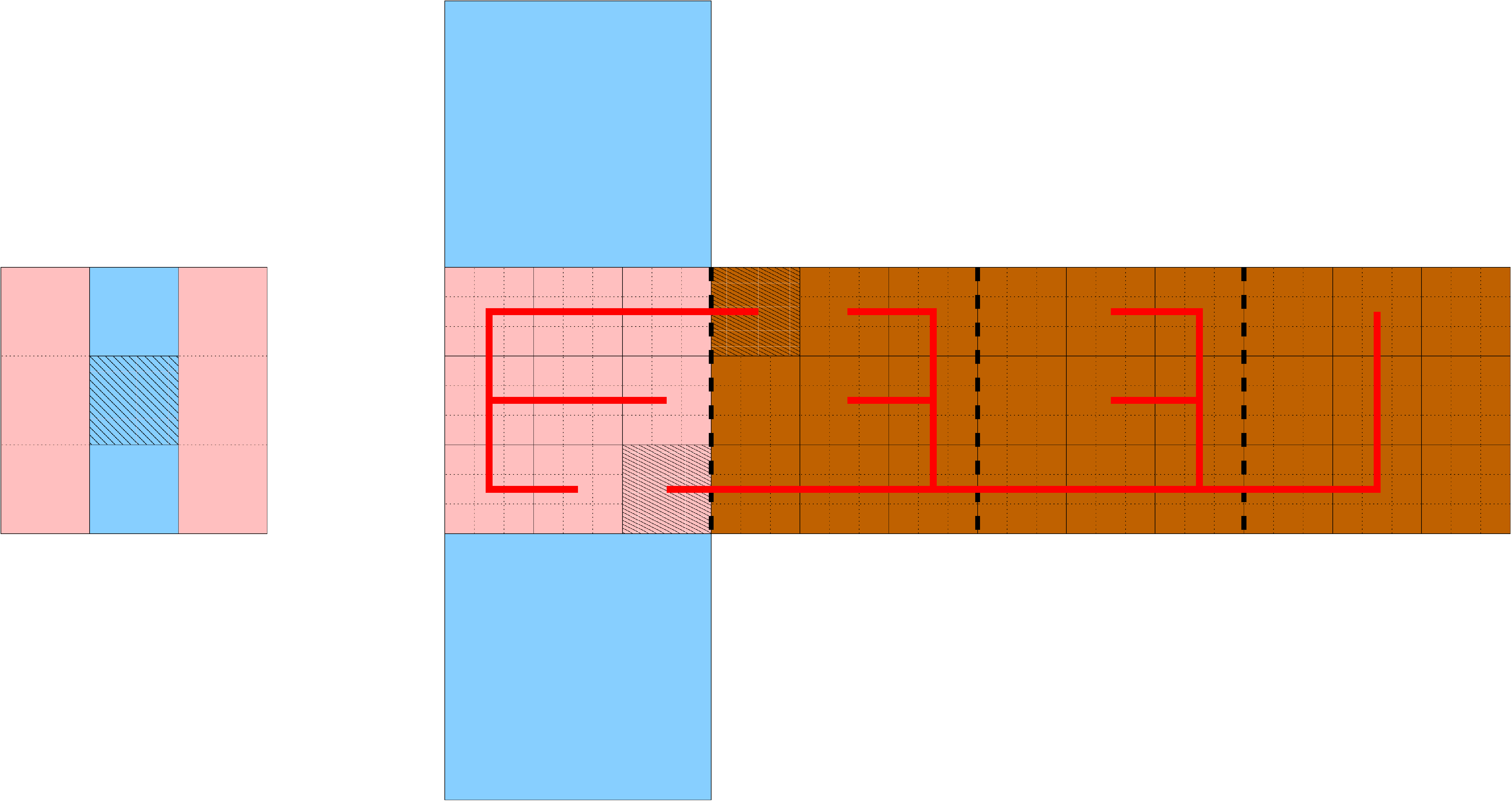}
\caption{A spanning forest on four faces $\cF$ of a cube $Q$; note that the forest enters each cube $q$ in $\cQ(Q;\cF)$. Here $Q$ is the center cube in a building block consisting of $3$ cubes (left figure).}
\label{fig:ra_non_flat_1}
\end{figure}

\begin{lemma}
\label{lemma:ra_2}
Let $(Q, (\cF^1,\cQ''_1,q_1), (\cF^2,\cQ''_2,q_2))$ form non-planar initial data, and let $\Sigma = \Gamma_1\cup \Gamma_2 \subset \Gamma(\cQ'\cup \{q_1, q_2\})$ be a spanning forest.
 
Then there exist a $1$-fine cubical set $A_\Sigma$ in $Q$ composed of pair-wise disjoint $1$-fine atoms $A_1$ and $A_2$ and for $r=1,2$ satisfying the following properties:
\begin{itemize}
\item[(1)] each $A_r$ is composed of building blocks,
\item[(2)] for every $q''\in \cQ''_r$, $A_r \cap q''$ is an atom having an essential partition into at most two building blocks,
\item[(3)] every building block in $A_r$ is $F$-based with $F\in \cF^r$,
\item[(4)] $A_r\cup q_r$ is an $n$-cell, 
\item[(5)] $A_r \cap \partial Q \subset |\cF^r|\cup q_r$, and
\item[(6)] the adjacency graph of cells $\{ A_r \cap Q'' \colon Q''\in \cQ''_r\}$ is isomorphic to $\Gamma_r$.
\end{itemize}
\end{lemma}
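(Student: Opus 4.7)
The plan is to build the atoms $A_1$ and $A_2$ separately, one for each component $\Gamma_r$ of the spanning forest $\Sigma$, by associating to every cube $q''\in \cQ''_r$ a centered building block $B_{q''}\subset q''$ whose shape mirrors the local star $(\Gamma_r)_{q''}$, then setting $A_r=\bigcup_{q''\in \cQ''_r} B_{q''}$ and $A_\Sigma=A_1\cup A_2$. This mimics the argument for Lemma \ref{lemma:ra_1}, except that $\Gamma_r$ may locally bend around an edge of $\Gamma(\cF^r)$, so some cubes $q''$ must house two building blocks instead of one.

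The construction of $B_{q''}$ branches on the domination alternative in Definition \ref{def:sf}(ii) applied at $q''$. In the flat case, all neighbors of $q''$ in $\Gamma_r$ other than possibly $q_r$ lie in $\cQ(Q;F)$ for a single face $F\in \cF^r$, so I take $B_{q''}$ to be the $F$-based centered building block in $q''$ obtained as a scaled translate of the star $|(\Gamma_r)_{q''}|$, exactly as in Lemma \ref{lemma:ra_1}. In the bent case, the star is dominated by an edge $\{F,F'\}$ of a maximal tree in $\Gamma(\cF^r)$, which forces $q''$ to have both $F$ and $F'$ as faces; I then place two sub-blocks $B^F_{q''}$ and $B^{F'}_{q''}$ inside $q''$, one $F$-based and one $F'$-based, each mirroring the corresponding half of the star. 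The valence bound $\nu(\Sigma)<2(n-1)$ from Definition \ref{def:sf}(i) is precisely what is needed to guarantee that the two sub-blocks fit inside $q''$ as essentially disjoint cubical sets, separated by the $(n-2)$-edge $F\cap F'\cap q''$ between their centers.

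Properties (1)--(3) are immediate from the construction. Property (5) holds because the non-center cubes of an $F$-based building block meet $\partial Q$ only in $F$, together with the fact that the unique block adjacent to $q_r$ matches $A_r\cap q_r\subset q_r\cap Q$. Property (6) follows exactly as in Lemma \ref{lemma:ra_1}: each edge $\{q'',q'''\}$ of $\Gamma_r$ yields an $(n-1)$-face adjacency between an arm cube of $B_{q''}$ and the center cube of $B_{q'''}$, so the resulting cell adjacency graph matches $\Gamma_r$ on the leaves and interior vertices alike. Property (4) follows by induction on $\Gamma_r$, traversing from the leaf $q_r$ and attaching each new $B_{q''}$ along a single $(n-1)$-face, which preserves the $n$-cell property. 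Disjointness of $A_1$ and $A_2$ is inherited from $\cQ''_1\cap \cQ''_2=\emptyset$ together with the fact that every building block is strictly interior to its cube except on its base face.

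The most delicate point is the bent case. One must distribute the at most $2(n-1)-1$ arms of the star between the two sub-stars so that (i) no unit cube in $q''$ is claimed by both sub-blocks, (ii) each sub-block's adjacency graph reproduces the corresponding sub-star, and (iii) the combined adjacency graph of unit cubes in $A_r\cap q''$ remains a tree, so that $A_r\cap q''$ is genuinely an atom. The centered placement forces the $F$-based and $F'$-based centers to be two distinct, non-face-adjacent unit cubes of $q''$, and the arms in directions parallel to the edge $F\cap F'$ are chosen so as to avoid the common corner unit cube of $q''$. This disposes of (i) and (iii) automatically, while (ii) is arranged by matching chosen arm directions to the sub-star neighbor list on a cube-by-cube basis.
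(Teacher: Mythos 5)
Your overall strategy---one centered building block per cube of the spanning forest, mirroring the star in $\Gamma_r$, with the domination condition of Definition \ref{def:sf} deciding between a single block (flat star) and a pair of blocks based on two adjacent faces (bent star)---is exactly the construction in the paper. The gap is in how you assemble the two sub-blocks in the bent case. You place $B^F_{q''}$ and $B^{F'}_{q''}$ so that they meet only along the $(n-2)$-cube $F\cap F'\cap q''$ (their centers are non-face-adjacent and you explicitly choose the arms to \emph{avoid} the common corner unit cube), and you then claim the adjacency graph of the unit cubes of $A_r\cap q''$ is ``automatically'' a tree. It is not: if no unit cube of $B^F_{q''}$ shares an $(n-1)$-face with a unit cube of $B^{F'}_{q''}$, that adjacency graph is a two-component forest, so $A_r\cap q''$ is not an atom and property (2) fails; moreover the union of the two blocks pinches along a codimension-$2$ face, so it is not an $n$-cell, which also undermines (4) and (6) (in (6) each $A_r\cap q''$ must be a single cell). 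Avoiding the corner cube does prevent overlap and cycles, but it destroys connectivity, which is the entire point of the bent-case construction.

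The paper imposes precisely the condition you omit: the two blocks are chosen so that $B_F\cap B_{F'}$ is an $(n-1)$-cube. Concretely, exactly one of the blocks must include the arm into the unit cube of $q''$ touching both $F$ and $F'$ at the middle of the edge $F\cap F'$ (in coordinates $q''=[0,3]^n$, $F\subset\{x_n=0\}$, $F'\subset\{x_1=0\}$, this is $[0,1]\times[1,2]^{n-2}\times[0,1]$); that cube shares an $(n-1)$-face with the other block's center, so the union is connected, its unit-cube adjacency graph is a tree (the corner cube is a leaf of one block attached by a single edge to the other), and the two blocks remain essentially disjoint building blocks. With this single correction---put the corner connector into exactly one block rather than excluding it from both---the remainder of your argument (properties (1), (3), (5), disjointness of $A_1$ and $A_2$, and the identification of the cell adjacency graph with $\Gamma_r$) goes through as in Lemma \ref{lemma:ra_1}. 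A minor side remark: your statement that a building block is ``strictly interior to its cube except on its base face'' is inaccurate---arm cubes do touch side faces of $q''$, and this is exactly what produces the adjacencies needed for (6); disjointness of $A_1$ and $A_2$ holds instead because arms only point toward $\Gamma_r$-neighbors, so blocks of different colors never reach a common face of adjacent cubes.
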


The set $A_\Sigma$ in Lemma \ref{lemma:ra_2} is said to be \emph{associated to this initial data and the spanning forest $\Sigma$}. Property (2) is a consequence of the trees $\Gamma_1$ and $\Gamma_2$ being dominated by $\Gamma(\cF^1)$ and $\Gamma(\cF^2)$ respectively. Property (3) asserts that $A_\Sigma$ is \emph{on the boundary of $Q$}.

\begin{figure}[h!]
\includegraphics[scale=0.24]{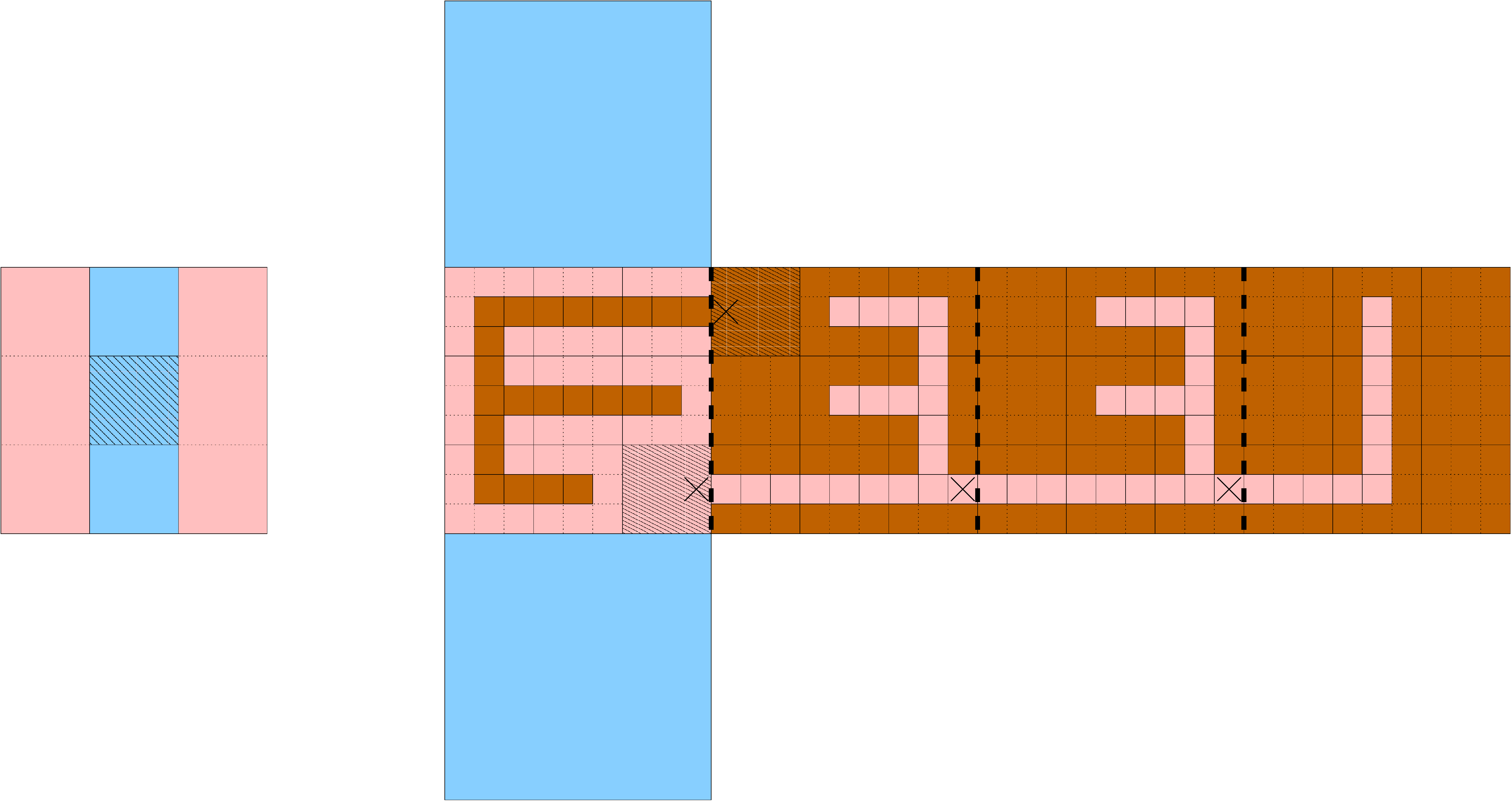}
\caption{Atoms $A_1$ and $A_2$ associated to the initial data in Figure \ref{fig:ra_non_flat_1}.}
\label{fig:ra_non_flat_2}
\end{figure}

\begin{remark}
As in Remark \ref{rmk:ra_1}, the components $A_1$ and $A_2$ of $A_\Sigma$ in Lemma \ref{lemma:ra_2} are atoms on the boundary of $Q$. In particularly, $Q-A_\Sigma$ is a dented cube.
\end{remark}

\begin{proof}[Proof of Lemma \ref{lemma:ra_2}]
Consider first the tree $\Gamma_1$. Let $q'\in\Gamma_1$ be an $F$-based cube, where $F\in \cF^1$, and let $\Gamma_{q'}$ be the star of $q'$ in $\Gamma_1$. 

If $|\Gamma_{q'}|$ is $F$-based, we fix a building block $B_{q'}$ as in Lemma \ref{lemma:ra_1}. Suppose, however, that $|\Gamma_{q'}|$ is not $F$-based. Then, by (ii) in Definition \ref{def:sf}, there exists a face $F'\in \cF^1$ so that each cube in $\Gamma_{q'}$ is either $F$-based or $F'$-based. Thus there exist an $F$-based building block $B_F$ and an $F'$-based building block $B_{F'}$ in $q'$ with the following properties:
\begin{itemize}
\item $B_F\cap B_{F'}$ is an $(n-1)$-cube and
\item $B_F\cup B_{F'}$ meeting the neighbors of $q'$ in $\Gamma_1$ in $(n-1)$-cubes. 
\end{itemize}

In this case, we take $B_{q'}=B_F\cup B_{F'}$, and define $A_1 = \bigcup_{q'\in \Gamma} B_{q'}$. The atom $A_2$ is defined similarly. It is easy to check that atoms $A_1$ and $A_2$ satisfy properties (1)-(5).
\end{proof}

These non-planar rearrangements satisfy the tripod property. 
\begin{lemma}
\label{lemma:tripod_non_planar}
 
Let $\bU=(U_1,U_2,U_3)$ be an essential partition and $Q\subset U_3$ an $n$-cube of side length $9$ sharing a face with both $U_1$ and $U_2$. Let 
\[
(Q, (\cF^1,\cQ''_1,q_1), (\cF^2,\cQ''_2,q_2))
\]
form non-planar initial data for which  
\begin{itemize}
\item[(i)] $q_r \subset U_r$ for $r=1,2$, 
\item[(ii)] $|\cF^r|\subset Q\cap U_{j_r}$,  where $\{j_r,r\}=\{1,2\}$, and
\item[(iii)] $|\cF^1|\cup |\cF^2| = Q\cap \partial_\cup \bU$.
\end{itemize}
Let $\Sigma$ be a spanning forest for this initial data and let $A_\Sigma = A_1\cup A_2$ be the union atoms associated to this initial data and spanning forest. 

Then the essential partition
\[
\bV=\left(U_1\cup A_1, U_2 \cup A_2, U_3 - A_\Sigma \right)
\]
has the tripod property in $Q$.
\end{lemma}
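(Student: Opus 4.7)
The plan is to construct an essential partition $\Delta$ of $Q\cap \partial_\cup\bV$ into $(n-1)$-cells satisfying ($\Delta$1) and ($\Delta$2), built from local tripod partitions obtained by applying Proposition~\ref{prop:bb_tripod} inside each cube of $\cQ(Q;\cF)$.

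First I would dissect $Q\cap\partial_\cup\bV$. Since $Q\subset U_3$, conditions (ii)--(iii) give $Q\cap U_1=|\cF^2|$ and $Q\cap U_2=|\cF^1|$ up to lower-dimensional sets, so for each $r\in\{1,2\}$ the set $\partial_\cup\bV\cap Q$ decomposes into (a) the base $A_r\cap|\cF^r|$ between $U_r\cup A_r$ and $U_{j_r}\cup A_{j_r}$; (b) the outer boundary $\partial A_r\cap\mathrm{cl}(Q\setminus A_\Sigma)$ between $U_r\cup A_r$ and $U_3-A_\Sigma$; and (c) the residual $|\cF^r|-A_r$ between $U_{j_r}\cup A_{j_r}$ and $U_3-A_\Sigma$. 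By properties (3) and (5) of Lemma~\ref{lemma:ra_2}, all of (a)--(c) sit inside the cubes of $\cQ(Q;\cF)$.

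Next, for a non-corner cube $q''\in\cQ''_r$, let $B_{q''}=A_r\cap q''$ denote the associated $F$-based building block with $F\in\cF^r$, set $F_{q''}=F\cap q''$, and let $q'_{\mathrm{adj}}\subset U_{j_r}$ be the cube outside $Q$ sharing $F_{q''}$ with $q''$. The triple $(q''-B_{q''},\,B_{q''},\,q'_{\mathrm{adj}})$ satisfies the hypotheses of Proposition~\ref{prop:bb_tripod}, so I can invoke it to obtain a tripod partition $\Delta_{q''}$ whose typical triple $\{A_C\cap(q''-B_{q''}),\,A_C\cap F_{q''},\,C\}$, indexed by cells $C$ of the planar partition from Corollary~\ref{cor:tt}, separates $U_r\cup A_r$, $U_3-A_\Sigma$, and $U_{j_r}\cup A_{j_r}$ respectively and meets at the $(n-2)$-cube $C\cap q_C$. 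For a corner cube, where Lemma~\ref{lemma:ra_2} supplies $B_{q''}=B_F\cup B_{F'}$ with $B_F\cap B_{F'}$ an $(n-1)$-face interior to $A_r$, I apply the same construction to each of $B_F$ and $B_{F'}$ with compatibly chosen planar partitions.

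To assemble $\Delta$, I restrict each $\Delta_{q''}$ to $\partial_\cup\bV$ and take the union. The only non-trivial restriction occurs on a side face $G$ of $q''$ shared with an adjacent cube $q'''\in\cQ''_r$: there $B_{q''}$ and $B_{q'''}$ meet along a common $(n-1)$-face interior to $A_r$, while the remainder of $G$ lies between $q''-B_{q''}$ and $q'''-B_{q'''}$, both in $U_3-A_\Sigma$, so neither portion belongs to $\partial_\cup\bV$. Discarding these contributions from $A_C\cap(q''-B_{q''})$ yields cubical $(n-1)$-cells that cover $\partial_\cup\bV\cap q''$, lie in a single side-length-$3$ cube of $Q^*$ (hence one $\bV$-equivalence class, verifying ($\Delta$1)), and retain their triple structure meeting at the interior $(n-2)$-cube $C\cap q_C$ (verifying ($\Delta$2)). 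The main technical obstacle is the corner case together with the compatibility of the local choices at shared side faces; I would resolve both by appealing to the structural properties (2) and (6) of Lemma~\ref{lemma:ra_2}, which constrain $A_r$ precisely enough that the discarded portions of the local cells sit strictly in the interior of $A_r$ or of $U_3-A_\Sigma$, so that $\Delta$ is a genuine essential partition of $Q\cap\partial_\cup\bV$ and not merely a covering.
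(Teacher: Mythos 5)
Your overall strategy is the same as the paper's: localize to the cubes of $\cQ(Q;\cF)$, apply Proposition~\ref{prop:bb_tripod} in each cube with the external neighbour across its base face, and assemble; your treatment of the single-block cubes and of the interfaces between adjacent cubes of $\cQ''_r$ (where the two blocks meet in a face interior to $A_r$, and the rest of the shared face is interior to $U_3-A_\Sigma$) agrees with the paper. The gap is in the corner case, which is precisely where the paper's proof does its real work. If $q\cap A_\Sigma=b_1\cup b_2$ with $b_1$ based on $f_1$ and $b_2$ based on $f_2$, then since the blocks are centered, share no $n$-cube, and their union is connected, one of them (say $b_1$) necessarily meets the \emph{other} face in a unit $(n-1)$-cube $c_{bf}=b_1\cap f_2$, in addition to the internal face $c_{bb}=b_1\cap b_2$. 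The cube $c_{bf}$ lies in $\partial_\cup\bV$ between the colors of $U_r\cup A_r$ and $U_{j_r}$, i.e.\ in the same equivalence class type as $b_2\cap f_2$, not in the residual class between $U_3-A_\Sigma$ and $U_{j_r}$. Your prescription ``apply the same construction to each of $B_F$ and $B_{F'}$ with compatibly chosen planar partitions'' does not account for this: the planar partition of $f_2-b_2$ used for the $b_2$-system contains $c_{bf}$, so either some residual cell $C$ straddles two $\bV$-equivalence classes (violating ($\Delta$1)) or $c_{bf}$ is left uncovered. Discarding pieces interior to $A_r$ (your $B_F\cap B_{F'}$, the paper's $c_{bb}$) is not enough; $c_{bf}$ must be \emph{reassigned}. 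The paper does this explicitly by forming $E_1=(\partial_\cup(q-b_1,b_1,q_1)-c_{bb})\cup c_{bf}$ and $E_2=\partial_\cup(q-b_2,b_2,q_2)-(c_{bb}\cup c_{bf})$, and then running a modified version of Proposition~\ref{prop:bb_tripod} on each $E_r$, so that the planar partition on $f_2$ is a partition of $f_2-(b_1\cup b_2)$ and the extra face $c_{bf}$ is absorbed into the tripod system of $b_1$. Without some such redistribution your assembled $\Delta$ is not an essential partition of $Q\cap\partial_\cup\bV$ satisfying ($\Delta$1)--($\Delta$2), so you should add this bookkeeping (or an equivalent one) to the corner case.
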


\begin{proof}
It suffices to verify that $\partial_\cup \bV$ satisfies the tripod property in every cube in $\cQ(Q;\cF)$.

Let $q\in \cQ(Q;\cF)$. We consider two cases. Suppose first that $b=q\cap A_\Sigma$ is a building block, with $A_\Sigma$ from Lemma \ref{lemma:ra_2}. Let $q'$ be the unique $n$-cube in $U_2\cup U_3$ sharing a side with $q$. By Proposition \ref{prop:bb_tripod}, the essential partition $(q-b,b,q')$ of $q\cup q'$ satisfies the tripod property.

Suppose next that $A=q\cap A_\Sigma$ has an essential partition into two building blocks, say $b_1$ and $b_2$. By (ii), there are exactly two $n$-cubes $q_1$ and $q_2$ in $U_2\cup U_3$ sharing a side with $q$. Let $f_1 = q\cap q_1$ and $f_2 = q\cap q_2$. By relabeling, we may assume that $b_r$ is $f_r$-based for $r=1,2$. Since the building blocks $b_1$ and $b_2$ are centered and do not contain common $n$-cubes, we may assume, by relabeling again if necessary, that $b_2 \cap f_1 = \emptyset$. Since $b_1\cup b_2$ is connected, it follows that $c_{bf}=b_1 \cap f_2$ must be an $(n-1)$-cube. We also note that the set $c_{bb}=(\partial b_1) \cap b_2$ is a unit $(n-1)$-cube and $(\partial b_1) \cap b_2 = b_1 \cap (\partial b_2)$. Define $E_1 = (\partial_\cup (q,q-b_1,q_1)-c_{bb})\cup c_{bf}$ and $E_2 = \partial_\cup (q,q-b_2,q_2)-(c_{bb} \cup c_{bf})$.

Thus, by elementary modifications to the proof of Proposition \ref{prop:bb_tripod}, there exists, for $r=1,2$, an essential partition $\Delta_r$ of $E_r$ satisfying the conditions of Definition \ref{def:tripod}, so that $\Delta = \Delta_1 \cup \Delta_2$ is an essential partition of $\partial_\cup (q,q-A,q_1\cup q_2)$ satisfying the conditions of Definition \ref{def:tripod}. The claim follows.
\end{proof}

\subsection{Neglected faces in $\cQ(Q;\cF)$}
\label{sec:NF}

We finish this section by a slight modification of our analysis for non-flat initial data. This is to compensate for the fact that while the spanning forest contains every subcube $q\in \cQ(Q;\cF)$, some cubes $q$ will have faces, contained in $\partial Q$, disjoint from atoms in $A_\Sigma = A_1\cup A_2$. For example, consider Figure \ref{fig:ra_non_flat_2}. It is easy to find a cube $q$ in $\cQ(Q;\cF)$ which meets more faces of $\partial Q$ than $q\cap A_\Sigma$. Such cubes $q$ are only of side length $3$, but this will create a problem in satisfying the tripod property when, in Section \ref{sec:RP}, we scale these configurations, and so preparations are given here. We make a formal definition.

\begin{definition}
\label{def:neglected_face}
Let $(Q, (\cF^1,\cQ''_1,q_1), (\cF^2,\cQ''_2,q_2))$ form non-flat initial data, $\Sigma$ be a spanning forest, and let $A_\Sigma=A_1 \cup A_2$ be the cubical set associated to $\Sigma$ from Lemma \ref{lemma:ra_2}. 
A cube $q\in \cQ(Q;\cF)$ has an $A_\Sigma$-neglected face if $q$ has more faces contained in $\partial Q$ than $q\cap A_\Sigma$ has building blocks.
\end{definition}

\begin{remark}
Note that, for each $q\in \cQ(Q,\cF)$, $q\cap A_\Sigma$ is either a building block or a union of two building blocks.
\end{remark}

Let $\cN(Q;A_\Sigma)$ denote the collection of all $A_\Sigma$-neglected faces in cubes in $\cQ(Q;\cF)$. 

\begin{definition}
\label{def:flat_ext}
Suppose $q\in \cQ(Q;\cF)$ has an $A_\Sigma$-neglected face $f$ and let $p\in \{1,2\}$ be such that $f\subset |\cF^p|$.  Then $f$ \emph{admits a flat extension of $A_\Sigma$} if there exists $q'\in \cQ(Q;\cF)$ adjacent to $q$ and a face $f'$ of $q'$ contained in $|\cF^p|$ so that $q'\cap A_p$ contains an $f'$-based atom and $f \cap f'$ is an $(n-2)$-cube. We call $f'$ a \emph{link of $A_\Sigma$ into $f$}.
\end{definition}

To motivate this terminology, consider a cube $q\in \cQ(Q,\cF)$ having a neglected face $f$ and let $q'\in\cQ(Q;\cF)$ be the cube adjacent to $q$ as in Definition \ref{def:flat_ext}. Then $q\cap A_\Sigma = q\cap A_r$ and $q'\cap A_\Sigma = q\cap A_p$, where $\{r,p\}=\{1,2\}$. Moreover, both cubes $q$ and $q'$ are $F$-based for $F\in \cF^p$. Thus using a flat rearrangement, the atom $q'\cap A_\Sigma$ may be extended to a molecule by adding an atom which enters the cube $q$ and is $f\cup f'$ based. This heuristics is made precise in Section \ref{sec:RP}.

Note that, in Figure \ref{fig:ra_non_flat_2}, all neglected faces admit a flat extension of $A_\Sigma$. In general this is, however, not the case; see Figure \ref{fig:BB_C_mods_small}. For this reason, we partition the neglected faces into collections, called pre-basins, so that each collection contains at least one neglected face admitting a flat extension. Note that pre-basins are always flat, in the sense that each pre-basin is contained in a single face in $\cF^1\cup \cF^2$.

Let $\cN_\ecl(Q;A_\Sigma)$ be the collection of all faces in $\cN(Q;A_\Sigma)$ admitting a flat extension of $A_\Sigma$.
\begin{definition}
Given $p\in \{1,2\}$, a collection $C\subset \cN(Q;A_\Sigma)$ is a \emph{pre-basin on $|\cF^p|$} if
\begin{itemize}
\item[(PB1)] $|C|\subset F$ for some $F\in \cF^p$,
\item[(PB2)] $\Gamma(C)$ is connected, and 
\item[(PB3)] $C \cap \cN_\ecl(Q;A_\Sigma)\ne \emptyset$,
\end{itemize}
\end{definition}

\begin{remark}
It is easy to observe that the components of the graph $\cN(Q;A_\Sigma)$ are pre-basins. Indeed, given a component $C\subset \cN(Q;A_\Sigma)$, by definitions of spanning forest and connected component, there exists a pair $\{f,f'\}$ where $f\in C$ and $f'$ is a link of $A_\Sigma$ to $f$.
\end{remark}

This formulation of pre-basins is sufficient for all forthcoming constructions in dimensions $n>3$. In Section \ref{sec:IC_dim3}, when $n=3$, we will also need to subdivide pre-basins. We formalize this with the notion of system of basins; however this procedure is (quite) general and need not be restricted only to dimension $n=3$. Note that, whereas a pre-basin always consists of neglected faces, a basin need not contain a neglected face; see Figure \ref{fig:ra_non_flat_2_basins}.

Given a pre-basin $C\subset \cN(Q;A_\Sigma)$, we introduce a cell $\sigma_C$, called a \emph{connecting cell}, as follows. By (PB3), we may fix $f_C\in C\cap \cN_\ecl(Q;A_\Sigma)$. Let $f'_C$ be a link into $f_C$, and let $q_C$ and $q'_C$ denote the unique cubes in $\cQ(Q;\cF)$ having $f_C$ and $f'_C$ as faces, respectively. Let $\sigma_C$ be the connected component of $f'_C-A_\Sigma$ meeting $f_C$ in an $(n-2)$-cell, and set $\Omega_C = |C|\cup \sigma_C$. The cell $\Omega_C$ is called an \emph{extension of $|C|$ to $f'_C$}.

\begin{figure}[h!]
\includegraphics[scale=0.24]{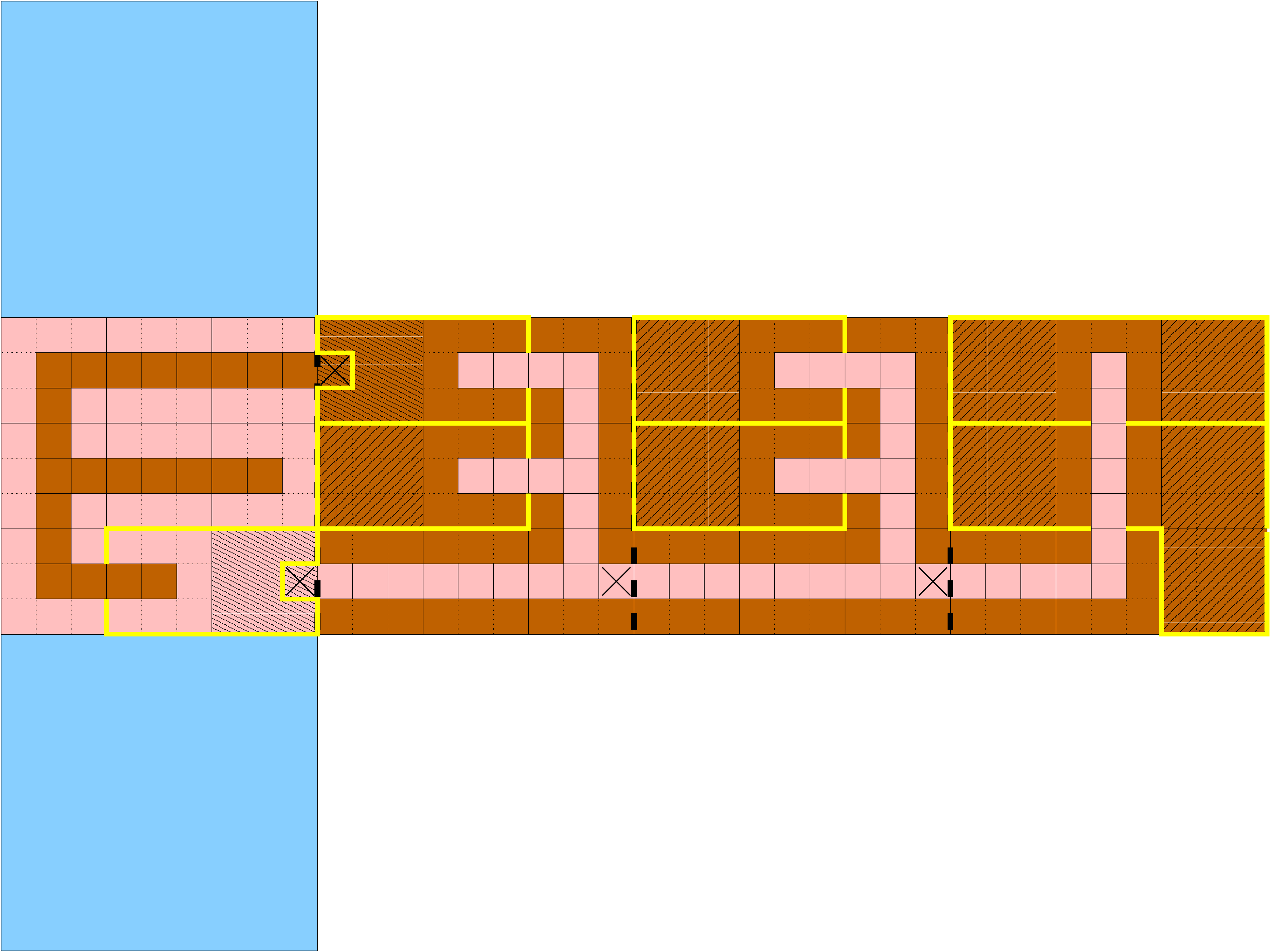}
\caption{Extended pre-basins for a partition of $\cN(Q;A_\Sigma)$ into $8$ pre-basins given the data in Figure \ref{fig:ra_non_flat_2}; the neglected faces are shaded.}
\label{fig:ra_non_flat_2_pre_basins}
\end{figure}

Let $\fP$ be a partition of $\cN(Q;A_\Sigma)$ into pre-basins, and suppose we have fixed, for each $C\in \fP$, an extension $\Omega_C$ of $|C|$; see Figure \ref{fig:ra_non_flat_2_pre_basins}. Let $\Omega_{\fP} = \bigcup_{C\in \fP} \Omega_C$.

\begin{definition}
An essential partition $\cB$ of $\Omega_{\fP}$ is a \emph{system of basins (associated to $\Omega_{\fP}$)} if 
\begin{itemize}
\item[(B1)] each $B\in \cB$ is a subset of $F\in \cF^1\cup \cF^2$,
\item[(B2)] $\Gamma(B^\#)$ is connected for every $B\in \cB$ 
\item[(B3)] $\Gamma(B^\#)$ admits a spanning tree,
\item[(B4)] $B\cap A_\Sigma$ contains a unit $(n-2)$-cube for every $B\in \cB$,
\item[(B5)] for every $B\in \cB$ there exists $C\in \fP$ so that $B-|\cN(Q;A_\Sigma)|$ is contained in a connecting cell $\sigma_C$.  
\end{itemize}
The elements of $\cB$ are called \emph{basins}. 
\end{definition}

Note that the condition (B5) is more flexible than requiring that $B-|C|\subset \sigma_C$, as can be observed by contrasting Figure \ref{fig:ra_non_flat_2_basins} with Figure \ref{fig:ra_non_flat_2_pre_basins}.
 .

\begin{figure}[h!]
\includegraphics[scale=0.24]{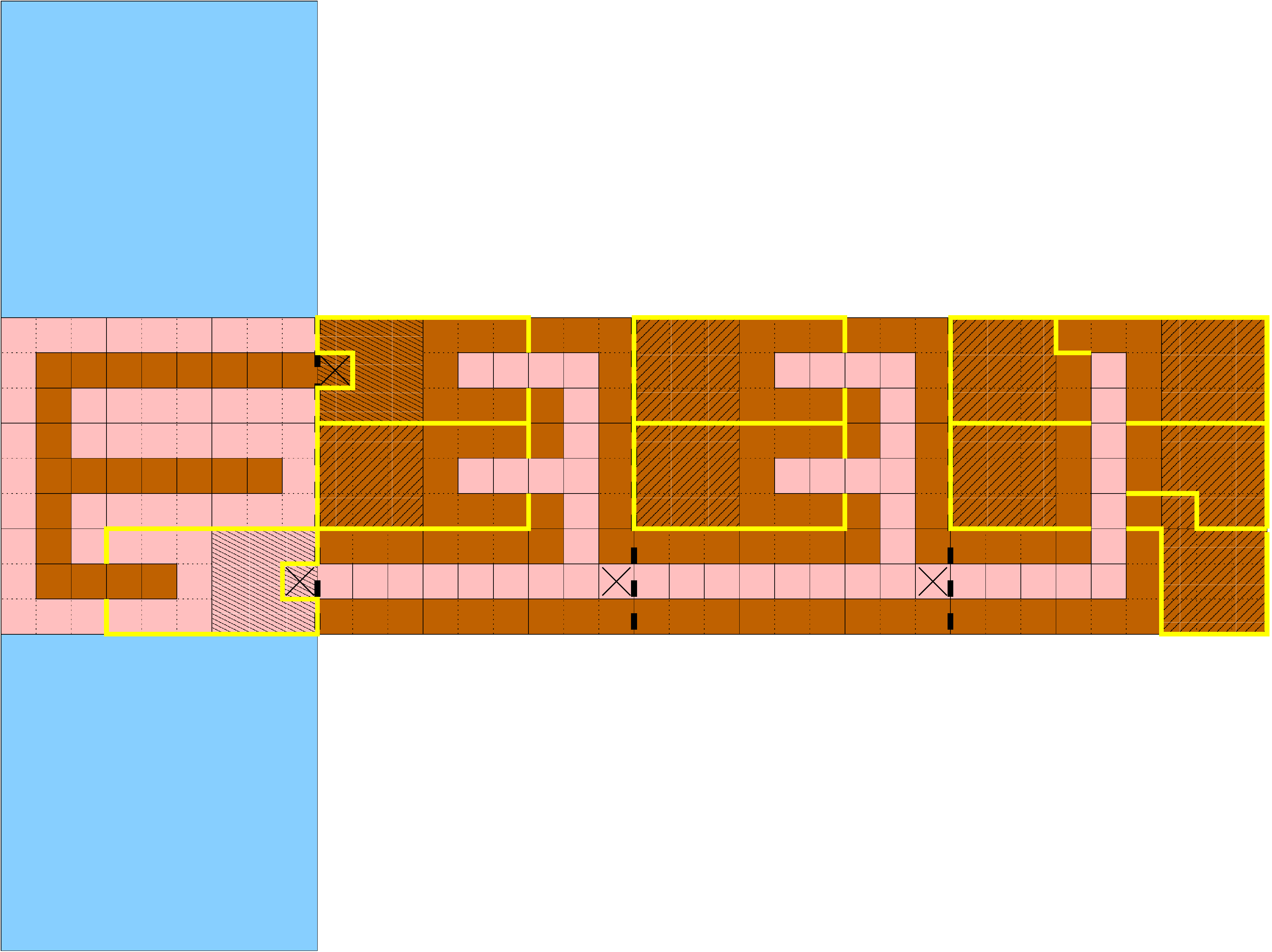}
\caption{A partition of $\cN(Q;A_\Sigma)$ into $10$ basins associated to the data of Figure \ref{fig:ra_non_flat_2_pre_basins}.}
\label{fig:ra_non_flat_2_basins}
\end{figure}

\begin{remark}
The existence of a system of basins is straightforward given a partition $\fP$ of $\cN(Q;A_\Sigma)$. Indeed, for every $C\in \fP$, fix $f_C\in C\cap \cN_\ecl(Q;A_\Sigma)$. Let $f'_C$ be a link into $f_C$ and let $\sigma_C$ be a connecting cell. We then subdivide $\bigcup_{C\in \fP} \sigma_C$ into pair-wise disjoint $1$-fine sets $\sigma'_C$ with connected graphs $\Gamma(\sigma'_C)$ so that the sets $B_C=|C|\cup \sigma'_C$ satisfy conditions (B2) and (B4) for every $C\in \fB$. Since $\Gamma(\sigma'_C)$ has valence less than $2(n-1)-1$ and $|C|$ is $3$-fine, it is also straightforward to show that $\Gamma( B_C^\# )$ admits a spanning tree. Clearly conditions (B1) and (B5) are satisfied. Thus $\cB=\{B_C \colon C\in \fP\}$ is a system of basins.
\end{remark}

Finally, we introduce a (flat) rearrangement along a system of basins. Let $\cB$ be a system of basins associated to $\Omega_{\fP}$, and let $B\in \cB$. By (B5) we may fix $q_B\in A_\Sigma$ so that $B\cap q_B$ is an $(n-2)$-cube.

Let $F_B\in \cF^1\cup \cF^2$ be the unique face of $Q$ satisfying (B1). Then the quadruple $(3Q,3F_B, 3B^\#, 3q_B)$ satisfies the conditions for flat initial data. The only modification is that $3Q$ and $F_B$ now have side length $27$. We call $(3Q,3F_B, 3B^\#, 3q_B)$ \emph{scaled flat initial data}.

By (B3), we may fix, for every $B\in \cB$, a spanning tree $\Gamma_B$ of $\Gamma(3B^\#\cup \{3q_B\})$. Similarly, as in the proof of Lemma \ref{lemma:ra_1}, we find a $1$-fine atom $A_{\Gamma_B}$ associated with the initial data $(3Q,3F_B,3B^\#, 3q_B)$ and the spanning tree $\Gamma_B$. This observation is formalized as the next lemma, with the details left to the interested reader. 

\begin{lemma}
\label{lemma:ra_1_ext}
Let $Q$ be a cube of side length $9$ and $A_\Sigma\subset Q$ a union of two atoms as in Lemma \ref{lemma:ra_2}. Suppose $\cB$ is a system of basins associated to $\Omega_{\fP}$, where $\fP$ is a partition of $\cN(Q;A_\Sigma)$ into pre-basins. For every $B\in \cB$, let $(3Q,3F_B,3B^\#, 3q_B)$ be a scaled flat initial data and $\Gamma_B$ a spanning tree of $\Gamma(3B^\#\cup \{3q_B\})$.

Then there exist $1$-fine pair-wise disjoint atoms $A_{\Gamma_B}$, $B\in \cB$, satisfying conditions (1)-(5) in Lemma \ref{lemma:ra_1} and so that $3A_\Sigma\cup \bigcup_{B\in \cB} A_{\Gamma_B}$ is a pair-wise disjoint union of two molecules.
\end{lemma}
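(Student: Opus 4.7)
The plan is to apply Lemma \ref{lemma:ra_1} (in its internal form per the convention following Corollary \ref{cor:tripod_planar}) separately to each scaled initial data $(3Q, 3F_B, 3B^\#, 3q_B)$ with spanning tree $\Gamma_B$ so as to produce a $1$-fine atom $A_{\Gamma_B}$ for each basin $B \in \cB$; I then verify pair-wise essential disjointness of these atoms and show that, together with $3A_\Sigma = 3A_1 \cup 3A_2$, they assemble into two disjoint molecules $M_1, M_2$ corresponding to the two components of $A_\Sigma$.

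For each $B \in \cB$, Lemma \ref{lemma:ra_1} supplies a $1$-fine atom $A_{\Gamma_B} \subset 3Q$ satisfying conditions (1)--(4) of that lemma; in particular, each $A_{\Gamma_B} \cap q$ for $q \in 3B^\#$ is a $3F_B$-based building block, and $A_{\Gamma_B} \cup 3q_B$ is an $n$-cell meeting $3q_B$ in a unit $(n-1)$-cube. Pair-wise essential disjointness of the $A_{\Gamma_B}$ follows from the essential disjointness of the basins: since $\cB$ is an essential partition of $\Omega_{\fP}$, the supports $|3B^\#|$ have pair-wise disjoint interiors, while each building block composing $A_{\Gamma_B}$ sits inside a cube of $3B^\#$. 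The only delicate case arises when two basins $B \neq B'$ share the attaching cube $q_B = q_{B'}$: then the attaching unit $(n-1)$-cubes $A_{\Gamma_B} \cap 3q_B$ and $A_{\Gamma_{B'}} \cap 3q_B$ are anchored at the disjoint $(n-2)$-cubes $3(B \cap q_B)$ and $3(B' \cap q_{B'})$, which lie on distinct edges of $3q_B \cap 3F_B$, so no collision occurs.

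To identify the molecular structure, use (B1) to assign each basin $B$ the index $p \in \{1,2\}$ with $F_B \in \cF^p$. By Lemma \ref{lemma:ra_2}(3), the unit cube $q_B$ belongs to $A_p$, so $A_{\Gamma_B}$ attaches to $3A_p$ along a unit $(n-1)$-cube; the side-length ratio $\rho(3A_p)/\rho(A_{\Gamma_B}) = 3$ yields the proper $3$-adjacency required by Definition \ref{def:prop_adj}. Consequently
\[
M_p := 3A_p \cup \bigcup_{\substack{B \in \cB \\ F_B \in \cF^p}} A_{\Gamma_B}, \qquad p \in \{1,2\},
\]
is a union of pair-wise properly adjacent atoms whose adjacency graph is a star rooted at $3A_p$. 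This graph is trivially a tree, satisfies the John condition on $\rho$ (since $\rho(3A_p) > \rho(A_{\Gamma_B})$ for every leaf), and has no inner vertices, so $M_p$ is a $(\nu,\lambda)$-molecule for any $\nu$ bounding the number of leaves and any $\lambda > 1$. Finally, $M_1$ and $M_2$ are essentially disjoint because $A_1$ and $A_2$ are essentially disjoint by Lemma \ref{lemma:ra_2} and the index sets $\{B : F_B \in \cF^p\}$ partition $\cB$.

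The principal obstacle is the bookkeeping at attaching cubes: ensuring that the atoms $A_{\Gamma_B}$ do not collide when several basins share an attaching cube, and that each basin unambiguously attaches to the correct component $A_p$ of $A_\Sigma$. Both facts follow from (B1), (B4), and the face-based structure of $A_r$ from Lemma \ref{lemma:ra_2}(3), but warrant explicit verification in the full proof.
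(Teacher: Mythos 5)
Your proposal takes exactly the route the paper intends: the paper introduces the scaled flat initial data $(3Q,3F_B,3B^\#,3q_B)$ and the spanning trees $\Gamma_B$, states that the atoms $A_{\Gamma_B}$ are found ``as in the proof of Lemma \ref{lemma:ra_1}'', and explicitly leaves the remaining details to the reader, which is precisely the construction you carry out. Your added bookkeeping --- essential disjointness of the $A_{\Gamma_B}$ via the essential disjointness of the basins, and the assembly of $3A_1$, $3A_2$ with their attached atoms into two essentially disjoint molecules with star-shaped, properly $3$-adjacent, John adjacency trees --- is the omitted verification, so the proposal is correct and follows essentially the same approach as the paper.
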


\begin{figure}[h!]
\includegraphics[scale=0.35]{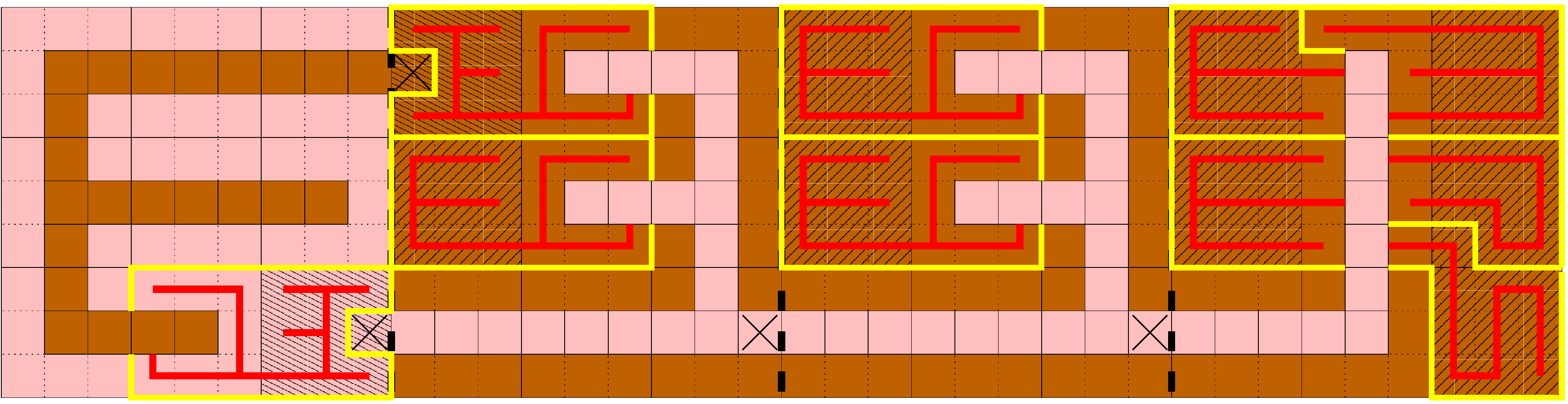}
\caption{A selection of spanning trees associated the configuration in Figure \ref{fig:ra_non_flat_2_basins}.}
\label{fig:ra_ext_flat}
\end{figure}




\section{Rough Rickman partitions}
\label{sec:RP}

This section applies the elementary constructions from Section \ref{sec:LRA} to produce domains $\Omega_1,\Omega_2,\Omega_3$ which form a rough Rickman of $\R^n$, and proves Theorem \ref{thm:3} for $p=2$. The proof is based on the existence of uniform essential partitions associated to the exhaustion of $[0,\infty)^{n-1}\times \R = \bigcup_{k\ge 0} 3^k Q_0$, where $Q_0 = [0,3]^{n-1}\times [-3,3]$. 

\begin{theorem}
\label{thm:RP}
For $m\ge 0$, there exist essential partitions 
\[
\bfOmega_m = (\Omega_{m,1},\Omega_{m,2},\Omega_{m,3})
\]
of $n$-cells $3^m\left( Q_0 \cup ([3,6]\times [0,3]^{n-1})\right)$ contained in $[0,\infty)^{n-1}\times \R$ with the following properties:
\begin{itemize}
\item[(1)] the sequence $(\bfOmega_m)$ is stable: 
\begin{itemize}
\item[(1a)] $\Omega_{m,p} \cap 3^{m-2}Q_0 = \Omega_{m',p} \cap 3^{m-2}Q_0$ for $m'>m>2$ and $p=1,2,3$,
\item[(1b)] $\Omega_{m,3} \subset \left(\interior [0,\infty)^{n-1}\right)\times \R = \bigcup_{m\ge 0}  \bfOmega_m$; 
\end{itemize}
\item[(2)] each $\Omega_{m,p}$ is a dented molecule satisfying
\begin{itemize} 
\item[(2a)] there exist $\nu\ge 1$, $\lambda>1$, and $\ell_0\ge 1$ depending only on $n$ so that each $\hull(\Omega_{m,p})$ is a $(\nu,\lambda)$-molecule with atom length at most $\ell_0$ and 
\item[(2b)] there exist $L\ge 1$ depending only on $n$ and an $L$-bilipschitz homeomorphism $(\Omega_{m,p},d_{\Omega_{m,p}}) \to (\hull(\Omega_{m,p}),d_{\hull(\Omega_{m,p})})$ which is the identity on $\partial \hull(\Omega_{m,p}) \cap \Omega_{m,p}$; 
\end{itemize}
\item[(3)] each $\bfOmega_m$, $m\ge 1$, satisfies the tripod property.
\end{itemize}
For $p=1,2,3$, each domain $\Omega_p = \bigcup_{m\ge 0}\Omega_{m,p}$ in its inner metric $d_{\Omega_p}$ is bilipschitz equivalent to $\R^{n-1}\times [0,\infty)$. Moreover, there exist bilipschitz homeomorphisms  $\phi_1 \colon [0,\infty)^{n-1}\times [0,\infty) \to (\Omega_1,d_{\Omega_1})$ and $\phi_2 \colon [0,\infty)^{n-1} \times (-\infty,0] \to (\Omega_2,d_{\Omega_2})$ which restrict to the identity mappings on $\partial [0,\infty)^{n-1}\times [0,\infty)$ and $\partial [0,\infty)^{n-1}\times (-\infty,0]$, respectively; the boundary $\partial [0,\infty)^{n-1}$ is understood relative to $\R^{n-1}$.
\end{theorem}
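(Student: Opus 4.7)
The plan is to construct the sequence $(\bfOmega_m)$ inductively on $m$, adding only one dyadic annulus of new material at each step so that everything at scale $3^{m-2}$ remains frozen, which immediately gives the stability condition (1a). First I would hand-construct a seed $\bfOmega_0$ (and, to initialize the recursion properly, also $\bfOmega_1$) in the cube $Q_0 \cup ([3,6]\times[0,3]^{n-1})$, in which $\Omega_{0,1}$ and $\Omega_{0,2}$ are essentially the upper and lower halves of $Q_0$, separated by a single $F$-based building block $\Omega_{0,3}$ entering through $[3,6]\times[0,3]^{n-1}$, with $F = [0,3]^{n-1}\times\{0\}$. The tripod property of this seed is a direct application of Proposition~\ref{prop:bb_tripod}, and each $\Omega_{0,p}$ is trivially a dented molecule.

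For the inductive step, given $\bfOmega_{m-1}$, I construct $\bfOmega_m$ by placing new material only in the annulus $A_m := 3^m(Q_0 \cup ([3,6]\times[0,3]^{n-1})) - 3^{m-2}Q_0$, which decomposes into cubes $Q\in (3^m Q_0)^*$ of side length $3^{m-1}$. In each $Q$ in which $\partial\Omega_{m,3}$ must cross a single face, I apply the flat rearrangement of Lemma~\ref{lemma:ra_1} to a spanning tree obtained from Lemma~\ref{lemma:gv}; in each corner cube where the interface turns, I apply the non-planar rearrangement of Lemma~\ref{lemma:ra_2} with a spanning forest from Lemma~\ref{lemma:gv_non-flat}. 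The neglected faces produced in non-planar cubes are collected into pre-basins and covered by a system of basins as in Section~\ref{sec:NF}, and the corresponding $1$-fine atoms are added via Lemma~\ref{lemma:ra_1_ext}; these atoms are precisely the dents that turn each $\Omega_{m,p}$ from a molecule into a dented molecule.

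The verification of properties~(2) and~(3) is essentially cube-by-cube. The hull $\hull(\Omega_{m,p})$ is a $(\nu,\lambda)$-molecule with $\nu$, $\lambda$, and atom length bounded only in terms of $n$, because building blocks have valence less than $2(n-1)$ by Definition~\ref{def:bb}, adjacent atoms are properly $3$-adjacent by construction, and the centered position of each building block yields a uniform $\lambda$-collapsibility; this gives~(2a), and~(2b) is then Proposition~\ref{prop:fRt_flat} applied to $\Omega_{m,p}$ regarded as $\hull(\Omega_{m,p})$ with its dents. The tripod property~(3) is checked locally: inside each flat rearrangement cube use Lemma~\ref{lemma:tripod_planar}, inside each non-planar cube use Lemma~\ref{lemma:tripod_non_planar}, with the basin-based flat extensions providing the tripod partition over the neglected faces; outside $A_m$ the tripod partition inherited from $\bfOmega_{m-1}$ is kept verbatim, and the two partitions glue along the common boundary of the annulus since it is frozen. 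Finally, setting $\Omega_p = \bigcup_m \Omega_{m,p}$ and applying Proposition~\ref{prop:adap_cont_2} to the increasing sequence $\hull(\Omega_{0,p})\subset \hull(\Omega_{1,p})\subset\cdots$, composed with the bilipschitz maps from~(2b) and identified on the boundary via the identity clause of Corollary~\ref{cor:fRt}, yields the required bilipschitz homeomorphisms $\phi_1, \phi_2$ from the half-spaces onto $\Omega_1$ and $\Omega_2$.

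The main obstacle will be the compatibility of the local rearrangements across faces shared by adjacent cubes of $A_m$, and between annuli $A_m$ and $A_{m+1}$, because the freeze condition~(1a) forces the atoms built at scale $3^{m-2}$ to match the new atoms at scale $3^{m-1}$ exactly on the common interface. This combinatorial matching has to be designed into the seed $\bfOmega_0$ and into the recursion rule, so that at every scale the required flat or non-planar initial data of Section~\ref{sec:LRA} genuinely satisfies the hypotheses of Lemmas~\ref{lemma:gv} and~\ref{lemma:gv_non-flat} (in particular, that the cubes $\cQ''_r \setminus \cQ_c(Q;\cF)$ in the corner cubes remain connected after the previous-scale atoms have been imposed), and that every neglected face admits a flat extension compatible with a system of basins. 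Once this combinatorial bookkeeping is pinned down, the geometric conclusions~(2) and~(3) follow with essentially no further work from the machinery of Sections~\ref{sec:RT}~and~\ref{sec:LRA}.
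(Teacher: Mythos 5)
Your inductive scheme is not the paper's, and as stated it has a genuine gap at its core: the tripod property cannot be produced by a single pass of the Section~\ref{sec:LRA} rearrangements in cubes of side length $3^{m-1}$ of a new outer annulus. The rearrangement lemmas (Lemmas~\ref{lemma:ra_1}, \ref{lemma:ra_2}, \ref{lemma:ra_1_ext} and the tripod Lemmas~\ref{lemma:tripod_planar}, \ref{lemma:tripod_non_planar}) are scale-specific: they operate in cubes of side $9$ (or $27$) and insert $1$-fine building blocks. If you apply their rescaled versions in cubes of side $3^{m-1}$, the atoms you insert have side comparable to $3^{m-3}$, so in the annulus the three domains only come together at scale $3^{m-1}$. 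But Definition~\ref{def:tripod} is formulated with the unit subdivision and $\bU$-equivalence classes of diameter at most $3$ (Definition~\ref{def:U-eqv}), and condition ($\Delta$2) forces triples of cells from three different equivalence classes to meet along $(n-2)$-cells inside cubes of side $3$; equivalently $\dist_\haus(\partial_\cup \bfOmega_m,\partial_\cap\bfOmega_m)\le 6$ as in \eqref{eq:haus_W}. Your coarse interface in the annulus violates this. Repairing it forces a cascade of modifications through all intermediate scales $3^{m-1},3^{m-2},\dots,3,1$ inside the annulus, which is precisely the iterative machine of Section~\ref{sec:CD}--\ref{sec:prep_induction}: the paper instead rescales the whole partition by $3$ at each step and performs unit-scale $\cC$-, $\cD$- and secondary modifications along the recently created interface, organized by the nested lists $\fL_m$ (this is also where the $\cN_2$-cubes and the bookkeeping of neglected faces across scales enter, which your one-shot basin fix does not capture). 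In that setting stability (1a) is \emph{not} automatic from ``freezing'': the scaling moves the picture near the corner, and (1a) is a stationarity property engineered by special choices (making the distinguished building block in $3Q_0$ a leaf of the spanning forest), as in Sections~\ref{sec:1ra}--\ref{sec:2ra} and \ref{sec:IC_init}.

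The second gap is in (2a)--(2b). You claim (2b) is Proposition~\ref{prop:fRt_flat} applied to $\Omega_{m,p}$ ``regarded as $\hull(\Omega_{m,p})$ with its dents,'' but that proposition only treats dented \emph{atoms}, whose dents are molecules confined to single cubes of the hull. The sets $\Omega_{m,p}$ are dented \emph{molecules} with recursively nested dents and with adjacency trees $\Gamma(\Omega_{m,p})$ of unbounded valence; the uniform constant $L(n)$ requires the internal/external dichotomy (Lemma~\ref{lemma:mods_meet_exts}), the counting estimates behind uniform $\lambda$-collapsibility (Lemma~\ref{lemma:hull_regularity}, including the inequality $\ell^{n-1}>2\cdot 3^n$ and the explicit dimension-$3$ catalogue of Section~\ref{sec:IC_dim3}), the bound of Lemma~\ref{lemma:entry_counting}, and the partial-hull induction of Lemmas~\ref{lemma:expanding_removability}, \ref{lemma:Q_flat} and \ref{lemma:f}. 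Likewise (2a) does not follow merely from the valence bound on building blocks and centering; it needs the same counting. Finally, Proposition~\ref{prop:adap_cont_2} is used in the paper only for $\Omega_3$; for $\Omega_1$ the hull is the quadrant $[0,\infty)^n$, and for $\Omega_2$ one applies Proposition~\ref{prop:fRt} to the disjoint molecules of $\hull(\Omega_2)$ in the upper half-space to get a homeomorphism that is the identity on the boundary, a point your uniform appeal to Proposition~\ref{prop:adap_cont_2} and Corollary~\ref{cor:fRt} does not deliver.
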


Conditions (1)-(3) have the following interpretations. Condition (1) refers to an induction process, which consists of two main steps: scaling and rearranging, and allows us to paste the essential partitions $\bfOmega_m$ together. Condition (2) yields that the domains $\Omega_{m,j}$ are uniformly bilipschitz equivalent to cubes $[0,3^m]^n$. Finally, (3) ensures that $\dist_\haus(\partial_\cup \bfOmega_m, \partial_\cap \bfOmega_m)\le 6$ in the sup-metric; compare with (\ref{eq:haus_W}). We also observe the following corollary; see Section \ref{sec:proof_RP}.

\begin{corollary}
\label{cor:John-domains}
Let $p=1,2,3$ and $m\geq 1.$ Then the domains $\Omega_{m,p}$ are John-domains with John-constant depending only on $n$. Furthermore, each $\Omega_p$ is a uniform domain.
\end{corollary}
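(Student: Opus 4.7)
The plan is to derive both assertions directly from Theorem \ref{thm:RP} and the molecule theory developed in Section \ref{sec:RT}; the corollary amounts to transferring the John and uniform properties across the bilipschitz equivalences already provided.

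First I would handle the John property of $\Omega_{m,p}$. Theorem \ref{thm:RP}(2a) identifies $\hull(\Omega_{m,p})$ as a $(\nu,\lambda)$-molecule with parameters $\nu$, $\lambda$, and atom length bound $\ell_0$ depending only on $n$. By Remark \ref{rmk:John} every molecule is a John domain, with John constant controlled by these parameters, so $\hull(\Omega_{m,p})$ is a John domain with constant depending only on $n$. Theorem \ref{thm:RP}(2b) then supplies an $L$-bilipschitz homeomorphism $\Phi \colon (\Omega_{m,p}, d_{\Omega_{m,p}}) \to (\hull(\Omega_{m,p}), d_{\hull(\Omega_{m,p})})$ which restricts to the identity on $\partial \hull(\Omega_{m,p}) \cap \Omega_{m,p}$, with $L = L(n)$. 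Pulling John curves from the hull back through $\Phi^{-1}$ produces John curves in $\Omega_{m,p}$; the identity behavior on the common boundary together with the bilipschitz control of the inner metrics ensures that distances to $\partial \Omega_{m,p}$ and distances to $\partial \hull(\Omega_{m,p})$ are comparable up to a factor of $L$ along these curves, so the John condition transfers with constant depending only on $n$.

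Next I would address uniformity of $\Omega_p$. The last paragraph of Theorem \ref{thm:RP} asserts that $(\Omega_p, d_{\Omega_p})$ is bilipschitz equivalent to the closed half-space $\R^{n-1} \times [0, \infty)$, whose inner metric coincides with the Euclidean metric by convexity. The half-space is trivially a uniform domain, and uniformity is an intrinsic condition stable under inner-metric bilipschitz equivalence, hence $\Omega_p$ is uniform with constant depending only on $n$. Should this transfer require reinforcement, an alternative route is to exploit the stability condition (1a), which makes $(\Omega_{m,p})_{m \ge 0}$ an increasing exhaustion of $\Omega_p$ with compatible overlap, and apply V\"ais\"al\"a's theorem on unions of John domains (\cite{VaisalaJ:UnioJd}) to the uniformly John family $(\Omega_{m,p})_m$ provided by the first part.

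The only technical matter, which I expect to be a routine unpacking rather than a genuine obstacle, is verifying that the inner-metric bilipschitz equivalences in Theorem \ref{thm:RP} transfer not only curve lengths but also the boundary-distance component of the John and uniform conditions. For the hull map this follows from the identity on the common boundary together with standard comparability of inner and Euclidean boundary distances in John-type geometries; for the global bilipschitz equivalence to the half-space it is captured by the intrinsic (quasihyperbolic-metric) formulation of uniformity.
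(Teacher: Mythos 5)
Your treatment of the John property of $\Omega_{m,p}$ is essentially sound and close to the paper's: the paper builds the John curves directly from the geodesic in the tree of dented atoms, using the uniformly bounded atom length and the combinatorial John behavior of the side-length function from Section \ref{sec:RT}, which is the same information you invoke through Theorem \ref{thm:RP}(2a), Remark \ref{rmk:John} and the map of Theorem \ref{thm:RP}(2b); and transferring the John condition through an inner-metric $L$-bilipschitz homeomorphism is legitimate, since curve lengths are distorted by at most $L$ and, for any domain, the Euclidean distance to the boundary coincides with the inner-metric distance to the boundary.

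The uniformity of $\Omega_p$ is where there is a genuine gap. Uniformity in the sense of Martio--Sarvas and V\"ais\"al\"a requires a curve with $\ell(\gamma)\le C\,|a-b|$, where $|a-b|$ is the \emph{Euclidean} distance of the endpoints, and this quantity is not controlled by the inner metric, so uniformity is \emph{not} stable under inner-metric bilipschitz equivalence. A concrete counterexample: the slit plane $\R^2\setminus\bigl([0,\infty)\times\{0\}\bigr)$ has inner metric bilipschitz equivalent to a half-plane (halve the angle in polar coordinates), yet it is not uniform, because points just above and just below the slit at $x_1=1$ are Euclidean-close while every connecting curve has length bounded below by a constant. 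The quasihyperbolic characterization of uniformity also involves $|a-b|$, so it does not make the condition intrinsic; and V\"ais\"al\"a's theorem in \cite{VaisalaJ:UnioJd} concerns unions of two John domains under hypotheses on their intersection -- in any case, John constants for the exhausting pieces $\Omega_{m,p}$ do not yield the length condition $\ell(\gamma)\le C|a-b|$ for the union. That condition, equivalently the quasiconvexity $d_{\Omega_p}(a,b)\le C|a-b|$, is precisely what must be proved and is not contained in Theorem \ref{thm:RP}. The paper obtains it by running the same explicit construction used for the John property: for $a,b\in\interior\Omega_p$ one follows the geodesic $D_1,\ldots,D_r$ in $\Gamma(\Omega_p)$ between the dented atoms containing $a$ and $b$; the uniformly bounded atom lengths and the John behavior of $r\mapsto\rho(\hull(D_r))$ along this geodesic produce a curve satisfying both the cigar condition and the length comparison with $|a-b|$, the latter coming from the geometry of the partition rather than from the inner metric alone. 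To repair your argument you would need to establish this Euclidean comparison directly (as the paper does), rather than deduce it from the bilipschitz equivalence of $(\Omega_p,d_{\Omega_p})$ with a half-space.
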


\begin{proof}[Proof of Theorem \ref{thm:3} (for $p=2$) given Theorem \ref{thm:RP}]
Let $\bfOmega'=(\Omega'_1,\Omega'_2,\Omega'_3)$ be the essential partition of $[0,\infty)^{n-1}\times \R$ from Theorem \ref{thm:RP}. By (1a) and (3) in Theorem \ref{thm:RP}, $\bfOmega'$ satisfies the tripod property.

We subdivide $\R^n$ into $2^{n-1}$ congruent subsets $W_1,\ldots, W_{2^{n-1}}$, where $W_1 = [0,\infty)^{n-1}\times \R$. Since $\Omega'_3 \subset \interior W_1$, by reflecting $\Omega'_3$ with respect to the common sides of $W_1,\ldots, W_{2^{n-1}}$ we obtain pair-wise disjoint domains $\Omega'_4,\ldots, \Omega'_{2^{n-1}+2}$. The unions of the corresponding reflections of $\Omega'_1$ and $\Omega'_2$ are the domains $\Omega_1$ and $\Omega_2$ claimed in Theorem \ref{thm:3}. Thus $\Omega_1$ and $\Omega_2$ are connected. 

Let $\phi_1 \colon [0,\infty)^{n-1}\times [0,\infty) \to (\Omega'_1,d_{\Omega'_1})$ and $\phi_2 \colon [0,\infty)^{n-1}\times (-\infty,0] \to (\Omega'_2,d_{\Omega'_2})$ be bilipschitz homeomorphisms which reduce to the identity mapping on the boundary, a consequence of Theorem \ref{thm:RP}. Reflections across the pair-wise common sides of domains $W_1,\ldots, W_{2^{n-1}}$ extend $\phi_1$ and $\phi_2$ to bilipschitz homeomorphisms $\psi_1 \colon \R^{n-1}\times [0,\infty) \to (\Omega_1,d_{\Omega_1})$ and $\psi_2 \colon \R^{n-1}\times (-\infty,0] \to (\Omega_2,d_{\Omega_2})$. Finally, if
\[
\Omega_3 = \Omega'_3\cup \cdots\cup \Omega'_{2^{n-1}+2}, 
\]
and
\[
\bfOmega = (\Omega_1,\Omega_2,\Omega_3),
\]
condition (3) in Theorem \ref{thm:RP} ensures that $\bfOmega$ is a rough Rickman partition satisfying the tripod property.
\end{proof}

\subsection{Proof of Theorem \ref{thm:RP} -- first steps}
\label{sec:RP_1st_steps}

We begin the proof of Theorem \ref{thm:RP} in this section by explicitly giving the initial steps of the inductive construction of partitions $\bfOmega_m$. The general induction is based on rearrangements in three types of cubes and their successive scalings, and we consider these rearrangements in detail in Section \ref{sec:CD}. We complete the proof finally in Section \ref{sec:proof_RP}.

Let $\Omega$ be a $3$-fine $n$-cell and suppose that $\bU=(U_1,U_2,U_3)$ is an essential partition of $\Omega$ into $n$-cells. A cube $Q\in \Omega^*$ of side length $3$ is a \emph{$\bU$-cube} if there exists $i\in \{1,2,3\}$ for which $Q\subset U_i$. The index $i$ is the \emph{color of $Q$ in $\bU$}, and the indices $\{1,2,3\}\setminus \{i\}$ are \emph{complementary indices (of the color of $Q$)}. Let also
\[
\cQ_\partial(\bU)=\{ Q\in |\bU|^* \colon Q\cap \partial_\cup \bU\ \mathrm{contains\ an\ } (n-1)\mathrm{-cell}\}.
\]

\subsubsection{The initial step; step $0$}
\label{sec:0ra}

We begin with the $n$-cubes
\[
\Omega_1=[0,3]^n,\ \Omega_2=[0,3]^{n-1}\times [-3,0],\ \mathrm{and}\ \Omega_3=[3,6]\times [0,3]^{n-1}
\]
of side length $3$, and set 
\[
\bfOmega = (\Omega_1,\Omega_2,\Omega_3),
\]
$\Omega = |\bfOmega| = \Omega_1\cup\Omega_2\cup \Omega_3$; see Figure \ref{fig:0ra}. 

\begin{figure}[h!]
\includegraphics[scale=0.35]{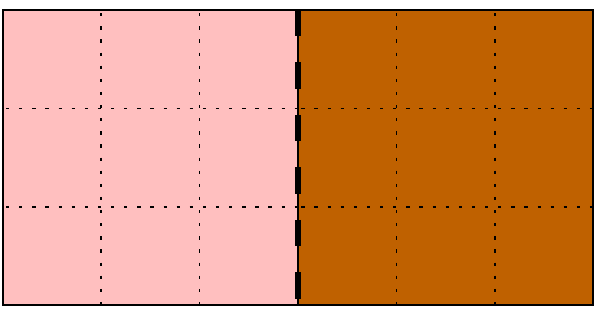}
\caption{Faces of $\Omega_1$ in $\partial_\cup \bfOmega$}
\label{fig:0ra}
\end{figure}

For consistency, let also 
\[
\bfOmega_0 = (\Omega_{0,1}, \Omega_{0,2},\Omega_{0,3}) = (\Omega_1, \Omega_2,\Omega_3).
\]
Note that $\bfOmega_0$ does not satisfy the tripod property for the (trivial) reason that $\Omega_2\cap \Omega_3$ is not $(n-1)$-dimensional. However, we note that $\partial_\cap \bfOmega = \Omega_1 \cap \Omega_2 \cap \Omega_3$ is an $(n-2)$-cube. 

In anticipation of the forthcoming induction step, we note that $\partial_\cup \bfOmega_0 \subset [0,3]^n$. Furthermore, the cube $[0,3]^n$ is contained in domain $\Omega_{0,1}$ but has one $(n-1)$-dimensional face contained in $\partial \Omega_{0,2}$ and one in $\Omega_{0,3}$. The cube $[0,3]^n$ will therefore be an example of a $\cC$-cube. This is one of the three general categories we will use $\cC$-cubes ('C' for color), $\cD$-cubes ('D' for dent), and $\cN$-cubes ('N' for neglected). They are formally introduced in Section \ref{sec:CD}, but have clear antecedents from various local rearrangements in Section \ref{sec:LRA}.

\subsubsection{First rearrangement}
\label{sec:1ra}

First scale $\bfOmega_0$ by $3$, and let 
\[
\bfOmega'_1=3\bfOmega_0=(3\Omega_{0,1},3\Omega_{0,2},3\Omega_{0,3}).
\]

To rearrange $\bfOmega'_1$ to achieve the tripod property and properties (1)-(3) in Theorem \ref{thm:RP}, we modify $\bfOmega'_1$ using atoms which allow respectively $3\Omega_{0,2}$ and $3\Omega_{0,3}$ to penetrate $3\Omega_{0,1}$; this will produce $\bfOmega_1$. We apply Lemma \ref{lemma:ra_2} to $C=3\Omega_{0,1}$ and thus obtain an essential partition 
\[
\bfOmega_1=(\Omega_{1,1},\Omega_{1,2},\Omega_{1,3}) = (3\Omega_{0,1}-(A_2\cup A_3), 3\Omega_{0,2} \cup A_2, 3\Omega_{0,3}\cup A_3)
\]
of $|\bfOmega'_1|$ into $n$-cells satisfying the tripod property, where $A_2$ and $A_3$ are atoms from the process of Lemma \ref{lemma:ra_2}, see Figure \ref{fig:1ra}; rearrangements of this type will be called $\cC$-modifications (Lemma \ref{lemma:ra_a}) in the inductive construction, since they are performed in a scaled copy of a $\cC$-cube.

\begin{figure}[h!]
\includegraphics[scale=0.35]{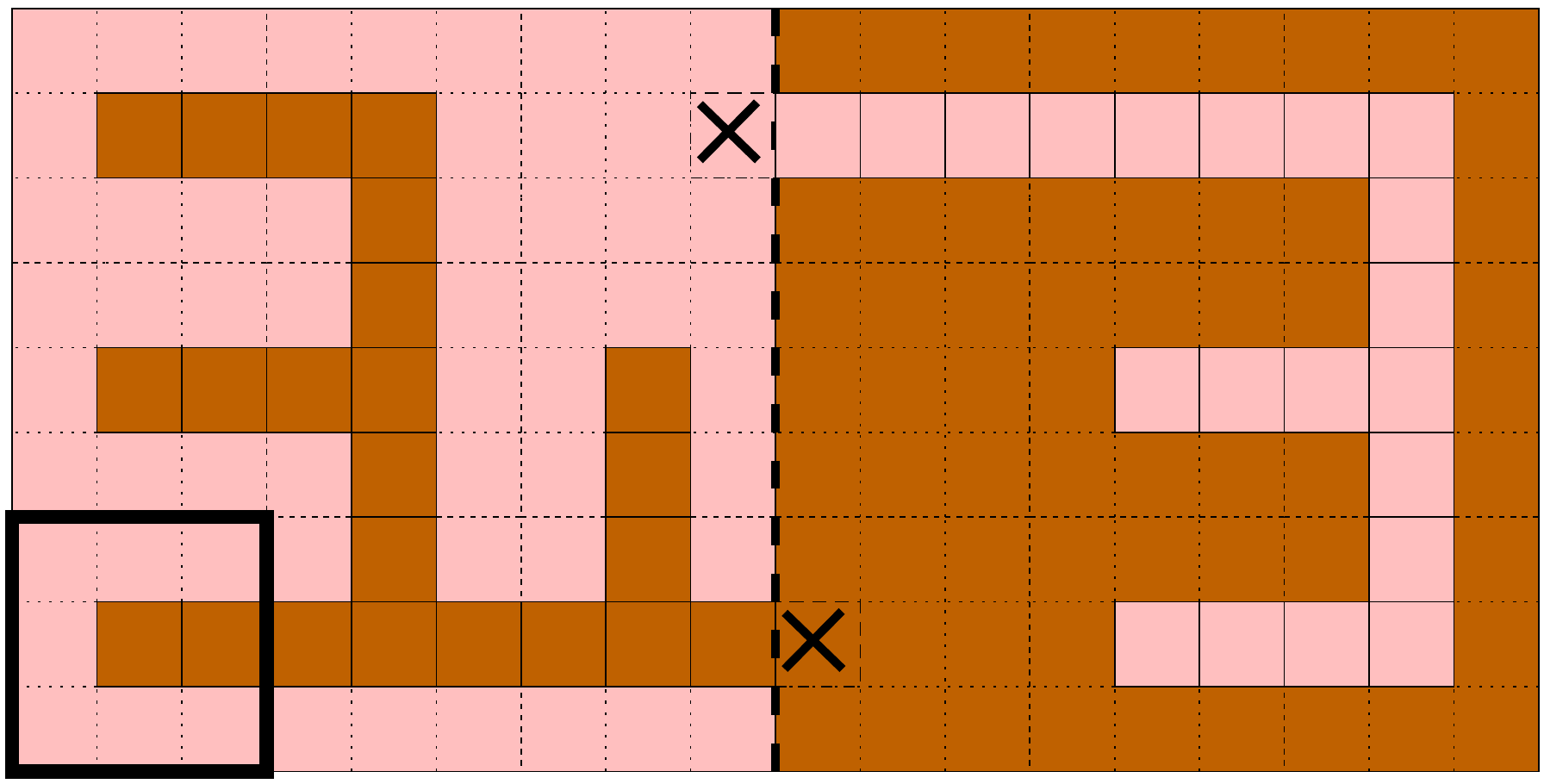}
\caption{An example of the evolution of $\partial_\cup \bfOmega_1$ with cube $[0,3]^3$ emphasized.} 
\label{fig:1ra}
\end{figure}

In the proof of Lemma \ref{lemma:ra_2}, we are free to use any maximal forest $\Sigma$. In particular, we may assume that $[0,3]^n$ is a leaf of $\Sigma$ as in Figure \ref{fig:1ra}; this choice is used to obtain stability condition (1a). Thus we arrive at  $\bfOmega_1$ in accord with the conditions of Theorem \ref{thm:RP}.

As orientation toward the general induction step, we note that $\partial_\cup \bfOmega_1$ is contained in a union of $n$-cubes of side length $3$ contained in $3\Omega_{0,1}$. Indeed, let 
\[
\cQ=\{ Q\in \cQ_\partial(\bfOmega'_1)\colon Q \subset 3[0,3]^n=3\Omega_{0,1}\}. 
\]
Then $\partial_\cup \bfOmega_1 \subset |\cQ|$. 

Furthermore, for all $Q\in\cQ$, there exists exactly one $j_Q\in \{2,3\}$ so that $\mathrm{cl}\left( \interior Q\cap \Omega_{1,j_Q}\right)$ is a building block. If $Q\cap \Omega_{1,j_Q} = \mathrm{cl}\left( \interior Q\cap \Omega_{1,j_Q}\right)$, $Q$ is a $\cD$-cube. Otherwise, $Q$ is an $\cN$-cube.


\subsubsection{The second step} 
\label{sec:2ra}

Whereas the essential partition $\bfOmega_0$ was explicitly chosen and $\bfOmega_1$ was described using Lemma \ref{lemma:ra_2}, at this point we only give heuristic description for $\bfOmega_2$. 

The essential partition $\bfOmega_2$ is obtained from $\bfOmega_1$ by first defining $\bfOmega_2' = 3\bfOmega_1$ and rearranging $3\Omega_{1,1}$, $3\Omega_{1,2}$, and $3\Omega_{1,3}$ with flat rearrangements (Lemma \ref{lemma:ra_1}) and by flat rearrangements in basins (Lemma \ref{lemma:ra_1_ext}); we attach atoms of side length $1$ to atoms $3A_2$ and $3A_3$ and, correspondingly, remove them from $3\Omega_{1,1}$. Figure \ref{fig:2ra} illustrates this step. This modification will be called a secondary $\cC$-modification and will be formalized in Lemma \ref{lemma:ra_c}. Note, however, that in order to satisfy the stability requirement (1a), we also impose the additional condition that $\bfOmega_2 \cap [0,3]^n = \bfOmega_1 \cap [0,3]^n$. This is possible, since $\Omega_{1,3}\cap [0,3]^n$ is a leaf.

\begin{figure}[h!]
\includegraphics[scale=0.20]{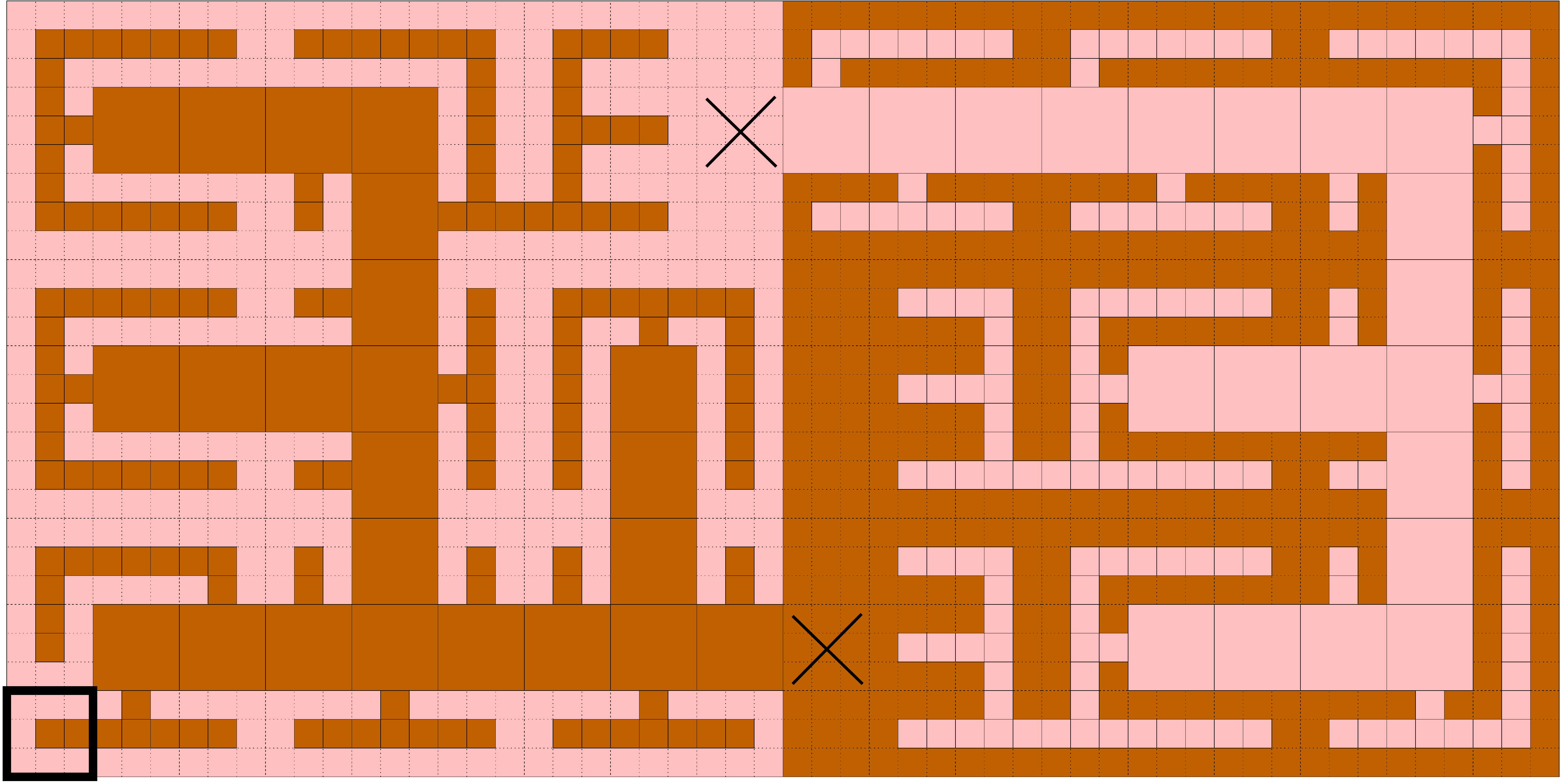}
\caption{An example of $\bfOmega_2$ with cube $[0,3]^3$ highlighted.}
\label{fig:2ra}
\end{figure}


\subsection{$\cC$-, $\cD$-, and $\cN$-cubes}
\label{sec:CD}

With this preparation, we formally define $\cC$-, $\cD$-, and $\cN$-cubes; primary cubes have side length $3$ and secondary cubes side length $9$. The corresponding rearrangements, based on flat and non-flat rearrangements in Section \ref{sec:LRA}, are then discussed then in the following sections. Note that, if $Q$ is a primary or a secondary cube, the corresponding rearrangement is performed in $3Q$.

Restricting exclusively to these cubes in the iteration process provides a systematic rearrangement process. We obtain the sequence $(\bfOmega_m)$, using scalings and rearrangements, in such a way that for each $m\ge 0$ there exists an essentially disjoint collection $\fL_m$ of primary and secondary cubes (of different types) which covers $\partial_\cup \bfOmega_m$. After scaling $\bfOmega_m$ by $3$, we perform appropriate rearrangements of the right type in each cube in $3\fL_m$. This yields a new essential partition $\bfOmega_{m+1}$ and a new list $\fL_{m+1}$ of essentially disjoint cubes which also have the property $\partial_\cup \bfOmega_{m+1} \subset |\fL_{m+1}|$. The rearrangements in these cubes are mutually independent, and it follows from properties of rearrangements in Section \ref{sec:LRA} that $\bfOmega_{m+1}$ satisfies the tripod property. We discuss the list $\fL_m$ and this inductive step in Section \ref{sec:prep_induction}.

Although there are \emph{a priori} six different types of cubes, only four types of rearrangements occur here. The reason is that all $\cN$-cubes are contained in secondary $\cC$-cubes and secondary $\cN$-cubes, and secondary $\cD$-cubes never appear. In Table \ref{table:modifications} we list the four types of rearrangements and descendants they produce; we use the subscript $2$ to denote secondary cubes.

\begin{table}[ht!]
\begin{tabular}{c|ccc}
$Q$  & modification in $3Q$ & descendant(s) in $3Q$ \\  
\hline
$\cC$-cube & $\cC$ & $\cC_2$ \\
$\cD$-cube & $\cD$ & $\cC$, $\cD$ \\
$\cC_2$ & $\cC_2$ & $\cC$, $\cD$, $\cN_2$ \\
$\cN_2$ & $\cN_2$ & $\cC$, $\cD$, $\cN_2$  
\end{tabular}
\smallskip
\caption{Cubes, modifications, and descendants.}
\label{table:modifications}
\end{table}

\subsubsection{$\cC$-cubes}

Let $U$ be an $n$-cell and $\bU=(U_1,U_2,U_3)$ an essential partition of $U$. Recall that $\cQ_\partial(\bU)$ is the collection of cubes $Q$ in $|\bU|^*$ for which $Q\cap \partial_\cup \bU$ is an $(n-1)$-cell.

Let $Q\in \cQ_\partial(\bU)$ be a $\bU$-cube of color $i\in \{1,2,3\}$, and $j$ and $k$ complementary indices. For $p=j,k$, let $\cQ'_p(Q)$ be the collection of all unit $n$-cubes in $Q^\#$ meeting $U_p$ in an $(n-1)$-cube, and let $\cQ'(Q)=\cQ'_j(Q) \cup \cQ'_k(Q)$. Let $\cQ'_c(Q)$ be the cubes in $\cQ'(Q)$ having (maximal) valence $2(n-1)$ in the adjacency graph $\Gamma(\cQ'(Q))$ as in Section \ref{sec:npc}.

\begin{definition}
\label{def:C_cube_extended}
Let  $\bU=(U_1,U_2,U_3)$ be an essential partition.
A $\bU$-cube $Q\in \cQ_\partial(\bU)$ of color $i$ is a \emph{$\cC$-cube in $\bU$} if, for complimentary colors $j$ and $k$, 
\begin{itemize}
\item[(i)] there are unit $n$-cubes $q_j\subset U_j$ and $q_k \subset U_k$ with $q_j\cap q_k = \varnothing$ such that both cubes $q_j$ and $q_k$ have a face contained in $\partial Q$; let $q'_j$ and $q'_k$ be the unique cubes in $\cQ'(Q)$ which share a face with $q_j$ and $q_k$, respectively, and
\item[(ii)] the adjacency graph $\Gamma(\{q_k\}\cup (\cQ'_j(Q)\setminus (\cQ'_c(Q)\cup \{q'_j\})))$ is connected.
\end{itemize}
\end{definition}

The collection of $\cC$-cubes in $\bU$ is denoted by $\cC(\bU)$. Note that each $Q\in \cC(\bU)$ satisfies $Q\cap \partial_\cup \bU \subset \partial Q$, since $\cC$-cubes are $\bU$-cubes.

\begin{remark}
Definition \ref{def:C_cube_extended} formalizes the heuristic properties of $\cC$-cubes discussed in Section \ref{sec:0ra}. First, a $\cC$-cube $Q$ is contained in one element of the essential partition, and, second, $Q$ meets the other two elements in a codimension $1$-set (item (i)). Item (ii) formalizes a necessary condition for rearrangement to extend color $k$ between $i$ and $j$ in the scaled copy of $Q$. For  $\bfOmega_0$ and $\bfOmega_1$ this condition could be simplified to the condition that $Q\cap \Omega_j$ is a union of faces of $Q$. 
\end{remark}

\begin{definition}
\label{def:2nd_C_cube}
Let $\bU$ and $\bV$ be essential partitions satisfying $|\bU|=3|\bV|$. A cube $Q$ of side length $9$ is a \emph{secondary $\cC$-cube in $\bU$ with respect to $\bV$} if $(1/3)Q$ is a $\cC$-cube with respect to $\bV$.
\end{definition}

The collection of secondary $\cC$-cubes in $\bU$ with respect to $\bV$ is denoted by $\cC_2(\bU;\bV)$. 

\begin{remark}
Note that in Definition \ref{def:2nd_C_cube} we do not require $\bU \cap Q = 3(\bV\cap (1/3)Q)$. In fact, if $\bU\cap Q$ is obtained by a $\cC$-modification in $Q$ (see Section \ref{sec:Cmod}),  $\bU\cap Q \ne (3\bV) \cap Q$.
\end{remark}

\subsubsection{$\cD$- and $\cN$-cubes}

Suppose $\bU=(U_1,U_2,U_3)$ and $\bV=(V_1,V_2,V_3)$ are essential partitions satisfying $|\bU|=3|\bV|$. We first discuss $\cD$-cubes.

\begin{definition}
\label{def:D_cube}
A cube $Q\in \cQ_\partial(\bU)$ of side length $3$ is a \emph{$\cD$-cube in $\bU$ relative to $\bV$} if 
\begin{itemize}
\item[(1)] $Q$ is a $3\bV$-cube of color $i$, 
\item[(2)] $Q$ is not a $\bU$-cube, 
\item[(3)] there exists complementary colors $j$ and $k$ for which $A:=Q \cap U_j$ is an $n$-cell and $Q$ has no neglected faces, and $(\interior Q)\cap U_k=\emptyset$,
\item[(4)] $A$ is either a $(Q\cap \partial 3V_i)$-based building block in $Q$ or a union of two building blocks based on different faces of $Q$, and
\item[(5)] $(Q-A,A,\Omega)$ has the tripod property, where $\Omega$ is the smallest $n$-cell consisting of $n$-cubes of side length $3$ for which $A\cap \partial Q \subset \Omega$.
\end{itemize}
\end{definition}
The collection of all $\cD$-cubes in $\bU$ with respect to $\bV$ is denoted $\cD(\bU;\bV)$. Note that in Definition \ref{def:D_cube}, $A$ in (3) is always a $1$-fine atom. If $A$ in (3) and (4) is a building block, we say that $Q$ is a \emph{$\cD$-cube of type $1$}. Otherwise, $Q$ is a \emph{$\cD$-cube of type $2$}. 

Since by definition $\cD$-cubes have no neglected faces, we also need $\cN$-cubes.

\begin{definition}
\label{def:N_cube}
A cube $Q\in \cQ_\partial(\bU)$ is an \emph{$\cN$-cube in $\bU$ relative to $\bV$} if 
\begin{itemize}
\item[(1)] $Q$ is a $3\bV$-cube of color $i$, 
\item[(2)] $Q$ is not a $\bU$-cube, 
\item[(3)] there exists unique complementary color $j$ so that $A:=\cl(\interior Q \cap U_j)$ is a $(Q\cap \partial 3V_i)$-based building block in $Q$ while $(\interior Q)\cap U_k=\emptyset$,
\item[(4)] $Q$ has a neglected face contained in $3V_j$, 
\item[(5)] $(Q-A,A,\Omega)$ has the tripod property, where $\Omega$ is the smallest $n$-cell consisting of $n$-cubes of side length $3$ for which $A\cap \partial Q \subset \Omega$.
\end{itemize}
\end{definition}
The collection of all $\cN$-cubes in $\bU$ with respect to $\bV$ is denoted $\cN(\bU;\bV)$. We define secondary $\cN$-cubes as follows. 
\begin{definition}
\label{def:2nd_cN-cube}
Let $\bU$, $\bV$, and $\bW$ be essential partitions satisfying $|\bU|=3|\bV|=9|\bW|$. A cube $Q$ of side length $9$ is a \emph{secondary $\cN$-cube in $\bU$ (with respect to $\cN(\bV;\bW)$)} if $(1/3)Q$ is an $\cN$-cube in $\bV$ relative to $\bW$.
\end{definition}
We let $\cN_2(\bU;\bV,\bW)$ to denote the collection of all secondary $\cN$-cubes in $\bU$ with respect to $\cN(\bV;\bW)$. 


\subsubsection{$\cD$-modifications}

We consider first $\cD$-cubes of type $1$. The first rearrangement is called a \emph{$\cD$-modification}, which has already been anticipated by Lemma \ref{lemma:ra_1} and Corollary \ref{cor:ra_1}.

\begin{lemma}[$\cD$-modification of type $1$]
\label{lemma:ra_b}
Let $V$ be an $n$-cell, $\bV=(V_1,V_2,V_3)$ and $\bW=(W_1,W_2,W_3)$ essential partitions satisfying $V=|\bV|=3|\bW|$, 
and let $Q\in \cD(\bV;\bW)$ be a $\cD$-cube of type $1$; let $i$ be the color of $(1/3)Q$ in $\bW$ and let $j$ be such that $A:=Q\cap V_j$ is an $F$-based building block, where $F$ is face of $Q$. Then there exists a pair-wise disjoint union of atoms $B_\Sigma\subset 3Q$ composed of $1$-fine $3F$-based building blocks on the boundary of $3Q$, so that the $n$-cells $U_i = 3V_i - B_\Sigma$, $U_j = 3V_j \cup B_\Sigma$, and $U_k = 3V_k$, where $k$ is the other complementary index, form an essential partition 
\[
\bU=(U_1,U_2,U_3)
\]
of $|3\bV|$ satisfying
\begin{itemize}
\item[(1)] $B_\Sigma \cap \partial (3Q) \subset \partial_\cup 3\bV$, 
\item[(2)] $\partial_\cup \bU \cap 3Q \subset |\cC(\bU)|\cup |\cD(\bU;\bV)|$, and
\item[(3)] $B_\Sigma$ is an atom for $n>3$ and consists of at most $3$ components for $n=3$.
\end{itemize}
Furthermore, $\bU$ has the tripod property in $3Q$.
\end{lemma}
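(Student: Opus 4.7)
The plan is to perform a flat rearrangement inside $3Q$ that extends the existing building block $3A\subset 3V_j$ by attaching $1$-fine $3F$-based building blocks to it, using the multi-atom flat rearrangement of Corollary~\ref{cor:ra_1} and its tripod counterpart Corollary~\ref{cor:tripod_planar}. Since $A$ corresponds to a proper subtree of $\cT_{n-1}$, the $3$-cubes comprising $3A$ form a sub-collection $\cA\subset\cQ(3Q;3F)$ of the $(n-1)$-dimensional $3\times\cdots\times 3$ grid of side-$3$ cubes along $3F$, and the remaining cubes $\cR=\cQ(3Q;3F)\setminus\cA$ sit inside $3V_i$ and will be filled by the new atoms.

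I decompose $\cR$ into adjacency components $\cC_1,\dots,\cC_s$. A routine combinatorial check in the grid shows $s=1$ for $n\ge 4$: removing a ``star'' of at most $2(n-1)$ cubes cannot disconnect an $(n-1)$-dimensional $3^{n-1}$ grid when $n-1\ge 3$. For $n=3$ one enumerates the finitely many shapes of building blocks in $\cT_2$ and observes $s\le 3$. For each $r$ I pick $q_r\in\cA$ adjacent to $\cC_r$, possible because $\cA$ is connected and every $\cC_r$ touches it in the grid. Then each $(3Q,3F,\cC_r,q_r)$ is internal initial data in the sense of Section~\ref{sec:pc}, the $\cC_r$ are pair-wise disjoint, and $|\cQ(3Q;3F)|-\bigcup_r|\cC_r|=3A$ is the required leftover building block of side $3$; a spanning tree $\Gamma_r$ exists by Lemma~\ref{lemma:gv} since $\cC_r$ sits in the complement of a subtree of $\cT_{n-1}$ and so meets the needed valence bound. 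Corollary~\ref{cor:ra_1} then produces pair-wise disjoint $1$-fine atoms $A_{\Gamma_r}$ composed of $3F$-based building blocks on the boundary of $3Q$. I set $B_\Sigma=\bigcup_r A_{\Gamma_r}$ and $U_i=3V_i-B_\Sigma$, $U_j=3V_j\cup B_\Sigma$, $U_k=3V_k$.

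Properties (1) and (3) then follow immediately: each $A_{\Gamma_r}$ meets $\partial(3Q)$ only through $3F$, and $3F\subset\partial_\cup 3\bV$ by the $\cD$-cube hypothesis (since $F\subset\partial V_j$ as $A$ is $F$-based in $V_j$, and $F\subset\partial V_i$ because the interior of $Q$ lies in $V_i$), while $B_\Sigma$ has exactly $s$ components. Property~(2) is verified cube by cube in $\cQ(3Q;3F)$: cubes in $\cC_r$ each receive a single $3F$-based $1$-fine building block from $B_\Sigma$ by Lemma~\ref{lemma:ra_1} and hence become $\cD$-cubes of type~$1$ in $\bU$ relative to $\bV$; the anchor cubes $q_r\in\cA$ acquire an additional attachment from $B_\Sigma$ and become $\cD$-cubes of type~$1$ or~$2$; the remaining cubes of $\cA$ inherit their status unchanged under $\bV\mapsto\bU$ and are $\cD$- or $\cC$-cubes. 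No $\cN$-cube arises, because the extension is strictly flat along $3F$ and every component of $\cR$ is covered. Finally, the tripod property of $\bU$ in $3Q$ is a direct application of Corollary~\ref{cor:tripod_planar} to the data $(3Q,3F,\cC_r,q_r)$ with residue $3A$.

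The main obstacle is the dimension-three case of (3): a maximal proper subtree of $\cT_2$ has four vertices and can disconnect the complement in the $3\times 3$ face grid into as many as three pieces, so $B_\Sigma$ genuinely may need three components, and one must inspect the handful of building-block shapes to confirm that three always suffice. The verification of (2), in particular the absence of any $\cN$-cube after the modification, is equally delicate; it rests on the fact that every new building block of $B_\Sigma$ sits inside a $3$-cube of $\cQ(3Q;3F)$ on the $3F$-side of $3Q$ and is attached through $3F$ to an already-present member of $\cA$, so each affected cube meets $\partial_\cup\bU$ only across faces already lying in $\partial_\cup 3\bV$.
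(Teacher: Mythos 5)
Your construction of $B_\Sigma$ is essentially the paper's: both proofs reduce to the flat rearrangement of Lemma \ref{lemma:ra_1}/Corollary \ref{cor:ra_1} along $3F$, anchored inside $3A$, with one piece of initial data per component of $3F-3A$ (connected for $n>3$, at most three components for $n=3$), and both obtain the tripod property from Lemma \ref{lemma:tripod_planar}/Corollary \ref{cor:tripod_planar}; properties (1) and (3) come out as in the paper.

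The problem is in your verification of (2). The anchor cubes $q_r\in\cA$ cannot ``become $\cD$-cubes of type $1$ or $2$'': they lie entirely in $3A\subset 3V_j\subset U_j$, so after the modification they are still $\bU$-cubes (of color $j$), and Definition \ref{def:D_cube} explicitly requires a $\cD$-cube not to be a $\bU$-cube (moreover $q_r\cap U_{j'}$ has empty interior for both complementary colors $j'$). The same objection applies to the ``remaining cubes of $\cA$'', which you only vaguely call ``$\cD$- or $\cC$-cubes''. Consequently your cube-by-cube argument covers only the cubes of the $\cC_r$, and the portion of $\partial_\cup\bU$ along the top of $3A$ --- the $(n-1)$-faces at height $3$ separating $U_j$ from $U_i$, whose upper neighbours in $(3Q)^*$ lie entirely in $U_i$ and hence are also not $\cD$-cubes --- is left unaccounted for. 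In the paper this is exactly the r\^ole of the observation that $3A=|\cC(\bU)|\cap 3Q$, recorded as $\cC(\bU\cap 3Q)=(3A)^*$ in Corollary \ref{cor:ra_b}: the cubes of $(3A)^*$ are the $\cC$-cube descendants of the $\cD$-modification, and it is these $\cC$-cubes, not $\cD$-cubes, that cover $\partial_\cup\bU$ over $3A$. So your classification of the $\cA$-cubes should be replaced by a verification of the $\cC$-cube condition of Definition \ref{def:C_cube_extended} for $(3A)^*$. A smaller point of the same kind: in your argument for (1), ``$3F\subset\partial_\cup 3\bV$'' is too strong, since over the base of $3A$ both sides of $3F$ lie in $V_j$; what holds, and what (1) needs, is that $B_\Sigma\cap\partial(3Q)\subset 3F-3A$ and that $3F-3A$ lies in $\partial_\cup 3\bV$.
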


\begin{figure}[h!]
\includegraphics[scale=0.30]{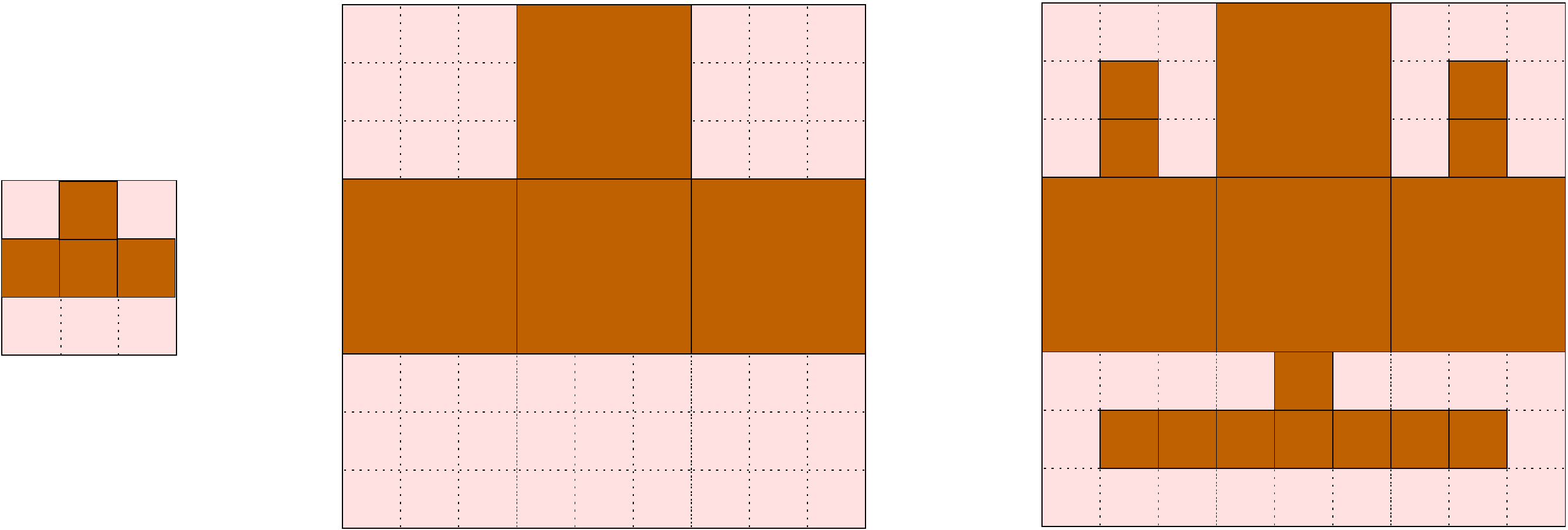}
\caption{An example of an essential partition $\bV$ in $Q$, and essential partitions $3\bV$ and $\bU$ in $D=3Q$ for a building block in Figure \ref{fig:1face3_molecule}.}
\label{fig:D_cubes}
\end{figure}

\begin{convention}
Before giving the proof of Lemma \ref{lemma:ra_b}, we emphasize that the figures in this section (e.g.\;in Figure \ref{fig:D_cubes}) use only the two complementary colors $j$ and $k$. The third color, the color $i$ of the cube itself, never appears.
\end{convention}

\begin{proof}[Proof of Lemma \ref{lemma:ra_b}]
It suffices to find planar initial data for Corollary \ref{cor:ra_1}, the claim then follows from Lemma \ref{lemma:tripod_planar}. Note that Corollary \ref{cor:ra_1} is necessary only for $n=3$, since for $n>3$ we may use Lemma \ref{lemma:ra_1}. 

Define $F' = 3F\cap 3V_i$, that is, $F'=3F-3A$. For $n>3$, $F'$ is connected. For $n=3$, $F'$ is at most three $2$-cells. Let $D=3Q$.

Let $D^*$ be the $3$-regular subdivision of $D$ and let $(D^*)'\subset D^*$ be the subset of cubes having a face contained in $F'$. For $n>3$ we fix a unit cube $Q_1$ in $A$. Then $Q_1\cap F'$ is an $(n-2)$-cell. For $n=3$, we fix unit cubes $Q_1,\ldots, Q_p$ in $A$, where $p$ is the number of components of $F'$. 

When $n>3$ we choose a maximal tree $\Sigma \in \Gamma((D^*)'\cup \{Q_1\})$, and for $n=3$, we fix a maximal forest $\Sigma = \Gamma_1\cup \cdots \cup \Gamma_p$ in $\Gamma((D^*)'\cup \{Q_1,\ldots, Q_p\})$. 
The vertex sets of trees $\Gamma_i$ give now a required partition for $(D^*)'$. Corollary \ref{cor:ra_1} yields a $1$-fine set $B_\Sigma$ whose components are $3F$-based atoms. The tripod property in $D$ for $\bU$ follows from Lemma \ref{lemma:tripod_planar}, and condition (4) in Lemma \ref{lemma:ra_1} shows that $B_\Sigma \subset F\cup \interior D$.  

Assertions (1) and (2) follow from Corollary \ref{cor:ra_1}, the fact that cubes in $(D^*)'$ are $\cD$-cubes in $\cD(\bU;\bV)$ and the observation that $3A=|\cC(\bU)|\cap D$.
\end{proof}

For $\cD$-cubes of type $2$, the corresponding arrangement is also called a \emph{$\cD$-mod\-ifi\-ca\-tion}.
\begin{lemma}[$\cD$-modification of type $2$]
\label{lemma:ra_b2}
Let $V$ be an $n$-cell, let $\bV=(V_1,V_2,V_3)$ and $\bW=(W_1,W_2,W_3)$ be essential partitions satisfying $V=|\bV|=3|\bW|$, and $Q\in \cD(\bV;\bW)$ be a $\cD$-cube of type $2$; let $i$ be the color of $(1/3)Q$ in $\bW$ and take $j$ so that $A:=Q\cap V_j=B\cup B'$ is an atom, where $B$ and $B'$ are essentially disjoint building blocks. Then there exists a pair-wise disjoint union $A_\Sigma\subset 3Q$ of $1$-fine atoms on the boundary of $3Q$ consisting of building blocks with
\[
\bU=(U_1,U_2,U_3)
\]
an essential partition of $3|\bV|$ by $n$-cells satisfying the tripod property in $3Q$. Here $U_i = 3V_i - A_\Sigma$, $U_j = 3V_j \cup A_\Sigma$, $U_k = 3V_k$, where $k$ is the remaining complementary index. Moreover,  
\begin{itemize}
\item[(1)] $A_\Sigma \cap \partial (3Q) \subset \partial_\cup 3\bV$, and
\item[(2)] $\partial_\cup \bU \cap 3Q \subset |\cC(\bU)|\cup |\cD(\bU;\bV)|$, and
\item[(3)] $A_\Sigma$ is an atom for $n>3$ and consists of at most $4$ components for $n=3$.
\end{itemize}
Finally, $\bU$ has the tripod property in $3Q$.
\end{lemma}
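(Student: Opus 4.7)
The plan is to adapt the argument for the flat case (Lemma \ref{lemma:ra_b}) by using the non-flat rearrangement machinery of Section \ref{sec:npc} in place of the flat one. Let $F^1$ and $F^2$ be the two distinct faces of $Q$ on which $B$ and $B'$ are based; since $A = B \cup B'$ is an $n$-cell they share an $(n-2)$-face. I will take $\cF^r = \{3F^r\}$ for $r = 1,2$, so that $|\cF^1|$ and $|\cF^2|$ are $(n-1)$-cells that together cover the portion of $\partial(3Q)$ adjacent to $3V_j$, and choose each $q_r$ to be a cube of side length $3$ in $3V_j$ adjacent to $3Q$ through a face meeting $3F^r$ in an $(n-2)$-cube; existence of such $q_r$ follows from the fact that $3B^{(r)}$ has a corner cube with a face in $\partial(3F^r)$, which sits next to an adjacent cube in $3V_j$. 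Partitioning $\cQ(3Q;\cF)$ into $\cQ''_1 \sqcup \cQ''_2$ in a manner compatible with the intended target face of each building block, and imposing a fixed rule at the corner cubes lying above the shared $(n-2)$-face, I verify that $(3Q, (\cF^1, \cQ''_1, q_1), (\cF^2, \cQ''_2, q_2))$ meets the conditions of Definition \ref{def:nfid}.

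By Lemma \ref{lemma:gv_non-flat}, a spanning forest $\Sigma = \Gamma_1 \cup \Gamma_2$ will exist; Lemma \ref{lemma:ra_2} then produces pair-wise disjoint $1$-fine atoms $A_1, A_2$ in $3Q$, with $A_r$ composed of building blocks based on faces in $\cF^r$ and $A_r \cap \partial(3Q) \subset |\cF^r| \cup q_r$. Setting $A_\Sigma = A_1 \cup A_2$, $U_j = 3V_j \cup A_\Sigma$, $U_i = 3V_i - A_\Sigma$, $U_k = 3V_k$, and $\bU = (U_1, U_2, U_3)$, claim (1) is immediate since $3F^r \cup (q_r \cap 3Q) \subset \partial(3V_j) \cap \partial(3V_i) \subset \partial_\cup 3\bV$. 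For the tripod property in $3Q$, I will work cube-by-cube over $\cQ(3Q;\cF)$ and then assemble as in the proof of Lemma \ref{lemma:tripod_planar}: for each cube $q$ whose intersection with $A_\Sigma$ is a single building block, Proposition \ref{prop:bb_tripod} applies directly to the triple $(q - (A_\Sigma \cap q), A_\Sigma \cap q, q')$ where $q'$ is the outside neighbor of $q$; for each corner cube whose intersection with $A_\Sigma$ consists of two building blocks meeting along an $(n-1)$-face, the second case of the proof of Lemma \ref{lemma:tripod_non_planar} produces the required tripod subdivision. Crucially, no system of basins is required and Lemma \ref{lemma:ra_1_ext} is not invoked, because the type-$2$ $\cD$-cube hypothesis excludes any neglected faces in $\cQ(3Q;\cF)$.

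For claim (2), each $Q' \in \cQ(3Q;\cF)$ will become in $\bU$ a $\cC$-cube, a $\cD$-cube of type $1$, or a $\cD$-cube of type $2$ (at the shared $(n-2)$-corner, where both $A_1$ and $A_2$ may contribute a building block); cubes in $(3Q)^*$ disjoint from $|\cF|$ remain of pure color $i$ and contribute nothing to $\partial_\cup \bU \cap 3Q$. The main work is claim (3): for $n > 3$ both sets $3F^r - 3B^{(r)}$ are connected, so each $\Gamma_r$ can be chosen connected and $A_1$ and $A_2$ can be arranged to share an $(n-1)$-face at the corner, fusing into a single atom $A_\Sigma$; for $n = 3$, each $3F^r - 3B^{(r)}$ may split into up to $2$ components, giving at most $2+2 = 4$ atoms. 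The principal technical obstacle I anticipate is coordinating the assignment of corner cubes to $\cQ''_1$ versus $\cQ''_2$ so that each $\Gamma_r$ has valence strictly below $2(n-1)$ and the final component count of $A_\Sigma$ matches the claimed bound; I expect to handle this by prescribing the corner assignment to respect the connected components of each $3F^r - 3B^{(r)}$.
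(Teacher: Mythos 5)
Your overall plan---use the non-flat machinery of Section \ref{sec:npc} in the cube $3Q$, with building blocks based on the two scaled faces $3F^1\cup 3F^2$, and verify the tripod property cube-by-cube via Proposition \ref{prop:bb_tripod} and the second case of Lemma \ref{lemma:tripod_non_planar}---is the right general direction, but the way you anchor the construction contains a genuine gap. You root the two trees at cubes $q_1,q_2$ of side length $3$ lying \emph{outside} $3Q$ and contained in $3V_j$, as required by Definition \ref{def:nfid}(a), and you justify their existence by saying that a corner cube of $3B^{(r)}$ ``sits next to an adjacent cube in $3V_j$.'' Nothing in the hypotheses of the lemma provides such cubes: the definition of a $\cD$-cube only controls $\bV$ inside $Q$, and in the situations where type-$2$ $\cD$-cubes actually arise (corner cubes of a non-flat $\cC$-modification), the color lying across the base faces of $Q$ is the \emph{other} complementary color $k$, not $j$, so a full side-$3$ cube of $3V_j$ sharing a lateral face with $3Q$ and meeting $3F^r$ in an $(n-2)$-cube will in general not exist. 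The paper avoids this entirely by rooting the spanning tree (one tree for $n>3$, a forest with at most four trees for $n=3$) at cubes of side length $3$ chosen \emph{inside the scaled atom} $3A\subset 3V_j\cap 3Q$, in the spirit of the internal initial data of Corollary \ref{cor:ra_1}; this choice always exists, it is what makes $U_j=3V_j\cup A_\Sigma$ an $n$-cell, and it gives condition (1) without any assumption on $\bV$ outside $3Q$.

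Two further points follow from the same misplacement. First, your single-atom claim for $n>3$ is obtained by ``fusing'' the two atoms $A_1,A_2$ along an $(n-1)$-face at the corner; Lemma \ref{lemma:ra_2} produces \emph{pair-wise disjoint} atoms, so this fusion is an extra construction you would have to carry out and verify (tree adjacency, valence bounds) by hand. In the paper the single-atom conclusion is immediate: for $n>3$ the adjacency graph of the side-$3$ cubes of $(3Q)^*$ meeting $3F^1\cup 3F^2$ and not contained in $3A$ is connected, so one tree rooted at one cube of $3A$ suffices, and for $n=3$ this graph has at most four components, each rooted separately in $3A$. Second, Definition \ref{def:nfid}(b) forces $\{\cQ''_1,\cQ''_2\}$ to partition \emph{all} of $\cQ(3Q;\cF)$, which would wrongly include the cubes of side $3$ contained in $3A$ (already of color $j$); the paper's proof explicitly removes them before building the tree. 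So the argument should be reorganized around the internal rooting in $3A$ rather than external cubes $q_1,q_2$; with that change the remaining verifications you outline (tripod property per cube, absence of neglected faces for type-$2$ cubes, claim (2)) go through essentially as in the paper.
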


\begin{figure}[h!]
\includegraphics[scale=0.25]{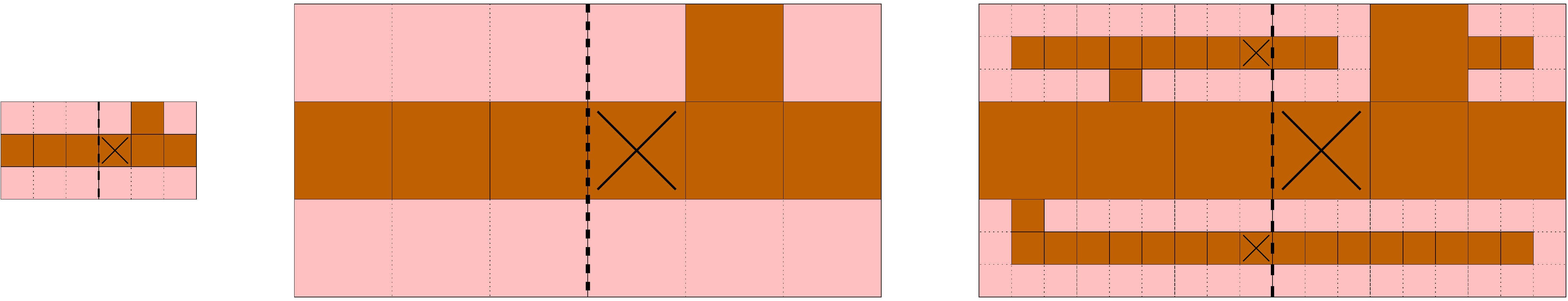}
\caption{Analogue of Figure \ref{fig:D_cubes} for $\cD$-cube of type $2$.}
\label{fig:D_example_3}
\end{figure}

\begin{proof}[Proof of Lemma \ref{lemma:ra_b2}]
This case uses Lemma \ref{lemma:ra_2} in place of Corollary \ref{cor:ra_1} and Lemma \ref{lemma:tripod_non_planar} in place of Lemma \ref{lemma:tripod_planar}. 

We may assume $i=1$, $j=2$, and $k=3$ and that $B$ and $B'$ are $f$- and $f'$-based, respectively. Let $D=3Q$. 

Let $\cQ'$ be the collection of cubes in $D^*$ meeting $f\cup f'$ and not contained in $3A^\#$. Recall that $\Gamma(\cQ')$ is the adjacency graph of cubes in $\cQ'$. For $n>3$, $\Gamma(\cQ')$ is connected, and we may fix a cube $q\in 3A^\#$ of side length $3$ and a maximal tree $\Sigma \subset \Gamma(\cQ'\cup \{q\})$. It is a simple observation that we may now apply Lemma \ref{lemma:ra_2} directly to $\Sigma$ and obtain a $1$-fine atom $A_\Sigma$ satisfying (1)-(5). 

For $n=3$, we observe that $\Gamma(\cQ')$ has at most $4$ components $\Gamma_1,\ldots, \Gamma_p$, $p\le 4$; Figure \ref{fig:D_example_3} illustrates $p=3$. It is easy to observe that we may fix pair-wise essentially disjoint cubes $q_1,\ldots, q_p$ in $3A^*$ so that $\Gamma(\Gamma_i\cup \{q_i\})$ is connected, and thus create a maximal forest $\Sigma = \Sigma_1\cup \cdots \cup \Sigma_p$ with $\Sigma_i \subset \Gamma(\Gamma_i\cup \{q_i\})$. A slight modification of Lemma \ref{lemma:ra_2} yields $A_\Sigma = A_{\Sigma_1}\cup \cdots \cup A_{\Sigma_p}$, where $A_{\Sigma_i}$ is a $1$-fine atom satisfying (1)-(5). 

In both cases, $\bU=(3V_1-A_\Sigma,3V_2\cup A_\Sigma,3V_3)$ satisfies the required conditions.
\end{proof}

The essential properties of $\cD$-modifications are summarized in the next two corollaries. 

\begin{corollary}
\label{cor:ra_b}
Let $\bV$, $Q$, $A$, $\bU$, and $\{i,j,k\}=\{1,2,3\}$ be as in Lemma \ref{lemma:ra_b} or as in Lemma \ref{lemma:ra_b2}. Then $\bU\cap 3Q$ satisfies the tripod property and in addition
\begin{itemize}
\item[(a)] $\partial_\cup \bU \cap 3Q \subset |\cC(\bU)| \cup |\cD(\bU;\bV)|$ and
\item[(b)] $\cC(\bU\cap 3Q) = (3A)^*$.
\end{itemize}
Furthermore, to each $f\in \left(((\partial_\cup \bV)\cap Q) - A\right)^\#$ corresponds one $3f$-based building block in $U_j$.
\end{corollary}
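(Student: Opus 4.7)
The plan is to derive this corollary by collating the conclusions of Lemmas \ref{lemma:ra_b} and \ref{lemma:ra_b2}. The tripod property in $3Q$ is established verbatim in both lemmas, and assertion (a) is exactly item (2) of each; these I would simply cite.

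For (b), my strategy is to prove the two inclusions $\cC(\bU\cap 3Q) = (3A)^*$ separately. Fix a size-$3$ cube $Q'\in (3A)^*$; then $Q'\subseteq 3A\subseteq U_j$, so $Q'$ has color $j$ with complementary colors $\{i,k\}$. Because $B_\Sigma$ consists of a single layer of $3F$-based building blocks attached on the boundary of $3Q$ (by construction in Lemmas \ref{lemma:ra_b} and \ref{lemma:ra_b2}), the face of $Q'$ opposite $3F$ abuts a cube of $(D^*)'$ lying in $U_i = 3V_i - B_\Sigma$ outside the footprint of any attached building block, producing the required unit cube in $U_i$ adjacent to $Q'$. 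Access to $U_k = 3V_k$ is inherited from $Q\in\cQ_\partial(\bV)$ with $V_k$ on a face of $Q$: this face scales to a face of $3Q$ that retains $U_k$ in $\bU$, and $Q'$ has a face on this side of $3Q$ across which sits the required unit cube in $U_k$. Condition (ii) of Definition \ref{def:C_cube_extended} then follows from the connectivity of the complement of the $3F$-based building blocks of $B_\Sigma$ in $(D^*)'$, exactly as in the $\cD$-cube conditions on $Q$. For the reverse inclusion, any size-$3$ cube in $3Q - 3A$ meets an atom of $B_\Sigma$ on an interior face or in its interior, so it is not a $\bU$-cube and hence cannot be a $\cC$-cube; thus every $\cC$-cube in $\bU\cap 3Q$ must lie in $(3A)^*$.

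The ``Furthermore'' clause is immediate from the construction in the proofs: the cubes of $(D^*)'$ stand in bijection with the unit $(n-1)$-cubes of $3F - 3A$, which are precisely the scaled images $3f$ of the unit $(n-1)$-cubes $f\in ((\partial_\cup\bV\cap Q) - A)^\#$; Corollary \ref{cor:ra_1} applied in the proofs of Lemmas \ref{lemma:ra_b} and \ref{lemma:ra_b2} produces exactly one $3f$-based building block of $B_\Sigma\subseteq U_j$ over each such $3f$, which is the asserted correspondence.

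The main obstacle I anticipate is the verification of condition (i) of Definition \ref{def:C_cube_extended} for cubes in the interior of $(3A)^*$ -- that is, confirming access to the third color $U_k$ from a cube $Q'\in (3A)^*$ that is not located at the extremity of $3A$ near the $U_k$ face of $3Q$. This requires a careful reading of the geometric configuration of $V_k$ on $\partial Q$ coming from Definition \ref{def:D_cube}; all other claims of the corollary then reduce to routine bookkeeping.
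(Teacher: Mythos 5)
The paper states this corollary without a separate proof (it is offered as a summary of Lemmas \ref{lemma:ra_b} and \ref{lemma:ra_b2}, the only hint being the remark ``$3A=|\cC(\bU)|\cap D$'' inside the proof of Lemma \ref{lemma:ra_b}), so your write-up stands or falls on its own details, and there are genuine gaps in part (b). First, your reverse inclusion is false as stated: it is not true that every side-$3$ cube of $3Q-3A$ meets $B_\Sigma$. The set $B_\Sigma$ is $1$-fine and based on $3F$, hence contained in the unit layer along $3F$; the side-$3$ cubes of the two upper layers of $3Q$ meet neither $3A$ nor $B_\Sigma$ and are genuine $\bU$-cubes of color $i$. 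Excluding them from $\cC(\bU)$ therefore cannot be done by saying they are not $\bU$-cubes; one must argue that they lack adjacent unit cubes of colors $j$ and $k$, which again depends on what lies across the faces of $3Q$. Second, and more seriously, the forward inclusion hinges exactly on the point you flag but do not resolve: access to the third color $U_k=3V_k$ from a cube $Q'\in(3A)^*$. Your justification ``inherited from $Q\in\cQ_\partial(\bV)$ with $V_k$ on a face of $Q$'' is not available: membership in $\cQ_\partial(\bV)$ only says $Q\cap\partial_\cup\bV$ contains an $(n-1)$-cell (already guaranteed by $A\cap(Q-A)$), and Definition \ref{def:D_cube} only gives $(\interior Q)\cap V_k=\emptyset$ together with the color-blind tripod condition (5) on $(Q-A,A,\Omega)$; none of this forces $V_k$ to sit across the base face $Q\cap\partial(3W_i)$. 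The fact actually needed — that across the base face(s) of $A$ lies the third color — is a property of how $\cD$-cubes are produced in the inductive construction of Section \ref{sec:RP} (via Lemma \ref{lemma:ra_a} and the secondary modifications), and a complete proof of (b), of the exclusion of the upper-layer cubes, and of the ``Furthermore'' clause (which needs $((\partial_\cup\bV)\cap Q)-A$ to coincide with $F-A$, resp.\ $(f\cup f')-A$ in type $2$) must invoke this configuration explicitly.

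A smaller but real slip: your route to the color-$i$ neighbor, ``the face of $Q'$ opposite $3F$ abuts a cube of $(D^*)'$,'' misidentifies $(D^*)'$ (those are the side-$3$ cubes along $3F$ outside the footprint of $3A$, not the cubes above $3A$), and for a $\cD$-cube of type $2$ the cube directly above a corner cube of $3B$ may belong to $3B'\subset U_j$, so the opposite face need not see $U_i$ at all; one must instead use a lateral face of $Q'$, across which the neighboring side-$3$ cube is only partially occupied by the $1$-fine set $A_\Sigma$ and hence contains unit cubes of $U_i$ with a face on $\partial Q'$. Your checks of the tripod property and of (a) by citing items of the two lemmas are fine, and condition (ii) of Definition \ref{def:C_cube_extended} is plausibly handled as you indicate, but as written the proof of (b) and of the final correspondence is incomplete.
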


By condition (1) in Lemmas \ref{lemma:ra_b} and \ref{lemma:ra_b2}, $\cD$-modifications are performed independently in each cube of $3\cD(\bV;\bW)$ in the sense that,  given two adjacent $\cD$-cubes $Q$ and $Q'$ in $\cD(\bV;\bW)$, all $\cD$-modifications (of types 1 and 2) in $3Q$ and $3Q'$ leave the essential partition $3\bV$ unmodified on the common face $(3Q)\cap (3Q')$. This is summarized in the following definition and corollary; see Definition \ref{def:acb} for notations $\widetilde{\Gamma}(\cdot)$ and $\ell_{\mathrm{bb}}(\cdot)$. 

Let $\bV=(V_1,V_2,V_3)$ be an essential partition of an $n$-cell by $n$-cells so that $V_p$ is a dented molecule for $p=1,2,3$. Let $\bW=(W_1,W_2,W_3)$ be an essential partition satisfying $|\bV|=3|\bW|$ and let $\cD'\subset \cD(\bV,\bW)$ be a non-empty subfamily.

\begin{definition}
An essential partition $\bU$ of $3|\bV|$ into $n$-cells is \emph{obtained by $\cD$-modifications in $\cD'$ from essential partitions $\bV$ relative to $\bW$} if $\bU$ satisfies the  tripod property in each cube in $3\cD'$ and 
\begin{itemize}
\item[(a)] $\bU - |3\cD'| = 3\bV - |3\cD'|$,
\item[(b)] for each cube $C\in 3\cD'$, the essential partition $\bU\cap C$ is obtained by a $\cD$-modification, that is, $\bU$ has the properties (1)--(3) of  Lemma \ref{lemma:ra_b} or \ref{lemma:ra_b2} relative to $C$.
\item[(c)] each leaf $A \in \Gamma(U_i)$ is a $1$-fine atom adjacent to a $3$-fine atom $A'=3a'$, where $a'$ is a leaf in $\Gamma(V_i)$, and
\item[(d)] to each leaf $a\in \Gamma(V_i)$ correspond at most $3\max_{a'} \ell_{\mathrm{bb}}(a')$ leaves in $\Gamma(U_i)$ adjacent to $3a\in \Gamma(U_i)$, where the maximum is taken over the leaves $a'$ in $\Gamma(V_i)$.
\end{itemize}
For $\cD'=\cD(\bV;\bW)$, we say that $\bU$ is \emph{obtained by $\cD$-modification from $\bV$ relative to $\bW$}. 
\end{definition}

\begin{corollary}
\label{cor:ra_b_all}
Let $\bV=(V_1,V_2,V_3)$ be an essential partition of an $n$-cell by $n$-cells so that $V_p$ is a dented molecule for $p=1,2,3$, and let $\bW=(W_1,W_2,W_3)$ be an essential partition satisfying $|\bV|=3|\bW|$. Given a non-empty subfamily $\cD'\subset \cD(\bV;\bW)$ there exists an essential partition $\bU$ which is obtained by $\cD$-modification in $\cD'$ from $\bV$ relative to $\bW$.
\end{corollary}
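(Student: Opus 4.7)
The plan is to perform the $\cD$-modifications cube-by-cube using Lemmas \ref{lemma:ra_b} and \ref{lemma:ra_b2}, and to verify that these local modifications glue into a single global essential partition satisfying the required properties.

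For each $Q \in \cD'$, apply Lemma \ref{lemma:ra_b} if $Q$ is of type $1$ and Lemma \ref{lemma:ra_b2} if $Q$ is of type $2$. These yield, inside $3Q$, a $1$-fine union of atoms $A_\Sigma(Q)$ and complementary colors $i(Q), j(Q), k(Q)$ such that $A_\Sigma(Q)$ is to be removed from $3V_{i(Q)}$ and adjoined to $3V_{j(Q)}$, while $3V_{k(Q)}$ is left untouched in $3Q$. Define the candidate $\bU = (U_1, U_2, U_3)$ by performing all these modifications simultaneously. The compatibility observation making $\bU$ well-defined is condition~(1) of both lemmas: $A_\Sigma(Q) \cap \partial(3Q) \subset \partial_\cup 3\bV$, so each modification is essentially supported in the interior of $3Q$ and leaves $3\bV$ unaltered on $\partial(3Q)$. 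Since the cubes $\{3Q\}_{Q \in \cD'}$ have pairwise essentially disjoint interiors, the local modifications cannot conflict, and $\bU$ is a bona fide essential partition of $3|\bV|$ by $n$-cells. Condition~(a) of the definition holds by construction; condition~(b) is exactly the local content of the two lemmas; and the tripod property in each $3Q \in 3\cD'$ is their final assertion.

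The remaining conditions (c) and (d) reduce to bookkeeping. The only new vertices appearing in the trees $\Gamma(U_p)$ are the $1$-fine building blocks composing the various $A_\Sigma(Q)$; each such building block is $F$-based for a face $F \subset \partial 3V_{i(Q)} \cap 3Q$ and thus attaches to the unique scaled leaf atom $3a'$ of $\Gamma(V_{j(Q)})$ whose boundary carries $F$, where $a'$ is a leaf of $\Gamma(V_{j(Q)})$ by the structural properties of a $\cD$-cube. This yields~(c). For~(d), the number of $1$-fine leaves that can attach to a fixed scaled leaf $3a$ is bounded by the number of unit boundary faces of $3a$ meeting a single cube $3Q$, which is at most $3\ell_{\mathrm{bb}}(a)$; maximizing over leaves $a'$ of $\Gamma(V_i)$ gives the stated bound. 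The only subtle point, and so the main obstacle, is this final count, which ultimately relies on the valence constraints of the spanning trees (or forests) selected in the proofs of Lemmas \ref{lemma:ra_b} and \ref{lemma:ra_b2}.
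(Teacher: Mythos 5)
Your proposal is correct and follows what is essentially the paper's own (implicit) argument: the corollary is stated without a separate proof precisely because condition (1) of Lemmas \ref{lemma:ra_b} and \ref{lemma:ra_b2} guarantees that the modifications in the pairwise essentially disjoint cubes $3Q$, $Q\in \cD'$, leave $3\bV$ untouched on $\partial(3Q)$ and hence can be performed simultaneously and glued, exactly as you do, with (a), (b) and the tripod property read off from the two lemmas. One correction to your bookkeeping for (d): the bound $3\max_{a'}\ell_{\mathrm{bb}}(a')$ arises by aggregating over \emph{all} cubes of $3\cD'$ meeting the scaled leaf $3a$ --- each $\cD$-cube contains at most one or two building blocks of $a$ by Definition \ref{def:D_cube}(4), and the modification in each such cube attaches at most three (resp.\ four) components of $B_\Sigma$ (resp.\ $A_\Sigma$) when $n=3$ by Lemmas \ref{lemma:ra_b}(3) and \ref{lemma:ra_b2}(3), and a single atom when $n>3$ --- rather than from a face count within a single cube $3Q$ or from the valence bound on the spanning trees.
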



\subsubsection{$\cC$-modification}
\label{sec:Cmod}

The following rearrangement is a \emph{$\cC$-modification}.
\begin{lemma}
\label{lemma:ra_a}
Let $V$ be an $n$-cell and $\bV=(V_1,V_2,V_3)$ an essential partition of $V$. Suppose $Q\in \cC(\bV)$ has color $i$ in $\bV$, and let $j$ and $k$ be complementary colors. Then there exist atoms $A_j$ and $A_k$ in $3Q$ which are composed of building blocks along $\partial (3Q)$ so that $U_i = 3V_i - (A_j\cup A_k)$, $U_j = 3V_j \cup A_j$, and $U_k = 3V_k \cup A_k$ are $n$-cells and 
\begin{equation}
\label{eq:C_cubes}
\bU = (U_1,U_2,U_3) 
\end{equation}
is an essential partition of $3V$ into $n$-cells having the tripod property in $3Q$. Moreover, 
\begin{itemize}
\item[(1)] $(A_j\cup A_k)\cap \partial (3Q) \subset \partial_\cup 3\bV$ and
\item[(2)] $(\partial_\cup \bU)\cap 3Q \subset |\cD(\bU;\bV)|\cup |\cN(\bU;\bV)|$.
\end{itemize}
\end{lemma}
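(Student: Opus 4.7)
The plan is to cast the configuration in $3Q$ as non-flat initial data in the sense of Definition \ref{def:nfid}, invoke Lemma \ref{lemma:ra_2} to produce the atoms $A_j$ and $A_k$, and then appeal to Lemma \ref{lemma:tripod_non_planar} for the tripod property.

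First I would construct the non-flat initial data. Since $Q\in \cC(\bV)$ has color $i$, we have $3Q\subset 3V_i$ and $\partial(3Q)$ meets $3V_j$ and $3V_k$ in $(n-1)$-dimensional subcomplexes. I would let $\cF^r$ (for $r=1,2$) be the maximal collections of faces of $3Q$ contained in $3V_j\cap 3Q$ and $3V_k\cap 3Q$, respectively, so each $|\cF^r|$ is an $(n-1)$-cell. The unit cubes $q_j\subset V_j$ and $q_k\subset V_k$ provided by Definition \ref{def:C_cube_extended}(i), scaled by $3$, give the anchor cubes $q_1$ and $q_2$ required by Definition \ref{def:nfid}(a). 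The delicate point is the partition $\{\cQ''_1,\cQ''_2\}$ of $\cQ(3Q;\cF)$ satisfying Definition \ref{def:nfid}(b): I would assign to $\cQ''_1$ the scaled copies of cubes in $\cQ'_j(Q)\setminus(\cQ'_c(Q)\cup\{q'_j\})$ together with $q_1$, and to $\cQ''_2$ the remaining cubes of $\cQ(3Q;\cF)$ together with $q_2$. Connectedness of $\Gamma(\cQ''_1)$ is then precisely condition (ii) of Definition \ref{def:C_cube_extended}, while connectedness of $\Gamma(\cQ''_2)$ follows from a symmetric argument after swapping the roles of $j$ and $k$.

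With the non-flat initial data in place, Lemma \ref{lemma:gv_non-flat} provides a spanning forest $\Sigma=\Gamma_1\cup\Gamma_2$, and Lemma \ref{lemma:ra_2} then yields $1$-fine atoms along the boundary of $3Q$, each composed of building blocks based on faces in $\cF^1$ and $\cF^2$, which I rename $A_j$ and $A_k$. Property (4) of Lemma \ref{lemma:ra_2} guarantees that each $A_r\cup q_r$ is an $n$-cell, so $3V_j\cup A_j$ and $3V_k\cup A_k$ are $n$-cells obtained by attaching an atom along an $(n-1)$-cell; the residue $U_i=3V_i-(A_j\cup A_k)$ is a dented cube and hence also an $n$-cell. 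Property (5) of Lemma \ref{lemma:ra_2} immediately yields conclusion (1), since $A_r\cap \partial(3Q)\subset |\cF^r|\cup q_r \subset \partial_\cup 3\bV$, and the tripod property in $3Q$ is the content of Lemma \ref{lemma:tripod_non_planar}.

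It remains to verify (2). I would argue cube-by-cube: any cube $Q'$ in $(3Q)^*$ meeting $\partial_\cup\bU$ in an $(n-1)$-cell must lie in $\cQ(3Q;\cF)$ by construction. Such a $Q'$ is a $3\bV$-cube of color $i$ that is not a $\bU$-cube, and $Q'\cap(A_j\cup A_k)$ is either a single building block or a union of two building blocks based on different faces of $Q'$; the tripod property of the induced local partition in $Q'$ is inherited from the proof of Lemma \ref{lemma:tripod_non_planar}. Depending on whether $Q'$ has additional faces in $\partial(3Q)\cap(3V_j\cup 3V_k)$ not met by the building block(s) attached to it, $Q'$ becomes a $\cD$-cube in the sense of Definition \ref{def:D_cube} or an $\cN$-cube in the sense of Definition \ref{def:N_cube}. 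The main obstacle I expect is the careful bookkeeping of this dichotomy and the verification of connectedness of $\cQ''_2$, especially in the case $n=3$ where the atoms $A_j$, $A_k$ may split into several components; once the combinatorial bookkeeping is in hand, all remaining assertions are immediate from the properties of Lemma \ref{lemma:ra_2}.
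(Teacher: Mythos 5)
Your outline is the paper's own route: recast the situation in $3Q$ as non-flat initial data, get a spanning forest from Lemma \ref{lemma:gv_non-flat}, the atoms from Lemma \ref{lemma:ra_2}, the tripod property from Lemma \ref{lemma:tripod_non_planar}, conclusion (1) from property (5) of Lemma \ref{lemma:ra_2}, and conclusion (2) by observing that the scaled cubes along $\partial(3Q)$ are $\cD$- or $\cN$-cubes. The gap is in the one place where the construction has real content: your pairing of face-sets, cube-collections and anchors is \emph{un-crossed}, whereas the construction requires the crossed pairing. In Definition \ref{def:nfid}(a) the anchor $q_r$ must meet $|\cF^r|$ only in an $(n-2)$-cube, and hypothesis (ii) of Lemma \ref{lemma:tripod_non_planar} demands $|\cF^r|\subset Q\cap U_{j_r}$ with $j_r\ne r$ while $q_r\subset U_r$: the atom that will be glued to $3V_j$ must consist of building blocks based on the faces where $3Q$ meets $3V_k$, anchored at the scaled copy of $q_j$, and symmetrically for the other atom. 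You instead put $\cF^1\subset 3V_j\cap 3Q$, the cubes coming from $\cQ'_j(Q)$, and the anchor $3q_j\subset 3V_j$ all on the same color $j$. With that assignment the anchor's face lies inside $|\cF^1|$ (so Definition \ref{def:nfid}(a) fails), Lemma \ref{lemma:tripod_non_planar} does not apply, and the resulting partition genuinely fails the tripod property: near the faces of $3Q$ meeting $3V_j$ you would only see the pair $\{i,j\}$ (color-$j$ blocks with color $j$ outside and color $i$ inside), and near the faces toward $3V_k$ only $\{i,k\}$, so no $(n-2)$-cell where cells from three different $\bU$-equivalence classes meet is created; conclusion (2) breaks down for the same reason.

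Your appeal to Definition \ref{def:C_cube_extended}(ii) shows the same mix-up: condition (ii) asserts connectedness of $\Gamma(\{q_k\}\cup(\cQ'_j(Q)\setminus(\cQ'_c(Q)\cup\{q'_j\})))$, i.e.\ it couples the cubes lying along $V_j$ with the anchor $q_k$ of the \emph{other} complementary color, precisely so that the crossed atom can crawl along $Q\cap V_j$ and exit into $V_k$; it does not give connectedness of your $\cQ''_1$, nor is it symmetric in $j$ and $k$ as you use it for $\cQ''_2$. Once the pairing is corrected (atom for $U_j$ built on the faces toward $V_k$ with anchor $3q_j$, atom for $U_k$ built on the faces toward $V_j$ with anchor $3q_k$), the remaining steps you describe coincide with the paper's proof.
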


\begin{figure}[h!]
\includegraphics[scale=0.30]{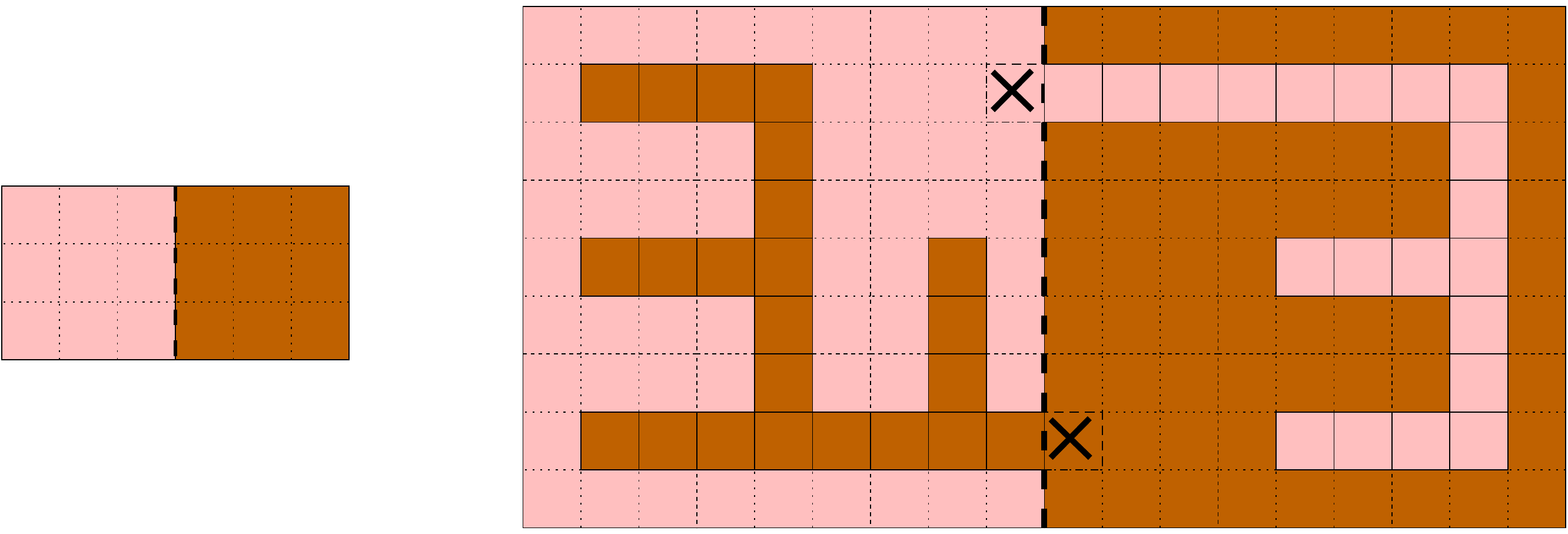}
\caption{Cube $Q$ and essential partition $\bU$ in $3Q$.} 
\label{fig:C_example_2b}
\end{figure}

\begin{proof}[Proof of Lemma \ref{lemma:ra_a}]
The proof is a straightforward application of Lemma \ref{lemma:tripod_non_planar} to appropriate non-planar initial data. 

For notational convenience, take $i=3$. Let $q_1\subset V_1$ and $q_2 \subset V_2$ be unit cubes as in Definition \ref{def:C_cube_extended}. For $p=1,2$, let $\cF^p$ be the collection of faces of $Q$ which meet $V_p$ in an $(n-1)$-cell. Then 
\[
(Q,(\cF^1, \cQ(Q;\cF^2),q_1),(\cF^2,\cQ(Q;\cF^1),q_2))
\]
are non-planar initial data; cf.\;Definition \ref{def:sf}. 

Let $\Sigma$ be a spanning forest as in Lemma \ref{lemma:gv_non-flat} and $A_1$ and $A_2$ be atoms associated to $\Sigma$ as in Lemma \ref{lemma:ra_1}. By Lemma \ref{lemma:tripod_non_planar}, the essential partition $(V_1\cup A_1, V_2\cup A_2,V_3-(A_1\cup A_2))$ satisfies the tripod property in $3Q$.

Property (1) follows immediately from (5) in Lemma \ref{lemma:ra_2}, and (2) from the observation that every cube in $3(\cQ(Q;\cF^1)\cup \cQ(Q;\cF^2))$ is either a $\cD$- or $\cN$-cube.
\end{proof}

It is obvious that $\cC$-modifications are performed independently. We formalize this in the following definition and corollary. Let $\bV$ be an essential partition of an $n$-cell into $n$-cells and let $\cC'\subset \cC(\bV)$ be non-empty.

\begin{definition}
An essential partition $\bU$ of $3|\bV|$ into $n$-cells is \emph{obtained by $\cC$-modification in $\cC'$ from $\bV$} if $\bU$ satisfies the tripod property in each cube in $3\cC'$ and 
\begin{itemize}
\item[(a)] $\bU - |3\cC'| = 3\bV - |3\cC'|$,
\item[(b)] if $C\in 3\cC'$, then $\bU\cap C$ is obtained from $3\bV$ by a $\cC$-modification, that is, $\bU$ satisfies the properties of Lemma \ref{lemma:ra_a} relative to $C$, and
\item[(c)] $\partial_\cup \bU \cap |3\cC'| \subset |\cD(\bU;\bV)\cup \cN(\bU;\bV)|$.
\end{itemize}  
\end{definition}

\begin{corollary}
\label{cor:ra_a_all}
Let $\bV=(V_1,V_2,V_3)$ be an essential partition of an $n$-cell so that $V_p$ is a dented molecule for $p=1,2,3$. Let $\cC'\subset \cC(\bV)$ be a non-empty subfamily. Then there exists an essential partition $\bU=(U_1,U_2,U_3)$ of $|3\bV|$ which is obtained by $\cC$-modification in $\cC'$.
\end{corollary}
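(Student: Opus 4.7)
The plan is to glue, for each cube $Q\in \cC'$, the local $\cC$-modification produced by Lemma~\ref{lemma:ra_a} into a single global essential partition of $|3\bV|$. The content of the argument is the observation that these local modifications are mutually independent.

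First I would note that, because $\cC(\bV)\subset |\bV|^*$, the cubes in $\cC'$ are pair-wise essentially disjoint $n$-cubes of side length $3$; hence their scalings $\{3Q \colon Q\in \cC'\}$ form a pair-wise essentially disjoint collection of $n$-cubes of side length $9$ contained in $|3\bV|$. For each $Q\in \cC'$ of color $i$ with complementary colors $j,k$, Lemma~\ref{lemma:ra_a} supplies atoms $A_j(Q), A_k(Q)\subset 3V_i\cap 3Q$ on the boundary of $3Q$ satisfying $(A_j(Q)\cup A_k(Q))\cap \partial(3Q) \subset \partial_\cup 3\bV$. Thus each modification is confined to the interior of $3Q$ and meets $\partial(3Q)$ only along the pair-wise common boundary of the unmodified partition $3\bV$.

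Accordingly, for each color $p\in\{1,2,3\}$, let $\cC'_p \subset \cC'$ denote the subfamily of $\cC$-cubes of color $p$, and define
\[
U_p \;=\; \Bigl( 3V_p - \bigcup_{Q\in \cC'_p,\ q\ne p} A_q(Q) \Bigr)\;\cup\;\bigcup_{Q\in \cC'\setminus \cC'_p} A_p(Q),
\]
where $A_q(Q)$ is the atom attached to color $q$ in the $\cC$-modification at $Q$. By the essential disjointness of the cubes $\{3Q\}_{Q\in \cC'}$ together with the boundary condition above, the tuple $\bU=(U_1,U_2,U_3)$ is an essential partition of $|3\bV|$ into $n$-cells that agrees with $3\bV$ on $|3\bV|-|3\cC'|$ and with the local partition provided by Lemma~\ref{lemma:ra_a} on each $3Q\in 3\cC'$. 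Conditions (a) and (b) in the definition of ``obtained by $\cC$-modification in $\cC'$'' are then immediate, the tripod property on each $3Q$ is inherited from Lemma~\ref{lemma:ra_a}, and condition (c), $\partial_\cup \bU\cap |3\cC'|\subset |\cD(\bU;\bV)\cup \cN(\bU;\bV)|$, follows from property~(2) of that lemma applied cube by cube.

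The only potential obstacle, compatibility along shared faces of neighboring cubes $3Q$ and $3Q'$ in $3\cC'$, is resolved by Lemma~\ref{lemma:ra_a}(1): the atoms produced in $3Q$ never cross $\partial(3Q)$ except along pieces of $\partial_\cup 3\bV$, which are identical in both modifications and match the unchanged exterior $3\bV$. This is the combinatorial content of the remark preceding the corollary that \emph{$\cC$-modifications are performed independently}.
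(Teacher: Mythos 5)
Your proposal is correct and follows exactly the route the paper intends: the corollary is stated as an immediate consequence of the remark that $\cC$-modifications are performed independently, i.e.\;one applies Lemma \ref{lemma:ra_a} in each cube $3Q$, $Q\in \cC'$, and glues, using property (1) of that lemma to see that the atoms meet $\partial(3Q)$ only along $\partial_\cup 3\bV$ so the local partitions match each other and the unmodified partition outside $|3\cC'|$. Your explicit formula for $U_p$ and the cube-by-cube verification of conditions (a)--(c) and the tripod property simply spell out what the paper leaves implicit.
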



\subsubsection{Secondary $\cC$- and $\cN$-modifications}
\label{sec:secondary_mods}

The $\cC$-modification in Lemma \ref{lemma:ra_a} is a 'primary' $\cC$-modification. To illustrate the necessity of 'secondary' $\cC$- and $\cN$-modifications, consider the following example. This is necessitated by the presence of neglected faces (Definition \ref{def:neglected_face}).

\begin{example}
\label{ex:C_problem}
Let $\bW=(W_1,W_2,W_3) = ([0,3]^3,[0,3]^2\times  [-3,0],[3,6]\times [0,3]^2)$. The cube $Q=[0,3]^3$ is a $\cC$-cube of color $1$ in $\bW$. 

Using Lemma \ref{lemma:ra_a} we perform a $\cC$-modification in $C=3Q$, that is, obtain the essential partition $\bV$ relative to $C$ as in Lemma \ref{lemma:ra_a}; see Figure \ref{fig:C_example_2b}. 

Consider now the essential partition $3\bV$. One possible essential partition $\bU''$ of $|3\bV|$ is in Figure \ref{fig:C_example_pre_final}. Notice in Figure \ref{fig:C_example_2b} that $\bV$ already has $3$ neglected faces each of which meets the vertical fold. Thus $\bU''$ cannot satisfy the tripod property. A glance at Figure \ref{fig:C_example_pre_final} also shows that in addition there are now $3$ cubes, in fact cubes in $3\cN(\bV;\bW)$, of side length $9$ in $\bU''$ which cannot be partitioned into $\cC$- and $\cD$-cubes.

\begin{figure}[h!]
\includegraphics[scale=0.20]{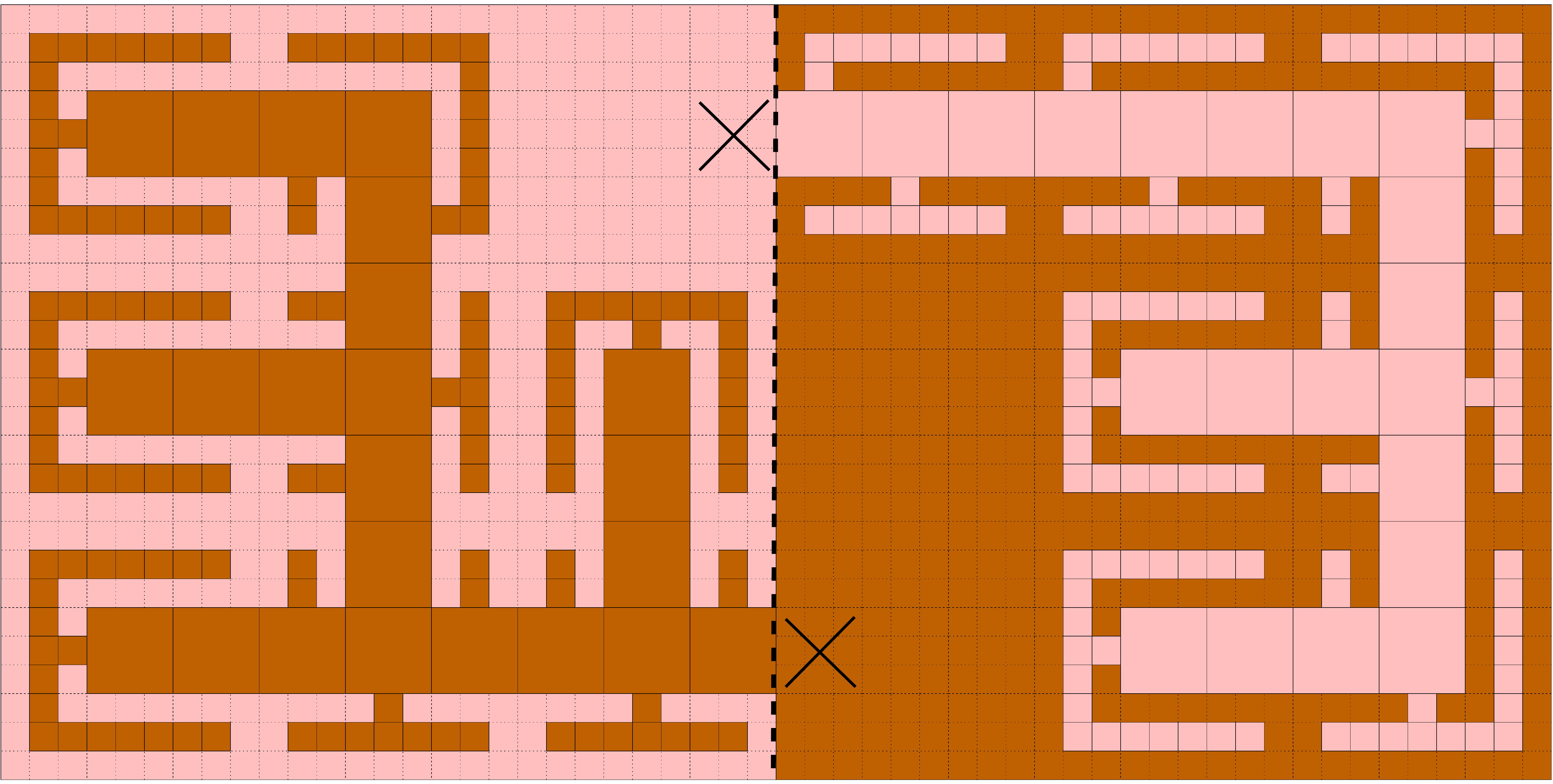}
\caption{Essential partition $\bU''$ in $9Q$.}
\label{fig:C_example_pre_final}
\end{figure}

In this situation, we achieve the tripod property with a 'secondary' $\cC$-modification in $3\bV$. The procedure imitates that of Lemma \ref{lemma:ra_b}, and takes into account both neglected faces and $\cD$-cubes; see Figure \ref{fig:C_example_final}. 

\begin{figure}[h!]
\includegraphics[scale=0.20]{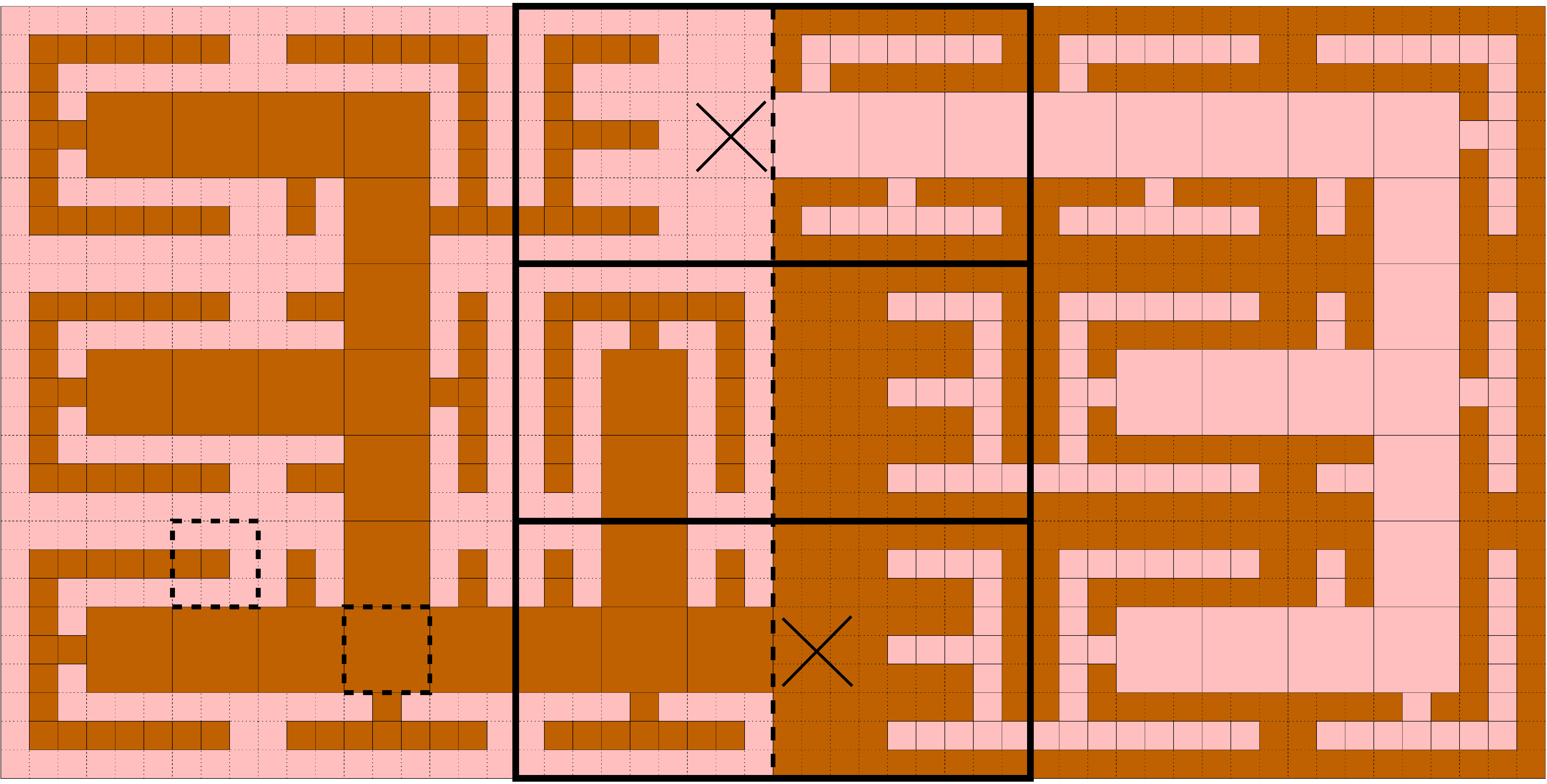}
\caption{An essential partition $\bU$ obtained by a secondary $\cC$-modification from $3\bV$. The three cubes of $\cN_2(\bU;\bV,\bW)$ are highlighted with solid lines and a $\cC$-cube and a $\cD$-cube highlighted with dashed lines.}
\label{fig:C_example_final}
\end{figure}

To be more precise, the essential partition $\bU$ is obtained by a secondary $\cC$-modification as follows. We observe first that $\partial_\cup \bV\cap 9Q \subset |\cD(\bV;\bW)|\cup |\cN(\bV;\bW)|$. Let also $A_j = C\cap V_j$ for each color $j=1,2$.

We perform now an independent $\cD$-modification in each cube in $3\cD(\bV;\bW)$. For each cube $Q' \in 3\cN(\bV;\bW)$, we extend either $3A_1$ or $3A_2$ from outside $Q'$ into $Q'$ using a $1$-fine atom; after this extension $Q'$ meets each color in its interior. This extension uses Lemma \ref{lemma:ra_1_ext}. Let $\bU$ be the essential partition obtained this way. Note that $3\cN(\bV;\bW) = \cN_2(\bU;\bV,\bW)$. 

Regarding the construction of the sequence $(\bfOmega_m)$, we may take $\bfOmega_2 = \bU$ if we have $\bV=\bfOmega_1$ and $\bW = \bfOmega_0$, since the tripod property is now satisfied.

The secondary $\cN$-modification in cubes $3\cN_2(\bU;\bV,\bW)$ is defined as follows. Let $Q'\in \cN_2(\bU;\bV,\bW)$. By Lemma \ref{lemma:ra_1_ext}, $U_j \cap Q'$ is a disjoint union of a molecule and a $1$-fine atom, each of a different complementary color. Moreover, there are three types of cubes in $Q'$, that is, sets $\cC(\bU\cap Q')$, $\cD(\bU\cap Q';\bV\cap (1/3)Q')$ and $\cN(\bU\cap Q';\bV\cap (1/3)Q')$ are all non-empty; note that all cubes of $\cN(\bU\cap Q';\bV\cap (1/3)Q')$ and none of $\cD(\bU\cap Q';\bV\cap (1/3)Q')$ meet the two distinguished faces of $Q'$. Secondary $\cN$-modification in $3Q'$ consists of independent $\cC$- and $\cD$-modifications and applications of Lemma \ref{lemma:ra_1_ext} and is similar to the case of a secondary $\cC$-modification.
\end{example}

We formalize now the secondary modifications in the form of lemmas. The proofs of those lemmas follow proofs of Lemmas \ref{lemma:ra_b}, \ref{lemma:ra_b2}, and \ref{lemma:ra_a}. Since the main difficulty is in their formulation, we leave the details of the proofs to the interested reader. For the statement, we give the following definition.

\begin{definition}
An essential partition $\bU$ is \emph{obtained by a secondary modification from $\bV$ with respect to $\bW$ in a cube $C=3Q$ of side length $27$} if $\bU$ satisfies the tripod property in $C$, $\bV$ and $\bW$ are essential partitions satisfying $|\bU|=3|\bV|=9|\bW|$, and there are colors $j$ and $k$ and molecules $M_j$ and $M_k$ in $3Q$ so that $U_i = 3V_i - (M_j\cup M_k)$, $U_j=3V_j\cup M_j$, and $U_k= 3V_k\cup M_k$ are $n$-cells, and these molecules satisfy the following conditions: 
\begin{itemize}
\item[(1)] $(M_j\cup M_k)\cap \partial (3Q) \subset \partial_\cup 3\bV$,
\item[(2)] $(\partial_\cup \bU)\cap 3Q \subset |\cC(\bU)|\cup |\cD(\bU;\bV)|\cup |\cN_2(\bU;\bV,\bW)|$,
\item[(3)] $3A_j\subset M_j$ and $3A_k \subset M_k$, 
\item[(4)] for $p=j,k$, $M_p-3A_p$ consists of pair-wise disjoint $1$-fine atoms made of building blocks,
\item[(5)] when $n>3$ and $p=1,2,3$, each building block in $\tilde \Gamma(3A_p)$ meets at most one atom in $M_p-3A_p$,
\end{itemize}
where $A_j$ and $A_k$ are atoms in $Q$ satisfying $V_i = 3W_i - (A_j\cup A_k)$, $V_j=3W_j\cup A_j$, and $V_k=3W_k\cup A_k$; here $\bV=(V_1,V_2,V_3)$ and $\bW=(W_1,W_2,W_3)$.
\end{definition}

\begin{lemma}[Secondary $\cC$-modification]
\label{lemma:ra_c}
Let $\bV$ and $\bW$ be essential partitions satisfying $|\bV|=3|\bW|$, and suppose that $\bV$ has been obtained by a $\cC$-modification in $(1/3)Q$ from $\bW$. Then there exists an essential partition $\bU$ of $3|\bV|$ which is obtained by a secondary modification from $\bV$ with respect to $\bW$ in $3Q$.
\end{lemma}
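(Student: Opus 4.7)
The plan is to carry out the procedure illustrated in Example \ref{ex:C_problem}, which replaces the scaled partition $3\bV$ by an essential partition $\bU$ of $3|\bV|$ using two kinds of independent local rearrangements: ordinary $\cD$-modifications in the $\cD$-cubes of $3\cD(\bV;\bW)$ and flat extensions (via the basin machinery of Section \ref{sec:NF}) in the $\cN$-cubes of $3\cN(\bV;\bW)$. The starting point is the fact that, since $\bV$ arose from a primary $\cC$-modification of $\bW$ in $(1/3)Q$, Lemma \ref{lemma:ra_a} guarantees
\[
\partial_\cup \bV \cap Q \subset |\cD(\bV;\bW)|\cup |\cN(\bV;\bW)|,
\]
so after scaling we have $\partial_\cup (3\bV)\cap 3Q \subset |3\cD(\bV;\bW)|\cup |3\cN(\bV;\bW)|$, and the two kinds of cubes are essentially disjoint.

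First I would apply Corollary \ref{cor:ra_b_all} to perform simultaneous and independent $\cD$-modifications in every cube of $3\cD(\bV;\bW)$. By properties (1) and (2) of Lemmas \ref{lemma:ra_b} and \ref{lemma:ra_b2}, each such modification only adjusts the partition in the interior of the corresponding cube and produces new building-block atoms which attach to the two boundary atoms $3A_j$ and $3A_k$ coming from $\bV$. This step alone gives an intermediate essential partition $\bU'$ of $3|\bV|$ satisfying the tripod property on all of $|3\cD(\bV;\bW)|$, together with conditions (1), (3), (4) of the definition of secondary modification restricted to these cubes.

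Next I would handle each cube $Q'\in 3\cN(\bV;\bW)$ separately. By construction of $\bV$ and the definition of $\cN$-cubes, there is a unique complementary color $p\in\{j,k\}$ and a $1$-fine building block $A_{Q'}\subset Q'\cap 3V_p$ with a neglected face. To extend $3A_p$ into $Q'$ across this face, I would mimic the discussion leading to Lemma \ref{lemma:ra_1_ext}: partition the collection of neglected faces $\cN(Q';A_{Q'})$ into pre-basins, fix a system of basins $\cB$, and for each basin $B\in\cB$ choose scaled flat initial data $(3Q',3F_B,3B^\#,3q_B)$ together with a spanning tree $\Gamma_B$. Lemma \ref{lemma:ra_1_ext} then produces $1$-fine atoms $A_{\Gamma_B}$, pair-wise disjoint, which together with $3A_{Q'}$ form a union of two molecules on the boundary of $3Q'$. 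Adding these atoms to $3V_p$ and removing them from $3V_i$ in each $Q'$ defines the desired essential partition $\bU$. The tripod property inside each $Q'$ is supplied by the flat tripod statement behind Lemma \ref{lemma:ra_1_ext} together with Lemma \ref{lemma:tripod_planar}; globally in $3Q$ the property follows because the three groups of cubes ($\cD$-modified, $\cN$-extended, untouched) interact only along the fixed partition $\bV$ on their common boundaries by condition (1) of the relevant lemmas.

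The main obstacle will be verifying condition (5) of the secondary modification, namely that for $n>3$ each building block in $\wt\Gamma(3A_p)$ meets at most one atom of $M_p-3A_p$. This translates into choosing the partition of $\cN(Q';A_{Q'})$ into pre-basins and then into basins so that no two basins attach to the same building block of $3A_p$; for $n>3$ this can be arranged from the geometry of the neglected faces, while for $n=3$ only the weaker bound on the number of components is needed, matching property (3) of Lemmas \ref{lemma:ra_b}, \ref{lemma:ra_b2}, and \ref{lemma:ra_a}. Once the basin system is fixed compatibly, the verification of conditions (2), (3), (4) is direct from the construction: (2) follows because every cube of $3Q$ meeting $\partial_\cup \bU$ belongs to $\cC(\bU)$ (the $3$-fine cubes inside the extended atoms), $\cD(\bU;\bV)$ (the cubes of the $\cD$-modification process), or $\cN_2(\bU;\bV,\bW)=3\cN(\bV;\bW)$ (the handled $\cN$-cubes); (3) holds by construction of $M_j$ and $M_k$ as $3A_j$ and $3A_k$ together with the new atoms; and (4) follows from Corollaries \ref{cor:ra_b_all} and Lemma \ref{lemma:ra_1_ext} stating that all attached pieces are $1$-fine atoms composed of building blocks.
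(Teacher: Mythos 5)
Your proposal follows exactly the route the paper intends: the paper gives no detailed proof of this lemma (it states that the proof follows those of Lemmas \ref{lemma:ra_b}, \ref{lemma:ra_b2}, and \ref{lemma:ra_a} and leaves the details to the reader), and the procedure sketched in Example \ref{ex:C_problem} is precisely your two-step scheme of independent $\cD$-modifications in the cubes of $3\cD(\bV;\bW)$ followed by basin/flat extensions via Lemma \ref{lemma:ra_1_ext} in the cubes of $3\cN(\bV;\bW)$, with $\cN_2(\bU;\bV,\bW)=3\cN(\bV;\bW)$. Your identification of condition (5) as the point needing care (and its weakening in dimension $3$) matches the paper's own treatment, which likewise only asserts that the basin and attachment choices can be arranged, so the proposal is correct at (at least) the paper's level of detail.
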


\begin{lemma}[Secondary $\cN$-modification]
\label{lemma:ra_n}
Let $\bV$, $\bW$, and $\bW'$ be essential partitions satisfying $|\bV|=3|\bW|=9|\bW'|$, and let $Q\in \cN_2(\bV;\bW,\bW')$ be a secondary $\cN$-cube so that $\bV$ is obtained by a secondary modification in $(1/3)Q$ from $\bW$ with respect to $\bW'$. Then there exists an essential partition $\bU$ of $3|\bV|$ so that $\bU$ is obtained by a secondary modification from $\bV$ with respect to $\bW$ in $3Q$.
\end{lemma}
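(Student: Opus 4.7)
The proof follows the template of Lemma \ref{lemma:ra_c}, adapted to the asymmetric initial configuration coming from the $\cN$-cube hypothesis. By the hypothesis $Q\in\cN_2(\bV;\bW,\bW')$, the cube $(1/3)Q$ of side length $3$ is an $\cN$-cube in $\bW$ relative to $\bW'$, and the additional assumption that $\bV$ is obtained by a secondary modification in $(1/3)Q$ means that $\bV$ restricted to $Q$ carries two $1$-fine atoms $A_j\subset V_j\cap Q$ and $A_k\subset V_k\cap Q$ on the boundary of $Q$: the atom $A_j$ is the building block inherited from the underlying $\cN$-cube, while $A_k$ arises from the basin extension in the preceding secondary modification. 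Moreover, by property (2) of the secondary-modification definition,
\[
(\partial_\cup\bV)\cap Q \subset |\cC(\bV)\cap Q|\cup |\cD(\bV;\bW)\cap Q|\cup |\cN(\bV;\bW)\cap Q|.
\]
The plan is to extend $3A_j$ and $3A_k$ into pairwise disjoint molecules $M_j, M_k\subset 3Q$ with $3A_p\subset M_p$ for $p=j,k$, and then to set $U_i=3V_i-(M_j\cup M_k)$, $U_j=3V_j\cup M_j$, $U_k=3V_k\cup M_k$.

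The molecules $M_j, M_k$ are assembled from three kinds of local rearrangements in $3Q$. First, for each cube $Q'\in\cD(\bV;\bW)$ contained in $Q$, I would apply a $\cD$-modification (Lemma \ref{lemma:ra_b} or \ref{lemma:ra_b2}) in $3Q'$, extending $3A_j$ or $3A_k$ by one building block or by a union of two building blocks. Second, for each cube $Q'\in\cC(\bV)$ contained in $Q$, apply a $\cC$-modification (Lemma \ref{lemma:ra_a}) in $3Q'$, extending both atoms into $3Q'$. Third, for the cubes in $\cN(\bV;\bW)$ contained in $Q$, fix a partition $\fP$ of the neglected-face set $\cN(Q; A_j\cup A_k)$ into pre-basins and a system of basins $\cB$ associated to $\fP$; then apply Lemma \ref{lemma:ra_1_ext} to produce pairwise disjoint $1$-fine atoms $A_{\Gamma_B}\subset 3Q$, one per basin, joining building blocks of $3A_j\cup 3A_k$ to the corresponding $\cN$-cubes at the new scale. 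Collecting all atoms of color $j$ together with $3A_j$ into $M_j$, and similarly for $M_k$, completes the construction of $\bU$.

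Verification of conditions (1)--(5) in the secondary-modification definition is then bookkeeping. Condition (1) follows from the local support of each rearrangement together with the observation that every face of each subcube $3Q'$ that lies in $\partial(3Q)$ already lies in $\partial_\cup 3\bV$. Condition (2) combines Corollary \ref{cor:ra_b} for the $\cD$-step, Lemma \ref{lemma:ra_a} for the $\cC$-step, and the classification of the new cubes produced by Lemma \ref{lemma:ra_1_ext} as $\cC$-, $\cD$-, or $\cN_2$-cubes of $\bU$; conditions (3) and (4) are immediate from the construction. For condition (5) in dimensions $n>3$, the anchoring cubes $3q_B$ of Lemma \ref{lemma:ra_1_ext} are chosen in pairwise distinct building blocks of $3A_j\cup 3A_k$, which is possible by property (B4) of a system of basins. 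The tripod property in $3Q$ is assembled cube-by-cube from Proposition \ref{prop:bb_tripod}, Lemma \ref{lemma:tripod_planar}, Lemma \ref{lemma:tripod_non_planar}, and the tripod clause in the proof of Lemma \ref{lemma:ra_1_ext}; the local partitions agree on shared faces of subcubes by (1). The principal technical obstacle is the combinatorial bookkeeping required for the basin step, namely, choosing $\fP$ so that (PB3) is satisfied, selecting a system of basins with the required distinct anchors, and, in dimension $n=3$, subdividing pre-basins appropriately, exactly as in the (unwritten) proof of Lemma \ref{lemma:ra_c}.
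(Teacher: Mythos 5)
Your proposal follows essentially the same route as the paper, which in fact only sketches this proof (via Example \ref{ex:C_problem}, stating that a secondary $\cN$-modification consists of independent $\cC$- and $\cD$-modifications together with applications of Lemma \ref{lemma:ra_1_ext}, the details being left to the reader); your decomposition into $\cC$-, $\cD$-, and basin steps and the subsequent verification of (1)--(5) and the tripod property match that outline. The only slip is cosmetic: the building block inherited from the underlying $\cN$-cube is $3$-fine (not $1$-fine) inside the side-$9$ cube $Q$, but this does not affect the construction.
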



\subsubsection{The Machine}
\label{sec:prep_induction}
Having all necessary modifications now at our disposal, we introduce the main induction step. Corollary \ref{cor:pre_induction} will summarize the process.

Let $\bV$, $\bW$, and $\bW'$ be essential partitions satisfying $|\bV|=3|\bW|=9|\bW'|$. Suppose that all secondary $\cC$-cubes in $\cC_2(\bV;\bW)$ and all secondary $\cN$-cubes $Q$ in $\cN_2(\bV;\bW,\bW')$ are obtained by secondary $\cC$- and $\cN$-modifications from $(1/3)Q$, respectively. Suppose also that $|\bV|\cap |3\cC(\bW)|$ is obtained by a series of independent $\cC$-modifications and $|\bV|\cap |3\cD(\bW;\bW')|$ by a series of independent $\cD$-modifications.

Based on the modifications introduced in this section (Lemmas \ref{lemma:ra_b}, \ref{lemma:ra_a}, \ref{lemma:ra_c}, and \ref{lemma:ra_n}) we note:
\begin{itemize}
\item[(a)] the sets $|\cC(\bV)|$ and $|\cD(\bV;\bW)|$ are essentially disjoint and
\item[(b)] sets $|\cC_2(\bV;\bW)|$ and $|\cN_2(\bV;\bW,\bW')|$ are essentially disjoint.
\end{itemize}
Indeed, by definition of $\cC$- and $\cD$-cubes, $\cC(\bV)\cap \cD(\bV;\bW)=\emptyset$. The claim (a) now follows from observation that cubes in $(1/3)\left( \cC(\bV)\cup \cD(\bV;\bW)\right)$ are unit cubes. The claim (b) follows now with a similar argument. 

It is essential to notice that cubes in $\cC_2(\bV;\bW)\cup \cN_2(\bV;\bW,\bW')$ contain cubes in $\cC(\bV)\cup \cD(\bV;\bW)$, that is, the intersection $|\cC(\bV)\cup \cD(\bV;\bW)| \cap |\cC_2(\bV;\bW)\cup \cN_2(\bV;\bW,\bW')|$ has non-empty interior; see e.g.\;Figure \ref{fig:C_example_final}. For this reason, and to have a well-defined order of independent rearrangements, we set 
\[
\mathfrak{R}(\bV;\bW,\bW') = \{ Q\in \cC(\bV)\cup \cD(\bV;\bW) \colon Q \not \subset  |\cC_2(\bV;\bW)\cup \cN_2(\bV;\bW,\bW')|\};
\]
here 'R' stands for 'remaining'.

The inductive process can now be organized in a form of a list of operations to be performed. Given a collection $\fL_\bW$ of essentially disjoint cubes in $|\bW|$, we define the \emph{list $\fL_{\bV}$ with respect to the history $(\bW,\bW',\fL_\bW)$} by  
\[
\begin{split}
\fL_\bV&=\fL(\bV;\bW,\bW',\fL_{\bW}) \\ 
&= \{ Q\in \cC_2(\bV;\bW)\cup \cN_2(\bV;\bW,\bW') \cup \mathfrak{R}(\bV;\bW,\bW') \colon Q\subset 3|\fL_\bW|\}.
\end{split}
\]
Note that cubes in $\fL(\bV;\bW,\bW',\fL_\bW)$ are  pair-wise essentially disjoint and have side length either $3$ or $9$.

\begin{remark}
\label{rmk:flat_top}
To see how the collection $\fL_\bW$ organizes the modification process, consider the essential partitions $\bfOmega_0$ and $\bfOmega_1$. The essential partition $\bfOmega_0$ has one $\cC$-cube $Q$ and, after scaling, we perform the (only possible) rearrangement in $3Q$. Thus the essential partition $\bfOmega_1$ has one secondary $\cC$-cube $3Q$. There are also several $\cC$-cubes in $|\bfOmega_1|-(3Q)$, contained in $\Omega_{1,2}\cup \Omega_{1,3}$, but all these $\cC$-cubes have one face in $3Q$. Since it suffices to perform a rearrangement on one side of $\partial_\cup 3\bfOmega_1$, we perform the secondary $\cC$-modification in $9Q$ and ignore the other $\cC$-cubes. Thus, if we set $\fL_{\bfOmega_0}=\{Q\}$ and $\fL_{\bfOmega_1}=\{3Q\}$, the rearrangement to obtain $\bfOmega_2$ is performed in $3|\fL_{\bfOmega_1}|$. We follow this general principle of nested rearrangements throughout the construction. For example, for the next rearrangement we define $\fL_{\bfOmega_2} = \fL(\bfOmega_2;\bfOmega_1,\bfOmega_0,\fL_{\bfOmega_1})$, and rearrangements take place in $3|\fL_{\bfOmega_2}|$; note that $\fL_{\bfOmega_2}$ consists of $\cC$- and $\cD$-cubes and secondary $\cN$-cubes as mentioned in Example \ref{ex:C_problem}. In particular, we have $9 |\fL_{\Omega_0}| = 3|\fL_{\bfOmega_1}|\supsetneq |\fL_{\bfOmega_2}|$.
\end{remark}

\begin{definition}
Given essential partitions $\bV$, $\bW$, and $\bW'$ satisfying $|\bV|=3|\bW|=9|\bW'|$ and a list $\fL_{\bW}$ of cubes in $\bW$, an essential partition $\bU$ is \emph{properly obtained (following $\fL_\bV=\fL(\bV;\bW,\bW',\fL_{\bW}))$} if $\bU$ is obtained 
\begin{itemize}
\item by $\cC$-modification in $3\left( \cC(\bV)\cap \fL_{\bV}\right)$, 
\item by $\cD$-modification in $3\left(\cD(\bV,\bW) \cap \fL_{\bV}\right)$, and 
\item by secondary modification in $3\left((\cC_2(\bV;\bW)\cup \cN_2(\bV;\bW,\bW'))\cap \fL_{\bV}\right)$.
\end{itemize}
\end{definition}

These modifications now yield the following corollary, which can be viewed as the induction step in the construction; the specific sequence $(\bfOmega_m)$ satisfying Theorem \ref{thm:RP} appears in the next section. 

\begin{corollary}
\label{cor:pre_induction}
Let $\bV$, $\bW$, and $\bW'$ be essential partitions for which $|\bV|=3|\bW|=9|\bW'|$ and let $\fL_{\bW}$ be list of cubes in $|\bW|$. Suppose $\bV$ is properly obtained following $\fL_{\bV}=\fL(\bV;\bW,\bW',\fL_{\bW})$ and suppose that $\bV$ satisfies the tripod property and $\partial_\cup \bV\subset |\fL_{\bV}|$. 

Then there exists a properly obtained essential partition $\bU$ satisfying $|\bU|=3|\bV|$ and $\partial_\cup \bU\subset |\fL_{\bU}|\subset 3|\fL_{\bV}|$, where $\fL_{\bU}=\fL(\bU;\bV,\bW,\fL_{\bV})$. In particular, $\bU$ satisfies the tripod property.
\end{corollary}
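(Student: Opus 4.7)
The plan is to construct $\bU$ by performing, in each scaled cube $3Q$ with $Q \in \fL_\bV$, the modification designated by the type of $Q$, and then to verify that the resulting local configurations glue into a single global essential partition having the required properties. First I would observe that the cubes in $\fL_\bV$ are pair-wise essentially disjoint: by the bulleted observations (a) and (b) preceding the definition of $\fL$, the families $\cC(\bV)$ and $\cD(\bV;\bW)$ are essentially disjoint, and so are $\cC_2(\bV;\bW)$ and $\cN_2(\bV;\bW,\bW')$; moreover the defining condition of $\mathfrak{R}(\bV;\bW,\bW')$ rules out primary cubes contained in a secondary cube. Hence the scaled cubes $\{3Q : Q \in \fL_\bV\}$ are also essentially disjoint.

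Next I would apply the corresponding modification lemma in each $3Q$: Lemma \ref{lemma:ra_a} for $Q \in \cC(\bV) \cap \fL_\bV$, Lemma \ref{lemma:ra_b} or \ref{lemma:ra_b2} for $Q \in \cD(\bV;\bW) \cap \fL_\bV$, Lemma \ref{lemma:ra_c} for $Q \in \cC_2(\bV;\bW) \cap \fL_\bV$, and Lemma \ref{lemma:ra_n} for $Q \in \cN_2(\bV;\bW,\bW') \cap \fL_\bV$. The hypotheses of the two secondary lemmas hold because $\bV$ is properly obtained following $\fL_\bV$: for each $Q \in \cC_2(\bV;\bW) \cap \fL_\bV$, the partition $\bV \cap (1/3)Q$ was produced by a $\cC$-modification of $\bW$ in $(1/3)Q$, and analogously for $\cN_2$. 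Condition~(1) of each of these four lemmas says the new configuration agrees with $3\bV$ on $\partial(3Q)$; combined with essential disjointness of the $3Q$'s, the local modifications glue into a single essential partition $\bU$ of $3|\bV|$ with $\bU - 3|\fL_\bV| = 3\bV - 3|\fL_\bV|$. By construction $\bU$ is then properly obtained following $\fL_\bV$.

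It remains to check the containment statement for $\partial_\cup \bU$ and the tripod property. Since $\partial_\cup \bV \subset |\fL_\bV|$, scaling gives $\partial_\cup(3\bV) \subset 3|\fL_\bV|$, and because modifications act only inside the $3Q$'s this yields $\partial_\cup \bU \subset 3|\fL_\bV|$. Condition~(2) of each modification lemma guarantees that $(\partial_\cup \bU) \cap 3Q$ is covered by cubes in $\cC(\bU) \cup \cD(\bU;\bV) \cup \cC_2(\bU;\bV) \cup \cN_2(\bU;\bV,\bW)$, each contained in $3Q \subset 3|\fL_\bV|$; by the definition of $\fL_\bU = \fL(\bU;\bV,\bW,\fL_\bV)$ every such cube is in $\fL_\bU$. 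Hence $\partial_\cup \bU \subset |\fL_\bU| \subset 3|\fL_\bV|$. For the tripod property, each modification lemma supplies an essential partition of $(\partial_\cup \bU) \cap 3Q$ satisfying $(\Delta 1)$ and $(\Delta 2)$; since the $3Q$'s are essentially disjoint and cover $\partial_\cup \bU$, these patch into a global essential partition of $\partial_\cup \bU$ verifying Definition~\ref{def:tripod}.

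The main obstacle is bookkeeping: one must confirm that every $\bU$-equivalence class appearing in a patch lies inside a single $3Q$, and that each cube produced by a local modification is correctly classified as a $\cC$-, $\cD$-, secondary $\cC$- or secondary $\cN$-cube relative to the triple $(\bU,\bV,\bW)$, so that the local tripod data actually fits the definition of $\fL_\bU$. Both points are controlled by condition~(1) of the four modification lemmas, which keeps every new atom strictly inside its $3Q$ and leaves $\partial(3Q)$ untouched; this prevents tripod triples or equivalence classes from being forced across the seam between two modified cubes, so the gluing is consistent.
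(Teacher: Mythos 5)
Your proposal is correct and follows essentially the same route as the paper's own (very terse) proof: perform the appropriate modification (Lemmas \ref{lemma:ra_a}, \ref{lemma:ra_b}, \ref{lemma:ra_b2}, \ref{lemma:ra_c}, \ref{lemma:ra_n}) independently in each cube $3Q$, $Q\in\fL_\bV$, and use conditions (1)--(2) of those lemmas to glue, to read off the new list $\fL_\bU$, and to patch the local tripod partitions. Your version simply spells out the disjointness, gluing, and bookkeeping details that the paper leaves implicit.
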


\begin{proof}
It suffices to note that $\bU$ is obtained by independent modifications in each cube $3Q$ for $Q\in \fL(\bV;\bW,\bW',\fL_{\bW})$, and is hence properly obtained. These modifications also yield a new list $\fL(\bU;\bV,\bW,\fL_{\bV})$; Lemmas \ref{lemma:ra_b}, \ref{lemma:ra_b2}, \ref{lemma:ra_a}, \ref{lemma:ra_c}, and \ref{lemma:ra_n} cover the possible situations of different modifications. Thus $\partial_\cup \bU\subset |\fL(\bU;\bV,\bW,\fL_{\bV})|$ and $\bU$ satisfies the tripod property.
\end{proof}

\subsection{Inductive construction} 

Throughout this section, $\bfOmega_0$ and $\bfOmega_1$ are essential partitions defined in Sections \ref{sec:0ra} and \ref{sec:1ra}.

\begin{proposition}
\label{prop:induction_clean}
Let $n\ge 3$, $\bfOmega_0=([0,3]^n, [0,3]^{n-1}\times [-3,0], [3,6]\times [0,3]^{n-1})$ and let $\bfOmega_1$ be an essential partition as in Section \ref{sec:1ra}. Then there exist essential partitions $\bfOmega_m=(\Omega_{m,1},\Omega_{m,2},\Omega_{m,3})$ for $m\ge 1$ satisfying the tripod property and the following conditions:
\begin{itemize}
\item[(a)] $|\bfOmega_m| = 3|\bfOmega_{m-1}|$,
\item[(b)] $\partial_\cup \bV_m \subset |\fL(\bfOmega_m; \bfOmega_{m-1},\bfOmega_{m-2},\fL_{\bfOmega_{m-1}})|$
\item[(c)] all cubes in $\fL_{\bfOmega_m}=\cL(\bfOmega_m; \bfOmega_{m-1},\bfOmega_{m-2},\fL_{\bfOmega_{m-1}})$ are properly obtained,
\item[(d)] $\bfOmega_\ell\cap 3^{m-2}|\bfOmega_0| = \bfOmega_m\cap 3^{m-2}|\bfOmega_0|$ for all $\ell>m$.
\end{itemize}
In addition, there exist $\nu\ge 1$ and $\lambda>1$, depending only on $n$, so that for all $m\ge 0$ and each $p=1,2,3$,
\begin{itemize}
\item[(e)] each $\hull(\Omega_{m,p})$ is a $(\nu,\lambda)$-molecule with the atom length of $\Gamma(\hull(\Omega_{m,p}))$ bounded by a constant depending only on $n$, and
\item[(f)] there exists $L=L(n)\ge 1$ and an $L$-bilipschitz map $\psi_{m,p} \colon (\Omega_{m,p}, d_{\Omega_{m,p}}) \to (\hull(\Omega_{m,p}),d_{\hull(\Omega_{m,p})})$ which is the identity on $\Omega_{m,p}\cap \partial \hull(\Omega_{m,p})$. 
\end{itemize}
\end{proposition}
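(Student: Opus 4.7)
The plan is to build the sequence $(\bfOmega_m)$ together with the lists $\fL_{\bfOmega_m}$ jointly by induction on $m$, using Corollary \ref{cor:pre_induction} as the induction engine. For the base case, $\bfOmega_0$ and $\bfOmega_1$ are the partitions of Sections \ref{sec:0ra} and \ref{sec:1ra}, and one sets $\fL_{\bfOmega_1}$ to be the singleton consisting of the secondary $\cC$-cube $[0,9]^n$ produced by the $\cC$-modification defining $\bfOmega_1$. Conditions (a)--(c) and the tripod property for $m=1$ follow directly from Lemma \ref{lemma:ra_a}, while (d) is vacuous for $m\le 2$ and (e)--(f) are trivial since each $\Omega_{1,p}$ is visibly a molecule with atoms of bounded length.

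For the induction step, given $\bfOmega_m$ with list $\fL_{\bfOmega_m}$, one applies Corollary \ref{cor:pre_induction} with $\bV=\bfOmega_m$, $\bW=\bfOmega_{m-1}$ and $\bW'=\bfOmega_{m-2}$ (for $m=1$ the auxiliary partition $\bW'$ is not used because $\cN_2(\bfOmega_1;\bfOmega_0,\cdot)=\emptyset$). This produces a properly obtained $\bfOmega_{m+1}$ with $|\bfOmega_{m+1}|=3|\bfOmega_m|$, satisfying the tripod property, and a new list $\fL_{\bfOmega_{m+1}}$ with $\partial_\cup \bfOmega_{m+1}\subset |\fL_{\bfOmega_{m+1}}|\subset 3|\fL_{\bfOmega_m}|$; this gives (a)--(c). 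Stability (d) is proved by simultaneous induction on the location of the list: one verifies that $|\fL_{\bfOmega_m}|\subset 3^m|\bfOmega_0|-3^{m-2}|\bfOmega_0|$, a property inherited by $\fL_{\bfOmega_{m+1}}\subset 3|\fL_{\bfOmega_m}|$. Since every modification used to pass from $\bfOmega_m$ to $\bfOmega_{m+1}$ has support in a single cube of $3\fL_{\bfOmega_m}$, the partition $\bfOmega_m$ is left unaltered on $3^{m-1}|\bfOmega_0|$, yielding (d) after a harmless index shift.

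Condition (e) is maintained inductively by observing that each of Lemmas \ref{lemma:ra_b}, \ref{lemma:ra_b2}, \ref{lemma:ra_a}, \ref{lemma:ra_c} and \ref{lemma:ra_n} adjoins only $1$-fine atoms built from centered building blocks (hence of length at most $2(n-1)-1$) and attaches them to faces of $3$-fine cubes of atoms already present in $\hull(\Omega_{m,p})$. Proper $3$-adjacency is thereby preserved, the atom length of $\Gamma(\hull(\Omega_{m,p}))$ stays bounded in terms of $n$ alone, and both the valence $\nu$ and the collapsibility constant $\lambda$ are controlled by the local geometry of building blocks. The hardest point is (f): although $\hull(\Omega_{m,p})$ is a $(\nu,\lambda)$-molecule, the adjacency tree $\Gamma(\Omega_{m,p})$ of the full dented molecule need \emph{not} have uniformly bounded valence, because dents produced by secondary modifications at many different scales may accumulate on a common atom of the hull; consequently neither Proposition \ref{prop:fRt} nor Proposition \ref{prop:fRt_flat} applies in one stroke. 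The approach I would take, and the main technical obstacle which the authors themselves defer to this section, is to interpolate through a sequence of partial hulls in the sense of Definition \ref{def:ph}: collapse the dents scale by scale, at each scale using Lemma \ref{lemma:local_boundedness} to bound the overlap of the truncated conical neighborhoods and Proposition \ref{prop:fRt_flat} to produce an $L(n)$-bilipschitz map, and then compose along the finitely many scales in the style of Corollary \ref{cor:fRt}. The result is the desired uniform bilipschitz homeomorphism $\psi_{m,p}$, equal to the identity on $\partial\hull(\Omega_{m,p})\cap\Omega_{m,p}$.
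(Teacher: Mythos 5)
Your overall architecture (induction driven by Corollary \ref{cor:pre_induction} for (a)--(c), preservation of molecule structure for (e), partial hulls for (f)) matches the paper, but there are genuine gaps. The decisive one is in (f). You correctly observe that $\Gamma(\Omega_{m,p})$ has unbounded valence and propose to collapse dents ``scale by scale'' through partial hulls, composing $L(n)$-bilipschitz maps ``along the finitely many scales''. The number of scales, however, grows linearly in $m$, so a naive composition yields a constant of order $L(n)^{m}$, which violates the requirement $L=L(n)$. The paper's resolution is precisely the point your sketch omits: at each step one removes only the \emph{smallest} dented atom $D_k$, collapses the subtree behind it into $D_k$ (Lemma \ref{lemma:expanding_removability}), and then pushes $D_k$ onto $\hull(D_k)$ by a map that carries the joins $J_d$ \emph{isometrically} into joins $J_B$ over building blocks of the hull (Lemma \ref{lemma:Q_flat}); the collections $\cJ_k$ of such joins are carried along as ``virtual'' adjacent atoms, and all later maps restrict to isometries on them. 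This is what makes the total constant the product of just two fixed constants rather than a product over all steps, and it requires the regrouping estimate $\#\LE(P,D;B)\le 8n^2 3^n<26^{n-1}$ of Lemma \ref{lemma:entry_counting} so that the joins fit disjointly into $J_B$ — Lemma \ref{lemma:local_boundedness}, which you invoke instead, only controls overlaps of cones inside a single dented atom and does not supply this. One also needs to check that the virtual atoms do not destroy $\lambda$-collapsibility, which your sketch does not address.

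Two further points. For (d), your locality argument shows that outside $3|\fL_{\bfOmega_m}|$ the new partition agrees with the \emph{scaled} partition $3\bfOmega_m$, not with $\bfOmega_m$ itself; stability requires an additional self-similarity choice near the corner, which the paper makes explicitly (keeping the building block in $3Q_0$ a leaf of $A_3$ and defining $\bfOmega_2\cap 3Q_0:=\bfOmega_1\cap 3Q_0$, then propagating this choice), and your claimed inclusion $|\fL_{\bfOmega_m}|\subset 3^m|\bfOmega_0|-3^{m-2}|\bfOmega_0|$ already fails for $m=1$. For (e), the correct mechanism is the internal/external dichotomy (Lemmas \ref{lemma:dichotomy} and \ref{lemma:mods_meet_exts}): $\cC$-modifications create only \emph{internal} vertices (inside dents, at scale ratio $3^4$), so the hull is built solely from $\cD$- and secondary modifications, and the valence bound is roughly $2\cdot 3^n$ (not $2(n-1)-1$), to be beaten by the $\ell^{n-1}$ count for collapsibility; this comparison fails for $n=3$, where the paper must resort to the explicit catalogs of Section \ref{sec:IC_dim3}, a case your proposal does not treat at all.
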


We prove Proposition \ref{prop:induction_clean} first in dimensions $n>3$ and then consider the more complicated dimension $n=3$ separately; see Section \ref{sec:IC_dim3}. Proposition \ref{prop:induction_clean} is obtained in three steps. In higher dimensions, we first construct the sequence $\bfOmega_3, \bfOmega_4, \ldots$ by induction using Corollary \ref{cor:pre_induction} and then check conditions (a)--(d) and the tripod property. Property (e) is more subtle and considered separately in Section \ref{sec:cond_e}. Finally, we prove Property (f), the most demanding part, in Section \ref{sec:cond_f}. For $n=3$, the steps are similar with the exception that we use specific $\cC$- and secondary $\cC$-modifications to meet condition (e).

\subsubsection{Proof of Proposition \ref{prop:induction_clean} in dimension $n>3$}
\label{sec:IC_init}

Consider essential partitions $\bfOmega_1$, $\bfOmega_0$, and $(1/3)\bfOmega_0$ in the r\^ole of the essential partitions $\bV$, $\bW$, and $\bW'$ of Section \ref{sec:prep_induction}. 

We obtain $\bfOmega_1$ by one $\cC$-modification from $3\bfOmega_0$ and take $\bfOmega_2$ to be either the essential partition $\bU$ in Lemma \ref{lemma:ra_c}, or directly apply Corollary \ref{cor:pre_induction}. In particularly, $\bfOmega_2$ satisfies the tripod property and is properly obtained following $\fL_{\bfOmega_1}=\fL(\bfOmega_1;\bfOmega_0,(1/3)\bfOmega_0,\fL_{\bfOmega_0})$, where $\fL_{\bfOmega_0}=\{[0,3]^n\}$; cf.\;Remark \ref{rmk:flat_top}. We take $\fL_{\bfOmega_2}=\fL(\bfOmega_2;\bfOmega_1,\bfOmega_0,\fL_{\bfOmega_1})$.

To meet the stability requirement (d) of the proposition, let $Q_0=[0,1]^{n-1}\cup [-1,1]$, and note that $\bfOmega_2\cap 9Q_0$ is obtained by a single $\cD$-modification from $3\bfOmega_1\cap 9Q_0$. Thus, by making proper choices in the $\cC$-modification yielding $\bfOmega_1$ and secondary $\cC$-modification yielding $\bfOmega_2$, we may assume that $\bfOmega_2 \cap 3Q_0 = \bfOmega_1\cap 3Q_0$; compare with Figures \ref{fig:1ra} and \ref{fig:2ra} and the discussion in Sections \ref{sec:1ra} and \ref{sec:2ra}. Indeed, using the notation from Section \ref{sec:1ra}, $\Omega_{1,3} = 3\Omega_{0,3}\cup A_3$ and we may assume as in Figure \ref{fig:1ra} that the building block $\Omega_{1,3}\cap 3Q_0$ is a leaf in $\Gamma(A_3)$. Let $\bU$ be an essential partition given by Corollary \ref{cor:pre_induction}, and define $\bfOmega_2$ by $\bfOmega_2 - 3Q_0 = \bU-3Q_0$ and $\bfOmega_2 \cap 3Q_0 = \bfOmega_1\cap 3Q_0$. 

Since $\bfOmega_2 \cap 3Q_0 = \bfOmega_1 \cap 3Q_0$, it is easy to obtain the rest of the sequence $\bfOmega_0,\bfOmega_1,\bfOmega_2,\ldots$ by applying Corollary \ref{cor:pre_induction} to essential partitions $\bfOmega_{m-1}$, $\bfOmega_{m-2}$, and $3\bfOmega_{m-2}$ and modifying the obtained essential partitions as for $m=2$.

Corollary \ref{cor:pre_induction} yields that the essential partitions in the sequence $(\bfOmega_m)$ satisfy the tripod property and conditions (a)--(c). 

\begin{remark}
Recall that, as a direct consequence of the definition, each dented molecule $\Omega_{m,p}$ has a unique essential partition into dented atoms. Recall that the adjacency graph of this essential partition of $\Omega_{m,p}$ into dented atoms is $\Gamma(\Omega_{m,p})$.
\end{remark}

\begin{remark}
Whereas there is no simple inclusion relation between domains $\Omega_{m,p}$ and $\Omega_{m+1,p}$, the trees $\Gamma(\Omega_{m,p})$ and $\Gamma(\Omega_{m+1,p})$ are closely related. Indeed, formally, $\Gamma(\Omega_{m+1,p})$ is obtained by adding leaves to $\Gamma(\Omega_{m,p})$. At the same time, however, a vertex representing an atom of side length at least $3$ in $\Gamma(\Omega_{m,p})$ becomes a dented atom in $\Gamma(\Omega_{m+1,p})$.

Finally, the tree $\Gamma(\Omega_p)$ of the limit $\Omega_p = \bigcup_{m\ge 1} \Omega_{m,p}$ is an inverse limit of the trees $\Gamma(\Omega_{m,p})$.
\end{remark}

\subsubsection{Condition (e)}
\label{sec:cond_e}

We consider first some general properties of dented molecules $\Omega_{m,p}$ and their hulls $\hull(\Omega_{m,p})$ (cf.\;Section \ref{sec:dented_molecules}), and then obtain condition (e) in Proposition \ref{prop:induction_clean}.

\begin{remark}
The trees $\Gamma(\Omega_{m,p})$ and $\Gamma(\hull(\Omega_{m,p}))$ are related since $\Gamma(\hull(\Omega_{m,p}))$ is obtained by removing those (dented) atoms from $\Gamma(\Omega_{m,p})$ which are contained in $\hull(\hull(\Omega_{m,p})-\Omega_{m,p})$. Thus $\Gamma(\hull(\Omega_{m,p}))$ can be viewed as a subtree of $\Gamma(\Omega_{m,p})$ where the remaining vertices are (undented) atoms instead of dented atoms. 
\end{remark}

Recall that a vertex $D\in \Gamma(\Omega_{m,p})$ is internal if there exists a vertex $D'\in \Gamma(\Omega_{m,p})$ so that $\rho(D')>\rho(D)$ and $D\subset \hull(D')$,  and that a vertex is external if not internal (Definition \ref{def:int_ext}). 

\begin{remark}
\label{rmk:dents}
Although we focus one of the domains in the following lemma, it should be noted that the other two domains also have a r\^ole, since they create the dents. This is crucial for the combinatorics to settle (e) and (f). Suppose $D\in \Gamma(\Omega_{m,p})$ is an internal vertex and $D'\in \Gamma(\Omega_{m,p})$ is a vertex closest to $D$ in $\Gamma(\Omega_{m,p})$ satisfying $D\subset \hull(D')$. 

Then $D$ is contained in a dent $M'$ of $D'$. This dent is a vertex in $\Gamma(\Omega_{m,r})$ for $r\ne p$. Note also that since $D$ is contained in a dent of $M'$, we have $\rho(\hull(D'))\ge 3^2 \rho(\hull(M'))\ge 3^4 \rho(\hull(D))$.

\end{remark}

\begin{lemma}
\label{lemma:mods_meet_exts}
Let $m>1$. Suppose $A$ is a leaf in $\Gamma(\Omega_{m,p})$ and let $3^k\in \{1,3\}$ be the side length of $A$. Let $D$ be the vertex adjacent to $A$ in $\Gamma(\Omega_{m,p})$ satisfying $\rho(D)>\rho(A)$. Then $3^{-k}A$ is a leaf of $\Omega_{m-k,p}$. 

If $\rho(D)>3\rho(A)$, then the atom $3^{-k}A$ arose from a $\cC$-modification and $A$ is an internal vertex of $\Gamma(\Omega_{m,p})$,

Otherwise, $\rho(D)=3\rho(A)$ and $3^{-k}A$ came from a $\cD$-modification or a secondary modification. Furthermore, in this case, $A$ is an external vertex of $\Gamma(\Omega_{m,p})$ if and only if $D$ is an external vertex of $\Gamma(\Omega_{m,p})$.
\end{lemma}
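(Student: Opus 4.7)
The plan is to induct on $m\geq 2$, classifying each leaf by which modification of Section~\ref{sec:CD} introduced it. First, I would note that every vertex of $\Gamma(\Omega_{m,p})$ has side length a power of $3$, and that passing from $\bfOmega_{m-1}$ to $\bfOmega_m$ scales existing atoms by $3$ and adds only $1$-fine atoms, without removing any adjacency. Hence if $A$ is a leaf of side $3^k$, then $A=3^k A_0$ for a $1$-fine atom $A_0$ produced by a single modification in step $m-k$, and $A_0$ is itself a leaf of $\Gamma(\Omega_{m-k,p})$ because each neighbor of $A_0$ survives as a neighbor of $A$ after scaling. A separate observation --- that every $3$-fine building block inherited via scaling is picked up and extended by a $\cD$- or secondary modification in the next step (Lemmas~\ref{lemma:ra_b}--\ref{lemma:ra_n}) --- forces $k\in\{0,1\}$.

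Next I would run a case analysis on which modification produces $A_0$. A $\cC$-modification (Lemma~\ref{lemma:ra_a}) in $3Q$, with $Q\in\cC(\bfOmega_{m-k-1})$ a $3$-fine cube, deposits $A_0$ along $\partial(3Q)$; the atom of $3\bfOmega_{m-k-1}$ on the opposite side of $\partial(3Q)$ has side length at least $9$, as a consequence of the inductive fact that the neighbors of $\cC$-cubes are not $1$-fine leaves. This gives $\rho(D_0)\geq 9>3\rho(A_0)$, and hence $\rho(D)>3\rho(A)$ after $k$ scalings. A $\cD$-modification (Lemmas~\ref{lemma:ra_b} and~\ref{lemma:ra_b2}) or a secondary modification (Lemmas~\ref{lemma:ra_c} and~\ref{lemma:ra_n}) instead attaches $A_0$ to the $3$-fine scaled building block $3A_\ast$ inside $3Q$, giving $\rho(D_0)=3$ and $\rho(D)=3\rho(A)$. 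These two alternatives are exhaustive and mutually exclusive, which proves the claimed dichotomy.

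For the internality claims I would invoke Remark~\ref{rmk:dents}: an internal vertex is precisely one that lies in a dent of some strictly larger atom of $\Gamma(\Omega_{m,p})$. In Case~1, $A\subset 3Q$ is a $1$-fine atom inside the scaled $\cC$-cube, and the secondary $\cC$-modification of Lemma~\ref{lemma:ra_c} (acting one generation later, or its scaled analog tracked through the induction) deposits a $3$-fine color-$p$ atom in the same region whose hull contains $A$, so $A$ is internal. In Case~2, the leaf $A$ and its neighbor $D=3A_\ast$ occupy the same $3$-fine cube (for $\cD$-modifications) or the same $9$-fine cube (for secondary modifications) used in the modification, so any strictly larger hull that contains $A$ must also contain this cube and hence $D$, and conversely; this yields the biconditional $A$ external iff $D$ external.

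The main obstacle will be the combinatorial bookkeeping in Case~1 --- verifying that the opposite-side neighbor across $\partial(3Q)$ always has side length at least $9$. This should follow from the inductive description of $\cC$-cubes, which arise only at three-color transitions between atoms of side $\geq 3$, together with the induction hypothesis applied one generation down: the $1$-fine leaves of $\bfOmega_{m-k-1}$ lie in the interior of the cube where they were born and cannot appear on the boundary of a further $\cC$-cube of the same generation.
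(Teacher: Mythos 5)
Your overall skeleton (classify the leaf by the modification that created it, read off the side-length ratio to get the dichotomy, then treat internality case by case) is the same as the paper's, and the dichotomy itself is fine: $\cD$- and secondary modifications attach atoms at ratio $3$, while a $\cC$-modification places $A$ inside a dent of its larger neighbour, forcing $\rho(D)>3\rho(A)$ (in fact $\rho(D)\ge 3^4\rho(A)$). But both internality arguments have genuine gaps. In the $\cC$-modification case your witness is wrong: internality of $A$ in $\Gamma(\Omega_{m,p})$ requires a \emph{strictly larger vertex of the same tree} whose hull contains $A$, and the $3$-fine atoms deposited one generation later by the secondary $\cC$-modification of Lemma \ref{lemma:ra_c} are extensions of (the scaled) $A$, not atoms whose hull contains $A$. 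The correct and much simpler witness is $D$ itself: the $\cC$-modification inserts $A$ into a dent $M$ of $D$, so $A\subset M\subset \hull(D)$ and $A$ is internal; this also gives the quantitative bound $\rho(D)\ge 3^4\rho(A)$ via Definition \ref{def:dRc_A} and Remark \ref{rmk:dents}.

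The more serious gap is in the case $\rho(D)=3\rho(A)$, which is the real content of the lemma. You reduce the biconditional to the assertion that ``any strictly larger hull that contains $A$ must also contain the modification cube and hence $D$, and conversely.'' That assertion is exactly what has to be proved, and it does not follow from $A$ and $D$ lying in a common cube of side $3\rho(D)$ (note, incidentally, that for a $\cD$-modification they share the side-$9$ cube $3Q$, not a single side-$3$ cube): containment of $A$ in $\hull(D')$ only says $A$ lies in some dent $M'$ of $D'$, and nothing yet prevents the cascade that produced $A$ and $D$ from protruding out of that dent. The paper handles the two directions with genuinely different arguments: for ``$A$ internal $\Rightarrow$ $D$ internal'' one uses the alternative ``either $D\subset M'$ or $M'\subset\hull(D)$'' together with the size estimate $\rho(M')\ge 3^2\rho(A)=3\rho(D)$, which excludes the second option; for ``$D$ internal $\Rightarrow$ $A$ internal'' one chooses a \emph{minimal} witness $D_0$ with dent $M_0\supset D$, follows the path $D_1,\ldots,D_\ell=D$ starting from the $\cC$-modification cube $Q$ inside $M_0$, and checks that all subsequent modifications producing $D_1,\ldots,D_\ell$ and then $A$ take place in cubes $3^jQ$ with $3^j\le\rho(\hull(D_1))$, so that everything, $A$ included, stays inside $\rho(\hull(D_1))Q\subset M_0$. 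None of this nesting bookkeeping appears in your plan, and without it the ``and conversely'' step is unsupported. You also flag the wrong step as the main obstacle: the bound $\rho(D)\ge 9$ across a $\cC$-cube comes for free from the dent structure, whereas the external/internal equivalence is where the work lies.
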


\begin{remark}
Note that, whereas the number of atoms $A$ attached to $D$ in Lemma \ref{lemma:mods_meet_exts} satisfying $\rho(D)=3\rho(A)$ is uniformly bounded, there will be no upper bound for the number of atoms $A$ attached to $D$ in general. This follows from the observation that there is no upper bound for the size of a dent of a dented molecule and each cube in a dent is penetrated by a (dented) molecule which is an internal vertex attached to $D$. Note that trees $\Gamma(\Omega_{m,p})$ have internal vertices only for $m\ge 3$. 

Note also that the essential partition $\bfOmega_1$ is exceptional in the following sense. The molecule $\Omega_{1,p}$, for $p=2,3$, is obtained from $3\Omega_{0,p}$ by a $\cC$-modification but the leaf $\Omega_{1,p}-3\Omega_{0,p}$ is not contained in a dent of $\Omega_{1,p}$. This is the one case in the construction of the sequence $(\bfOmega_m)$ when a $\cC$-modification does not procude an internal vertex.
\end{remark}

\begin{proof}[Proof of Lemma \ref{lemma:mods_meet_exts}]
Since $A$ is a leaf, it is an atom. Moreover, $\rho(D)\ge 3\rho(A)$ by construction.

First observe that if $3^{-k}A$ is obtained by a $\cC$-modification, there exists a dent $M$ of $3^{-k}D$ with $3^{-k}A\subset M$. Since $M\subset \hull(3^{-k}D)$, it follows that $A\subset \hull(D)$. Thus in this case $A$ is internal and $\rho(A)\le 3^{-4}\rho(D)$. 

Since the ratio of side lengths in a $\cD$-modification and in secondary modifications is $3$, the atom $3^{-k}A$ is obtained by a $\cD$-modification or a secondary modification if and only if $\rho(D)=3\rho(A)$.

Suppose now that $\rho(D)=3\rho(A)$. We show that $A$ is an internal vertex if and only if $D$ is an internal vertex.

\emph{Suppose first that $A$ is an internal vertex.} Then there exists $D'\in \Gamma(\Omega_{m,p})$ containing $A$ in its hull. Let $M'$ be the dent of $D'$ containing $A$. By properties of modifications, we have either $D\subset M'$ or $M'\subset \hull(D)$, since $D$ is adjacent to $A$ and $A\subset M'$. Since $\rho(M')\ge 9\rho(A)=3\rho(D)$, we have $D\subset M'$. Thus $D$ is internal.

\emph{Suppose now that $D$ is an internal vertex.} Then there exists $D_0\in \Gamma(\Omega_{m,p})$ and a dent $M_0$ of $D_0$ satisfying $D\subset M_0 \subset \hull(D_0)$. We may assume that $D_0$ is minimal in the sense that, for every $D'\in \Gamma(\Omega_{m,p})$ between $D$ and $D_0$ in $\Gamma(\Omega_{m,p})$, $D\not\subset \hull(D')$. 

Let $D_1,\ldots, D_\ell$ be the shortest path in $\Gamma(\Omega_{m,p})$ from $D_0$ to $D$ so that $D_1$ is adjacent to $D_0$. Then $D_1\subset M_0$ and we note that $\rho(\hull(D_1))^{-1}\hull(D_1)$ has been obtained by a $\cC$-modification in a cube $Q$ of side length $9$.

Furthermore, by properties of modifications, we observe that all modifications to obtain dented molecules $D_1,\ldots D_\ell$ take place in cubes $3^jQ$ where $3^j \le \rho(\hull(D_1))$. Thus all dented atoms $D_1,\ldots, D_\ell$ are contained in the cube $Q':=\rho(\hull(D_1))Q\subset M_0$. In particular, $D\subset Q'$. Since $A$ is obtained from $D$ by either a $\cD$- or secondary modification, we also have $A\subset Q'\subset M_0$. Thus $A$ is internal.
\end{proof}

\begin{lemma}[Property (e)]
\label{lemma:hull_regularity}
Let $n>3$. There exist $\nu\ge 1$ and $\lambda>1$ depending only on $n$ so that 
the adjacency tree $\Gamma(\hull(\Omega_{m,p}))$ is a $(\nu,\lambda)$-molecule for every $m\ge 2$ and each $p$.
\end{lemma}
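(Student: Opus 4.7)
The plan is to verify the five defining conditions of a $(\nu,\lambda)$-molecule for $\Gamma(\hull(\Omega_{m,p}))$. By Lemma~\ref{lemma:dichotomy} the map $D \mapsto \hull(D)$ is a tree isomorphism from the subtree $\Gamma_E(\Omega_{m,p})$ of external vertices of $\Gamma(\Omega_{m,p})$ onto $\Gamma(\hull(\Omega_{m,p}))$, and for $D \in \Gamma_E$ the atom $\hull(D) = D$ is a non-dented atom composed of at most two building blocks, so $\ell_\atom(\hull(\Omega_{m,p}))$ is bounded by a constant depending only on $n$.

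First I would verify properties (a), (b) and (c). Property (a) is immediate since $\Gamma(\Omega_{m,p})$ is a tree by the inductive construction, so any subtree is a tree. For (b), suppose $A$ and $D$ are adjacent in $\Gamma_E$ with $\rho(D) > \rho(A)$. Applying Lemma~\ref{lemma:mods_meet_exts} at the moment $A$ was introduced as a leaf of the current tree gives $\rho(D) = 3\rho(A)$: otherwise $\rho(D) > 3\rho(A)$ would force $A$ to be an internal vertex, contradicting $A \in \Gamma_E$. Property (c) then follows by taking the root to be the atom of largest side length and observing that, along every root-to-leaf path in $\Gamma_E$, side lengths decrease strictly and monotonically by factors of $3$.

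Next I would bound the valence, property (d). An external atom $D$ has at most one parent in $\Gamma_E$, and by (b) each external child $a$ of $D$ satisfies $\rho(a) = \rho(D)/3$ and is attached via a centered $F$-based building block, where $F$ is a face of a cube of $D$ of side length $\rho(D)$. By the centering convention in Section~\ref{sec:bb}, each face of each cube of $D$ hosts at most one such centered building block, so the valence of $D$ in $\Gamma_E$ is bounded by the number of external faces of the (at most two) building blocks constituting $D$. A case analysis of the modifications in Lemmas~\ref{lemma:ra_b}, \ref{lemma:ra_b2}, \ref{lemma:ra_a}, \ref{lemma:ra_c} and \ref{lemma:ra_n} refines this further: in each modification external children are introduced only on a controlled subset of faces, so the valence is bounded by some $\nu = \nu(n)$, and in fact the refined count yields the quantitative inequality $\nu(n) < 3^{n-1}$ for every $n \ge 4$.

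The main obstacle is $\lambda$-collapsibility (e). For an inner vertex $A \in \Gamma(\hull(\Omega_{m,p}))$ with parent $A'$ and other external neighbors $a_1,\ldots,a_s$ (where $s \le \nu - 1$), properties (b) and (d) give $\rho(A') = 3\rho(A)$ and $\rho(a_i) = \rho(A)/3$; hence $F_A(A')$ is an $(n-1)$-cube of side length $\rho(A)$, while each $F_A(a_i)$ is an $(n-1)$-cube of side length $\rho(A)/3$. I must place essentially disjoint $(n-1)$-cubes $f_{a_i} \subset F_A(A')$ with $\rho(f_{a_i}) = \lambda\,\rho(A)/3$. Using the natural subdivision of $F_A(A')$ into $3^{n-1}$ sub-faces of side length $\rho(A)/3$, a volume-packing argument requires $(\nu - 1)(\lambda/3)^{n-1} \le 1$, that is $\nu - 1 < 3^{n-1}$; the refined valence bound of step (d) makes this inequality strict for $n \ge 4$, so some $\lambda = \lambda(n) > 1$ works. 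The genuinely delicate part is the refined valence estimate: establishing $\nu(n) < 3^{n-1}$ requires a careful face-by-face accounting of how external children can attach to an external atom under each of the modification types, exploiting the locality of modifications and the centering convention to rule out most \emph{a priori} possible children.
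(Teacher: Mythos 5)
Your reduction to the external tree via Lemma \ref{lemma:dichotomy} and your use of Lemma \ref{lemma:mods_meet_exts} to get side-length ratio exactly $3$ between adjacent external vertices match the paper, but the collapsibility step contains a genuine gap, and it is caused by a misreading of Definition \ref{def:lambda}. There $F_A(A')$ is the face of the cube $q_{A'}$ of the \emph{larger} neighbour $A'$ containing the contact $q_{A'}\cap A$, so $\rho(F_A(A'))=\rho(A')=3\rho(A)$, which is \emph{nine} times the side length $\rho(A)/3$ of the faces $F_A(a_i)$ of the smaller neighbours; you take it to be $\rho(A)$, i.e.\ only three times as large. With the correct scale the packing constraint only requires the number of smaller neighbours to be below roughly $(9/(1+\varepsilon)-1)^{n-1}$, and a crude valence bound suffices: a newly attached external atom has side length $1$ at its creation and lies in at most two cubes of side $9$, hence consists of fewer than $2\cdot 3^n$ building blocks, and at the next stage at most one new atom is attached per building block of its scaled copy, so the valence is at most $2\cdot 3^n$; since $2\cdot 3^n<8^{n-1}$ for $n\ge 4$, some $\lambda=\lambda(n)>1$ works. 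This is exactly the paper's argument (compare Lemma \ref{lemma:fRt_PL}, where the disjoint cubes are placed in the face of the \emph{larger} cube), and it needs no refined valence estimate.

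Because of the wrong scale you instead need $\nu(n)<3^{n-1}$, which is precisely the step you leave unproved (your ``genuinely delicate part''), and the structural claims offered in its support are inaccurate. An external vertex $D$ need not satisfy $\hull(D)=D$ (external atoms acquire dents from later $\cC$-modifications; externality only means $D$ is not contained in the hull of a larger dented atom), and the atoms of $\hull(\Omega_{m,p})$ are not unions of at most two building blocks: a type-$1$ $\cD$-modification (Lemma \ref{lemma:ra_b}) attaches an atom with one building block over each side-$3$ subface of $3F-3A$, i.e.\ up to $3^{n-1}-2$ of them, and secondary modifications attach atoms spread over entire basins. Accordingly, children attach one per building block of the scaled atom, not one per face of a cube of $D$ via centered building blocks, and a single external atom can receive on the order of $3^{n-1}$ children; so a bound $\nu(n)<3^{n-1}$ is at best borderline, is not claimed anywhere in the construction (the paper only proves, and only needs, $\nu\le 2\cdot 3^n$), and cannot be delivered by a face count of ``at most two'' building blocks. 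The gap disappears once $F_A(A')$ is given its correct, parent-scale size.
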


\begin{proof}
By Lemma \ref{lemma:dichotomy}, $\Gamma(\hull(\Omega_{m,p}))$ is isomorphic to the tree $\Gamma_E(\Omega_{m,p})$ of external vertices of $\Gamma(\Omega_{m,p})$, and Lemma \ref{lemma:mods_meet_exts} shows that external vertices arise from $\cD$-modifications or secondary modifications. Thus it suffices to estimate the number of atoms created by a $\cD$-modification or secondary modification for $m>2$.

Let $1<k<m$, and let $A$ be an atom in $\Gamma(\Omega_{k,p})$ created by a $\cD$-modification or secondary modification. Then $A$ has side length $1$ and it is contained in a union of at most two cubes of side length $9$. Since there exist $3^n$ essentially disjoint cubes of side length $3$ in a cube of side length $9$, the atom $A$ consists of strictly less than $2\cdot 3^n$ building blocks; see Remark \ref{rmk:why_3_is_special} below. Since we attach at most one atom to builing block of $3A$, this modification of $3A$ attaches strictly less than $2\cdot 3^n$ atoms. We conclude that the tree $\Gamma(\hull(\Omega_{m,p}))$ is at most $(2\cdot 3^n)$-valent.

To show that $\hull(\Omega_{m.p})$ is $\lambda$-collapsible for some $\lambda > 1$, let $M\in \Gamma(\hull(\Omega_{m,p}))$ be a molecule of side length $3^k$. Then $M$ is attached to at most $2\cdot 3^n$ molecules of side length $3^{k-1}$ and to one molecule $M'$ of side length $3^{k+1}$. Let $F'$ be the face of a cube in $M'$ where $M$ and $M'$ meet.

Let $\varepsilon>0$ to be fixed in a moment, and take  $\ell$ with $(1+\varepsilon)3^{k-1} \ell \le 3^{k+1} < (1+\varepsilon)3^{k-1}(\ell +1)$. Then there exist at least $\ell^{n-1}$ pair-wise disjoint $(n-1)$-cubes of side length $(1+\varepsilon)\cdot 3^{k-1}$ on $F$. Since
\begin{equation}
\label{eq:ell_eps}
\ell^{n-1} > \left( \frac{9}{1+\varepsilon}-1\right)^{n-1},
\end{equation}
we may fix $\varepsilon>0$ small enough, depending on $n$, so that 
\begin{equation}
\label{eq:ell}
\ell^{n-1} > 2\cdot 3^n
\end{equation}
when $n\ge 4$. We conclude that there exists $\lambda>1$, depending only on $n$, for which $M$, and hence $\hull(\Omega_{m,p})$, is $\lambda$-collapsible.
\end{proof}

\begin{remark}
\label{rmk:why_3_is_special}
Note that, although estimates \eqref{eq:ell_eps} and \eqref{eq:ell} hold also for $n=3$, the number of building blocks in an atom $A$ no longer is an upper bound for atoms attached to $3A$. In fact, $\cD$-modification in dimension $3$ may attach as many as $3$ atoms to a single building block; cf.\;Figure \ref{fig:1face3_molecule}.
\end{remark}

\subsubsection{Condition (f)}
\label{sec:cond_f}
It suffices to consider $m\ge 4$. Let $p\in \{1,2,3\}$. To simplify notation, set $V=\Omega_{m,p}$. 

\begin{lemma}
\label{lemma:f}
There exist $L=L(n)\ge 1$ and an $L$-bilipschitz map $\varphi \colon (V,d_V) \to (\hull(V),d_{\hull(V)})$ which is the identity on $V\cap \partial \hull(V)$.
\end{lemma}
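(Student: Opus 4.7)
My proof goes by induction on the nesting depth $\mathfrak{d}(V)$, defined as the maximal length of a chain $D_0, D_1, \ldots, D_{\mathfrak{d}}$ in $\Gamma(V)$ with $D_0$ external and $D_{j+1}$ contained in a dent of $D_j$. In the base case $\mathfrak{d}(V) = 0$, all vertices are external and $V$ is an essentially disjoint union of dented atoms whose hulls, by Lemma \ref{lemma:dichotomy} and property (e), form the $(\nu(n), \lambda(n))$-molecule $\hull(V)$. For each external $D$ with $\hull(D) = D \cup \bigcup_i M_i$, Proposition \ref{prop:fRt_flat} gives an $L(n)$-bilipschitz map $D \to \hull(D)$ that is the identity on $D \cap \partial \hull(D)$, and these combine along shared boundaries (using that $\hull(V)$ is a $(\nu, \lambda)$-molecule) to an $L(n)$-bilipschitz map $\varphi \colon V \to \hull(V)$ fixing $V \cap \partial \hull(V)$.

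For the inductive step $\mathfrak{d}(V) \ge 1$, I observe that for each external $D$ with dents $M_1, \ldots, M_k$, the intersection $V \cap M_i$ is naturally a dented molecule of depth $\mathfrak{d}(V) - 1$ whose hull is $M_i$ (the external vertices of $V \cap M_i$ are the internal vertices of $V$ nearest to $D$). The induction hypothesis produces $L(n)$-bilipschitz maps $\psi_i \colon V \cap M_i \to M_i$ that are the identity on $V \cap \partial M_i$. Extending each $\psi_i$ by the identity on $D$ yields a map on $V \cap \hull(D)$ which, after a final application of Proposition \ref{prop:fRt_flat} to absorb the now-filled dented-atom structure into $\hull(D)$, is an $L(n)$-bilipschitz homeomorphism $V \cap \hull(D) \to \hull(D)$ fixing $V \cap \partial \hull(D)$. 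Combining across external $D$ gives $\varphi \colon V \to \hull(V)$.

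The main obstacle is that naive recursion compounds the bilipschitz constant to $L(n)^{\mathfrak{d}(V)}$, and $\mathfrak{d}(V)$ grows with $m$. The remedy is the scale separation of Remark \ref{rmk:dents}: if $D$ sits in a dent of its nearest ancestor $D'$ in $\Gamma(V)$, then $\rho(\hull(D')) \ge 3^4 \rho(\hull(D))$, so recursive layers at depth $k$ act at scales at most $3^{-4k} \rho(\hull(V))$ and displace points by at most those scales. For $x, y \in V$ with $d_V(x,y) = r$, only boundedly many layers (controlled by the valence bound $\nu(n)$ of $\hull(V)$ from property (e)) act at scales comparable to or larger than $r$, each contributing a Proposition \ref{prop:fRt_flat}-bounded distortion; the deeper layers jointly contribute a convergent multiplicative perturbation of the form $\prod_{k} \bigl(1 + C(n) \cdot 3^{-4k}\bigr)$. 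Combining these estimates yields a uniform bilipschitz constant $L = L(n)$ for the entire composition, completing the proof.
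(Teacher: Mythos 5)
There is a genuine gap, and it occurs at the structural heart of your inductive step. You claim that for an external $D$ with dent $M_i$, the set $V\cap M_i$ is a dented molecule of smaller depth \emph{whose hull is $M_i$}, so that the induction hypothesis hands you a bilipschitz map $V\cap M_i \to M_i$ fixing $V\cap\partial M_i$. This is false. A dent of $D\in\Gamma(\Omega_{m,p})$ is a molecule belonging to one of the \emph{other} domains $\Omega_{m,r}$, $r\ne p$ (Remark \ref{rmk:dents}); the part of $V$ inside $M_i$ consists only of the small internal dented atoms (side length at most $3^{-4}\rho(D)$) that penetrate individual cubes of $M_i$ via $\cC$-modifications. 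Its hull, by definition, is the union of the hulls of those small constituents — a collection of small molecules inside cubes of $\Gamma^\icl(M_i)$ — and is nowhere near all of $M_i$; moreover $V\cap M_i$ is in general disconnected (one or two penetrating molecules per cube of the dent), so it cannot even be a single dented molecule with connected hull $M_i$. Consequently the maps $\psi_i$ you invoke are not supplied by the induction hypothesis, and the dent cannot be "filled from inside" by the material of $V$. In the paper the dent is filled by $D$'s own material via Proposition \ref{prop:fRt_flat} (Lemma \ref{lemma:Q_flat}), but only after the penetrating subtrees have been retracted out of the dent and compressed into joins $J_d$ near their attachment faces (Proposition \ref{prop:fRt} via Lemma \ref{lemma:expanding_removability}); this is exactly the sequence of partial hulls $P_0=V,\ldots,P_k=\hull(V)$ with the join bookkeeping $\cJ_0,\ldots,\cJ_k$, together with the counting Lemma \ref{lemma:entry_counting} which guarantees there is room to place the compressed material disjointly.

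Your treatment of the constant is also not sound, and the scale-separation remedy points in the wrong direction. For a pair $x,y\in V$ at distance $r$, the maps acting at scales \emph{smaller} than $r$ are harmless, but the dangerous ones are the maps at scales \emph{larger} than $r$: both points lie in their supports, there are up to $\mathfrak{d}(V)$ of them (unbounded in $m$), and each could a priori contribute a factor $L(n)$, giving $L(n)^{\mathfrak{d}(V)}$. Your convergent product $\prod_k\bigl(1+C(n)3^{-4k}\bigr)$ only controls the harmless small scales, and "displacement at most the scale" does not bound bilipschitz distortion in any case. The paper kills the compounding differently: once material has been distorted at its own scale it is confined to a join, and every subsequent (larger-scale) map is arranged to be an \emph{isometric} embedding on that join (the isometry statements in Lemmas \ref{lemma:expanding_removability} and \ref{lemma:Q_flat}, with the joins carried as "virtual atoms" to preserve collapsibility), so each point suffers only a bounded number of non-isometric distortions and $L$ is the product of the two lemma constants. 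Without this transport-by-isometry mechanism, or some substitute for it, your argument does not yield $L=L(n)$.
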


We begin the proof of Lemma \ref{lemma:f} with two auxiliary lemmas. For the statements, we need some new notation and also use terms from Section \ref{sec:dented_molecules}. 

Let $d\in \Gamma(V)$ be a dented atom and let $D\in \Gamma(V)$ be the unique dented atom adjacent to $D$ satisfying $\rho(D)>\rho(d)$. Let $Q_d$ and $Q'_d$ be the unique (dented) cubes of side length $\rho(d)$ in $d$ and in $D$, respectively, having a common face $F'_d$. Note that $F'_d \subset Q_d\cap Q_D = d\cap D$.

Let $F_d$ be a face of $Q_d$ contained in $\partial d$ sharing an $(n-2)$-cube with $F'_d$. We call $J_d=F_d\star \{x_{Q_d}\}$ and $J'_d = F'_d \star \{x_{Q'_d}\}$ an \emph{internal} and the \emph{external join of $D$}, respectively. Note that $J_d \subset d$ and $J'_d \subset D$.

The first key ingredient in the proof of Lemma \ref{lemma:f} is a bilipschitz equivalence property for expanding descendants; recall Definitions \ref{def:expanding} and \ref{def:ph} of expanding descendants and partial hull, respectively, in Section \ref{sec:dented_molecules}.

\begin{lemma}
\label{lemma:expanding_removability}
Let $P$ be a partial hull of $V$ and let $D\in \Gamma(P)$ be a dented atom having only expanding descendants. Then there exist $L=L(n)\ge 1$ and an $L$-bilipschitz map $\varphi_D \colon (|\Gamma(P)_D|,d_{|\Gamma(P)_D|}) \to (\hull(D), d_{\hull(D)})$ which is the identity on $D\cap \partial \hull(D)$. Moreover, for any decendant $d\in \Gamma(P)$ of $D$, $\varphi_D(|\Gamma(P)_d|)\subset J_d$.
\end{lemma}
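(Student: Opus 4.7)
The plan is to adapt the iterative leaf-peeling strategy from the proof of Proposition \ref{prop:fRt}, with the modification that the ``root'' $D$ is a dented atom and the target is $\hull(D)$ rather than $D$ itself. By hypothesis every non-root vertex of $\Gamma(P)_D$ is an atom, so $|\Gamma(P)_D|-D$ has the structure of a molecule attached to $D$. The uniform valence and $\lambda$-collapsibility required for the peeling argument will be inherited from property (e) of Proposition \ref{prop:induction_clean} (see Lemma \ref{lemma:hull_regularity}): since $P$ is a partial hull of $V$, the relevant combinatorial invariants differ from those of $V$ by constants depending only on $n$.

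The construction will be done in two stages. In the first stage, I will peel leaves of $\Gamma(P)_D$ one at a time. Each such leaf $A$ is an atom properly adjacent to its parent in $\Gamma(P)_D$, and Lemma \ref{lemma:fRt_PL} furnishes an $L(n)$-bilipschitz fold absorbing $A$ into its parent, which is the identity outside a truncated conical neighborhood of the shared face and an isometry on each of the joins $F_i\star\{x_{Q_i}\}$ attached to faces of deeper descendant cubes. The $\lambda$-collapsibility ensures that such fold-cubes exist, and the bounded valence ensures that only finitely many pairwise disjoint folds are required at each parent. Telescoping the folds exactly as in Proposition \ref{prop:fRt} will produce an $L(n)$-bilipschitz map $(|\Gamma(P)_D|, d_{|\Gamma(P)_D|}) \to (D, d_D)$ that is the identity on $D\cap\partial\hull(D)$, since $\partial\hull(D)$ is disjoint from each fold neighborhood on the $D$-side.

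In the second stage, Proposition \ref{prop:fRt_flat} supplies another $L(n)$-bilipschitz map $(D,d_D)\to(\hull(D),d_{\hull(D)})$ that restores the dents of $D$ while fixing $D\cap\partial\hull(D)$. The composition is the required $\varphi_D$. For the ``moreover'' assertion, I will argue inductively using the isometric join-preservation clause of Lemma \ref{lemma:fRt_PL}: when the peeling absorbs the edge immediately above $d$, the (already folded) image of $|\Gamma(P)_d|$ is laid isometrically into a copy of the join $J_d$ along the shared face; all subsequent higher-level folds, together with the application of Proposition \ref{prop:fRt_flat}, are the identity on this region and therefore preserve the containment.

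The main obstacle will be verifying that at each peeling step the $(n-1)$-cubes $F'_i\subset F$ of Lemma \ref{lemma:fRt_PL} with $\rho(F'_i)=\lambda\rho(F_i)$ can be placed pairwise disjointly on the shared face. This is precisely the $\lambda$-collapsibility condition, which by Lemma \ref{lemma:hull_regularity} holds uniformly along $\Gamma(\hull(V))$ with constants depending only on $n$; the partial hull $P$ preserves this uniformity along $\Gamma(P)_D$ because only boundedly many expanding atoms can be attached to any single inner cube.
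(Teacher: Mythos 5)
Your argument is essentially the paper's proof: the paper likewise obtains $\varphi_D$ as the composition of a molecule contraction $(|\Gamma(P)_D|,d_{|\Gamma(P)_D|})\to (D,d_D)$ (citing Proposition \ref{prop:fRt} directly, with the uniform $(\nu,\lambda)$-collapsibility supplied by Lemma \ref{lemma:hull_regularity}, rather than re-running the leaf-peeling) with the dent restoration $(D,d_D)\to(\hull(D),d_{\hull(D)})$ of Proposition \ref{prop:fRt_flat}. The one inaccuracy is in your handling of the ``moreover'' clause: since the expanding descendants typically sit inside the dents of $D$, the restoration map of Proposition \ref{prop:fRt_flat} cannot be taken to be the identity on the join regions where their folded images lie; the paper instead modifies it so that it carries the join attached on the $D$-side, $J'_d\subset D$, isometrically onto $J_d$, and it is this isometric exchange of joins, not the identity, that yields $\varphi_D(|\Gamma(P)_d|)\subset J_d$.
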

\begin{proof}
By Lemma \ref{lemma:hull_regularity}, for every descendant $d\in \Gamma(P)$, $|\Gamma(P)_d|$ is a collapsible $(\nu,\lambda)$-molecule with $\nu$ and $\lambda$ depending only on $n$. Thus, by Proposition \ref{prop:fRt}, there exist $L'=L'(n)\ge 1$ and an $L'$-bilipschitz mapping $\psi_D \colon (|\Gamma(P)_D|,d_{|\Gamma(P)_D|}) \to (D,d_D)$ which is the identity on $D-\bigcup_d J_d$, where the union is over descendants of $D$.

Proposition \ref{prop:fRt_flat} then produces $L''=L''(n)\ge 1$ and an $L''$-bilipschitz map $\phi_D \colon (D,d_D) \to (\hull(D),d_\hull(D))$ which is the identity on $D\cap \partial \hull(D)$. Furthermore, by a simple modification of the proof of Proposition \ref{prop:fRt_flat}, we may assume that $\phi_D$ is an isometry from $J_d$ to $J'_d$ for each descendant $d$ of $D$.  Thus $\varphi_D = \phi_D \circ \psi_D$ is the desired map.
\end{proof}

The second key ingredient in the proof of Lemma \ref{lemma:f} is a regrouping of joins associated to expanding descendants of large relative side length. We begin by counting the number of descendants, and again need some notation.

Let $P$ be a partial hull of $V=\Omega_{m,p}$. Let $D\in \Gamma(P)$ be a dented atom and $B\in \wt \Gamma(\hull(D))$ a building block. Let $\LE(P,D;B)$ denote the vertices of $\Gamma(P)$ adjacent to $D$ which have side length $3^{-4}\rho(D)$ and are contained in $B$. Note that there are no vertices adjacent to $D$ and contained in $B$ with side length greater than $3^{-4}\rho(D)$; recall Remark \ref{rmk:dents}. 

\begin{lemma}
\label{lemma:entry_counting}
Let $n>3$, $D\in \Gamma(P)$, and $B\in \wt\Gamma(\hull(D))$. Then 
\begin{equation}
\label{eq:LE}
\#\LE(P,D;B) \le 8n^2 3^n.
\end{equation}
\end{lemma}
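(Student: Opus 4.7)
The plan is to place each $d \in \LE(P,D;B)$ at a two-level-deep combinatorial location inside $B$, specifically as a color-$p$ dent of a color-$r$ dent of $D$, and then count the available positions.

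To identify this location, fix $d \in \LE(P,D;B)$. Since $\rho(d) = 3^{-4}\rho(D) < 3^{-1}\rho(D)$, Lemma \ref{lemma:mods_meet_exts} applied with $d$ in place of $A$ shows that $d$ arose from a $\cC$-modification and is therefore an internal vertex of $\Gamma(V)$. Taking $D$ itself as the nearest ancestor of $d$ whose hull contains $d$, Remark \ref{rmk:dents} produces a dent $M'$ of $D$ with $d \subset M'$. This $M'$ is a color-$r$ vertex of $\Gamma(\Omega_{m,r})$ for some $r \ne p$, has side length $\rho(M') = 3^{-2}\rho(D) = 9\rho(d)$, and, by Definition \ref{def:dRc_A}, lies in a single cube of $\hull(D)$. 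Iterating the argument with $d$ now viewed as a color-$p$ vertex inside the color-$r$ molecule $M'$ forces $d$ to sit inside a color-$p$ dent of $M'$; since dents of $M'$ have side length at most $3^{-2}\rho(M') = \rho(d)$, the atom $d$ saturates this bound and is itself such a maximal-size dent, contained in a single cube of $\hull(M')$.

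Next I count dents of $D$ inside $B$. By Definition \ref{def:bb}, $B$ is (the realization of) a proper subtree of $\cT_{n-1}$ and so consists of at most $2n-1$ cubes of $\hull(D)$ of side length $\rho(D)$. The remark immediately following Definition \ref{def:dRc_A} caps the number of dents of $D$ per cube of $\hull(D)$ at two, so there are at most $2(2n-1) < 4n$ candidates for $M'$ inside $B$. For each such $M'$, I count maximal-size color-$p$ dents. The hull $\hull(M')$ is an atom of the $(\nu,\lambda)$-molecule $\hull(\Omega_{m,r})$, and following the combinatorial argument used to prove Lemma \ref{lemma:hull_regularity}, it is composed of strictly less than $2\cdot 3^n$ building blocks, each containing at most $2n-1$ cubes of side length $\rho(M')$. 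Restricting to those cubes whose faces lie on the non-mouth portion of $\partial M'$---only these can host a dent adjacent to $D$ rather than sit against $\partial \hull(D)$---and applying once more the two-dents-per-cube bound, I obtain at most $2n\cdot 3^n$ admissible positions for $d$ inside a single $M'$.

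Multiplying these two counts gives $\#\LE(P,D;B) \le (4n)(2n\cdot 3^n) = 8 n^2 \cdot 3^n$, as claimed. The principal obstacle is the two-fold application of Lemma \ref{lemma:mods_meet_exts} and Remark \ref{rmk:dents}: first between colors $p$ and $r$ passing from $D$ to $M'$, and then back from $r$ to $p$ passing from $M'$ to $d$, which requires that the construction of $\bfOmega_m$ genuinely alternates colors at each scale of dent. The hypothesis $n>3$ enters through the proof of Lemma \ref{lemma:hull_regularity}: in dimension three a $\cD$-modification can attach up to three atoms to a single building block rather than one (see Remark \ref{rmk:why_3_is_special}), so the dents-per-cube bookkeeping above would need to be modified.
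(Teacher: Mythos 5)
Your counting skeleton is the same as the paper's: the paper sets $M_B=B\cap \hull(\hull(D)-D)$, observes that for each cube $Q\in\Gamma^\icl(B)$ the set $Q\cap M_B$ is a disjoint union of two molecules of side length $3^{-2}\rho(D)$ coming from a $\cC$-modification, and puts $\LE(P,D;B)$ in one-to-one correspondence with the top-scale cubes of these dents; the bound is then (number of cubes of $B$, $<2n$) times (two dents per cube) times (number of top-scale cubes per dent, $\le 2n\,3^n$). Your first-level count (at most $2(2n-1)<4n$ candidates for $M'$) matches this exactly, and your identification of each $d$ as the maximal-size colour-$p$ dent sitting in a single cube of $\hull(M')$ is the same correspondence. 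One small citation issue: Lemma \ref{lemma:mods_meet_exts} is stated for \emph{leaves} of $\Gamma(\Omega_{m,p})$, whereas elements of $\LE(P,D;B)$ are dented molecules that need not be leaves of $\Gamma(P)$; the fact you actually need (side-length ratio $3^4$ forces a $\cC$-modification and placement inside a dent of $D$) is the content of Remark \ref{rmk:dents} and the construction, so this is easily repaired.

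The genuine gap is in your second-level count, the bound ``at most $2n\,3^n$ admissible positions inside a single $M'$''. The factors you actually write down are: strictly fewer than $2\cdot 3^n$ building blocks in $\hull(M')$, at most $2n-1$ cubes per building block, and then ``once more the two-dents-per-cube bound''; multiplied out this gives roughly $8n\,3^n$ positions per $M'$, hence about $32n^2 3^n$ in total, and the step that is supposed to rescue the constant --- restricting to cubes ``on the non-mouth portion of $\partial M'$'' --- is asserted, not proved. It is also not the right fix: in the paper's correspondence \emph{every} top-scale cube of the dent hosts exactly one element of $\LE(P,D;B)$, so no such restriction is available. The correct repair is different and simpler: (i) since by Definition \ref{def:dRc_A}(i) a dent of $D$ is confined to a \emph{single} cube of $\hull(D)$, its root atom arises from a $\cC$-modification in one cube and therefore consists of at most $3^n$ building blocks (your $2\cdot 3^n$ is the bound for $\cD$-modification atoms spanning two cubes, which is not the situation here), each with fewer than $2n$ cubes, giving at most $2n\,3^n$ top-scale cubes per dent; and (ii) at this second level there is no factor $2$: of the two atoms attached in a cube of $M'$ by the $\cC$-modification, only the colour-$p$ one is a vertex of $\Gamma(P)$ adjacent to $D$. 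With these two corrections your computation becomes $(<2n)\cdot 2\cdot(2n\,3^n)=8n^2 3^n$, which is precisely the paper's proof.
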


\begin{proof}
The argument is similar to the counting argument in the proof of Lemma \ref{lemma:hull_regularity}. Let $\rho(D)=3^k$. Let $M_B = B \cap \hull(\hull(D)-D)$. We note first that given a cube $Q\in \Gamma^{\icl}(B)$, $Q\cap M_B$ is a pair-wise disjoint union of two molecules, since $Q\cap M_B$ stems from a $\cC$-modification performed in $3^{k-2}Q$. We also have that $\rho(M_B)=3^{k-2}$. 

Let $U_B\subset M_B$ be the union of the atoms of side length $3^{k-2}$ in $\Gamma(M_B)$. The dented molecules in $\LE(P,D;B)$ are in one-one correspondence with $\Gamma^\icl(U_B)$. Indeed, other cubes in $\Gamma^\icl(M_D)$ have side length at most $3^{k-3}$ and the dented molecules adjacent to $D$ which they contain have side length at most $3^{k-5}$.

Since an atom of side length $3^{k-2}$ in a cube of side length $3^k$ has at most $2n 3^n$ cubes, we have for each $Q\in \Gamma^{\mathrm{int}}(B)$ the estimate
\[
\# \Gamma^\icl(U_B\cap Q) \le 2 \cdot 2n \cdot 3^n = 4n 3^n.
\]

Since  $\#\Gamma^{\icl}(B)< 2n$, i.e\;an $n$-dimensional building block consists of less than $2n$ cubes, we have
\[
\# \Gamma^\icl(U_B) \le 2n \cdot 4n 3^n = 8n^2 \cdot 3^n.
\]
\end{proof}

In the proof of Lemma \ref{lemma:f} we construct a sequence of partial hulls from the dented molecule $V$ to the molecule $\hull(V)$. The crux of the proof is to contract inductively the leaves into joins associated to building blocks and then isometrically move these joins further. The estimate in Lemma \ref{lemma:entry_counting} is used to obtain the necessary collapsibility properties of the partial hulls. We formalize this step in the following lemma.

Let $D\in \Gamma(P)$ be a dented molecule and $B\in \wt\Gamma(\hull(D))$ a building block. Let $Q'_B\in \Gamma(B)$ denote the center of $B$ and by $F'_B$ the unique face of $Q'_B$ contained in $\partial \hull(D)$. Let $Q_B\subset 3^{k-1}(3^{-k}Q'_B)^\#$ be the unique cube of side length $3^{-1}\rho(B)$ having $F_B = Q_B\cap F'_B$ as a face of $Q_B$ with the same barycenter as $F'_B$. We call $J_B = F_B \star \{x_{Q_B}\}$ the \emph{join associated to $B$}.

\begin{lemma}
\label{lemma:Q_flat}
Let $P$ be a partial hull of $V$. Suppose $D\in \Gamma(P)$ is a dented atom. Then there exist $L=L(n)\ge 1$ and an $L$-bilipschitz map 
\[
\psi_D \colon (D,d_{D}) \to (\hull(D),d_{\hull(D)})
\]
which is the identity on $\partial D-\hull(D)$, and which for every $B\in \wt\Gamma(\hull(D))$ satisfies
\begin{itemize}
\item[(1)] $\psi_D(B\cap D)=B$ and
\item[(2)] for each $d\in \LE(P,D;B)$, $\psi_D|J_d$ is an isometric embedding from $J_d$ into $J_B$.
\end{itemize}

In addition, suppose $Q$ is the smallest cube having $D$ on the boundary, and let for every $B\in \wt\Gamma(\hull(D))$, $f_B$ be an $(n-1)$-cube of side length $3^{-4}\rho(B)$ in $B\cap \partial Q$ having distance at least $3^{-4}$ to $\partial B- \partial Q$ and to each $J_d$. Then $\varphi_D|f_B\star \{x_{q_B}\}$ is an isometry into $J_B$, where $x_{q_B}$ is the barycenter of the unique cube $q_B$ in $Q$ having $f_B$ as a face.
\end{lemma}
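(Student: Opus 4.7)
The plan is to adapt the PL-shelling construction from Proposition~\ref{prop:fRt_flat} so that it produces not just any bilipschitz equivalence $D \to \hull(D)$ but one whose action on prescribed small cones is fixed in advance. I would first fix the \emph{target geometry} inside each building block $B \in \wt\Gamma(\hull(D))$, then carry out a finite sequence of local PL moves in truncated conical neighborhoods that simultaneously collapses the dents of $D$ and transports the source cones onto their targets.

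\emph{Target placement.} Each $J_B$ is a cone of linear scale $\rho(B) = \rho(D)$ sitting inside the center cube $Q'_B$ of $B$, whereas each $J_d$ for $d \in \LE(P,D;B)$ has linear scale $3^{-4}\rho(D)$, and each extra cone $f_B \star \{x_{q_B}\}$ has linear scale $3^{-4}\rho(B)$. By Lemma~\ref{lemma:entry_counting} the set $\LE(P,D;B)$ has cardinality at most $8n^2 3^n$, and the number of extra boundary cones is bounded by the number of cubes of $B$, hence by a constant depending only on $n$. A direct volumetric count inside $J_B$ then yields pairwise essentially disjoint congruent target cones $T_d, T_{f_B} \subset J_B$, each at distance at least $3^{-4}$ from $\partial J_B$. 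This is where Lemma~\ref{lemma:entry_counting} enters decisively.

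\emph{Shelling.} Following the induction in Proposition~\ref{prop:fRt_flat}, I would peel the forest of dents of $D$ one leaf at a time. For each leaf $q \subset Q \in \Gamma(\hull(D))$ one constructs, as in Lemma~\ref{lemma:fRt_PL}, a PL homeomorphism $\phi_q$ supported in $\Cone(q,Q)$ that fills in across $q$ and, simultaneously, carries the relevant source subcones (the isometric copies of $J_d$ and $f_B \star \{x_{q_B}\}$ lying inside $B \cap D$) onto their preassigned targets $T_d, T_{f_B}$. Each such $\phi_q$ is a shelling from a finite combinatorial list, hence uniformly $L(n)$-bilipschitz. Lemma~\ref{lemma:local_boundedness} bounds the number of overlapping conical supports by a constant depending only on $n$, so the composition $\psi_D$ of all $\phi_q$ is $L(n)$-bilipschitz. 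Condition~(1), $\psi_D(B \cap D) = B$, holds automatically because every $\Cone(q,Q)$ lies inside a single building block; conditions~(2) and the final isometry assertion are built in by the prescribed transport; and the identity on $\partial D$ outside the dents survives because these boundary pieces lie outside every conical support used.

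The main obstacle is the room-counting in the first step: one must verify that the $\lesssim n^2 3^n$ small cones $J_d$ together with the extra boundary cones $f_B \star \{x_{q_B}\}$ do fit disjointly inside $J_B$ with the stipulated $3^{-4}$-separation from $\partial J_B$. This is a purely dimensional estimate and is exactly the point where the scale gap $\rho(d) = 3^{-4}\rho(D)$ forced by the $\cC$-modification is exploited, together with the uniform bound from Lemma~\ref{lemma:entry_counting}. Once this placement is secured, the remainder is a careful but routine reprise of the PL shelling argument already in Proposition~\ref{prop:fRt_flat}, requiring no new techniques.
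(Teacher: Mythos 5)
Your first step — the target placement — is exactly the paper's crux, and you have identified the right lemma for it: the proof hinges on comparing $\#\LE(P,D;B)\le 8n^2 3^n$ with the number of disjoint cells of side comparable to $3^{-4}\rho(D)$ that fit in the face $F_B$ (the paper fixes $26^{n-1}$ cubes of side $(27/26)3^{k-4}$ in $F_B$ and checks $8n^23^n<26^{n-1}$ for $n\ge 4$; for $n=3$ this generic bound is too large and the explicit catalog of Section \ref{sec:IC_dim3} is needed, so the ``purely dimensional estimate'' only covers the range in which Lemma \ref{lemma:entry_counting} is stated). The uniformity-by-finiteness reasoning is also in line with the paper.

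The gap is in the transport mechanism. You propose to carry each $J_d$ onto its target by the same leaf-peeling moves as in Proposition \ref{prop:fRt_flat}, i.e.\ by PL homeomorphisms supported in truncated cones $\Cone(q,Q)$ over dent cubes $q$. Such a move is localized in a set of diameter comparable to $\rho(q)$ sitting at the dent, whereas the target cones lie inside $J_B$, the join over the central subface $F_B$ of the \emph{center} cube of $B$; the joins $J_d$ are scattered along $B\cap\partial Q$ at the attachment loci of the descendants, at distance comparable to $\rho(B)$ from $J_B$. No map supported in a single $\Cone(q,Q)$ can effect this displacement, so the ``simultaneously fills in across $q$ and carries the source cones onto their preassigned targets'' step fails as stated. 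The repair is to decouple the two operations, which is what the paper does: for each building block $B$ one constructs a single block-scale $L(n)$-bilipschitz map $\psi_B\colon B\to B$, equal to the identity off a neighborhood of $B\cap\partial Q$, which slides each $J_d$ isometrically onto the cone over its assigned subcube $F''_d\subset F_B$ (this is where the room-count is consumed), and the dent-filling is handled separately by the restoration machinery of Lemma \ref{lemma:fRt_PL} and Proposition \ref{prop:fRt_flat}, arranged to be isometric on the distinguished joins; the final map is the composition of the extensions of the $\psi_B$ with that restoration. With this modification your outline coincides with the paper's argument, including the treatment of the extra cones $f_B\star\{x_{q_B}\}$.
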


\begin{proof}
The argument is similar to the collapsing argument in Lemma \ref{lemma:hull_regularity}. Let $\rho(D)=3^k$ and $B\in \wt\Gamma(\hull(D))$. 
 
Since by definition $\rho(F_B)=3^{k-1}$, we may fix $26^{n-1}$ $(n-1)$-cubes of side length $(27/26)3^{k-4}$ in $F_B$. Since Lemma \ref{lemma:entry_counting} yields that 
\[
\#\LE(P,D;B)< 26^{n-1}
\]
and $\rho(J_d) = 3^{-4}\rho(D)=3^{k-4}$, there exists for each $d\in \LE(P,D;B)$ an $(n-1)$-cube $F''_d\subset F_B$ of side length $3^{k-4}$ so that the pair-wise distances of these $(n-1)$-cubes are at least $(1/26)3^{k-4}$. Thus there exist $L=L(n)\ge 1$ and an $L$-bilipschitz map $\psi_B \colon B \to B$ which is the identity on $B-\partial Q$ and which is an isometric embedding from $J_d$ to $F''_d \star \{x_{q''_d}\}$, where $q''_d$ is the unique $n$-cube in $Q$ having $F''_d$ as a face. 

The required mapping $\varphi_D$ is now the composition of the extensions of the various maps $\psi_B$ to all of $D$. We leave the modification of the argument in the case of additional $(n-1)$-cubes $f_B$ for the interested reader.
\end{proof}

\begin{proof}[Proof of Lemma \ref{lemma:f}]
We construct a sequence $P_0,\ldots, P_k$ of partial hulls of $V$ where $P_0=V$ and $P_k = \hull(V)$. In each stage, we remove one dented atom of smallest side length. 

Let $P_0 = V$ and $\cJ_0 =\emptyset$. Suppose that, for $k\ge 0$, we have constructed 
\begin{itemize}
\item[(a)] partial hulls $P_0,\ldots, P_k$ of $V$ so that $P_{\ell+1}$ is a partial hull of $P_\ell$ for $0 \le \ell < k-1$;
\item[(b)] collections $\cJ_0,\ldots, \cJ_k$ of joins associated to building blocks in these partial hulls so that joins $\cJ_\ell$ are contained in atoms of $\Gamma(P_{\ell})$ which are hulls of dented atoms $D$ in $\Gamma(P_{\ell-1})$ for $1\le \ell \le k$, and for such $D$, the number of joins contained in $|\Gamma(P_\ell)_D-D|$ is at most $3^n$, recall that $\Gamma(P_\ell)_D$ is the subtree in $\Gamma(P_\ell)$ behind vertex $D$;
\item[(c)] for every $1\le \ell < k$, an $L$-bilipschitz map $\psi_\ell \colon (P_\ell,d_{P_\ell}) \to (P_{\ell+1},d_{P_{\ell+1}})$, which is the identity on those atoms of $\Gamma(P_\ell)$ which are atoms of $\Gamma(P_{\ell-1})$, where $L$ is at most the product of bilipschitz constants in Lemmas \ref{lemma:expanding_removability} and \ref{lemma:Q_flat}.
\end{itemize}

If $P_k \ne \hull(V)$, we construct $P_{k+1}$ as follows. Since $\Gamma(V)$ is finite, this process terminates.

Since $P_k \ne \hull(V)$, there exist dented atoms in $\Gamma(P_k)$. Let $D_k \in \Gamma(P_k)$ be the dented atom having smallest side length, and $d\in \Gamma(P_k)$ an atom adjacent to $D_k$ in $\hull(D_k)$. By minimality of $D_k$,  $d$ is an expanding vertex (Definition \ref{def:expanding}) in $\Gamma(P_k)$. 

Let $\cJ_k(D_k)$ be the joins in $\cJ_k$ which are contained in $|\Gamma(P_k)_{D_k}-D_k|$. We treat these joins as (virtual) adjacent atoms. Thus each join $J\in \cJ_k(D_k)$ increases (virtually) the valence of $\Gamma(P_k)_{D_k}$ by $1$ at the dented atom containing it, and so when $n\ge 4$, the valence of $\Gamma(P_k)_{D_k}$ increases at each vertex by at most $3^{n}$.

We leave it to the interested reader to verify that $\Gamma(P_k)_{D_k}$ remains $\lambda$-collapsible with $\lambda$ depending only on $n$ even when joins $\cJ_k(D_k)$ are understood as (virtual) adjacent atoms; compare to Lemma \ref{lemma:hull_regularity}.

Let $\varphi_k \colon (|\Gamma(P_k)_{D_k}|, d_{|\Gamma(P_k)_{D_k}|}) \to (D_k,d_{D_k})$ be a bilipschitz map as in Lemma \ref{lemma:expanding_removability} with the property that, for each $J\in \cJ_k(D_k)$, $\varphi_k|J$ is an isometry. 

Let $\phi_k\colon (D_k,d_{D_k}) \to (\hull(D_k),d_{\hull(D_k)})$ be a bilipschitz map as in Lemma \ref{lemma:Q_flat} with the property that, for each descendant $d$ of $D_k$, $\phi_k$ is an isometry from $J_d$ into some $J_B$ for $B\in \tilde\Gamma(\hull(D_k))$.

Let $\psi_k$ be the composition of $\phi_k \circ \varphi_k$ and $P_{k+1} = P_k \cup \hull(D_k)$. To obtain $\cJ_{k+1}$, we remove the joins $\cJ_k(D_k)$ from $\cJ_k$ and add joins associated to builing blocks in $\hull(D_k)$.
This completes the induction step and the proof.
\end{proof}


\subsubsection{Proposition \ref{prop:induction_clean} in dimension $n=3$}
\label{sec:IC_dim3}

The essential partitions $\bfOmega_0$ and $\bfOmega_1$ fixed in Sections \ref{sec:0ra} and \ref{sec:1ra} are the starting point for the induction also in dimension $n=3$. To obtain partitions $\bfOmega_m$ for $m\ge 2$, we use explicit configurations of atoms in order to obtain branching estimates for the adjacency trees. We begin this section by introducing the particular modifications we use in the induction. 

When $n=3$ it is easy to exhibit an explicit catalog of $\cC$-modifications associated to building blocks. Similarly, the secondary modifications can be explicitly illustrated. These configurations are exhibited in figures and the estimates are obtained simply by counting building blocks and cubes in these configurations.

{\bf Visible faces.} Suppose $Q$ is a cube of side length $3$ in $\R^3$ and $F$ a face of $Q$, and $B$ an $F$-based building block in $Q$. Having Figure \ref{fig:blocks} at our disposal, we observe that for every $q\in B^\#$, $q\cap F$ is a unit square and $B\cap (Q-B)$ a $2$-cell consisting of at most $4$ faces of $q$.

\begin{figure}[h!]
\includegraphics[scale=0.45]{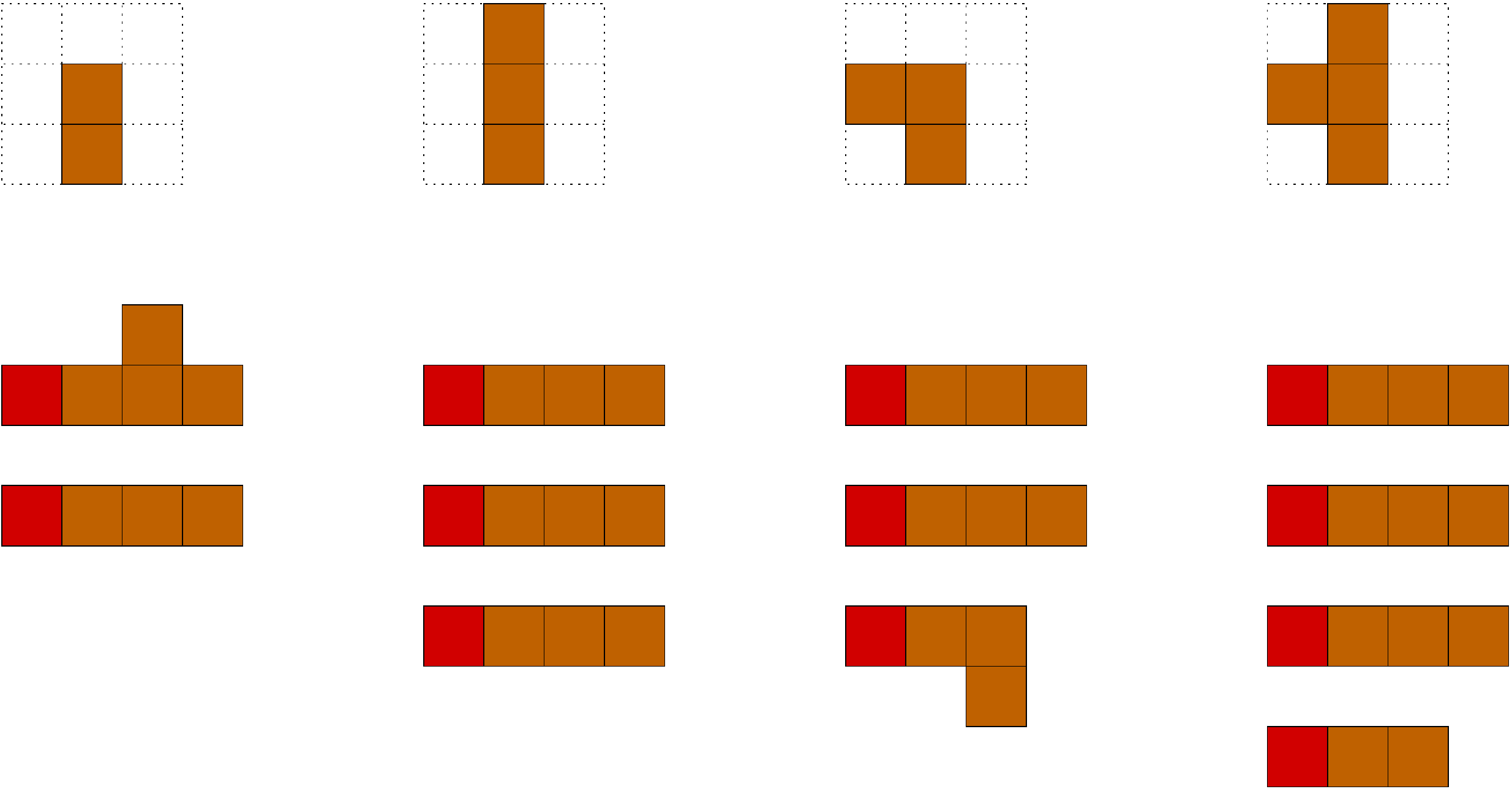}
\caption{Visible faces of building blocks.}
\label{fig:BB_foldouts}
\end{figure}

Figure \ref{fig:BB_foldouts} displays foldouts of faces of all (unit) cubes $q$ in building blocks $B$ which may occur in $Q$. Note that the foldout pictures show only faces of cubes $q$ contained in $F$ or $Q-B$. These faces are the \emph{visible} faces of $\partial q$; only these are in $\partial_\cup \bU$.

{\bf $\cC$-modification.} Based on the catalog in Figure \ref{fig:BB_foldouts}, we observe that in dimension $n=3$ it suffices to fix $4$ $\cC$-modifications which can be applied in all cubes in all building blocks of side length $9$. The case of $5$ visible faces is illustrated in Figure \ref{fig:BB_C_mods_4}. A comprehensive list of examples of $\cC$-modifications to cubes with $3$ or $4$ visible faces is given in Figure \ref{fig:BB_C_mods_small}.

\begin{figure}[h!]
\includegraphics[scale=0.25]{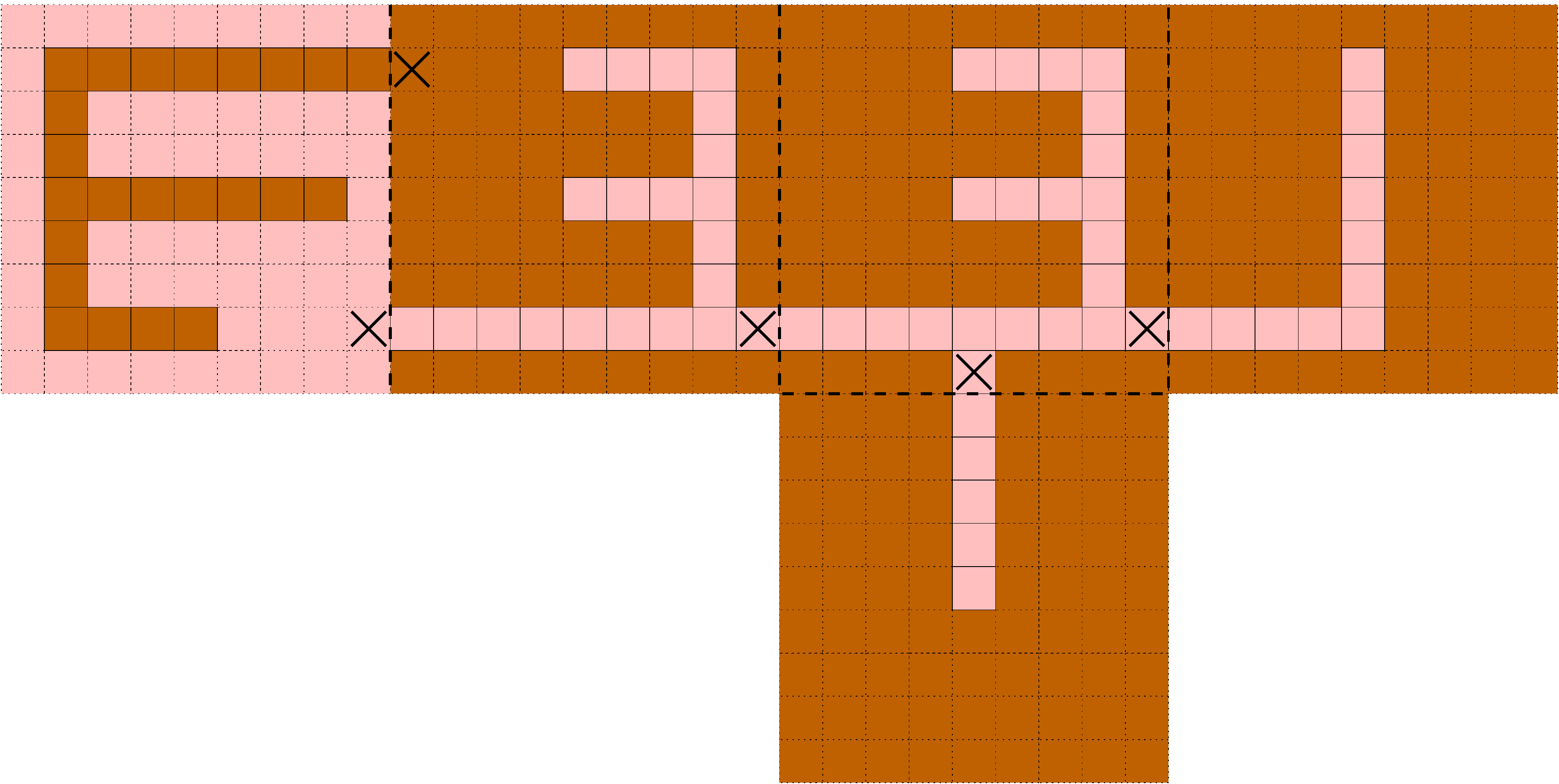}
\caption{Cube $q$ in $B$ with five visible faces.} 
\label{fig:BB_C_mods_4}
\end{figure}

\begin{figure}[h!]
\includegraphics[scale=0.25]{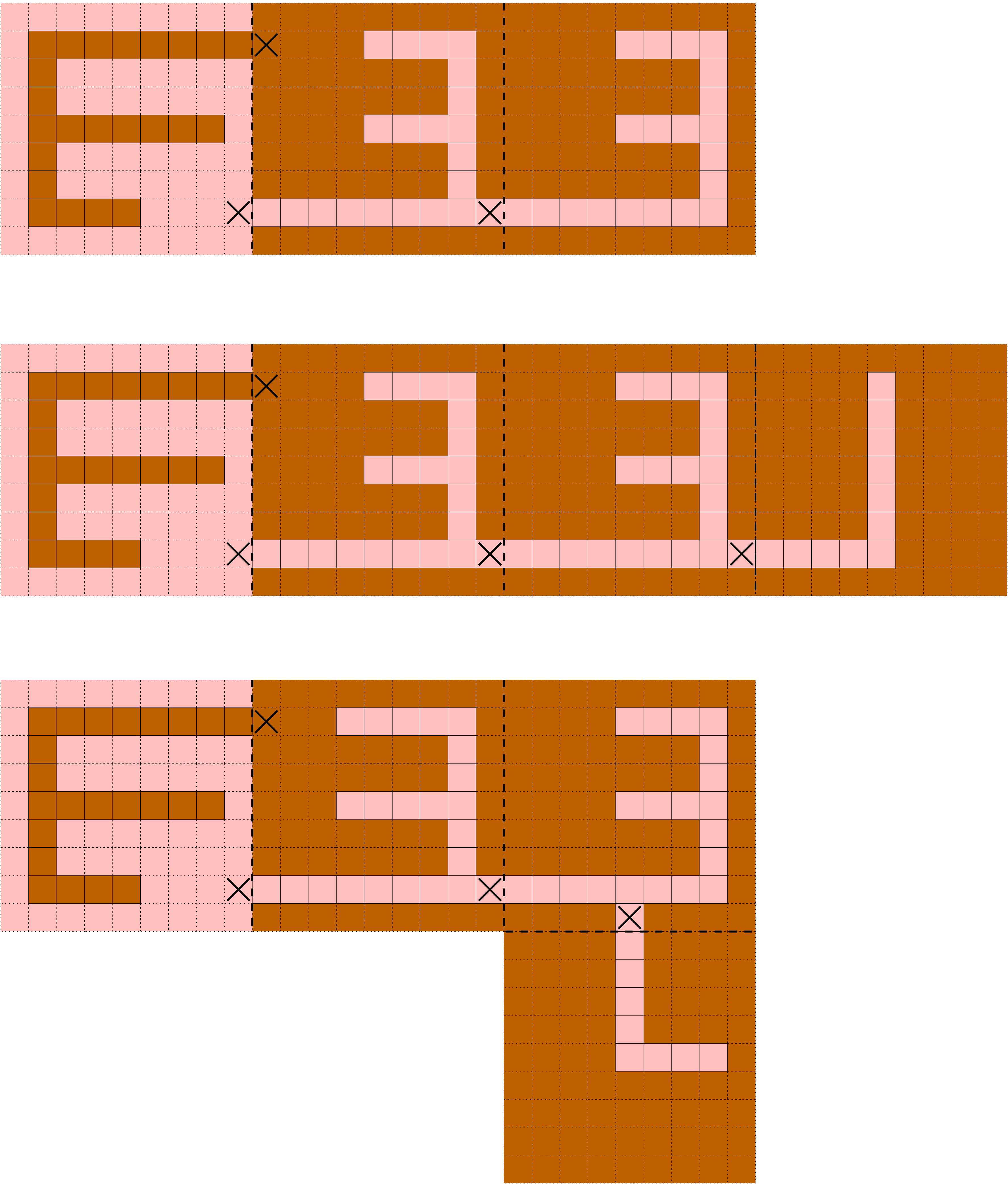}
\caption{}
\label{fig:BB_C_mods_small}
\end{figure}

\emph{Summary:} Let $3B$ be a building block of side length $9$ and suppose that in each $Q'\in 3(B^*)$ we have performed one of the $\cC$-modifications illustrated in Figures \ref{fig:BB_C_mods_4} and \ref{fig:BB_C_mods_small} and let $A_{Q',i}\subset Q'$, $i=1,2$, be the corresponding atoms; $\rho(A_{Q',i})=1$. Then 

\begin{itemize}
\item each atom $A_{Q',i}$ consists of at most $20$ building blocks and $56$ cubes;
\item in each cube $Q'\in 3(B^*)$, $A_{Q',1}\cup A_{Q',2}$ consists of at most $28$ building blocks and $79$ cubes;
\item $\bigcup_{Q'} \left( A_{Q',1}\cup A_{Q',2} \right) $ consists of at most $100$ building blocks and $285$ cubes.
\end{itemize}

{\bf Secondary modifications.}
Observe first that, while secondary $\cC$-modification may occupy as many as $4$ faces of a cube of side length $27$, secondary $\cN$-modification is confined to $2$ faces. Thus upper bounds for unit cubes and building blocks are achieved by secondary $\cC$-cubes, and so there is no need to consider explicit secondary $\cN$-modifications.

Let $Q$ and $B$ be as above and let $Q''$ be the unique cube sharing the face $F$ with $Q$. Let $\bV=(Q-B,Q'',B)$ and let $\bU=(U_1,U_2,U_3)$ be the essential partition of $|3\bV|$ obtained after $\cC$-modifications, based on Figures \ref{fig:BB_C_mods_4} and \ref{fig:BB_C_mods_small}. Note that components of $3A_1=U_1 - 3(Q-B)$ are atoms having $8$ building blocks.

Let $Q'\in 3(B^*)$. Figure \ref{fig:BB_C_mods_4_basins} presents an example of a system of basins in $Q'$ when $Q'$ has $5$ visible faces. For cubes with fewer visible faces, similar systems of basins can be found; these systems have fewer basins. Figure \ref{fig:BB_C_mods_4_basins_largest} illustrates an $\cC$-modification in the largest basin in Figure \ref{fig:BB_C_mods_4_basins}. The systems of basins for cubes in $3(B^*)$ with fewer visible faces can be chosen to have basins not larger than this basin in terms of the number of unit cubes in added atoms. We encourage the interested reader to verify these statements by illustrations.

\begin{figure}[!ht]
\includegraphics[scale=0.30]{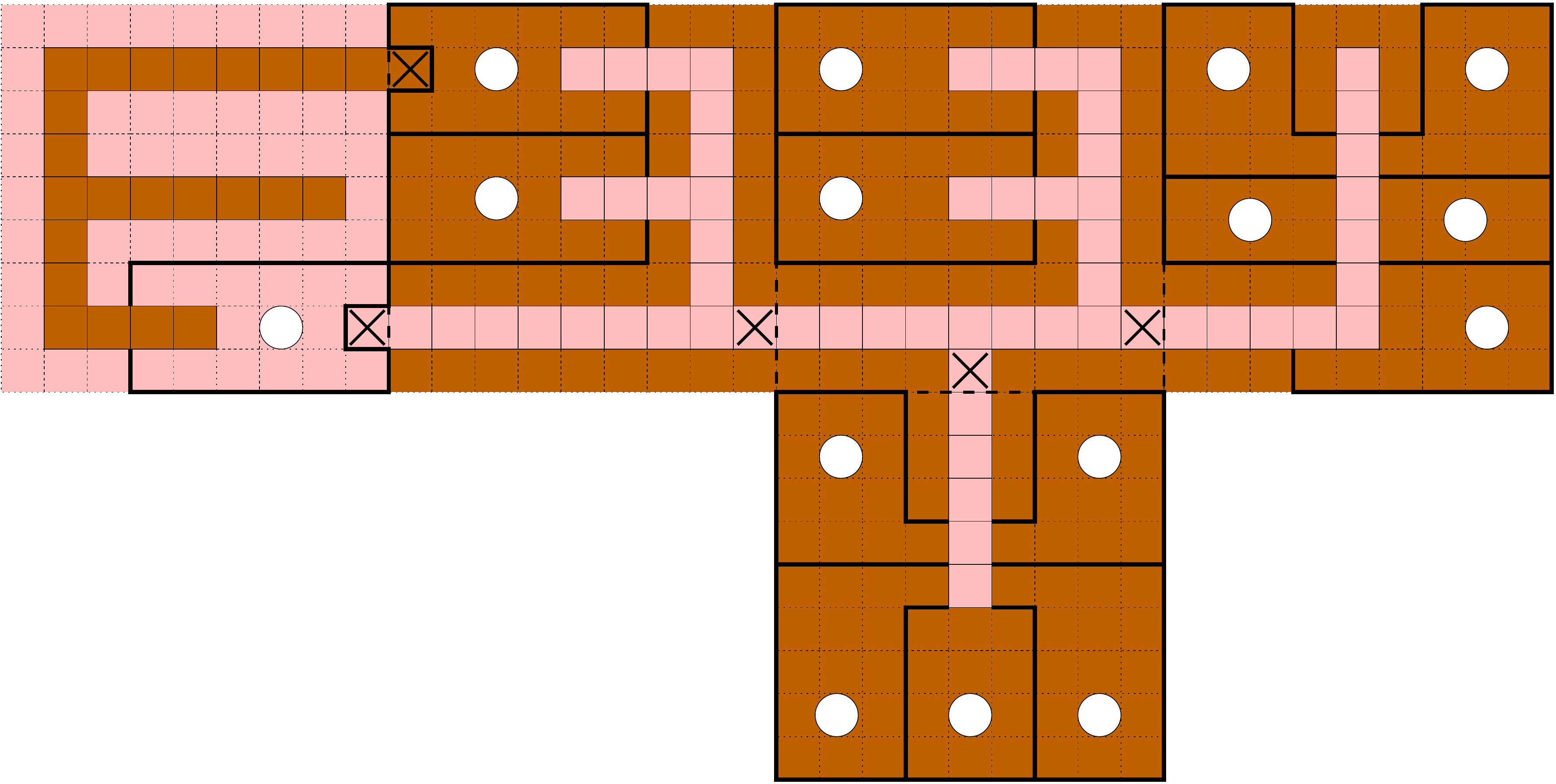}
\caption{An example of a system of basins. Basins indicated with (large) dots.}
\label{fig:BB_C_mods_4_basins}
\end{figure}

\begin{figure}[!ht]
\includegraphics[scale=0.30]{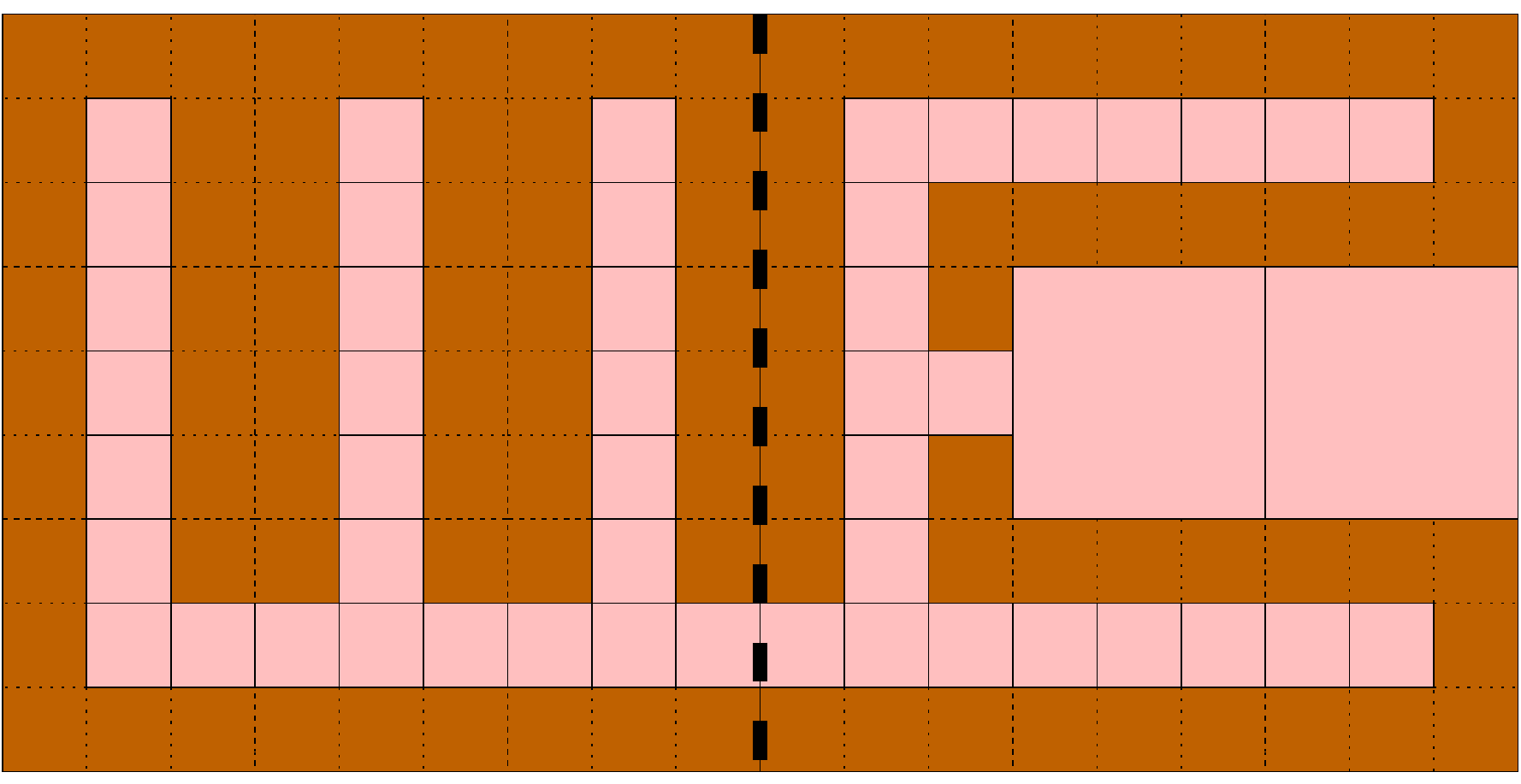}
\caption{}
\label{fig:BB_C_mods_4_basins_largest}
\end{figure}

\emph{Summary:}
Let $\bfOmega=(\Omega_1,\Omega_2,\Omega_3)$ be an essential partition of $3Q$ obtained from $\bU$ by a secondary modification. For $Q'\in 3(B^*)$, let $M_{Q',j} = \Omega_j \cap  Q'$, $j=1,2$. Then $M_{Q',j}$ is a molecule having $3A_{Q',j}$ as its root. Let $L_{Q',j} = M_{Q',j} - 3A_{Q',j}$ be the union of leaves of $M_{Q',j}$. Then
\begin{itemize}
\item each component of $L_{Q',j}$ consists of at most $16$ building blocks and $47$ cubes,
\item for each cube $Q'\in 3(B^*)$, $L_{Q',1}\cup L_{Q',2}$ has at most $31$ components and consists of at most $243$ building blocks,
\item the union $\bigcup_{Q'} \left( L_{Q',1}\cup L_{Q',2}\right)$  consists of at most $829$ building blocks.
\end{itemize}
Furthermore, $\Gamma(M_{Q',j})$ has valence at most $45$.

\subsubsection{Completion of Proposition \ref{prop:induction_clean} for $n=3$}
\label{sec:pic3}

We construct the sequence $(\bfOmega_m)$ of essential partitions using Corollary \ref{cor:pre_induction} iteratively as in Section \ref{sec:IC_init} with the only exception that for $\cC$- and secondary $\cC$-modifications, we use explicit configurations illustrated in Section \ref{sec:IC_dim3}. Thus, again, the essential partitions $\bfOmega_m$ satisfy conditions (a)--(d) and the tripod property.

To verify condition (e), we note first that Lemma \ref{lemma:mods_meet_exts} has no dimensional restrictions, and so it applies also for $n=3$. 

Regarding Lemma \ref{lemma:hull_regularity}, we note that, when $n=3$, the statistics in Section \ref{sec:IC_dim3} imply that $\Gamma(\hull(\Omega_{m,p}))$ has valence at most $20$ and every atom in $\Gamma(\hull(\Omega_{m,p}))$ consists of at most $56$ cubes. Since $9^2 > 8^2 > 56$, we may take $\varepsilon=8/9$ in the proof. Thus the claim of Lemma \ref{lemma:hull_regularity} holds also for $n=3$, and so $(\bfOmega_m)$ satisfies condition (e).

To verify condition (f), we observe that, using configurations in Section \ref{sec:IC_dim3} now show that $\# \cA(P,D;B)\le 285$ in Lemma \ref{lemma:entry_counting}. Since $285 < 26^2$, the process of Lemma \ref{lemma:Q_flat} is therefore also at our disposal, and thus it suffices to discuss the proof of Lemma \ref{lemma:f} for $n=3$. The bounds we need here are already available from Section \ref{sec:IC_dim3}.

Concerning item (b) in the proof of Lemma \ref{lemma:f}, the number of joins in our construction now is $16$. Furthermore, the maximal (virtual) valence of $\Gamma(P_k)_{D_k}$ is at most $31+16 = 47$, since the atoms in $\Gamma(P_k)_{D_k}-D_k$ are expanding and hence obtained by a $\cD$-modification, a secondary modification or by a $\cC$-modification over one face of a cube. Finally, since $D_k$ has at most $16$ descendants for $n=3$, the result of Lemma \ref{lemma:f} therefore holds also for $n=3$. This concludes the verification of condition (f) and the proof of Proposition \ref{prop:induction_clean} in dimension $n=3$.\qed

\subsection{Proof of Theorem \ref{thm:RP}}
\label{sec:proof_RP}

Let $(\bfOmega_m)$ be a sequence of essential partitions as in Proposition \ref{prop:induction_clean} and observe that conditions (2) and (3) in the claim of Theorem \ref{thm:RP} are satisfied.

We conclude the proof of Theorem \ref{thm:RP} by showing that, for $p=1,2,3$, $\Omega_p = \bigcup_{m\ge 0}\Omega_{m,p}$ is a bilipschitz equivalent to $\R^{n-1}\times [0,\infty)$ in its inner geometry.

By (2b), $(\Omega_p,d_{\Omega_p})$ is bilipschitz equivalent to $(\hull(\Omega_p),d_{\hull(\Omega_p)})$ for each $p$. Since $\hull(\Omega_3)$ is a monotone union of $(\nu,\lambda)$-molecules, where $\nu$ and $\lambda$ depend only on $n$, $(\Omega_3,d_{\Omega_3})$ is bilipschitz equivalent to $\R^{n-1}\times [0,\infty)$ by Proposition \ref{prop:adap_cont_2}.

Concerning $\hull(\Omega_2)$, we observe first that $\hull(\Omega_2) \cap [0,\infty)^{n-1}\times [0,\infty)$ consists of an infinite collection of pair-wise disjoint $(\nu,\delta)$-molecules. Thus $(\hull(\Omega_2),d_{\hull(\Omega_2)})$ is bilipschitz equivalent to $[0,\infty)^{n-1}\times (-\infty,0]$ as we may apply Proposition \ref{prop:fRt} to these molecules separately. Since components of $\hull(\Omega_2) \cap [0,\infty)^{n-1}\times [0,\infty)$ do not meet $\partial [0,\infty)^{n-2}\times \R$, we obtain a bilipschitz homeomorphism $[0,\infty)^{n-1}\times (-\infty,0] \to (\hull(\Omega_2),d_{\hull(\Omega_2)})$ which is the identity on the boundary $\partial [0,\infty)^{n-1}\times (-\infty,0]$.

We are left with $\hull(\Omega_1)$. Since $\hull(\Omega_{1,m}) = [0,3^{m+1}]^n$ for every $m\ge 1$, $\hull(\Omega_1) = [0,\infty)^n$.

This completes the construction of a rough Rickman partition of $[0,\infty)^{n-1}\times \R$ and the proof of Theorem \ref{thm:RP}.\qed

\begin{proof}[Proof of Corollary \ref{cor:John-domains}]
Domains $\Omega_{m,p}$ are John-domains with a constant depending only on $n$. This can be seen for example as follows. Let $a,b\in \interior \Omega_p$ be points and let $A,B\in \Gamma(\Omega_p)$ be the (dented) atoms containing $a$ and $b$, respectively. Let $D_1,\ldots, D_r$ be the geodesic in $\Gamma(\Omega_p)$ connecting $A$ and $B$. Since atoms $\hull(D_r)$ have uniformly bounded length and the function $r \mapsto \rho(\hull(D_r))$ satisfies the combinatorial John condition as noted in Section \ref{sec:RT}, we observe that there exists $C>1$ depending only on $n$ so that $a$ and $b$ can be connected with a path $\gamma\colon [0,1]\to \Omega_{m,p}$ satisfying $\min\{|a-\gamma(t))|,|\gamma(t)-b|\} \le C\dist(\gamma(t),\partial \Omega_{m,p})$ for $0\le t\le 1$.

Domains $\Omega_p$, for $p=1,2,3$, are uniform domains by the same argument.
\end{proof}




\section{From cubes to simplices}
\label{sec:triangulation}

In this section we introduce a particular triangulation of the pair-wise common boundary $\partial_\cup \bfOmega$ of a rough Rickman partition $\bfOmega = (\Omega_1,\Omega_2,\Omega_3)$. While the construction of the domains $\Omega_p$ is facilitated by using cubes as fundamental units, an Alexander-type mapping is  more naturally described using simplices. We wish to remind the reader that the rough Rickman partition $\bfOmega$ must be modified once more to obtain a Rickman partition $\wt\bfOmega$ supporting a suitable BLD-mapping on $\partial_\cup \wt\bfOmega$. The triangulation of $\partial_\cup \bfOmega$ and a parity function carried by it have important roles in the construction of $\wt\bfOmega$ in the next section.

The space $\R^n$ has a natural structure as a CW-complex with unit cubes $[0,1]^n+v$, $v\in \Z^n$, as $n$-cells, and the $k$-dimensional faces of these cubes as $k$-cells. Every $(n-1)$-cube $Q$ of this complex has a natural subdivision into $(n-1)$-simplices. In what follows the convex hull of points $v_0,\ldots, v_k$ in $\R^k$, $0\le k \le n-1$, is 
\[
[v_0,\ldots, v_k].
\]

Let $Q$ be an $(n-1)$-cube in $\R^n$ and, for $k=0,\ldots, n-1$, $Q_k$ a $k$-dimensional face of $Q$. The $n$-tuple $\cQ=(Q_0,\ldots, Q_{n-1})$ is a \emph{flag in $Q$} if
\begin{equation}
\label{eq:nested_cubes}
Q_0\subset Q_1\subset \cdots \subset Q_{n-1} = Q.
\end{equation} 
Each $k$-cell $Q_k$ has a uniquely defined barycenter $c_{Q_k}$ and, by \eqref{eq:nested_cubes}, the  vectors $c_{Q_0}-c_{Q_{n-1}},\ldots, c_{Q_{n-2}}-c_{Q_{n-1}}$ are linearly independent with 
\[
S_{\cQ} = [c_{Q_0},\ldots, c_{Q_{n-1}}]
\]
an $n$-simplex contained in $Q$. We say that $S_{\cQ}$ is the \emph{simplex induced by the flag $\cQ$}. Furthermore, 
\[
Q = \bigcup_{\cQ} S_{\cQ},
\]
the union over all flags $(Q_0,\ldots, Q_n)$ in $Q$. Two $(n-1)$-simplices $S_{\cQ}$ and $S_{\cQ'}$, determined by different flags $\cQ$ and $\cQ'$, may intersect but they have no common interior. Thus simplices induced by flags triangulate $Q$.

Since simplices induced by flags are determined by the barycenters of lower-dimensional faces of $(n-1)$-cubes, every $(n-1)$-dimensional subcomplex $\bX$ of $\R^{n-1}$, which is a union of its $(n-1)$-cells, admits a triangulation with simplices induced by flags. We call the simplicial complex associated to such triangulation the \emph{standard simplicial structure of $\bX$}. Note that, since simplices in the standard simplicial structure arise as a subdivision of unit cubes in $\R^n$, the $k$-simplices ($0<k\le n$) in the standard simplicial structure have diameter between $1/2$ and $\sqrt{n}/2$.

\begin{convention}
From now on we tacitly assume that a given $(n-1)$-simplex $\sigma$ in an $(n-1)$-dimensional cubical complex $\bX$ has the standard simplicial structure of $\bX$. 
\end{convention}

In particular, the pair-wise common boundary $\partial_{\cup} \bfOmega$ of a Rickman partition $\bfOmega$ admits this standard simplicial structure.

There is an elementary labeling function associated to the standard simplicial structure. Let $\bX$ be an $(n-1)$-dimensional subcomplex of $\R^n$ so that $\bX$ is a union of its $(n-1)$-cells and let $\bX^{(0)}$ be the vertices of the standard simplicial structure. Since every vertex $v$ in $\bX$ is a barycenter of a unique unit cube $Q_v$ in the cubical structure of $\R^n$, the map
\[
\vartheta_\bX \colon \bX^{(0)} \to \{0, \ldots, n-1\}, \ v\mapsto \dim Q_v, 
\]
is well-defined. Moreover, $\vartheta_\bX(\sigma) = \{0, \ldots, n-1\}$ for every $(n-1)$-simplex $\sigma$ in the standard simplicial structure of $\bX$. We call $\vartheta_\bX$ the \emph{labeling function of $\bX$}.

\subsection{Parity functions}
\label{sec:parity_function}

Let $\bfOmega = (\Omega_1,\Omega_2,\Omega_3)$ be a rough Rickman partition of $\R^n$ and let $\sigma$ be an $(n-1)$-simplex in $\left( \partial_\cup \bfOmega\right)^{(n-1)}$. Then $\sigma = [v_0,\ldots, v_{n-1}]$, where $0\leq k\leq n-1$ and $v_k$ is a barycenter of a $k$-cube in $\partial_\cup \bfOmega$.
Since $\partial_\cup \bfOmega$ is the pair-wise common boundary,  $\sigma$ lies on the boundary of exactly two domains in $\bfOmega$. We say that $\sigma$ is \emph{$\bfOmega$-positive} if there exist $i$ and $j$ with $\sigma\subset \Omega_i \cap \Omega_j$ and 
\begin{itemize}
\item[(1)] $j=i + 1 \mod 3$, and 
\item[(2)] there exists a unit vector $v\in \R^n$ with $v_{n-1} + v \in \Omega_i$ and 
\begin{equation}
\label{eq:orient}
\det((v_0-v_{n-1}),\ldots, (v_{n-2}-v_{n-1}), v) >0.
\end{equation}
\end{itemize}
Otherwise, $\sigma$ is \emph{$\bfOmega$-negative}.
A vector $v$ satisfying \eqref{eq:orient} is called an \emph{oriented normal of $\sigma$} if $v$ is orthogonal to $v_k-v_{n-1}$ for every $0\le k \le n-1$.

The \emph{parity function of $\bfOmega$ is the function $\nu_\bfOmega \colon (\partial_\cup \bfOmega)^{(n-1)} \to \{\pm 1\}$} defined by 
\[
\nu_\bfOmega(\sigma) = \left\{ \begin{array}{rl}
1, & \sigma\ \mathrm{is}\ \bfOmega\mathrm{-positive},\\
-1, & \sigma\ \mathrm{is}\ \bfOmega\mathrm{-negative}.
\end{array}\right.
\]

The next lemma describes the change of the parity on adjacent simplices. 
\begin{lemma}
\label{lemma:parity}
Let $\bfOmega=(\Omega_1,\Omega_2,\Omega_3)$ be a rough Rickman partition of $\R^n$. Suppose $\sigma$ and $\sigma'$ are adjacent $(n-1)$-simplices in $\partial\Omega_i$.
Then $\nu_\bfOmega(\sigma) = -\nu_\bfOmega(\sigma')$ if there exists $j\ne i$ so that $\sigma\cup \sigma'\subset \partial\Omega_j$, and $\nu_\bfOmega(\sigma) = \nu_\bfOmega(\sigma')$ otherwise.
\end{lemma}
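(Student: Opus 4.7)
The plan is to reduce the parity comparison to a direct determinant computation after a combinatorial dichotomy of adjacency. Write $\sigma=[v_0,\ldots,v_{n-1}]$ with $v_k=c_{Q_k}$ for a flag $Q_0\subset\cdots\subset Q_{n-1}=F$, and similarly $\sigma'=[v'_0,\ldots,v'_{n-1}]$ with flag ending in $F'$. Matching $(n-2)$-faces via the labelling function $\vartheta$ (whose value on the modified vertex must agree for both simplices) shows that either (A) $F=F'$ and the two flags differ at a single index $k<n-1$, or (B) $F\ne F'$, with the shared $(n-2)$-simplex equal to the ``base'' face $[v_0,\ldots,v_{n-2}]$ and $Q_{n-2}=Q'_{n-2}=:E$ the common $(n-2)$-cube of the ambient cubical complex, so that $F$ and $F'$ are two of the four $(n-1)$-cubes meeting $E$.

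In case (A), $F=F'$ separates a single pair $\Omega_i,\Omega_j$, so the cyclic order $(i^*,j^*)$ and the oriented normal $v$ coincide for $\sigma$ and $\sigma'$, and a direct expansion along the $k$-th column shows that exchanging $v_k$ with $v'_k$ reverses the sign of $\det(v_0-c_F,\ldots,v_{n-2}-c_F,v)$; hence $\nu_\bfOmega(\sigma)=-\nu_\bfOmega(\sigma')$, which is Case~1 since $F=F'$ automatically lies in $\partial\Omega_i\cap\partial\Omega_j$. For case (B), place $E\subset\R^{n-2}\times\{(0,0)\}$ with $c_E=0$ so that $v_0,\ldots,v_{n-2}$ lie in this $(n-2)$-plane; then the four $(n-1)$-cubes at $E$ correspond to the unit half-rays $\pm e_{n-1},\pm e_n$ of the transverse $\R^2$ and the four $n$-cubes to the four transverse quadrants, each carrying one of the labels $\Omega_1,\Omega_2,\Omega_3$. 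Direct computation of $\mathrm{sgn}\det(v_0-c_F,\ldots,v_{n-2}-c_F,v)$ with $c_F\in\{(0,\ldots,\pm 1/2,0),(0,\ldots,0,\pm 1/2)\}$ and $v$ along the remaining transverse axis yields an explicit $\pm 1$ value for each angular position of $F$, from which the product $\nu_\bfOmega(\sigma)\nu_\bfOmega(\sigma')$ is read off using the rule that $v$ must point into $\Omega_{i^*(F)}$.

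A short enumeration of the local quadrant-colourings then shows: when both $F$ and $F'$ lie between the same ordered pair $(i,j)$ the product is $-1$ (Case~1), and when they lie between two different pairs that share only the index $i$ the product is $+1$ (Case~2). The main obstacle is to exclude the pathological alternating pattern $(\Omega_i,\Omega_j,\Omega_i,\Omega_j)$ of the four transverse quadrants around $E$, in which the $180^\circ$ pair $(F,F')$ would give parity product $+1$ and violate Case~1. Such a pattern would make $\partial\Omega_i$ non-manifold at $E$ with four incident $(n-1)$-cubes in $\partial\Omega_i$. It is ruled out by the cube-based inductive construction of Theorem~\ref{thm:RP}: each $\cC$-, $\cD$-, or secondary modification is performed in a cube $3Q$ of side length strictly larger than the atoms it introduces and places those atoms along at most two adjacent faces of $Q$ meeting the two complementary domains, so locally around any $(n-2)$-face of $\partial_\cup\bfOmega$ the transverse quadrants bearing the same label form a connected arc of the cyclic order. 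A direct induction on the modification step, using the scale separation built into Section~\ref{sec:CD}, establishes this property, after which the enumeration finishes the proof.
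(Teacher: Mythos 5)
Your determinant computations are essentially the paper's. The same-$(n-1)$-cube case is the paper's planar computation, driven by the affine relation $v'_k=v_{k-1}+v_{k+1}-v_k$ between the exchanged barycenters (note that a bare ``exchange of $v_k$ with $v'_k$'' does not by itself reverse a determinant; you need exactly this relation, which is the identity the paper expands). Your common-$(n-2)$-cube case with explicit transverse coordinates reproduces the paper's wedge-sign argument for coplanar and perpendicular faces. You are also right that the local arrangement of the domains around the $(n-2)$-cube matters: at an edge where one domain occupies two \emph{opposite} transverse quadrants, the asserted parity relation genuinely fails for the coplanar pair (in the pattern $(\Omega_i,\Omega_j,\Omega_i,\Omega_j)$ both simplices get the same parity although Case 1 demands opposite ones), so something must rule such edges out -- a point the paper's proof compresses into ``it is then easy to verify,'' tacitly assuming the shared domain lies on one side of the common hyperplane along both simplices.

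This, however, is exactly where your proposal has a genuine gap. First, your enumeration is incomplete: the pattern $(\Omega_i,\Omega_j,\Omega_i,\Omega_k)$, with one domain on two opposite quadrants and the other two quadrants carrying different domains, makes a Case 2 pair have parity product $-1$, so your claim that ``different pairs sharing only $i$ give $+1$'' is false unless the connected-arc property is already in force; hence that property, not merely the exclusion of the two-colour alternation, is what must be proved. Second, it is not proved: ``a direct induction on the modification step establishes this property'' is the entire content of the new step, and the factual claim supporting it is inaccurate -- a $\cC$-modification places atoms along the cells $|\cF^1|$ and $|\cF^2|$, which may consist of up to $2n-2$ faces of the cube (five faces when $n=3$, cf.\ Figure \ref{fig:BB_C_mods_4}), not ``at most two adjacent faces,'' and the neglected-face/basin and secondary modifications would all have to be checked at the codimension-two skeleton. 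Third, the lemma is stated for an arbitrary rough Rickman partition, so an appeal to the specific construction behind Theorem \ref{thm:RP} cannot establish it as stated; if you believe pathological edges are compatible with the definition of a rough Rickman partition, the honest conclusion is that the hypothesis of the lemma must be strengthened (or the standard local arrangement verified for the partitions actually used), and your proposal carries out neither. As it stands the crucial exclusion step is asserted rather than proved, so the argument is not yet a proof.
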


\begin{proof}
Let $\sigma = [v_0,\ldots, v_{n-1}]$ and $\sigma'=[v'_0,\ldots, v'_{n-1}]$. Suppose first that $\sigma$ and $\sigma'$ are contained in an $(n-1)$-dimensional plane $P$. We claim that 
\begin{equation}
\label{eq:wedge_sign}
(v'_0-v'_{n-1})\wedge \cdots \wedge (v'_{n-2}-v'_{n-1}) = - (v_0-v_{n-1})\wedge \cdots \wedge (v_{n-2}-v_{n-1}).
\end{equation}
It is then easy to verify the claim of the lemma as the oriented normal vectors of $\sigma$ and $\sigma'$ will be opposite normals of $P$.

Let $\cQ=(Q_0,\ldots,Q_n)$ and $\cQ'=(Q'_0,\ldots, Q'_n)$ be flags defining $\sigma=S_{\cQ}$ and $\sigma'=S_{\cQ'}$ respectively. Since $\sigma$ and $\sigma'$ have a common side, there exists $0 \le k \le n-1$ so that $v_i = v'_i$ for $i \ne k$.  

\begin{figure}[h!]
\includegraphics[scale=0.30]{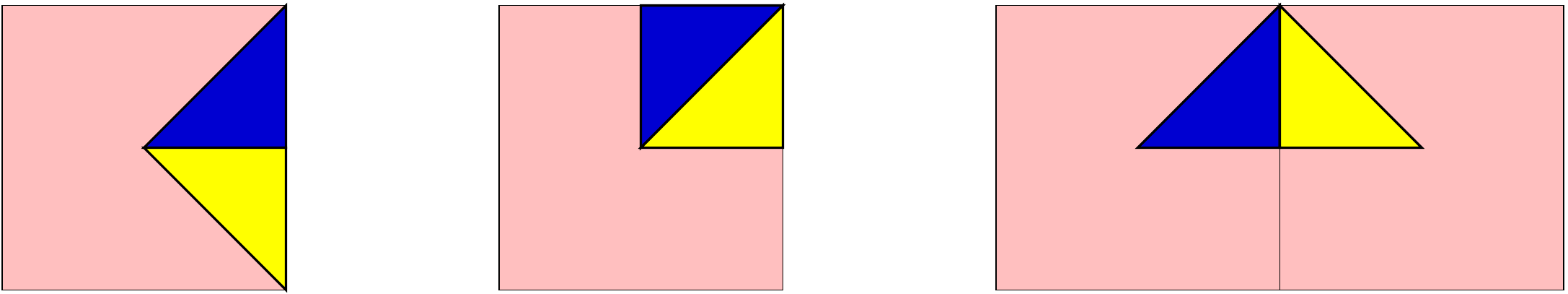}
\caption{Congruence classes of planar $\sigma\cup \sigma'$ for $n=3$ and $k=0,1,2$.}
\label{fig:PF}
\end{figure}

Suppose first that $0<k<n-1$. Then $Q'_k$ and $Q_k$ have a common face $Q_{k-1}$ and are contained in $Q_{k+1}$. Since 
\[
c_{Q_{k-1}} -c_{Q_{k+1}} = (c_{Q_{k-1}}-c_{Q'_k}) + (c_{Q'_k} - c_{Q_{k+1}})
= (c_{Q_k} - c_{Q_{k+1}}) + (c_{Q'_k}-c_{Q_{k+1}}), 
\]
it follows that
\begin{eqnarray*}
v'_k - v_{n-1} &=& v'_k - v_{k+1} + (v_{k+1}-v_{n-1}) \\
&=& v_{k-1}-v_{k+1} - (v_k - v_{k+1}) + (v_{k+1}-v_{n-1}) \\
&=& -(v_k - v_{n-1}) + (v_{k-1}-v_{n-1}) + (v_{k+1}-v_{n-1}),
\end{eqnarray*}
and so
\begin{eqnarray*}
&& (v'_0 - v'_{n-1}) \wedge \cdots \wedge (v'_k - v'_{n-1}) \wedge \cdots \wedge (v'_{n-2}-v'_{n-1}) \\
&&\quad = (v_0 - v_{n-1}) \wedge \cdots \wedge (v'_k-v_{n-1}) \wedge\cdots \wedge (v_{n-2}-v_{n-1}) \\ 
&&\quad = -(v_0 - v_{n-1}) \wedge \cdots \wedge (v_k - v_{n-1}) \wedge \cdots \wedge (v_{n-2} -v_{n-1}).
\end{eqnarray*}
Thus \eqref{eq:wedge_sign} holds. The cases $k=0$ and $k=n-1$ are similar.

Suppose now that $\sigma$ and $\sigma'$ are not contained in an $(n-1)$-dimensional hyperplane. In this case, using the notation above, $v'_{n-1} \ne v_{n-1}$ and $v'_k = v_k$ for $0\le k < n-1$. By construction of $\bfOmega$, there also exists an $n$-cube $Q$ having $\sigma$ and $\sigma'$ on its boundary. In particular, $w=c_Q-v_{n-1}$ and $w'=c_Q-v'_{n-1}$ are orthogonal to $\sigma$ and $\sigma'$, respectively.

\begin{figure}[h!]
\includegraphics[scale=0.30]{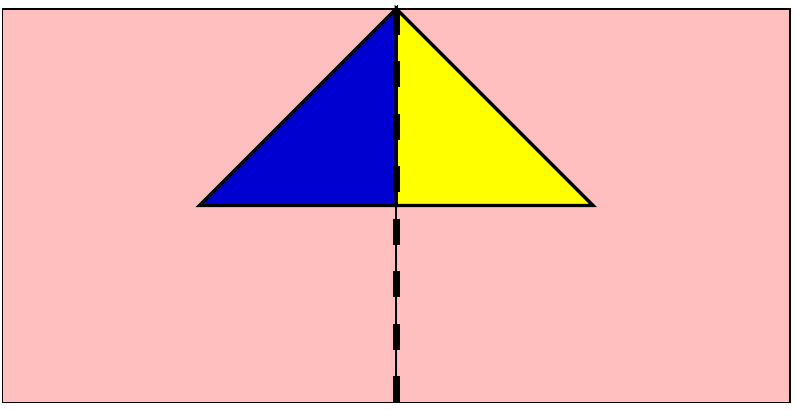}
\caption{Fold-out of the congruence class of $\sigma\cup \sigma'$ for $n=3$.}
\label{fig:PF2}
\end{figure}

Since the $n$-simplices $[v_0,\ldots,v_{n-1},c_Q]$ and $[v'_0,\ldots, v'_{n-1},c_Q]$ are planar in $\R^{n+1}$ and share an $(n-1)$-dimensional face, we have, by the previous argument,
\[
(v'_0 - c_Q) \wedge \cdots  \wedge (v'_{n-1}-c_Q) = - (v_0 - c_Q) \wedge \cdots  \wedge (v_{n-1}-c_Q),
\]
so that
\[
(v'_0 - v'_{n-1}) \wedge \cdots \wedge (v'_{n-2}- v'_{n-1})\wedge w' = - (v_0 -v_{n-1})\wedge \cdots \wedge (v_{n-2}-v_{n-1}) \wedge w.
\]
Since $Q$ is contained in one of the elements of the partition $\bfOmega$, the claim now follows by considering separately cases $Q\subset Q_i$ and $Q\subset Q_j$, where $j=i+1\mod 3$; in both cases the oriented normals for $\sigma$ and $\sigma'$ are either $w$ and $-w'$, or $-w$ and $w'$, respectively.
\end{proof}


\section{Pillows and pillow covers}
\label{sec:pillows}
In this section we establish the most significant case, $p=3$, of Proposition \ref{prop:4}. Using the ideas of Rickman \cite[Section 7]{R} we prove the following proposition.

\begin{proposition}
\label{prop:special_4}
Let $\bfOmega = (\Omega_1,\Omega_2,\Omega_3)$ be a rough Rickman partition of $\R^n$ supporting the tripod property. Then there exists a Rickman partition $\wt\bfOmega = (\wt\Omega_1,\wt\Omega_2,\wt\Omega_3)$ of $\R^n$ for which the Hausdorff distance of $\partial_\cup \bfOmega$ and $\partial_\cup \wt\bfOmega$ is at most $1$.
\end{proposition}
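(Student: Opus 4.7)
The plan is to construct $\wt\bfOmega$ by a controlled local modification of $\bfOmega$ whose modifications are $n$-cells of diameter at most $1$ (the \emph{pillows}), inserted to convert the tripod partition of $\partial_\cup\bfOmega$ into a branched $(n-1)$-complex admitting a BLD Alexander-type mapping onto $\partial_\cup\bE$. Equip $\partial_\cup\bfOmega$ with the standard simplicial structure of Section \ref{sec:triangulation} and the parity function $\nu_\bfOmega$. By the tripod property there is an essential partition $\Delta$ of $\partial_\cup\bfOmega$ into $(n-1)$-cells organized in triples $(c_1,c_2,c_3)$ meeting in $(n-2)$-cells contained in $\partial_\cap\bfOmega$, each $c_k$ lying between a fixed pair of domains in $\bfOmega$. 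The target $\partial_\cup\bE$ carries a canonical tripod structure with triples meeting along the distinguished $\bS^{n-2}\subset \partial_\cap\bE$, and our goal is to upgrade the tripods of $\Delta$ to a globally compatible branched complex over $\partial_\cup\bE$.

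A pillow $P$ will be a $1$-fine cubical $n$-cell attached along one of its $(n-1)$-faces to an $(n-1)$-simplex $\sigma\subset\partial_\cup\bfOmega$, with $P\setminus\sigma$ contained in the interior of a single $\Omega_i$. The bookkeeping operation is to remove $P$ from $\Omega_i$ and to add it to the domain $\Omega_k$ adjacent to $\sigma$ only through $P$. This replaces the face $\sigma$ in $\partial_\cup\bfOmega$ by the remaining faces of $P$, which can be arranged to form a small tripod of three $(n-1)$-cells meeting the three updated domains pairwise. Because pillows have diameter at most $1$, these modifications keep $\partial_\cup\wt\bfOmega$ within Hausdorff distance $1$ of $\partial_\cup\bfOmega$. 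The choice of which side of $\sigma$ receives a pillow, and of the destination color $k\in\{1,2,3\}\setminus\{i,j\}$ (where $\sigma\subset\partial\Omega_i\cap\partial\Omega_j$), is dictated by $\nu_\bfOmega(\sigma)$ and the cyclic order $i\mapsto i+1\bmod 3$ used to define positivity in Section \ref{sec:parity_function}.

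With a pillow assignment fixed for every simplex in $(\partial_\cup\bfOmega)^{(n-1)}$ using the rule above, one defines $\wt\Omega_i$ by adding to $\Omega_i$ the pillows whose destination color is $i$ and removing the pillows sitting inside $\Omega_i$ with other destination color. I would then verify three points. (a) \emph{Hausdorff bound:} already guaranteed by the diameter-$1$ restriction. (b) \emph{Zorich extension domains:} each $\Omega_i$ decomposes into half-spaces modulo boundary that are BLD-homeomorphic images of $\R^n_+$, and the pillow operation is a bounded-geometry bilipschitz perturbation on each piece, preserving the half-space-modulo-boundary structure by the uniform bilipschitz contraction results of Section \ref{sec:RT} (Proposition \ref{prop:fRt} and Proposition \ref{prop:fRt_flat}). (c) \emph{BLD Alexander map:} on each newly created tripod of pillow faces, map the three faces isometrically onto the three faces of $\partial_\cup\bE$ surrounding $\bS^{n-2}$; on the remaining simplices of $\partial_\cup\bfOmega$ extend by a cellwise Alexander mapping onto $\partial_\cup\bE$ following the triples of $\Delta$. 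Uniformly bounded local combinatorics yields the BLD property.

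The main obstacle is the global combinatorial coherence at the $(n-2)$-cells of $\partial_\cap\bfOmega$: three tripod cells of $\Delta$ meet there, and the individual pillow insertions on each of the three cells must, when fitted together, produce a local branched tripod whose orientation is consistent with a single Alexander mapping of constant degree onto $\partial_\cup\bE$. Lemma \ref{lemma:parity} is the essential tool: it shows that $\nu_\bfOmega$ flips precisely across adjacent simplices that share a common third domain, and is preserved otherwise. This dichotomy classifies the two possible incompatibility types at a triple junction and selects, canonically, the side and color of each pillow at that junction. The bulk of the argument will consist in checking that the finitely many local parity configurations at each $(n-2)$-cell are all repaired by the prescribed pillow insertion, and that the repairs made for neighbouring $(n-2)$-cells are mutually consistent along the $(n-1)$-faces of the pillows; once this is established, the construction of $\wt\bfOmega$ and the BLD map $f\colon\partial_\cup\wt\bfOmega\to\partial_\cup\bE$ follows.
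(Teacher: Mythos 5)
Your mechanism differs from the paper's in an essential way, and as stated it has a genuine gap. You propose, for every $(n-1)$-simplex $\sigma\subset\Omega_i\cap\Omega_j$, to attach a solid cell $P$ of diameter at most $1$ on one side of $\sigma$ and to recolor it with the third color $k$, "the domain adjacent to $\sigma$ only through $P$". But the tripod property only guarantees that the three cells $c_1,c_2,c_3$ of $\Delta$ meet along an $(n-2)$-cell in $\partial_\cap\bfOmega$; away from that junction the third domain $\Omega_k$ is not within reach of $\sigma$ (a cell of $\Delta$ sits inside a $\bfOmega$-equivalence class of diameter up to $3$ in the sup-metric, so $\dist(\sigma,\partial\Omega_k)$ can be comparable to $3$, while your pillow has diameter at most $1$). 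If you recolor such pillows anyway, $\wt\Omega_k$ acquires infinitely many bounded, isolated components scattered along $\partial_\cup\bfOmega$. These are not Zorich extension domains (half-spaces modulo boundary are unbounded), so condition (a) of a Rickman partition fails, and there is no way to arrange a BLD map with $f(\partial\wt\Omega_{k,j})=\partial E_k$ on such pieces. Your point (b) addresses the perturbation of the domain that \emph{loses} the pillows, but not the geometry of the domain that \emph{gains} them, which is where the construction breaks.

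What is missing is precisely the connectivity mechanism that the paper's pillow covers supply. There, each simplex is not transferred but \emph{replaced} by a stack of two or four uniformly Lipschitz sheets (the number dictated by $\nu_\bfOmega$), with openings along the entry and exit faces of a maximal tree of each cell of $\Delta$ and a shuffle on the positive simplices; the complement of the union of sheets over a whole cell then has exactly three components (Lemmas \ref{lemma:ULM} and \ref{lemma:planar_complements}), and the thin middle component is a single connected layer running along the entire cell. That layer is attached to the bulk of the third domain by modifying the three pillows of a tripod triple \emph{simultaneously} near their common $(n-2)$-cell (Section \ref{sec:PC_RP}), which is also exactly where the resulting $\wt\Omega_m$ are shown to be bilipschitz to the original $\Omega_m$ in the inner metric. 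The same sheet structure is what makes the Alexander-type map a branched (double) cover locally (Lemma \ref{lemma:LRM1}) with uniform BLD constants. In the paper the parity function governs the two-sheet versus four-sheet (shuffle) alternative, not a choice of side and destination color for a solid transfer; Lemma \ref{lemma:parity} by itself cannot repair the connectivity failure described above, so your step (c) and the "global coherence" paragraph do not yet yield a proof. To fix the proposal you would have to replace the per-simplex solid pillows by connected, tree-organized insertions running from each simplex down to the tripod junction of its cell, at which point you have essentially rebuilt the paper's sheet construction.
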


We would like to recall that the construction in Section \ref{sec:RP} yields a rough Rickman partition $\bfOmega=(\Omega_1,\Omega_2,\Omega_3)$ where $\Omega_1$ and $\Omega_2$ are connected and $\Omega_3$ has $2^{n-1}$ components. It should be, however, noted that we may construct the essential partition $\wt\bfOmega$ in Proposition \ref{prop:special_4} from any rough Rickman partition. Indeed, the construction of $\wt\bfOmega$ is local and the number of components of sets $\Omega_i$ have no r\^ole in the argument.

The proof of Proposition \ref{prop:4} is based on a construction of what we call a pillow cover of $\partial_\cup \bfOmega$, and yields the final essential partition $\wt\bfOmega$. The labeling and parity functions of $\bfOmega$ lead at once to a BLD-map $\partial_\cup \wt\bfOmega \to \wh \bS^{n-1}$, where $\wh \bS^{n-1} = \bS^{n-1}\cup \bB^{n-1}$. The bound on the Hausdorff distances of $\partial_\cup \bfOmega$ and $\partial_\cup \wt\bfOmega$ is immediate from the pillow construction.

\begin{remark}{(Idea of the pillow cover)}
We summarize the idea of the pillow cover construction as follows. Let $\bfOmega$ be a rough Rickman partition of $\R^n$ and consider a triangulation on $\partial_\cup \bfOmega$ as in Section \ref{sec:triangulation}. 

To construct a pillow cover of $\partial_\cup \bfOmega$ we (locally) replace each pair of adjacent $(n-1)$-simplices with a sextuplet of adjacent (Lipschitz) $(n-1)$-simplices. This sextuplet can be seen as a (branched) double cover of $\wh\bS^{n-1}$; note that $\wh\bS^{n-1}$ consists of three $(n-1)$-cells.

We use the tripod property of $\partial_\cup \bfOmega$, organize the adjacent $(n-1)$-simplices into directed trees, and modify the domains $\Omega_k$ by modifying their boundaries via this local replacement procedure of $(n-1)$-simplices. This process extends $\Omega_k$ between $\Omega_i$ and $\Omega_j$ for $\{i,j,k\}=\{1,2,3\}$ and, as a consequence, we obtain a new essential partition $\wt\bfOmega=(\wt\Omega_1,\wt\Omega_2, \wt\Omega_3)$ of $\R^n$.

Finally, the local (combinatorial) properties of $\partial_\cup \wt\bfOmega$ allow us to construct a BLD-map $\partial_\cup \wt\bfOmega\to \partial_\cup {\bf E}$ which shows that domains in $\wt\bfOmega$ are Zorich extension domains.
\end{remark}

We discuss first the pillow construction locally for planar $(n-1)$-cells contained in $\partial_\cup \bfOmega$. For notational convenience let $E\subset \partial_\cup \bfOmega$ be a cubical $(n-1)$-cell contained in a hyperplane $P=\R^{n-1}\times \{0\}$ of $\R^n$ so that $E\subset \Omega_i \cap \Omega_j$ for some $i\ne j$. Throughout Sections \ref{sec:pillow_simplex}--\ref{sec:ppc} we consider $E$ fixed but arbitrary and $E$ inherits a standard simplicial structure from $\partial_\cup \bfOmega$. We denote by $\nu = \nu_{E,\bfOmega} \colon E^{(n-1)} \to \{\pm 1\}$ the restriction of the parity function $\nu_\Omega$ to $E$. Similarly, $\vartheta = \vartheta_{E,\bfOmega} \colon E^{(0)} \to \{0,\ldots, n-1\}$ is the restriction of the labeling function $\vartheta_{\partial_\cup \bfOmega}$ to $E$.

Let $\cE$ be the adjacency graph $\Gamma(E^{(n-1)})$ and fix a maximal tree $\wh\cE$ in $\cE$. Contrary to the case of maximal trees of adjacency graphs of cubical complexes, we consider $\hat \cE$ as a directed tree, and fix orientation  on $\hat \cE$ so that $\hat \cE$ is connected and  all simplices in $\hat \cE$ have at most one outgoing edge and (possibly several or no) incoming edges. 

Suppose $\sigma$ is an $(n-1)$-simplex $\sigma$ of $\hat \cE$ and the $(n-2)$-simplex $\tau$ is a face of $\sigma$. Let $\sigma'$ be an  $(n-1)$-simplex in $\hat \cE$ adjacent to $\sigma$, that is, $\sigma'\cap \sigma=\tau$. Then $\tau$ is an \emph{entry face of $\sigma$} if the edge between $\sigma$ and $\sigma'$ is an incoming edge to $\sigma$, and $\tau$ is an \emph{exit face of $\sigma$} if it is the (unique) outgoing edge from $\sigma$. If $\tau$ is an entry or an exit face of a simplex,  $\tau$ is considered \emph{open}, otherwise $\tau$ is a \emph{closed face of $\sigma$}; in the configuration of Figure \ref{fig:entry-exit} the open faces are marked with dashed lines.

\begin{figure}[h!]
\includegraphics[scale=0.60]{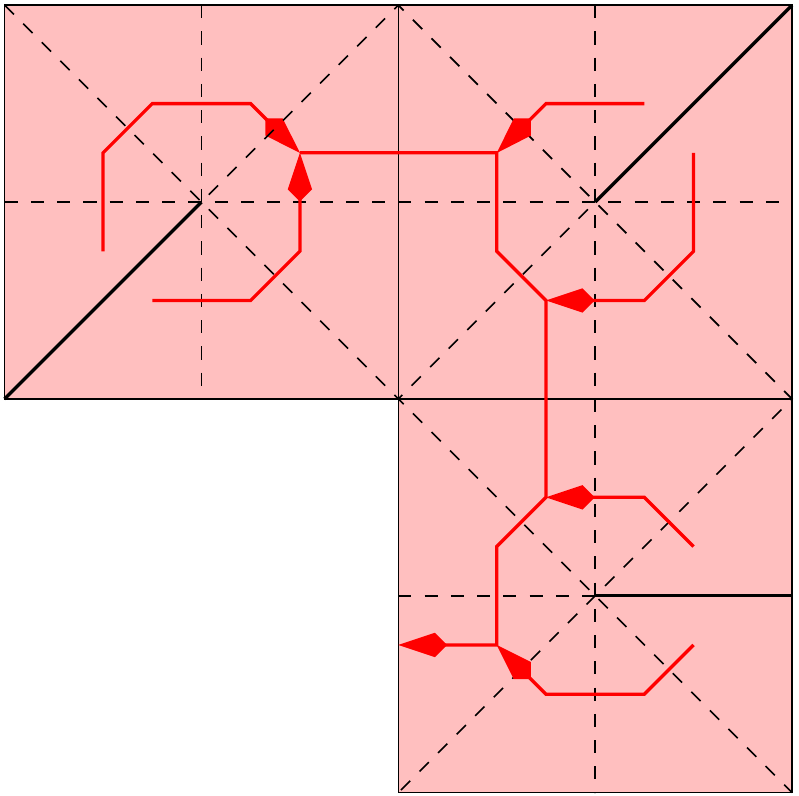}
\caption{}
\label{fig:entry-exit}
\end{figure}

\subsection{Pillow of a simplex}
\label{sec:pillow_simplex}

As a preparatory step, let $\tau=[v_1,\ldots, v_{n-1}]$ be an $(n-2)$-simplex in $\R^{n-1}$, and consider $\tau$ as a face of an $(n-1)$-simplex $\sigma$ in $E$. We define a subdivision $\tau_0,\ldots, \tau_{n-1}$ of $\tau$ as follows. 

Suppose first that $n\ge 4$. For $i=1,\ldots, n-1$, let 
\[
\tau_i = [(v_1+v_i)/2,\ldots, (v_{n-1}+v_i)/2]\subset \tau.
\]
Then $\tau_i$ is an $(n-2)$-simplex congruent to $\tau$ having diameter $(\diam \tau) /2$ and having $v_i$ as a vertex; see Figure \ref{fig:triangles}. Finally, let $\tau_0 = \tau - \bigcup_{i=1}^{n-1} \tau_i$; we use here and from now on the notation $\alpha-\beta = \overline{\alpha\setminus \beta}$ also for simplices.
For $n=4$, $\tau_0$ is an $2$-simplex, while when  $n>4$, $\tau_0$ is a more general polyhedron.

When $n=3$, $\tau$ is a line segment $[v_1,v_2]$. In this case, we set $\tau_1 = [v_1,v_1+(v_2-v_1)/3]$, $\tau_2 = [v_2,v_2+(v_1-v_2)/3]$, and $\tau_0 = \tau - (\tau_1\cup \tau_2)$; thus $\tau_0$ is the 'middle third' of $\tau$.

\begin{figure}[h!]
\includegraphics[scale=1]{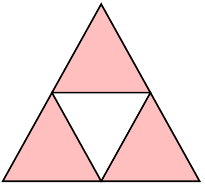}
\caption{$2$-simplices $\tau_1$, $\tau_2$, $\tau_3$ surrounding $\tau_0$ in a subdivision of $\tau$; $n=4$.}
\label{fig:triangles}
\end{figure}

\begin{definition}
Let $u \colon \tau \to [-1,1]$ be a continuous function on $\tau$. Then $u$ is an \emph{opening} if $u|\interior \tau_0 >0$ and $u|\tau\setminus \interior \tau_0 = 0$. Similarly,  $u$ is a \emph{shuffle} if 
\begin{itemize}
\item[(1)] $u|\interior \tau_0>0$,
\item[(2)] there exist $i\ne j$ in $\{1,\ldots, n-1\}$ so that $u|\interior \tau_i >0$ and $u|\interior \tau_j < 0$, and
\item[(3)] $u|\tau\setminus (\interior \tau_0 \cup \interior \tau_i\cup \interior \tau_j) = 0$.
\end{itemize}
\end{definition}

\begin{remark}
Note that, if $u\colon \tau \to [-1,1]$ is either an opening or a shuffle, $u|\partial \tau = 0$. 
\end{remark}

\subsubsection{Pillow cover functions}

To each $(n-1)$-simplex $\sigma$ in $E$, we set
\[
\ell_\sigma = \left\{ \begin{array}{ll}
2, & \nu(\sigma)=-1, \\
4, & \nu(\sigma)=1,
\end{array}\right.
\]
and introduce a family of Lipschitz functions 
\[
\Psi_\sigma\colon \sigma\times \{1,\ldots,  \ell_\sigma\} \to [-1,1],
\]
which will form the pillow covers. We consider the two parities separately.

\begin{remark}
For each $\sigma$ and both parities $\nu(\sigma)$, we may assume that the function $\Psi_\sigma$ satisfies the additional regularity condition
\[
\Psi_\sigma(x,i+1)-\Psi_\sigma(x,i) \ge \dist(x,\partial \sigma)/10
\]
for $x\in \sigma$ and $i \in \{1,\ldots, \ell_\sigma-1\}$.

We may also assume, from now on, that mappings $\Psi_\sigma$ are PL and uniformly Lipschitz, that is, there exists $L\ge 1$ (depending only on $n$) so that every $\Psi_\sigma$ is $L$-Lipschitz for every $\sigma$ in $\partial_\cup \bfOmega$ and, in particularly, in the cell $E$. 
\end{remark}

\medskip
\noindent
\emph{Case 1: Functions on negative simplices.}

Suppose $\nu(\sigma)=-1$. We define $u_\sigma \colon \partial \sigma \to [-1,1]$ as follows. Given a face $\tau$ of $\sigma$, we set $u_\sigma|\tau$ to be an opening if $\tau$ is either an entry or an exit face of $\sigma$. If $\tau$ is closed, $u_\sigma|\tau$ is the zero function. Thus we may fix $\Psi_\sigma \colon \sigma \times \{1,2\} \to [-1,1]$ satisfying 
\begin{itemize}
\item[(1)] $\Psi_\sigma(x,1) = 0$ and $\Psi_\sigma(x,2)=u_\sigma(x)$ for all $x\in \partial \sigma$, and
\item[(2)] $\Psi_\sigma(x,1)<0<\Psi_\sigma(x,2)$ for all $x\in \interior \sigma$.
\end{itemize}

\medskip
\noindent
\emph{Case 2: Function on positive simplices.}

For $\nu(\sigma)=1$, two functions $u_\sigma$ and $v_\sigma$ on $\partial \sigma$ will be used in a similar way. Given a face $\tau$ of $\sigma$, take $u_\sigma|\tau$ to be an opening if $\tau$ is either entry or exit face of $\sigma$, and $u_\sigma|\tau = 0$, otherwise. As for $v_\sigma$,  define $v_\sigma|\tau = 0$ for every face $\tau$ of $\sigma$ which is not the exit face, and take $v_\sigma$ to be a shuffle on the exit face of $\sigma$ if such exists. Note that $u_\sigma$ and $v_\sigma$ have (essentially) pair-wise disjoint supports.

We may now fix a function  $\Psi_\sigma \colon \sigma \times \{1,\ldots, 4\} \to [-1,1]$ so that, for $x\in \partial \sigma$, 
\begin{itemize}
\item[(1)] $\Psi_\sigma(x,1)=\Psi_\sigma(x,2)=0$ and $\Psi_\sigma(x,3)=\Psi_\sigma(x,4) = u_\sigma(x)$ if $v_\sigma(x)=0$, \\
\item[(2)] $\Psi_\sigma(x,1)=\Psi_\sigma(x,2)=\Psi_\sigma(x,3)=v_\sigma(x)$ and $\Psi_\sigma(x,4)=0$ if $v_\sigma(x)<0$, \\
\item[(3)] $\Psi_\sigma(x,1)=0$ and $\Psi_\sigma(x,2)=\Psi_\sigma(x,3)=\Psi_\sigma(x,4) = v_\sigma(x)$ if  $v_\sigma(x)>0$,
\end{itemize}
while for $x\in \interior \sigma$, 
\begin{itemize}
\item[(4)] $\Psi_\sigma(x,1)<\Psi_\sigma(x,2)<\Psi_\sigma(x,3)<\Psi_\sigma(x,4)$ and
\item[(5)] $\Psi_\sigma(x,1) < 0 < \Psi_\sigma(x,4)$.
\end{itemize}

\subsubsection{Sheets and a pillow cover}

The singular $(n-1)$-simplices
\begin{equation}
\label{eq:hat_sigma_i}
\hat \sigma_i = \{ (x,\Psi_\sigma(x,i)) \colon x\in \sigma\},
\end{equation}
where $i\in \{1,\ldots, \ell_\sigma\}$, constitute the \emph{sheets of $\sigma$} (as in \cite{R}), and the union of sheets 
\begin{equation}
\label{eq:hat_sigma}
\hat \sigma = \bigcup_i \hat\sigma_i
\end{equation}
forms a \emph{pillow cover on $\sigma$}. Note that a pillow cover of $\sigma$ consists of either $2$ or $4$ singular $(n-1)$-simplices depending on the parity $\nu(\sigma)$ of $\sigma$.

\begin{remark}
Observe that $\{\hat \sigma_1,\ldots \hat\sigma_{\ell_\sigma}\}$ is a (singular) triangulation of $\hat \sigma$ by singular $(n-1)$-simplices. This triangulation, however, does not induce a simplicial complex, since pair-wise intersections of these simplices are generally not unions of sides. For example, $\hat \sigma_1 \cap \hat \sigma_{\ell_\sigma}$ is not a union of faces of $\hat \sigma_1$.   
\end{remark}

\subsubsection{Pillows}

We consider next, in more detail, the complementary domains of $\hat \sigma$ in $\sigma\times \R$.
Let
\[
P_\sigma = \{ (x,t)\in \sigma \times \R \colon \Psi_\sigma(x,1)\le t \le \Psi_\sigma(x,\ell_\sigma)\}.
\]
We call $P_\sigma$ a ``pillow''. Let also
\[
U_\sigma = \{ (x,t)\in \sigma \times \R \colon t \ge \Psi_\sigma(x,\ell_\sigma)\} \\
\]
and
\[
L_\sigma = \{ (x,t)\in \sigma \times \R \colon t \le \Psi_\sigma(x,1))\}.
\]

Independent of the parity of $\sigma$,  $U_\sigma$ and $L_\sigma$ are bilipschitz equivalent to $\sigma \times [0,\infty)$ and $\sigma \times (-\infty,0]$, respectively. For example, for $U_\sigma$, there is the bilipschitz map 
\[
(x,t) \mapsto \left\{ \begin{array}{ll}
(x,2(t-\Psi_\sigma(x,\ell_\sigma)), & \Psi_\sigma(x,\ell_\sigma) \le t \le 2\Psi_\sigma(x,\ell_\sigma) \\
(x,t), & t \ge 2\Psi_\sigma(x,\ell_\sigma)
\end{array}\right.
\]
and similarly for $L_\sigma$ the map
\[
(x,t)\mapsto \left\{ \begin{array}{ll}
(x,2(t-\Psi_\sigma(x,1)), & \Psi_\sigma(x,1) \ge t \ge 2\Psi_\sigma(x,1) \\
(x,t), & t \le 2\Psi_\sigma(x,1)
\end{array}\right..
\]

Since $|\Psi_\sigma|\leq 1$, these homeomorphism are the identity outside $\sigma \times [-2,2]$, and the bilipschitz constant of these homeomorphisms depends only on $n$ and the Lipschitz constant of $\Psi_\sigma$. Similarly, $P_\sigma$ is bilipschitz to an $n$-cell independent of the parity of $\sigma$.

\subsubsection*{Pillows on a negative simplex} 

When $\nu(\sigma)=-1$, we observe that $\partial P_\sigma$ is an essentially disjoint union of $\hat \sigma = \hat \sigma_1\cup \hat \sigma_2$ together with a union of $(n-1)$-cells in $\partial \sigma \times \R$. 

\begin{figure}[h!]
\includegraphics[scale=0.40]{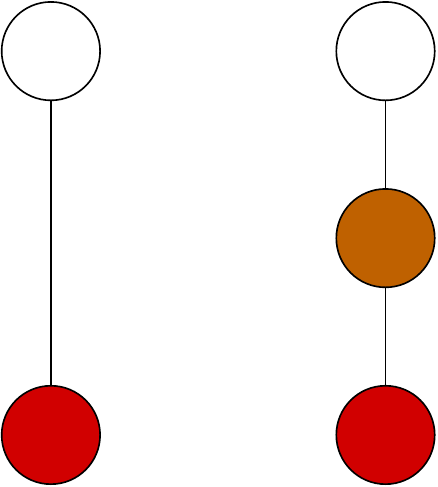}
\caption{Adjacency graphs $\Gamma((\sigma\times \R) \setminus \sigma)$ and $\Gamma((\sigma\times \R)\setminus \hat \sigma)$ for $\sigma$ with negative parity.} 
\label{fig:shuffle_2}
\end{figure}

\subsubsection*{Pillows on a positive simplex}

When $\nu(\sigma)=1$, the complementary domains have more complicated structure. Now $P_\sigma \setminus \hat \sigma$ has three components with closures $P_\sigma^U$, $P_\sigma^M$, and $P_\sigma^L$, respectively,
\begin{eqnarray*}
P_\sigma^U &=& \{ (x,t) \colon \Psi_\sigma(x,1) \le t \le \Psi_\sigma(x,2) \}, \\
P_\sigma^M &=& \{ (x,t) \colon \Psi_\sigma(x,2) \le t \le \Psi_\sigma(x,3) \},\ \mathrm{and} \\
P_\sigma^L &=& \{ (x,t) \colon \Psi_\sigma(x,3) \le t \le \Psi_\sigma(x,4) \}.
\end{eqnarray*}
Although the letters 'U', 'M', and 'L' refer to 'upper', 'middle', and 'lower', the domains are not named by their position along the $t$-axis; these names anticipate Lemma \ref{lemma:ULM} below. We have
\[
\hat \sigma \cap \partial P_\sigma^U = \hat\sigma_1\cup \hat\sigma_2, \
\hat \sigma \cap \partial P_\sigma^M = \hat\sigma_2\cup \hat\sigma_3, \ \mathrm{and}\
\hat \sigma \cap \partial P_\sigma^L = \hat\sigma_3\cup \hat\sigma_4;
\]
see Figure \ref{fig:shuffle_4}.

\begin{figure}[h!]
\includegraphics[scale=0.40]{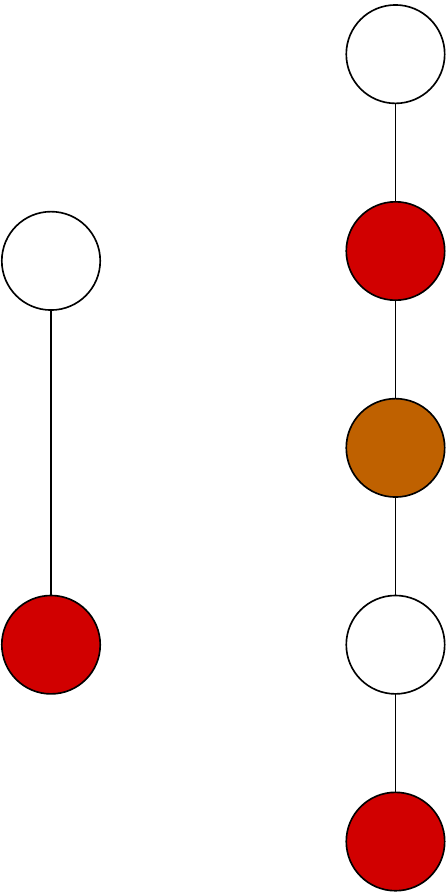}
\caption{Adjacency graphs $\Gamma((\sigma\times \R) \setminus \sigma)$ and $\Gamma((\sigma\times \R)\setminus \hat \sigma)$ for $\sigma$ with positive parity. The merge of domains $P_\sigma^U$ and $U_\sigma$ (as well as $P_\sigma^L$ and $L_\sigma$) in Lemma \ref{lemma:ULM} is anticipated by the choice of colors.}
\label{fig:shuffle_4}
\end{figure}

\subsection{Pillow covers of adjacent simplices}
\label{sec:pcas}

Recall that $E\subset \partial_\cup \bfOmega$ is a planar $(n-1)$-cell, and, to simplify the notation, we have assumed $E\subset \R^{n-1}\times \{0\}\subset \R^n$.
  
Let $\sigma$ be an $(n-1)$-simplex in $E$ as before and suppose that $\sigma'$ is another $(n-1)$-simplex in $E$ sharing an $(n-2)$-simplex with $\sigma$. By changing the r\^oles of $\sigma$ and $\hat \sigma$ if necessary, we may assume $\nu(\sigma')=-\nu(\sigma)=-1$. 

\begin{definition}
\label{def:compatible_pillows}
Pillow covers $\hat \sigma$ and $\hat \sigma'$ of $\sigma$ and $\sigma'$, respectively, are \emph{compatible} if $\Psi_\sigma(\cdot,2)=\Psi_{\sigma'}(\cdot,1)$ and $\Psi_{\sigma}(\cdot,3)=\Psi_{\sigma'}(\cdot,2)$ on $\tau$, where $\tau$ is the common face of $\sigma$ and $\sigma'$.
\end{definition}

From now on we assume that $\hat \sigma$ and $\hat \sigma'$ are compatible pillow covers. The following lemma recapitulates Rickman's idea on using two types of pillow covers to permute the local r\^oles of the three domains. 
\begin{lemma}
\label{lemma:ULM}
Let $\hat \sigma$ and $\hat \sigma'$ be compatible pillow covers of $\sigma$ and $\sigma'$, respectively. Then  
\[
\left( (\sigma\cup \sigma')\times \R\right) \setminus (\hat \sigma\cup \hat\sigma')
\]
has three components $\Omega^U$, $\Omega^M$, and $\Omega^L$ satisfying
\[
\overline{\Omega^U} = U_\sigma \cup P_\sigma^U \cup U_{\sigma'},\ \overline{\Omega^M} = P_\sigma^M \cup P_{\sigma'},\ \mathrm{and}\ \overline{\Omega^L} = L_\sigma \cup P_\sigma^L \cup L_{\sigma'}.
\]
\end{lemma}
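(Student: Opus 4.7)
\textbf{Proof plan for Lemma \ref{lemma:ULM}.}

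The strategy is a direct topological analysis. Away from $\tau\times\mathbb R$, the complements split into the known pieces: on $\interior\sigma\times\mathbb R\setminus\hat\sigma$ the five stacked slabs $L_\sigma,P^U_\sigma,P^M_\sigma,P^L_\sigma,U_\sigma$, and on $\interior\sigma'\times\mathbb R\setminus\hat\sigma'$ the three slabs $L_{\sigma'},P_{\sigma'},U_{\sigma'}$. Each of these eight sets is the complement of a union of graphs of continuous functions over a convex domain, hence connected. The whole proof reduces to tracking which of these slabs are identified via the open set $\interior\tau\times\mathbb R\setminus(\hat\sigma\cup\hat\sigma')$.

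First, I would record the structure of the sheets on $\tau\times\mathbb R$ using the compatibility hypothesis. The identifications $\Psi_\sigma(\cdot,2)=\Psi_{\sigma'}(\cdot,1)$ and $\Psi_\sigma(\cdot,3)=\Psi_{\sigma'}(\cdot,2)$ say that two sheets of $\sigma$ are continued by the two sheets of $\sigma'$, while $\hat\sigma_1$ and $\hat\sigma_4$ have no continuation into $\sigma'$. Since all opening and shuffle data vanish on $\partial\tau$, every sheet collapses to $t=0$ there; in particular $\hat\sigma_1$ and $\hat\sigma_4$ form ``bubbles'' that close onto $\hat\sigma_2$ and $\hat\sigma_3$ respectively around $\partial\tau$. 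On $\interior\tau$ the geometry is governed by the shuffle $v_\sigma$ (if $\tau$ is the exit face of $\sigma$) or the opening $u_\sigma$ (if $\tau$ is an entry face): in the $\{v_\sigma>0\}$-zone ($\tau_0\cup\tau_i$) sheets $2,3,4$ of $\sigma$ coincide, while in the $\{v_\sigma<0\}$-zone ($\tau_j$) sheets $1,2,3$ of $\sigma$ coincide, and in both zones the two sheets of $\sigma'$ coincide as well.

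Second, I would treat the middle component. Restricted to the slab $\{\Psi_\sigma(\cdot,2)<t<\Psi_\sigma(\cdot,3)\}$ over $\tau$, the compatibility condition says that $P^M_\sigma$ on the $\sigma$-side and $P_{\sigma'}$ on the $\sigma'$-side meet in a relatively open $(n-1)$-face of $\tau\times\mathbb R$; since neither of $\hat\sigma_1,\hat\sigma_4$ enters this slab, no obstruction arises, and $P^M_\sigma\cup P_{\sigma'}$ is a single connected open set, yielding $\overline{\Omega^M}$.

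Third, I would handle $\overline{\Omega^U}$ and $\overline{\Omega^L}$ together by symmetry. The slab $\{t>\Psi_\sigma(\cdot,4)\}$ over $\tau$ connects $U_\sigma$ with $U_{\sigma'}$, and analogously $L_\sigma$ is joined to $L_{\sigma'}$ at the bottom. The decisive step is to show that $P^U_\sigma$ joins $U_\sigma\cup U_{\sigma'}$ and $P^L_\sigma$ joins $L_\sigma\cup L_{\sigma'}$. To this end I would trace a path from the interior of $P^U_\sigma$ to $U_\sigma$: moving $x$ from $\interior\sigma$ toward the appropriate shuffle-zone on $\tau$, the sheets $\hat\sigma_2,\hat\sigma_3,\hat\sigma_4$ come together, so the pillow structure degenerates and $P^U_\sigma$ joins the appropriate outer region of $\sigma'$; continuing the path through $\sigma'$ back across $\tau$ in the complementary shuffle-zone, where the r\^oles of the two coinciding triples are swapped, lands inside $U_\sigma$. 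Finally I would check that the three sets thus produced are open and pairwise essentially disjoint, and that together with $\hat\sigma\cup\hat\sigma'$ they exhaust $(\sigma\cup\sigma')\times\mathbb R$, so that they are exactly the connected components of the complement.

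The main obstacle is precisely this last merging argument: verifying rigorously that the shuffle zones $\{v_\sigma>0\}$ and $\{v_\sigma<0\}$, together with the pinching of the extra sheets $\hat\sigma_1,\hat\sigma_4$ at $\partial\tau$, open a topological path connecting $P^U_\sigma$ with $U_\sigma,U_{\sigma'}$ rather than with $L_\sigma,L_{\sigma'}$, and similarly for $P^L_\sigma$. This is a careful zone-by-zone verification, but once carried out, the three components are forced, since after these identifications there are exactly $8-5=3$ equivalence classes of slabs.
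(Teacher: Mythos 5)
Your strategy is the same as the paper's: everything reduces to deciding which of the eight connected slabs $L_\sigma,P^U_\sigma,P^M_\sigma,P^L_\sigma,U_\sigma$ and $L_{\sigma'},P_{\sigma'},U_{\sigma'}$ share an $(n-1)$-dimensional window over $\interior\tau\times\R$, and then counting. Your treatment of $\Omega^M$ and of the identifications $U_\sigma\sim U_{\sigma'}$, $L_\sigma\sim L_{\sigma'}$ is fine. The problem is that the step you yourself call ``the main obstacle'' -- whether the outer slabs of $P_\sigma$ merge upward or downward -- is the entire content of the lemma, and you leave it unproved; moreover the two-crossing path you sketch, traced through the very facts you quote, gives the opposite matching. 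Over the zone $\{v_\sigma>0\}$ one has $\Psi_\sigma(\cdot,1)=0$ and $\Psi_\sigma(\cdot,2)=\Psi_\sigma(\cdot,3)=\Psi_\sigma(\cdot,4)=v_\sigma$, and by compatibility $\Psi_{\sigma'}(\cdot,1)=\Psi_{\sigma'}(\cdot,2)=v_\sigma$ there; so the window $\{(x,t): 0<t<v_\sigma(x)\}$ is bounded on the $\sigma$-side by $\hat\sigma_1,\hat\sigma_2$ (hence adjacent to $P^U_\sigma$ as defined) but lies \emph{below both sheets of $\sigma'$}, hence is adjacent to $L_{\sigma'}$, not $U_{\sigma'}$; continuing through $\sigma'$ and re-crossing over $\{v_\sigma<0\}$ (where $\Psi_\sigma(\cdot,1)=\Psi_\sigma(\cdot,2)=\Psi_\sigma(\cdot,3)=v_\sigma<0=\Psi_\sigma(\cdot,4)$) you re-enter $\sigma\times\R$ below $\hat\sigma_1$, i.e.\ in $L_\sigma$, not $U_\sigma$. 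In fact no two-crossing argument is needed: over $\{v_\sigma<0\}$ the slab between $\hat\sigma_3$ and $\hat\sigma_4$ shares the $(n-1)$-cell $\{(x,t)\in\tau_j\times\R \colon v_\sigma(x)\le t\le 0\}$ with $\overline{U_{\sigma'}}$, and symmetrically the slab between $\hat\sigma_1$ and $\hat\sigma_2$ shares an $(n-1)$-cell with $\overline{L_{\sigma'}}$ over $\{v_\sigma>0\}$; together with the three identifications you already have, eight slabs minus five gluings give exactly three components. This single-window observation is literally the paper's proof, so what is missing from your write-up is precisely the short boundary-value computation above -- carry it out rather than defer it.

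Two mitigating remarks. First, the letters are partly the paper's fault: with the displayed definitions ($P^U_\sigma$ between $\hat\sigma_1,\hat\sigma_2$, $P^L_\sigma$ between $\hat\sigma_3,\hat\sigma_4$) and the stated compatibility, the computation forces $\overline{\Omega^U}=U_\sigma\cup P^L_\sigma\cup U_{\sigma'}$ and $\overline{\Omega^L}=L_\sigma\cup P^U_\sigma\cup L_{\sigma'}$, so the statement of Lemma \ref{lemma:ULM} (and the displays preceding Lemma \ref{lemma:bULM}) use the opposite convention from the definitions; the substance of the lemma is unaffected, but you should fix one labeling and keep it consistent. Second, your parenthetical case ``$\tau$ an entry face of $\sigma$'' cannot occur here: if only an opening sits on $\tau$, then $P^U_\sigma$ and $P^L_\sigma$ are pinched along all of $\partial\sigma$ and form components of their own, so the complement has five components, not three; the lemma tacitly assumes the shuffle (and an opening zone) lies on the common face $\tau$, and your proof should say so.
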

\begin{proof}
It suffices to observe that the closures of $P_\sigma^U$ and $U_{\sigma'}$ meet in the $(n-1)$-cell
\[
\{(x,t)\colon \tau \times \R \colon \Phi_\sigma(x,3) \le t \le \Phi_\sigma(x,4)\}.
\]
Similarly, $P_\sigma^L\cap L_{\sigma'}$ is an $(n-1)$-cell.  
\end{proof}

Using the notation of Lemma \ref{lemma:ULM}, we make now few observations on the natural triangulation of $\hat\sigma \cup \hat\sigma'$ into sheets and domains $\Omega^U$, $\Omega^M$, $\Omega^L$.

For $\hat \sigma'$, the pair-wise intersections of domains $\Omega^L$, $\Omega^M$, $\Omega^U$ with $\hat \sigma'\times \R$ are (up to a closure) $L_{\sigma'}$, $P_{\sigma'}$, and $U_{\sigma'}$. Thus 
\[
\partial \Omega^L \cap \hat \sigma' = \hat \sigma'_1,\ 
\partial \Omega^M \cap \hat \sigma' = \hat \sigma'_1 \cup \hat\sigma'_2,\ \mathrm{and}\ 
\partial \Omega^U \cap \hat \sigma' = \hat \sigma'_2.
\]
 
The situation is slightly more complicated with $\hat \sigma$. Note first that $\Omega^M \cap (\sigma\times \R)$ is $P^M_\sigma$ up to closure. Thus
\[
\partial \Omega^M \cap \hat \sigma = \hat \sigma_2 \cup \hat \sigma_3,
\]
and we have 
\[
\hat \sigma_2 = \Omega^U\cap \Omega^M \cap (\sigma \times \R)\ \mathrm{and}\ 
\hat \sigma_3 = \Omega^L\cap \Omega^M \cap (\sigma \times \R).
\]

Moreover,
\[
\overline{\Omega^L\cap (\sigma\times \R)} = \overline{L_\sigma \cup P^L_\sigma}\ \mathrm{and}\ \overline{\Omega^U \cap (\sigma \times \R)} = \overline{U_\sigma \cup P^U_\sigma},
\]
\[
\partial \Omega^L \cap \hat \sigma = \partial L_\sigma \cup \partial P^L_\sigma= \hat \sigma_1 \cup \hat \sigma_3 \cup \hat \sigma_4,
\]
and
\[
\partial \Omega^U \cap \hat \sigma = \partial U_\sigma \cup \partial P^U_\sigma = \hat \sigma_4 \cup \hat \sigma_1 \cup \hat \sigma_2.
\]

This 'shuffle' will allow our domains $\{\Omega_\ell\}$ to connect near $\partial_\cup\bfOmega$. The proof of following lemma is left to the interested reader; the situation is captured by the suggestive figure in \cite[Fig 7.2]{R} and Figure \ref{fig:shuffle}.
\begin{lemma}
\label{lemma:bULM}
With the notation above, we have
\begin{eqnarray*}
\hat \sigma_1 \cup \hat \sigma_4 &=& \Omega^U\cap \Omega^L \cap (\sigma\times \R),\\ 
\hat \sigma_2 &=& \Omega^L\cap \Omega^M \cap (\sigma \times \R),\ \mathrm{and} \\  
\hat \sigma_3 &=& \Omega^M\cap \Omega^U \cap (\sigma \times \R). 
\end{eqnarray*}
Furthermore,
\begin{eqnarray*}
\hat \sigma'_1 &=& \Omega^L \cap \Omega^M \cap (\sigma' \times \R),\ \mathrm{and}\ 
\hat \sigma'_2 = \Omega^M \cap \Omega^U \cap (\sigma' \times \R).
\end{eqnarray*}
\end{lemma}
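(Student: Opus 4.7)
The plan is to deduce this lemma as a direct book-keeping exercise from Lemma \ref{lemma:ULM}, once one records which of the five ``slices'' of $\sigma \times \R$ and three ``slices'' of $\sigma'\times \R$ belong to which of $\Omega^U$, $\Omega^M$, $\Omega^L$. The sheets $\hat\sigma_1,\ldots,\hat\sigma_4$ divide $\sigma \times \R$ into the stacked regions $L_\sigma$, $P_\sigma^U$, $P_\sigma^M$, $P_\sigma^L$, $U_\sigma$ (in order of increasing $t$, since $\Psi_\sigma(x,1)<\Psi_\sigma(x,2)<\Psi_\sigma(x,3)<\Psi_\sigma(x,4)$ on $\mathrm{int}\,\sigma$), and similarly $\hat\sigma'_1,\hat\sigma'_2$ divide $\sigma'\times \R$ into $L_{\sigma'}$, $P_{\sigma'}$, $U_{\sigma'}$. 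Since any two distinct open slices belong to different components of $((\sigma\cup\sigma')\times\R)\setminus(\hat\sigma\cup\hat\sigma')$, each sheet $\hat\sigma_i$ (respectively $\hat\sigma'_i$) lies in the common boundary of the two $\Omega^{\bullet}$'s that contain its lower and upper neighboring slices.

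Applying Lemma \ref{lemma:ULM}, $L_\sigma,P_\sigma^L\subset\overline{\Omega^L}$, $P_\sigma^M\subset \overline{\Omega^M}$, and $U_\sigma,P_\sigma^U\subset\overline{\Omega^U}$; on the $\sigma'$-side, $L_{\sigma'}\subset\overline{\Omega^L}$, $P_{\sigma'}\subset\overline{\Omega^M}$, $U_{\sigma'}\subset\overline{\Omega^U}$. The sheet-by-sheet identification on $\sigma'\times\R$ then gives $\hat\sigma'_1 = \Omega^L\cap\Omega^M\cap(\sigma'\times\R)$ and $\hat\sigma'_2 = \Omega^M\cap\Omega^U\cap(\sigma'\times\R)$ immediately, because $\hat\sigma'_1$ separates $L_{\sigma'}$ from $P_{\sigma'}$ and $\hat\sigma'_2$ separates $P_{\sigma'}$ from $U_{\sigma'}$. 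For the sheets on $\sigma$, the key ``shuffle'' observation is that $\hat\sigma_1$ separates $L_\sigma\subset\overline{\Omega^L}$ from $P_\sigma^U\subset\overline{\Omega^U}$, and $\hat\sigma_4$ separates $P_\sigma^L\subset\overline{\Omega^L}$ from $U_\sigma\subset\overline{\Omega^U}$, so both sheets lie in $\Omega^U\cap\Omega^L$; taking the union yields the first asserted equality. The middle two sheets $\hat\sigma_2,\hat\sigma_3$ each separate a slice of $\overline{\Omega^M}$ from a slice contained in $\overline{\Omega^U}\cup\overline{\Omega^L}$, and a direct lookup of the corresponding slice labels gives the remaining two identities.

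To make the identifications rigorous rather than purely combinatorial, I would, for each sheet, pick a point $(x,\Psi_\sigma(x,i))$ with $x\in\mathrm{int}\,\sigma$ and verify that both the small upward perturbation $(x,\Psi_\sigma(x,i)+\varepsilon)$ and the small downward perturbation $(x,\Psi_\sigma(x,i)-\varepsilon)$ belong to the asserted $\Omega^{\bullet}$'s, using the strict inequalities in the definition of $\Psi_\sigma$; this locates each sheet in the common boundary of the two claimed components. The opposite inclusions — that no other $\Omega^{\bullet}$ meets $\hat\sigma_i$ — follow because the three $\Omega^{\bullet}$'s are pairwise disjoint and the union of their closures exhausts $((\sigma\cup\sigma')\times\R)$, so each sheet point has exactly two $\Omega^{\bullet}$'s accumulating at it.

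The only mildly subtle point, which is where I would concentrate attention, is checking the ``shuffle'' at $\hat\sigma_1$ and $\hat\sigma_4$: these two sheets are not adjacent to $\Omega^M$ at all, which is why they merge into a single piece of $\partial\Omega^U\cap\partial\Omega^L$. This is precisely the feature that lets the pillow cover swap the r\^oles of the lower and upper neighbors of $\sigma$ as one crosses the pillow, and it is the mechanism underlying the eventual BLD Alexander map onto $\widehat{\mathbb S}^{n-1}$. Everything else is a direct reading of Lemma \ref{lemma:ULM} combined with the strict stacking $\Psi_\sigma(x,1)<\cdots<\Psi_\sigma(x,4)$ on $\mathrm{int}\,\sigma$.
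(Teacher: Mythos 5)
Your overall strategy is the intended one (the paper in fact leaves this verification to the reader, pointing to Figure \ref{fig:shuffle}): each sheet is the interface of two consecutive slices of $\sigma\times\R$ (resp.\ $\sigma'\times\R$), Lemma \ref{lemma:ULM} tells you which of $\Omega^U,\Omega^M,\Omega^L$ each slice belongs to, and the pairwise intersections are then read off slice by slice. The $\sigma'$-identities and the identity for $\hat\sigma_1\cup\hat\sigma_4$ indeed come out this way: $\hat\sigma_1$ separates $L_\sigma\subset\overline{\Omega^L}$ from $P^U_\sigma\subset\overline{\Omega^U}$, and $\hat\sigma_4$ separates $P^L_\sigma\subset\overline{\Omega^L}$ from $U_\sigma\subset\overline{\Omega^U}$.

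However, your claim that ``a direct lookup of the corresponding slice labels gives the remaining two identities'' is where the argument does not close. Carry the lookup out: $\hat\sigma_2$ is the interface of $P^U_\sigma$ and $P^M_\sigma$, and $\hat\sigma_3$ is the interface of $P^M_\sigma$ and $P^L_\sigma$; with Lemma \ref{lemma:ULM} ($P^U_\sigma\subset\overline{\Omega^U}$, $P^M_\sigma\subset\overline{\Omega^M}$, $P^L_\sigma\subset\overline{\Omega^L}$) this yields $\hat\sigma_2=\Omega^U\cap\Omega^M\cap(\sigma\times\R)$ and $\hat\sigma_3=\Omega^L\cap\Omega^M\cap(\sigma\times\R)$ --- i.e.\ exactly the displayed formulas in the paragraph \emph{preceding} the lemma, and the \emph{opposite} of the lemma as printed (the paper is internally inconsistent here, so the two cannot both be correct; one of them carries a $U\leftrightarrow L$ typo). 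As written, your proof asserts that the bookkeeping produces the stated identities when in fact it produces them with $\Omega^U$ and $\Omega^L$ interchanged for $\hat\sigma_2,\hat\sigma_3$; you must either flag and resolve this discrepancy (deciding which convention for $P^U_\sigma,P^L_\sigma$ and for the compatibility on $\tau$ is actually in force, since that is what determines which outer slab of the pillow merges with $U_{\sigma'}$ and which with $L_{\sigma'}$) or the verification of the printed statement fails. A second, minor point: your closing remark that ``each sheet point has exactly two $\Omega^{\bullet}$'s accumulating at it'' is false on the degenerate set over $\partial\sigma\cup\partial\sigma'$ where several sheets coincide (in particular along the pinch locus $\{\Psi_\sigma(\cdot,1)=\Psi_\sigma(\cdot,4)\}$), so the asserted equalities should be established for interior sheet points and then passed to closures, rather than claimed pointwise everywhere.
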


\begin{figure}[h!]
\includegraphics[scale=0.40]{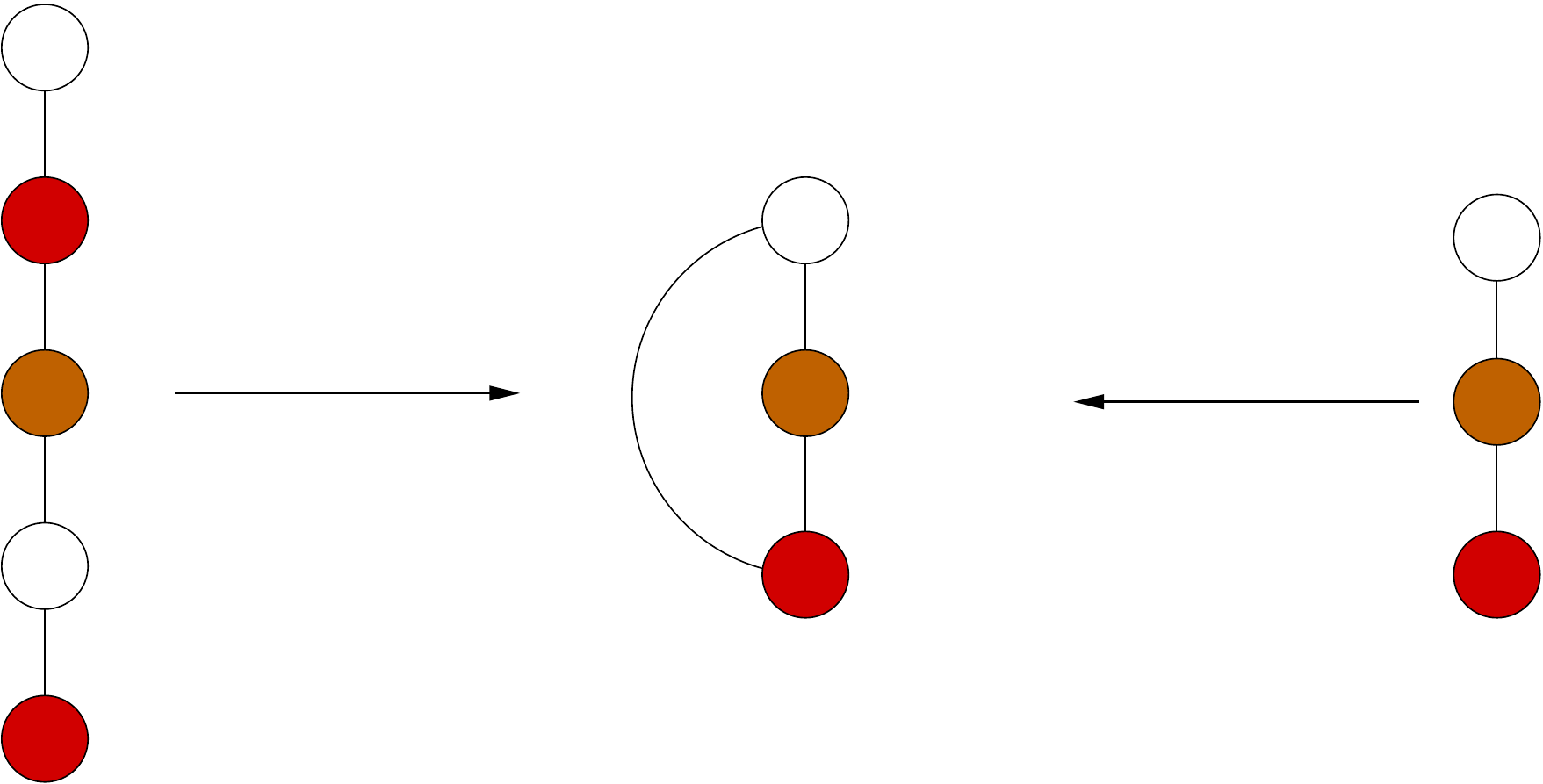}
\caption{Adjacency graphs $\Gamma((\sigma\times \R)\setminus \hat \sigma)$, $\Gamma(((\sigma\cup \sigma')\times \R)\setminus (\hat \sigma \cup \hat \sigma'))$ and $\Gamma((\sigma'\times \R)\setminus \hat \sigma')$ with maps of graphs induced by inclusions.} 
\label{fig:shuffle}
\end{figure}

Our discussion shows that the domains $\Omega^U$, $\Omega^M$, and $\Omega^L$ are bilipschitz equivalent to either $(\sigma\cup \sigma')\times (0,\infty)$, $(\sigma\cup \sigma')\times (-\infty,0)$, or to $\bB^n$. We formalize this observation as follows.
\begin{lemma}
\label{lemma:biliph}
Let $\hat \sigma$ and $\hat \sigma'$ be compatible Lipschitz pillow covers on $\sigma$ and $\sigma'$, respectively. Then
\begin{itemize}
\item[(1)] there exist bilipschitz homeomorphisms $h^U_{\sigma,\sigma'}\colon (\sigma\cup \sigma')\times (0,\infty) \to (\Omega^U,d_{\Omega^U})$ and $h^L_{\sigma,\sigma'} \colon (\sigma\cup \sigma')\times (-\infty,0) \to (\Omega^L,d_{\Omega^L})$ whose supports are contained in $\sigma\cup \sigma' \times [-1/2,1/2]$ so that $h^U_{\sigma,\sigma'}$ and $h^L_{\sigma,\sigma'}$ extend to BLD-maps $(\sigma\cup \sigma')\times [0,\infty) \to \overline{\Omega^U}$ and $(\sigma\cup \sigma')\times (-\infty,0] \to \overline{\Omega^L}$, respectively, and
\item[(2)] the closure of $\Omega^M$ is a bilipschitz $n$-cell.
\end{itemize}
The bilipschitz (and BLD) constants are quantitative in the sense that they depend only on $n$, the Lipschitz constants of $\Psi_\sigma$ and $\Psi_{\sigma'}$ and the minimal bilipschitz constants of homeomorphisms $\sigma \to \bB^{n-1}$ and $\sigma'\to \bB^{n-1}$.
\end{lemma}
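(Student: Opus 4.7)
The plan is to construct each of the three homeomorphisms piecewise, using the explicit descriptions of $\Omega^U$, $\Omega^M$, $\Omega^L$ from Lemma \ref{lemma:ULM} together with the observation already made in the excerpt that each of the primitive pieces $U_\sigma$, $L_\sigma$, $U_{\sigma'}$, $L_{\sigma'}$, $P_\sigma^U$, $P_\sigma^M$, $P_\sigma^L$, $P_{\sigma'}$ is quantitatively bilipschitz to a standard model ($\sigma\times[0,\infty)$, $\sigma\times(-\infty,0]$, or an $n$-cell), with bilipschitz constants depending only on $n$ and the Lipschitz constants of $\Psi_\sigma,\Psi_{\sigma'}$. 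Using the bilipschitz homeomorphisms $\sigma\to\bB^{n-1}$ and $\sigma'\to\bB^{n-1}$ only to transport these model pieces into rigid shapes, it will suffice to produce the desired global maps on the corresponding model of $(\sigma\cup\sigma')\times\R$.

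For part (1), I focus on $\Omega^U$; $\Omega^L$ is symmetric. The idea is to build an ambient PL bilipschitz perturbation $H$ of $(\sigma\cup\sigma')\times\R$, supported in $(\sigma\cup\sigma')\times[-1/2,1/2]$, that carries the flat slab $(\sigma\cup\sigma')\times\{0\}$ onto a PL hypersurface $\Sigma^U\subset\overline{\Omega^U}$ meeting $\partial\Omega^U$ along a PL $(n-2)$-sphere, in such a way that $H\big((\sigma\cup\sigma')\times(0,\infty)\big)=\Omega^U$. On the $\sigma'$ side I would lift vertically by $\Psi_{\sigma'}(\cdot,2)$; on the $\sigma$ side, away from $\tau$, I would lift by $\Psi_\sigma(\cdot,4)$; and in a collar of $\tau$ the map would be designed to reroute a neighborhood of $\tau\times\{0\}$ through the shuffle strip $P^U_\sigma$, using that $P^U_\sigma$ is bilipschitz to an $(n-1)$-cell $\times[0,1]$ and joins $U_{\sigma'}$ along an $(n-1)$-cell over $\tau$ (compatibility gives $\Psi_\sigma(\cdot,2)=\Psi_{\sigma'}(\cdot,1)$ there). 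Only finitely many PL cells, with combinatorics depending only on $n$, enter the construction, so the quantitative Lipschitz bounds follow cell by cell from the Lipschitz constants of $\Psi_\sigma,\Psi_{\sigma'}$. Because $H$ is PL and the target boundary decomposes precisely as $\hat\sigma_1\cup\hat\sigma_2\cup\hat\sigma_4\cup\hat\sigma'_2$ as prescribed by Lemma \ref{lemma:bULM}, $H$ extends to a BLD map $(\sigma\cup\sigma')\times[0,\infty)\to\overline{\Omega^U}$.

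For part (2), the closure of $\Omega^M=P_\sigma^M\cup P_{\sigma'}$ is obtained by gluing two quantitatively bilipschitz $n$-cells along the $(n-1)$-cell $\{(x,\Psi_\sigma(x,2))=(x,\Psi_{\sigma'}(x,1)):x\in\tau\}$, whose Lipschitz structure is controlled by the hypotheses. The plan is to apply a standard bilipschitz gluing lemma (which is itself a consequence of Proposition \ref{prop:fRt_flat} after identifying each piece with a cube) to produce a bilipschitz homeomorphism from $\overline{\Omega^M}$ onto a single $n$-cell with constants depending only on the stated data.

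The main obstacle will be the combinatorial bookkeeping near $\tau$, where the sheet labeling shuffles between the two parities: one has to arrange the piecewise definition of $h^U_{\sigma,\sigma'}$ so that the $\sigma$-side and the $\sigma'$-side formulas meet continuously along $\tau\times\R$ and so that the correct sheets end up on the correct side of $\Omega^U$, as dictated by Lemma \ref{lemma:bULM}. Once this PL surgery is carried out and estimated on the finitely many cells involved, the quantitative bilipschitz and BLD constants follow routinely.
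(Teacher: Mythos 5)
Your outline is correct and follows essentially the same route the paper takes: the paper states this lemma without a separate proof, as a formalization of the preceding discussion, namely the explicit vertical bilipschitz maps identifying $U_\sigma, L_\sigma, U_{\sigma'}, L_{\sigma'}$ with half-infinite cylinders and the pillows with $n$-cells, glued according to the decomposition in Lemma \ref{lemma:ULM}, with uniform constants coming from the finitely many PL congruence types of the (uniformly Lipschitz, PL) functions $\Psi_\sigma,\Psi_{\sigma'}$. Your added care with the rerouting of a collar of $\tau$ through the shuffle strip and the gluing argument for $\overline{\Omega^M}$ merely fills in details the paper leaves to the reader (note only that the two-cell gluing fact is a standard PL/finiteness argument rather than an instance of Proposition \ref{prop:fRt_flat}).
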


\subsection{Maps on pairs of sheets}
\label{sec:LRM1}

The pillow construction on $\sigma\cup \sigma'$ gives rise to maps $\hat \sigma\cup \hat\sigma' \to \hat\bS^{n-1}$, where $\hat\bS^{n-1} = \bS^{n-1} \cup \bB^{n-1} \subset \R^n$. We discuss these local maps now in more detail.

We write $\bS^{n-1} = \bS^{n-1}_+ \cup \bS^{n-1}_-$, where $\bS^{n-1}_+$ and $\bS^{n-1}_-$ are the upper and lower hemispheres of $\bS^{n-1}$, i.e.\;$\bS^{n-1}_+ \cap \bS^{n-1}_- = \partial \bB^{n-1}$. Then $\R^n \setminus \hat\bS^{n-1}$ has three components denoted $D^U$, $D^L$, and $D^M$ so that $\partial D^U = \bS^{n-1}_+ \cup \bB^{n-1}$, $\partial D^L = \bS^{n-1}_- \cup \bB^{n-1}$, and $\partial D^M = \bS^{n-1}$. We fix $n$ points $\{y_0,\ldots, y_{n-1}\}$ on $\partial \bB^{n-1}$ and view $\hat \bS^{n-1}$ as a CW-complex having three $(n-1)$-cells $\bS^{n-1}_+$, $\bS^{n-1}_-$, and $\bB^{n-1}$ and  vertices $\{y_0,\ldots, y_{n-1}\}$.

Let $\sigma$ and $\sigma'$ be adjacent $(n-1)$-simplices in $E$ and let $\hat \sigma$ and $\hat \sigma'$ be compatible Lipschitz pillows on $\sigma$ and $\sigma'$, respectively. By changing the r\^oles of $\sigma$ and $\sigma'$ if necessary, we may assume $\nu(\sigma)=-\nu(\sigma')=1$. Let $\vartheta \colon (\sigma^{(0)} \cup \sigma'^{(0)}) \to \{0,\ldots,n-1\}$ be the labeling function of $\bfOmega$ restricted to $\sigma\cup \sigma'$.

Although the singular simplices $\Delta = \{\hat\sigma_1,\ldots, \hat\sigma_4, \hat \sigma'_1,\hat\sigma'_2\}$ again do not define a simplicial complex, there exists a continuous map $f \colon \hat \sigma \cup \hat \sigma' \to \hat \bS^n$ satisfying 
\begin{itemize}
\item[(S1)] $f$ maps each singular simplex in $\Delta$ to one of the simplices $\bS^{n-1}_+$, $\bS^{n-1}_-$, or $\bB^{n-1}$ in a bilipschitz manner, 
\item[(S2)] $f(v) = y_{\vartheta(v)}$ for all $v\in \sigma^{(0)} \cup (\sigma')^{(0)}$, and
\item[(S3)] if $\{X,Y\}\subset \{U,L,M\}$ is a pair then $f(\Omega^X \cap \Omega^Y) = D^X\cap D^Y$. 
\end{itemize}

Since $f$ is bilipschitz on singular simplices, it is discrete and  
\[
\frac{1}{\cL} \ell(\gamma) \le \ell(f\circ \gamma) \le \cL \ell(\gamma)
\]
for all paths $\gamma$ in $\sigma\cup \sigma'$, where $\cL$ is the maximum of the bilipschitz constants of $f$ restricted to simplices in $\Delta$. Furthermore, in the sense of the following lemma, $f$ is a branched cover in the interior of $\hat \sigma\cup \hat \sigma'$.

\begin{lemma}
\label{lemma:LRM1}
Let $O = (\hat \sigma\cup \hat \sigma') \cap \interior (\sigma\cup \sigma')\times \R$. Then $f|O \colon O \to \hat \bS^n$ is a branched cover and the branch set of $f|O$ is the set
\[
O \cap \{y\in \sigma\cap \sigma' \colon \Psi_\sigma(y,1)=\Psi_\sigma(y,4)\}\subset \R^n.
\]
In particular, $f|O$ is an open map.
\end{lemma}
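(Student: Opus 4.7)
The plan is to prove discreteness, openness, and the identification of the branch set in three stages, relying on the bilipschitz control on individual sheets from (S1) and the combinatorial matching supplied by (S3) and the compatibility of the pillow covers.

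For discreteness, I first note that $\hat\sigma\cup\hat\sigma'$ is the union of only six singular $(n-1)$-simplices $\hat\sigma_1,\ldots,\hat\sigma_4,\hat\sigma'_1,\hat\sigma'_2$, each mapped by $f$ bilipschitzly onto one of the cells $\bS^{n-1}_+$, $\bS^{n-1}_-$, or $\bB^{n-1}$. Thus $f$ restricted to any single sheet is injective, and for every $z\in \hat\bS^{n-1}$ the fiber $f^{-1}(z)\cap O$ contains at most one point from each sheet, hence at most six points in total. In particular $f|O$ is discrete.

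For openness I will analyze a point $x=(y,t)\in O$ by cases according to the location of $y$ in $\interior(\sigma\cup\sigma')$. If $y\in\interior\sigma$ (or, symmetrically, $y\in\interior\sigma'$), then the strict inequalities $\Psi_\sigma(y,1)<\Psi_\sigma(y,2)<\Psi_\sigma(y,3)<\Psi_\sigma(y,4)$ force $x$ to lie on exactly one sheet $\hat\sigma_i$, in its relative interior, and $f(x)$ lies in the interior of a single cell of $\hat\bS^{n-1}$ (which is open in $\hat\bS^{n-1}$); the bilipschitz homeomorphism $f|\hat\sigma_i$ then carries neighborhoods of $x$ onto neighborhoods of $f(x)$. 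For $y\in\interior\tau$, compatibility of pillows gives $\Psi_\sigma(\cdot,2)=\Psi_{\sigma'}(\cdot,1)$ and $\Psi_\sigma(\cdot,3)=\Psi_{\sigma'}(\cdot,2)$ on $\tau$, so $\hat\sigma_2\cup\hat\sigma'_1$ and $\hat\sigma_3\cup\hat\sigma'_2$ are continuous singular $(n-1)$-surfaces crossing $\tau$. I will use the descriptions of the regions $\Omega^U$, $\Omega^M$, $\Omega^L$ from Lemmas \ref{lemma:ULM} and \ref{lemma:bULM}, together with the compatibility conditions imposed on $\Psi_\sigma$ and $\Psi_{\sigma'}$, to match the local combinatorial book-of-pages structure of $O$ at $x$ with the local structure of $\hat\bS^{n-1}$ at $f(x)$ under the correspondence $\overline{\Omega^X\cap\Omega^Y}\leftrightarrow\overline{D^X\cap D^Y}$ prescribed by (S3); the upshot is that $f$ carries a neighborhood of $x$ in $O$ onto a neighborhood of $f(x)$ in $\hat\bS^{n-1}$.

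To identify the branch set, I will observe that the condition $\Psi_\sigma(y,1)=\Psi_\sigma(y,4)$ at $y\in\sigma\cap\sigma'=\tau$ forces, by monotonicity and compatibility, all four values $\Psi_\sigma(y,i)$ as well as $\Psi_{\sigma'}(y,1),\Psi_{\sigma'}(y,2)$ to vanish simultaneously; equivalently, the entire pillow collapses at such $y$ (both the opening $u_\sigma$ and the shuffle $v_\sigma$ vanish there), so all six sheets meet at the single point $(y,0)$, while all three regions $\Omega^U,\Omega^M,\Omega^L$ accumulate there. At such $x$ the preimage of any nearby regular value contains points from several sheets, so $f$ is not locally injective. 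At every other $x\in O$, the case analysis from the previous paragraph yields a local homeomorphism, so the branch set is exactly the claimed set; openness at $x$ then follows from the standard fact that a discrete map between $(n-1)$-dimensional spaces that is a local homeomorphism off a set of codimension $\ge 2$ is automatically open at branch points as well.

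The main obstacle will be the book-matching argument over $\tau$: one must check that at a point where only a proper subcollection of the six sheets meets (so $x$ is not a branch point but nonetheless lies on the $(n-2)$-skeleton of $\hat\bS^{n-1}$), the combination of the two continuous surfaces $\hat\sigma_2\cup\hat\sigma'_1$ and $\hat\sigma_3\cup\hat\sigma'_2$ with the $\sigma$-side sheets $\hat\sigma_1,\hat\sigma_4$ (which terminate along $\tau$) correctly fills out the three-page local structure of $\hat\bS^{n-1}$ near $f(x)\in\bS^{n-2}$. This is where the precise form of the functions $\Psi_\sigma$ and $\Psi_{\sigma'}$ and the fold-across-$\tau$ behavior enforced by the choices of $u_\sigma$, $v_\sigma$, and $u_{\sigma'}$ must be used, and where all the bookkeeping in the definitions of openings and shuffles pays off.
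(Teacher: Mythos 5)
Your stratification of $O$ agrees with the paper's, and the easy parts of your argument are sound: discreteness from (S1), the local homeomorphism property at points lying over $\interior \sigma$ or $\interior \sigma'$, and the observation that $\Psi_\sigma(y,1)=\Psi_\sigma(y,4)$ on $\tau=\sigma\cap\sigma'$ forces $u_\sigma(y)=v_\sigma(y)=0$, so that all six singular simplices pass through $(y,0)$. The genuine gap is exactly the step you yourself label ``the main obstacle'': the claim that $f$ is a local homeomorphism at points $x=(y,t)\in O\cap(\tau\times\R)$ with $\Psi_\sigma(y,1)<\Psi_\sigma(y,4)$ is announced (``the upshot is\,\dots'') but never proved, and this is the substance of the lemma -- without it neither openness off the claimed set nor the inclusion of the branch set \emph{in} the claimed set is established. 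The paper settles it by a short case analysis dictated by the boundary conditions on $u_\sigma$ and $v_\sigma$: near such a $y$ one of $\Psi_\sigma(\cdot,1)=\Psi_\sigma(\cdot,2)$, $\Psi_\sigma(\cdot,1)=\Psi_\sigma(\cdot,3)$, or $\Psi_\sigma(\cdot,2)=\Psi_\sigma(\cdot,4)$ holds on a whole neighborhood of $y$ in $\tau$, and in each case compatibility ($\Psi_\sigma(\cdot,2)=\Psi_{\sigma'}(\cdot,1)$, $\Psi_\sigma(\cdot,3)=\Psi_{\sigma'}(\cdot,2)$ on $\tau$) shows that exactly three of the six singular simplices contain $x$, carried onto the three distinct cells $\bS^{n-1}_+$, $\bS^{n-1}_-$, $\bB^{n-1}$, one each; this is the book-matching you postpone, and it is short once the cases are listed.

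The second problem is your treatment of openness at the branch points. The ``standard fact'' you invoke -- discrete plus local homeomorphism off a codimension-two set implies open -- is a statement about maps between manifolds (and even there it is not quotable in this form; it requires a local degree argument excluding folds). It cannot be applied here, because neither $O$ nor $\hat\bS^{n-1}$ is an $(n-1)$-manifold: along $O\cap(\tau\times\R)$, respectively along $\bS^{n-2}$, these spaces are branched polyhedra where several pages meet along a codimension-one spine, so purity and degree arguments are not available as stated. No such detour is needed: your own observation already contains the right local picture. At a point $x=(y,0)$ of the claimed set all six singular simplices pass through $x$, and by Lemma \ref{lemma:bULM} together with (S1) and (S3) they map in pairs onto the three cells ($\hat\sigma_1,\hat\sigma_4$ onto $\bB^{n-1}$, $\hat\sigma_2,\hat\sigma'_1$ onto $\bS^{n-1}_-$, $\hat\sigma_3,\hat\sigma'_2$ onto $\bS^{n-1}_+$), each sheet homeomorphically; hence every neighborhood of $x$ in $O$ is mapped onto a union of neighborhoods of $f(x)$ in the three cells, i.e.\ onto a neighborhood of $f(x)$ in $\hat\bS^{n-1}$, so $f$ is there an open, two-to-one branched covering and in particular not locally injective. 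This explicit local description is how the paper concludes, and it gives openness and the exact identification of the branch set in one stroke.
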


\begin{proof}
Let $\tau$ be the common face of $\sigma$ and $\sigma'$. Let $S = \hat \sigma \cup \hat \sigma'$ and 
\[
G = \bigcup_{i=1}^4 \interior \hat \sigma_i \cup \bigcup_{j=1}^2 \interior \hat \sigma_j'.
\]
Then
\[
S = G \cup (S\cap (\tau\times \R)) \cup \left(S\cap \partial (\sigma\cup \sigma')\times \R\right).
\]
Clearly $G\subset O$ and $f|G\colon G\to \hat \bS^n$ is a local homeomorphism. Suppose now that $x=(y,t)\in O \cap (\tau\times \R)$. Then $f(x)\in \bS^n \cap \bB^n$.

There are four cases to consider. Suppose first that $y$ has a neighborhood $O'$ in $\tau$ so that $\Psi_\sigma(y',1)=\Psi_\sigma(y',2)$ for $y'\in O'$. Then also $\Psi_\sigma(y',1)=\Psi_{\sigma'}(y',1)$ and $\Psi_\sigma(y',3)=\Psi_\sigma(y',4)=\Psi_{\sigma'}(y',2)$ for $y'\in O'$ by compatibility, and so either $t = \Psi_\sigma(y,1)= \Psi_{\sigma'}(y,1)$ or $t=\Psi_\sigma(y,3)=\Psi_{\sigma'}(y,2)$. In either case, there are exactly three simplices $T_U$, $T_L$, $T_M$ among the simplices $\{\hat\sigma_1,\ldots, \hat\sigma_4, \hat\sigma'_1,\hat\sigma'_2\}$ with $x\in T_U\cap T_L\cap T_M$ and $f(T_U)=\partial D^U$, $f(T_L)=\partial D^L$, $f(T_M)=\partial D^M$. When $y$ has a neighborhood $O'$ with $\Psi_\sigma(y',1)=\Psi_\sigma(y',3)$ or $\Psi_\sigma(y',2)=\Psi_\sigma(y',4)$ for $y'\in O'$, the argument is similar. In all these cases, $f$ is a homeomorphism in a neighborhood of $x$.

In the remaining case, $x\in O \cap (\tau\times \R)$ and $\Psi_\sigma(y,1)=\Psi_\sigma(y,4)$. Then $x$ belongs to all six singular simplices, and $f$ is a branched double cover near $x$.
\end{proof}

\subsection{Pillow covers of cells}
\label{sec:ppc}

Suppose again that $E$ is a planar $(n-1)$-cell, i.e.\;$E$ is contained in an $(n-1)$-plane $P$. We may take $P=\R^{n-1}\times \{0\}$ as in the beginning of Section \ref{sec:pillows}.

Having $\nu=\nu_{E,\bfOmega}$ at our disposal, we fix, a maximal tree $\hat \cE \subset \Gamma(E^{(n-1)})$ and obtain, for every $\sigma\in E^{(n-1)}$, a pillow $\hat \sigma$ compatible with simplices adjacent to $\sigma$ in $E$. The set 
\[
\hat E = \bigcup_{\sigma \in E^{(n-1)}}\hat \sigma 
\] 
is called a \emph{pillow cover on $E$}. By our convention, all pillow covers $\hat \sigma$ for $\sigma\in E^{(n-1)}$ are $\cL$-Lipschitz for $\cL\ge 1$ depending only on $n$, so that $\hat E$ is an \emph{$\cL$-Lipschitz pillow cover}. 

Lemmas \ref{lemma:ULM} and \ref{lemma:biliph} on metric properties of the pillow cover construction for pairs of simplices have counterparts for an $(n-1)$-cell contained in a hyperplane. The proofs are verbatim so we merely state the results.

\begin{lemma}
\label{lemma:planar_complements}
Let $E$ be a cubical  $(n-1)$-cell in $\R^{n-1}$ and $\hat E\subset E\times [-1/2,1/2]$ an $\cL$-Lipschitz pillow on $E$. Then 
\[
E\times [-1,1] \setminus \hat E
\]
has three components $\Omega^U$, $\Omega^M$, and $\Omega^L$, each bilipschitz equivalent to $\bB^n$ in their inner metric, respectively, so that $\Omega^U \supset E\times \{1\}$ and $\Omega^L \supset E\times \{-1\}$. The bilipschitz constant is quantitative and depends only on $n$ and $\cL$.
\end{lemma}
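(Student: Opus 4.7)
The plan is to prove Lemma \ref{lemma:planar_complements} by induction on $\#E^{(n-1)}$, grafting simplices one at a time along the maximal tree $\hat\cE$ fixed when constructing $\hat E$, and using the pair-of-simplices results of Lemmas \ref{lemma:ULM} and \ref{lemma:biliph} as the inductive step. The key enabling observation is that, since $E\subset\Omega_i\cap\Omega_j$ for some $i\ne j$, Lemma \ref{lemma:parity} forces adjacent $(n-1)$-simplices in $E$ to have opposite parity. Therefore every parent--child pair in $\hat\cE$ is a (positive, negative) pair with compatible pillows, placing each inductive step exactly in the setting of Section \ref{sec:pcas}.

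The base case $\#E^{(n-1)}=1$ is a direct inspection: for $E=\sigma$, the complement $\sigma\times[-1,1]\setminus\hat\sigma$ has three components $U_\sigma\cap(\sigma\times[-1,1])$, $P_\sigma\setminus\hat\sigma$, and $L_\sigma\cap(\sigma\times[-1,1])$, each bilipschitz to $\bB^n$ in the inner metric via explicit straightening of $\Psi_\sigma$ with constants depending only on $n$ and $\cL$. For the inductive step, assume the three components $\Omega^U_T,\Omega^M_T,\Omega^L_T$ over a subtree $T\subset\hat\cE$ are each $L$-bilipschitz to $\bB^n$ with $L=L(n,\cL)$. Let $\sigma'$ be a leaf of $\hat\cE$ adjacent to $T$ via its parent $\sigma\in T$, and set $T'=T\cup\{\sigma'\}$. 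Lemma \ref{lemma:ULM} exhibits the three local pieces $\Omega^U_{\sigma,\sigma'},\Omega^M_{\sigma,\sigma'},\Omega^L_{\sigma,\sigma'}$ of $(\sigma\cup\sigma')\times\R\setminus(\hat\sigma\cup\hat\sigma')$; since the pillows are compatible and $\sigma'$ is a leaf, setting $\Omega^X_{T'}:=\Omega^X_T\cup\Omega^X_{\sigma,\sigma'}$ produces exactly the three components of the new complement.

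To upgrade the bilipschitz structure, I splice the inductive map with the pair-wise bilipschitz map $h^X_{\sigma,\sigma'}$ from Lemma \ref{lemma:biliph} (using Lemma \ref{lemma:biliph}(2) for $X=M$) along the common $\sigma$-slice. The splicing is legitimate because the support of $h^X_{\sigma,\sigma'}$ lies in the thin slab $(\sigma\cup\sigma')\times[-1/2,1/2]$, so outside that slab the new map agrees with the identity and hence with the previously constructed map.

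The main obstacle will be to keep the bilipschitz constant uniform across the induction rather than compounding. Three structural facts resolve this: (i) all pillow modifications live inside $E\times[-1/2,1/2]$, while each $\Omega^U_T$ contains the unmodified ceiling $|T|\times[1/2,1]$ (dually for $\Omega^L_T$), so successive splicings take place in disjoint slabs; (ii) every $\Psi_\sigma$ is $\cL$-Lipschitz with $\cL$ depending only on $n$, so the pair-wise maps of Lemma \ref{lemma:biliph} have bilipschitz constants depending only on $n$ and $\cL$; and (iii) the valence of $\hat\cE$ is bounded in terms of $n$, so only boundedly many gluings affect any fixed neighborhood. These three facts hold the inductive bilipschitz constant at $L=L(n,\cL)$ at every step, yielding the quantitative conclusion.
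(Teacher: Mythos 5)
Your route (induction along the maximal tree $\hat\cE$, with the pair lemmas as the gluing step) is a reasonable reconstruction of what the paper leaves unproved, but the inductive step as written has a genuine gap. Lemmas \ref{lemma:ULM} and \ref{lemma:biliph} are not symmetric in the pair: their proofs use the \emph{shuffle} on the common face $\tau=\sigma\cap\sigma'$, so they require $\tau$ to be the \emph{exit} face of the positive simplex. In the directed tree $\hat\cE$ each simplex has at most one outgoing edge, so this configuration occurs only on edges pointing out of the positive simplex, i.e.\ when the positive simplex is the child. When you attach a negative child $\sigma'$ to a positive parent $\sigma$, the face $\tau$ is an entry face of $\sigma$, $v_\sigma|\tau=0$, and $(\sigma\cup\sigma')\times\R\setminus(\hat\sigma\cup\hat\sigma')$ has \emph{five} components ($P^U_\sigma$ and $P^L_\sigma$ do not merge across $\tau$); Lemma \ref{lemma:ULM} does not apply and Lemma \ref{lemma:biliph} is not the right tool. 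That step can be repaired --- there the three pieces $U_{\sigma'}$, $P_{\sigma'}$, $L_{\sigma'}$ of the negative pillow attach to the already constructed $\Omega^U_T$, $\Omega^M_T$, $\Omega^M_T$-analogues through the opening on $\tau$, a simpler gluing --- but your claim that every parent--child pair is ``exactly in the setting of Section \ref{sec:pcas}'' is false on roughly half the edges, and the argument must treat the two edge types separately.

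Two further points need attention. First, your base case is wrong for a positive simplex: $\sigma\times[-1,1]\setminus\hat\sigma$ has five components when $\nu(\sigma)=1$, not three. More globally, the three-component count for an intermediate subtree $T$ requires that every positive simplex of $T$ have its exit-face neighbour already in $T$; hence the induction must start at the root and the root must be negative (equivalently, every positive simplex must carry a shuffle on some face) --- otherwise $P^U$ and $P^L$ of a shuffle-less positive simplex are extra bounded components and the conclusion of the lemma itself fails. This hypothesis on the construction should be stated and used; you should also add at each step a vertex whose parent lies in $T$ (not a leaf of $\hat\cE$). Second, the uniformity discussion is loose: the splicings do not take place ``in disjoint slabs'' (all pillows live in the same slab $E\times[-1/2,1/2]$; they sit over different simplices), and for $\Omega^M$, which is a tree of pillows glued along openings, your facts (i)--(iii) do not by themselves prevent the bilipschitz constant from compounding with each gluing. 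In the intended application the cells $E$ come from the tripod property and lie in cubes of side length $3$, so $\#E^{(n-1)}\le c(n)$ and uniformity is immediate; either invoke such a bound or give an actual argument (e.g.\ corrections with pairwise disjoint supports), noting that for cells of unbounded diameter the statement cannot hold verbatim since $\bB^n$ has bounded diameter.
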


\begin{figure}[h!]
\includegraphics[scale=0.25]{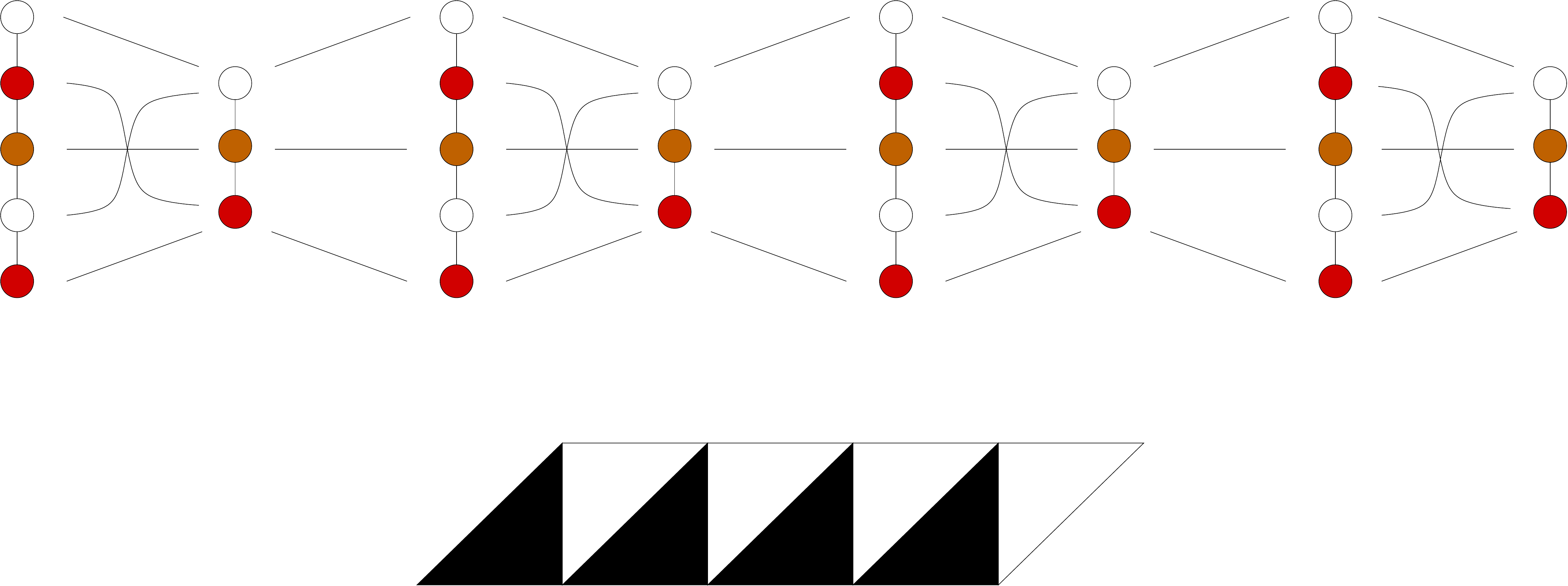}
\caption{The adjacency of domains $\Omega^U$,  $\Omega^M$, $\Omega^L$ over a $2$-cell in $\R^2\times \{0\}\subset \R^3$.}
\label{fig:shuffle_long}
\end{figure}

\begin{lemma}
\label{lemma:planar_BLD}
Let $E$ be a cubical  $(n-1)$-cell in $\R^{n-1}$ and $\hat E\subset E\times [-1/2,1/2]$ an $\cL$-Lipschitz pillow on $E$. Then
\begin{itemize}
\item[(1)] there exists a bilipschitz homeomorphism $h_E^U \colon E\times (0,1) \to (\Omega^U,d_{\Omega^U})$ having a BLD-extension $\bar h_E^U \colon E\times [0,1] \to \overline{\Omega^U}$ so that $\bar h_E^U$ is the identity on $E\times \{1\}\cup \partial E\times [0,1]$.
\item[(2)] there exists a bilipschitz homeomorphism $h_E^L \colon E\times (-1,0) \to (\Omega^L,d_{\Omega^L})$ having a BLD-extension $\bar h_E^L \colon E\times [-1,0] \to \overline{\Omega^L}$ so that $\bar h_E^L$  is the identity on $E\times \{-1\}\cup \partial E\times [-1,0]$. 
\end{itemize}
The statement is quantitative in the sense that the bilipschitz constant depends only on $n$ and $\cL$.
\end{lemma}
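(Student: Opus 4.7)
Parts (1) and (2) are symmetric, so I outline only the proof of (1). The strategy is to split $E\times(0,1)$ into an upper cylinder $E\times[1/2,1]$ and a lower slab $E\times(0,1/2]$, construct $h_E^U$ separately on each piece, and then glue along $E\times\{1/2\}$. Since $\hat E\subset E\times[-1/2,1/2]$, the upper cylinder is entirely contained in $\Omega^U$, so on it I define $h_E^U$ to be the identity; this is bilipschitz because inner paths in $\Omega^U$ between points of $E\times[1/2,1]$ can be chosen inside the cylinder, so the inner metric and the Euclidean metric agree there up to a constant depending only on $n$.

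On the lower slab $E\times(0,1/2]$ I construct a bilipschitz map onto $\Omega^U\cap(E\times(-\infty,1/2])$ working one $(n-1)$-simplex $\sigma$ of $E$ at a time. Over a negative simplex $\sigma$, one has $\Omega^U\cap(\sigma\times\R)=U_\sigma=\{(x,t):t\ge\Psi_\sigma(x,2)\}$, the epigraph of the $\cL$-Lipschitz function $\Psi_\sigma(\cdot,2)$; a vertical linear interpolation such as $(x,t)\mapsto(x,(1-2t)\Psi_\sigma(x,2)+t)$ defines a bilipschitz homeomorphism from $\sigma\times(0,1/2]$ onto the corresponding slab of $U_\sigma$, with constant depending only on $\cL$. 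Over a positive simplex $\sigma$, the region $\Omega^U\cap(\sigma\times\R)=U_\sigma\cup P_\sigma^U$ has two components over $\interior\sigma$, joined globally in $\Omega^U$ through the neighboring negative simplices by the shuffle structure (Lemma~\ref{lemma:bULM}). I split $\sigma\times(0,1/2]$ with an auxiliary PL hypersurface that degenerates to $\partial\sigma$ along the closed faces, and map the two resulting pieces bilipschitzly onto $U_\sigma$ and $P_\sigma^U$ respectively. The local bilipschitz constants for these pair-constructions are supplied by Lemma~\ref{lemma:biliph}, applied to each edge $(\sigma,\sigma')$ of the directed maximal tree $\hat\cE\subset\Gamma(E^{(n-1)})$.

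The simplex-wise definitions assemble into a single globally bilipschitz map on $E\times(0,1/2]$. Indeed, the supports of the pair-maps lie in $(\sigma\cup\sigma')\times[-1/2,1/2]$, so they can be combined without accumulating bilipschitz constants; compatibility of the pillow covers on adjacent simplices (Definition~\ref{def:compatible_pillows}) guarantees that the sheets, and hence the vertical interpolations, match along common faces. Along $\partial E$, every simplex-face is closed, so $\Psi_\sigma(\cdot,i)\equiv 0$ there and the map reduces to the identity on $\partial E\times[0,1/2]$, as required. Gluing with the identity on the upper cylinder produces the desired $h_E^U$ on all of $E\times(0,1)$ with bilipschitz constant depending only on $n$ and $\cL$. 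The BLD-extension $\bar h_E^U$ to $E\times[0,1]$ is obtained by continuous extension: on $E\times\{1\}\cup\partial E\times[0,1]$ it remains the identity, while on $E\times\{0\}$ it becomes a (branched) parametrization of $\hat E$ whose local multiplicity at a point equals the number of sheets through it. The BLD property propagates from the interior bilipschitz estimate by continuity, and the branching set has codimension at least $2$.

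The principal obstacle is the construction over positive simplices: a single connected slab $\sigma\times(0,1/2]$ must be mapped by a bilipschitz homeomorphism onto the two-lobed region $U_\sigma\cup P_\sigma^U$. What makes this possible is precisely that all sheets $\Psi_\sigma(\cdot,i)$ collapse to $0$ on the closed faces of $\sigma$, so the two lobes abut along $\partial\sigma$ (outside the entry/exit-face openings); the PL hypersurface splitting the slab can therefore be chosen to degenerate to $\partial\sigma$ in the same pattern, matching the lobe structure. All quantitative constants are controlled through the pairwise estimates of Lemma~\ref{lemma:biliph} and depend only on $n$ and $\cL$.
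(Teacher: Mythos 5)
Your overall strategy (reduce to the pairwise statements, interpolate vertically over negative simplices, be the identity above height $1/2$ and on $\partial E$) is reasonable and close in spirit to what the paper intends, but the step over positive simplices contains a genuine gap, and it breaks the gluing as well. Over a positive simplex $\sigma$ the two lobes $U_\sigma$ and $P_\sigma^U$ do \emph{not} abut along $\partial\sigma$ in codimension one: on closed and entry faces one has $\Psi_\sigma(\cdot,1)=\Psi_\sigma(\cdot,2)=0$, so $P_\sigma^U$ degenerates there to the zero section, and $\overline{U_\sigma}\cap\overline{P_\sigma^U}$ requires $\Psi_\sigma(\cdot,2)=\Psi_\sigma(\cdot,4)$, hence is a graph over a subset of $\partial\sigma$, i.e.\ a set of dimension at most $n-2$. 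Consequently a PL hypersurface splitting $\sigma\times(0,1/2]$, whose two complementary pieces share an $(n-1)$-dimensional frontier, cannot be matched homeomorphically to the pair $(U_\sigma,P_\sigma^U)$: a homeomorphism must carry the frontier of the preimage of the bottom lobe onto the relative frontier of $\interior P_\sigma^U$ inside $\Omega^U$, and by Lemma \ref{lemma:ULM} that relative frontier is the $(n-1)$-cell ``doorway'' over the exit face through which $P_\sigma^U$ meets $U_{\sigma'}$ of the adjacent negative simplex --- exactly the region your construction excludes (``outside the entry/exit-face openings''). So the positive-simplex map you describe cannot exist as stated.

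The same point defeats the gluing: across the exit-face wall $\tau\times\R$, the $\sigma'$-side vertical interpolation has image $\{t\ge\Psi_{\sigma'}(\cdot,2)\}$, which near $\tau$ reaches down to the shuffle depth, while the $\sigma$-side image is $U_\sigma\cup P_\sigma^U$, i.e.\ the part above $\hat\sigma_4$ together with a detached bottom lobe; these boundary traces are different sets, so the piecewise definition is not even continuous, and compatibility of the pillows (Definition \ref{def:compatible_pillows}) cannot repair this because the shuffle is designed precisely so that $\Omega^U$ crosses the wall \emph{below} $\hat\sigma_4$. The real content of the lemma is this ``reach-around'': each bottom lobe must be threaded through its exit-face doorway, so the homeomorphism cannot respect the decomposition of $E\times(0,1)$ into vertical slabs over simplices. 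That is what Lemma \ref{lemma:biliph} accomplishes for a single pair, and a correct proof assembles those pair maps along the directed tree $\hat\cE$ (each positive simplex has a unique exit face, so the doorways can be handled with essentially disjoint supports and uniform constants), which is why the paper says the cell-wide proof is verbatim the pairwise one. Your remark that the pair maps ``can be combined without accumulating constants'' because their supports lie in $(\sigma\cup\sigma')\times[-1/2,1/2]$ does not address this: pairs sharing a simplex have overlapping supports, and the whole difficulty is the matching on those overlaps. The negative-simplex interpolation and the behaviour on $\partial E$ and $E\times\{1\}$ are fine (modulo the harmless degeneracy where $\Psi_\sigma(\cdot,2)=1/2$), but as written the positive-simplex step and the gluing do not yield a homeomorphism onto $\Omega^U$, nor therefore the claimed bilipschitz map.
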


In order to define maps $\hat E \to \hat \bS^n$, we fix points $\{y_0,\ldots, y_{n-1}\}\subset \bS^{n-1}\cap \bB^{n-1}$, as in Section \ref{sec:LRM1}. The following lemma is a counterpart of the construction in Section \ref{sec:LRM1}.

\begin{lemma}
\label{lemma:planar_BLD-map}
Let $E$ be a cubical planar $n$-cell in $\R^n$ and $\hat E\subset E\times [-1/2,1/2]$ a pillow on $E$. Then there exists a map $f_E \colon \hat E \to \hat \bS^n$, which is a branched cover on $\interior \hat E = \hat E \cap (\interior E\times \R)$, so that $f_E|(\hat \sigma\cup \hat \sigma')$ satisfies (S1)-(S3) from Section \ref{sec:LRM1} for every pair of adjacent simplices $\sigma$ and $\sigma'$ in $E^{(n-1)}$. The BLD-constant of $f_E|\interior \hat E$ is quantitative in the sense that it depends only on $n$, $\cL$, and points $\{y_0,\ldots, y_{n-1}\}$.
\end{lemma}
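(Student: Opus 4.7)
The plan is to build $f_E$ sheet-by-sheet, gluing adjacent pillows together by means of the pairwise construction of Lemma \ref{lemma:LRM1}. For each $\sigma\in E^{(n-1)}$ and each sheet $\hat\sigma_i$ of its pillow cover, Lemmas \ref{lemma:ULM} and \ref{lemma:bULM} identify a unique pair $\{X,Y\}\subset\{U,L,M\}$ with $\hat\sigma_i\subset\overline{\Omega^X\cap\Omega^Y}$; condition (S3) then forces $f_E(\hat\sigma_i)$ to equal $D^X\cap D^Y$, which is one of $\bS^{n-1}_+$, $\bS^{n-1}_-$, or $\bB^{n-1}$. Since $\vartheta$ gives a bijection between the vertices of $\sigma$ and $\{y_0,\ldots,y_{n-1}\}$, and each $\hat\sigma_i$ is the graph of the $\cL$-Lipschitz function $\Psi_\sigma(\cdot,i)$ over the standard $(n-1)$-simplex $\sigma$, I will choose a PL bilipschitz homeomorphism $\hat\sigma_i\to D^X\cap D^Y$ sending each lifted vertex $v$ to $y_{\vartheta(v)}$; the bilipschitz constant is uniform in $\sigma$ and $i$, depending only on $n$, $\cL$, and the points $\{y_0,\ldots,y_{n-1}\}$.

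The substantive task is to verify that these sheetwise definitions agree along shared boundaries. Since $E\subset\partial\Omega_i\cap\partial\Omega_j$ for some $i\ne j$, every pair of adjacent $(n-1)$-simplices in $E^{(n-1)}$ lies in both $\partial\Omega_i$ and $\partial\Omega_j$, so by Lemma \ref{lemma:parity} their parities alternate. Hence Lemma \ref{lemma:LRM1} applies to every such pair $\{\sigma,\sigma'\}$ and provides a branched cover $\hat\sigma\cup\hat\sigma'\to\hat\bS^{n-1}$ whose sheetwise behavior, dictated by (S3), coincides with the pointwise prescription above; I therefore take $f_E|\hat\sigma\cup\hat\sigma'$ to be this pairwise map. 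At higher-order junctions where three or more simplices of $E$ meet along a lower-dimensional common face, the openings and shuffles vanish on $\partial\tau_0$, forcing all sheets to collapse to the single point $(v,0)$ at each common vertex $v$; the global definition of $\vartheta$ then makes (S2) consistent, and the decomposition $\Omega^U,\Omega^M,\Omega^L$ of Lemma \ref{lemma:planar_complements} extends across such junctions so that (S3) assigns each sheet a coherent target face.

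Once consistency is established, the branched cover property of $f_E|\interior\hat E$ is local and follows at once from Lemma \ref{lemma:LRM1}, since every point of $\interior\hat E$ has a neighborhood contained in some pair $\hat\sigma\cup\hat\sigma'$. The BLD bound then follows from the uniform bilipschitz constants of the sheet maps together with the bounded multiplicity of sheets at any point (a constant depending only on $n$). The principal obstacle in this plan is exactly the consistency check across adjacent pairs of simplices: the concern is that the (S3)-driven assignment of sheets to target faces might conflict at a shared boundary. This is precisely what the engineered design of openings and shuffles in Section \ref{sec:pillow_simplex}, together with the compatibility requirement of Definition \ref{def:compatible_pillows} and the alternation of parity from Lemma \ref{lemma:parity}, is tailored to guarantee; the three global domains $\Omega^U,\Omega^M,\Omega^L$ extend coherently over $E\times[-1,1]$, making (S3) an unambiguous global recipe.
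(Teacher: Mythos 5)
Your proposal is correct and takes essentially the paper's route: the map is obtained by globalizing the sheetwise/pairwise construction of Section \ref{sec:LRM1} over $E$, with the target of each sheet dictated by (S3) through the global decomposition of Lemma \ref{lemma:planar_complements}, and with consistency across adjacent pillows coming from the parity alternation of Lemma \ref{lemma:parity} and the compatibility of the pillow functions. The only real difference is how uniformity of the BLD constant is secured: you appeal directly to the uniform Lipschitz bound $\cL$ on the $\Psi_\sigma$ and a ``uniform choice'' of per-sheet bilipschitz maps, whereas the paper makes that choice precise by taking pillows invariant under congruences, so that only finitely many model simplices and maps $f_1,\ldots,f_r$ occur and $f_E$ is obtained by transporting them with isometries --- that finiteness device is exactly what turns your asserted uniform per-sheet choice into an argument.
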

\begin{proof}
It suffices to observe that $f_E$ is readily obtained from the discussion in Section \ref{sec:LRM1} and it suffices to discuss the uniformity of the BLD-constant of $f_E|\interior \hat E$. Since $E$ is given a standard simplicial structure, all simplices $\sigma$ in $E^{(n-1)}$ are congruent. For every $\sigma\in E^{(n-1)}$ faces of $\sigma$ are one of the three different types: \emph{entry}, \emph{exit}, and \emph{closed} faces. By fixing opening and shuffle functions invariant under congruences, we may assume that pillows over simplices, with the same combinatorics, are congruent. More precisely, there exist simplices $\sigma_1,\ldots, \sigma_r$ in $E^{(n-1)}$ and compatible pillows so that, for every $\sigma\in E^{(n-1)}$, there exists an isometry $I_\sigma$ of $\R^n$, preserving $\R^{n-1}\times [0,\infty)$, and $1\le i_\sigma \le r$ so that $I_\sigma(\sigma)=\sigma_{i_\sigma}$ and $I_\sigma(\hat \sigma) = \hat \sigma_{i_\sigma}$.   

Thus we fix a finite collection of Lipschitz maps $f_i \colon \hat \sigma_i \to \hat \bS^{n-1}$ and use the isometries $I_\sigma$ to obtain a map $f_E \colon \hat E \to \hat\bS^{n-1}$. The BLD-constant of $f_E|\interior \hat E$ then clearly depends only on the Lipschitz constants of this finite collection $f_1,\ldots, f_r$, depending only on $n$, $\cL$, and the choice of points $\{y_0,\ldots,  y_{n-1}\}$.
\end{proof}

\begin{remark}
\label{rmk:planar_BLD-map}
The standard simplicial structure of $E$ is not essential to the proof of Lemma \ref{lemma:planar_BLD-map}. In fact, given any simplicial complex $P$ in $\R^n$ with $|P|=E$, it is easy to observe that there exists a pillow $\hat E$ on $E$ consisting of compatible pillows $\hat \sigma$ for $\sigma\in P^{(n-1)}$, and a map $f_{E,P} \colon \hat E\to \hat\bS^{n-1}$ satisfying the properties of Lemma \ref{lemma:planar_BLD-map} with the only exception that the BLD-constant of $f_{E,P}|\interior \hat E$ now depends also on the bilipschitz constants of affine parametrizations $[0,e_1,\ldots,e_{n-1}] \to \sigma$ for $\sigma \in P^{(n-1)}$. Although, this observation is essential in what follows, we leave the simple modification of the proof of Lemma \ref{lemma:planar_BLD-map} to the interested reader.
\end{remark}

Suppose now that $E$ is a cubical $(n-1)$-cell in $\R^n$. Since $E$ is a PL $(n-1)$-cell, there exists a PL homeomorphism $E\to E'$, where $E'$ is an $(n-1)$-cell in $\R^{n-1}$. More precisely, there exists a simplicial complex $P$ so that $|P|=E$ and a simplicial homeomorphism $\varphi \colon E \to E'$ with respect to $P$.

Let $E$ be a cubical $(n-1)$-cell $E$ in $\R^n$ and let $\cQ(E)$ be the collection of all unit $n$-cubes $Q$ in $\R^n$ with $Q\cap \interior E\ne \emptyset$, and $|\cQ(E)|$ the union of these cubes. Set
\[
\cN(E) = B_\infty(E,1/3) \cap |\cQ(E)|.
\]
In particular, we have
\[
\cN(E') = E' \times [-1/3,1/3]
\]
for a planar $(n-1)$-cell $E'$ in $\R^n$, and the pair $(\cN(E),E)$ is PL-homeomorphic to proper cell pair $(\bar B^n,\bar B^{n-1})$; see \cite[Chapter 4]{RourkeC:Intplt}. 

We apply these observations to small $(n-1)$-cells in $\R^n$, and summarize the needed properties in the following lemma, omitting details. Note that the uniform bound of the bilipschitz constant follows directly from the finiteness of congruence classes of $(n-1)$-cells in statement.

\begin{lemma}
\label{lemma:E-E'}
Let $E$ be a cubical $(n-1)$-cell in a cube $Q\subset \R^n$ of side length $3$. Then there exist $\cL\ge 1$ depending only on $n$, a planar cubical $(n-1)$-cell $E'$, and an $\cL$-bilipschitz PL-homeomorphism $\varphi_E \colon \cN(E) \to \cN(E')$ so that $\varphi_E(E)=E'$. Moreover, there is a simplicial complex $P$ so that $|P|=E$ and $\varphi_E$ is piecewise affine with respect to $P$.   
\end{lemma}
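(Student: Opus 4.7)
The plan is to reduce to finiteness and then construct the homeomorphism in each of the finitely many cases.

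Since $Q$ has side length $3$, the number of unit $(n-1)$-cubes contained in $Q$ is bounded by a constant depending only on $n$. A cubical $(n-1)$-cell $E\subset Q$ is determined by the collection of unit $(n-1)$-cubes comprising it, so the family of such $E$ splits into finitely many congruence classes. It therefore suffices to produce, for each $E$, a PL bilipschitz homeomorphism $\varphi_E\colon \cN(E)\to \cN(E')$ sending $E$ onto a planar cubical $(n-1)$-cell $E'$; then $\cL$ may be taken as the maximum of the finitely many bilipschitz constants.

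For a fixed $E$, I would first build $E'$ and a bending map via a fold-out in the sense of Section~\ref{sec:n:pre}. Writing $E=F_1\cup\cdots\cup F_s$ as an essential partition into unit $(n-1)$-cubes, fix a maximal tree $T\subset\Gamma(\{F_i\})$ and let $E'=F_1'\cup\cdots\cup F_s'\subset \R^{n-1}\times\{0\}$ with $T$ realized as an isomorphism of adjacency graphs. The bending $\psi\colon E'\to E$ sending each $F_i'$ isometrically onto $F_i$ is then piecewise affine with respect to the cubical structure, hence with respect to any simplicial refinement $P$ of that structure. The desired $\varphi_E$ will restrict to $\psi^{-1}$ on $E$.

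The technical step is to extend $\psi^{-1}$ to a bilipschitz PL homeomorphism on neighborhoods. The target $\cN(E')$ is the slab $E'\times [-1/3,1/3]$, while $\cN(E)$ is its bent analogue in $\R^n$. I would build the extension cube by cube, traversing the tree $T$: over the interior of each $F_i$, $\cN(E)$ locally looks like a normal $[-1/3,1/3]$-slab over $F_i$, and $\psi^{-1}$ is extended there by the obvious normal coordinate. Across each edge $\{F_i,F_j\}\in T$, the two slabs meet along a hinge---a bent prism whose dihedral angle is a multiple of $\pi/2$. Each hinge is PL bilipschitz equivalent to a flat prism of the right dimensions by a standard angular PL map whose bilipschitz constant depends only on $n$.

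The main obstacle, where the argument requires care, is at points where three or more unit $(n-1)$-cubes of $E$ share a common lower-dimensional face, or where $\cN(E)$ inherits corners from $|\cQ(E)|$ not visible in the fold-out tree. There one chooses the simplicial subdivision $P$ of $E$ (and an induced subdivision of $\cN(E)$) fine enough to resolve all such configurations, and invokes PL uniqueness of regular neighborhoods of PL $(n-1)$-cells, as in \cite[Chapter~4]{RourkeC:Intplt}, to produce the local extension. Since the list of local configurations is finite by the initial reduction, the bilipschitz constants of these model extensions are uniformly controlled, and the resulting $\varphi_E$ satisfies the conclusion with $\cL=\cL(n)$.
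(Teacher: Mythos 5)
Your opening reduction to finitely many congruence classes of cells $E$ inside a cube of side length $3$ is exactly how the paper obtains the uniform constant $\cL=\cL(n)$, and that part is fine. The core of your construction, however, has a genuine gap: the bending $\psi\colon E'\to E$ of a fold-out is in general \emph{not} a homeomorphism, so the map $\psi^{-1}$ that you propose to take as $\varphi_E|E$ need not exist. A fold-out along a maximal tree $T$ cuts $E$ open along every adjacency of $\Gamma(\{F_i\})$ that is not an edge of $T$, and the two resulting boundary faces of $E'$ are both sent by $\psi$ onto the same interior face of $E$, so $\psi$ is $2$-to-$1$ there. Concretely, for $n=3$ let $E=F_1\cup F_2\cup F_3$ be the three unit squares surrounding a corner of a unit cube: the adjacency graph is a $3$-cycle, any maximal tree omits one edge, say $\{F_2,F_3\}$, and the bending identifies two boundary edges of the L-shaped fold-out with the single interior edge $F_2\cap F_3$ of $E$. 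The obstruction is intrinsic: the corner vertex is an interior point of $E$ with cone angle $3\pi/2$, so no map which is isometric on each unit cube can be a local homeomorphism onto a planar neighborhood there. Cells of this type are precisely the non-planar cells arising in $\partial_\cup\bfOmega$, so this is the typical case, not an exceptional one; the fold-outs in Section \ref{sec:n:pre} are an illustrational device, not a source of homeomorphisms.

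Your closing appeal to regular-neighborhood theory is the right tool, but in your write-up it only supplies \emph{local extensions} over the problematic configurations while the global map on $E$ is still claimed to be $\psi^{-1}$; as written the pieces do not assemble into a homeomorphism. The paper makes the PL statement global from the start: $(\cN(E),E)$ is a proper cell pair, PL-homeomorphic to $(\bar B^n,\bar B^{n-1})$ and hence to $(E'\times[-1/3,1/3],E')$ for a planar cubical cell $E'$ (this is the content of \cite[Chapter 4]{RourkeC:Intplt} invoked just before the lemma), which yields a PL homeomorphism of pairs that is piecewise affine with respect to some triangulation $P$ of $E$; the uniform bilipschitz bound then follows from the finiteness of congruence classes, exactly as you argue, because a PL homeomorphism between compact polyhedra is bilipschitz and only finitely many models occur. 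Your hinge-flattening idea can be salvaged if you drop the requirement that the charts be isometric on each cube and instead use PL charts that absorb the angle defect along interior $(n-3)$-faces and at vertices, but then the work is being done by the cell-pair (regular neighborhood) statement rather than by the bending, i.e.\ by the paper's argument.
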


Having Lemma \ref{lemma:E-E'} at our disposal, we may define pillow covers for small $(n-1)$-cells in $\R^n$. Let $E$ be a cubical $(n-1)$-cell contained in a cube of side length $3$. Suppose $E'$ is a planar $(n-1)$-cell and $\varphi_E \colon \cN(E)\to \cN(E')$ a PL-homeomorphism as in Lemma \ref{lemma:E-E'}. Then $\varphi_E(E^{(n-1)})$ is a triangulation of $E'$. Although $\varphi_E(E^{(n-1)})$ is not the standard triangulation of $E'$, we obtain, a pillow $\hat{E'}$ on $E'$ in $\cN(E')$ with respect to this triangulation, and call $\hat E = \varphi_E^{-1}(\hat{E'})$ a pillow cover of $E$.

Given an $(n-1)$-simplex $\sigma$ in $E$, we also say that $\hat \sigma = \varphi^{-1}(\hat E \cap (\varphi(\sigma)\times [-1,1]))$ is the \emph{pillow over $\sigma$ in $\wh E$}. By finiteness of congruence classes, we conclude that the results in the beginning of this section hold also for these pillow covers almost verbatim.


\subsection{Proof of Proposition \ref{prop:special_4}}
\label{sec:PC_RP}

Let $\bfOmega=(\Omega_1,\Omega_2,\Omega_3)$ be a rough Rickman partition of $\R^n$ having the tripod property.
Thus  $\partial_\cup \bfOmega$ has an essential partition into cubical $(n-1)$-cells $\Delta=\{E_\ell\}_{\ell\ge 0}$.

Given adjacent $E_\ell$ and $E_{\ell'}$ in $\Delta$ belonging to different $\bfOmega$-equivalence classes (recall Definition \ref{def:U-eqv}), there exists, by property ($\Delta$2) {of Definition \ref{def:tripod}, a unique $E_{\ell''}$ in $\Delta$ so that the cells $E_\ell$, $E_{\ell'}$, and $E_{\ell''}$ are mutually adjacent, contained in the same cube of side length $3$, and belong to different $\bfOmega$-equivalence classes. If we write  $E_\ell\sim E_{\ell'}$, the relation $\sim$ defines an equivalence relation in $\Delta$ which subdivides $\Delta$ into equivalence classes containing exactly three elements.

Let 
\[
\cN(\partial_\cup \bfOmega) = B_\infty(\partial_\cup \bfOmega, 1/3)
\]
be the $(1/3)$-neighborhood of $\partial_\cup \bfOmega$ in $\R^n$, and for each $\ell$ define
\[
\cN_\ell = \{ x\in \cN(\partial_\cup\bfOmega) \colon \dist(x,E_\ell)=\dist(x,\partial_\cup \bfOmega)\}.
\]
Then $\{\cN_\ell\}_{\ell\ge 0}$ is an essential partition of $\cN(\partial_\cup \bfOmega)$. Moreover, $\cN_\ell$ is PL-homeo\-morphic to $\cN(E_\ell)$ for every $\ell$. Due to finite number of congruence classes of $\cN_\ell$ and $\cN(E_\ell)$, we have that $\cN_\ell$ is bilipschitz to $\cN(E_\ell)$, the constant depending only on $n$. 

Suppose $E_{\ell_0}$, $E_{\ell_1}$, and $E_{\ell_2}$ are equivalent $(n-1)$-cells in $\Delta$. We create pillows $\hat E_{\ell_0}$, $\hat E_{\ell_1}$ and $\hat E_{\ell_2}$ simultaneously. Let $E_{[\ell]} =E_{\ell_0}\cup E_{\ell_1}\cup E_{\ell_2}$ and $\cN_{[\ell]} = \cN_{\ell_0}\cup \cN_{\ell_1}\cup \cN_{\ell_2}$.
We fix, for $m=0,1,2$, indices $\{i_m,j_m,k_m\}=\{1,2,3\}$ so that $E_{\ell_m}\cap \Omega_{k_m}$ is an $(n-2)$-cell and $E_{\ell_m} \subset \Omega_{i_m}\cap \Omega_{j_m}$.

Let 
\[
Y=\left(\R^{n-1}\times \{0\}\right) \cup \left(\{0\}\times \R^{n-2}\times [0,\infty)\right) \subset \R^n.
\]

Since $E_{[\ell]}=E_{\ell_0}\cup E_{\ell_1}\cup E_{\ell_2}$ is a union of equivalent elements in a $\Delta$, we may fix essentially disjoint $(n-1)$-cells $E'_{\ell_0},E'_{\ell,1},E'_{\ell,2}$ in $Y$ so that there exists a map $\phi_{[\ell]} \colon E'_{[\ell]}\to E_{[\ell]}$, where $E'_{[\ell]} = E'_{\ell_0}\cup E'_{\ell_1}\cup E'_{\ell_2}$, which is a PL homeomorphism $E'_{\ell_0}\cap E'_{\ell_1}\cap E'_{\ell_2} \to E'_{\ell_0}\cap E'_{\ell_1}\cap E'_{\ell_2}$ and $E'_{\ell_k}\to E_{\ell_k}$ for each $k$.

The map $\phi_{[\ell]}$ extends to a PL-map $\phi_{[\ell]} \colon \cN(E'_{[\ell]}) \to  \cN_{[\ell]}$ which is a homeomorphism from the interior of $\cN(E'_{[\ell]})$ to the interior of $\cN_{[\ell]}$, where $\cN(E'_{[\ell]}) = \bigcup_{m=0}^2 \cN(E'_{\ell_m})$. The connected components of $\cN(E'_{[\ell]})\setminus Y$ are $U_m = \psi_{[\ell]}(\interior \Omega_m \cap \cN_{[\ell]})$ for $m=0,1,2$.

Again, by finiteness of congruence classes, $\phi'_{[\ell]}=\phi_{[\ell]}|\interior E'_{[\ell]}\colon \interior E'_{[\ell]}\to \interior E_{[\ell]}$ is bilipschitz (in the inner metric) with constant depending only on $n$. Each map $\phi'_{[\ell]}$ induces a triangulation on $E'_{[\ell]}$, and we denote by $\nu$ the parity function $\sigma \mapsto \nu_\bfOmega(\phi_{[\ell]} \circ \sigma)$ defined on $(n-1)$-simplices in the induced triangulation of $E'_{[\ell]}$.

\begin{figure}[h!]
\includegraphics[scale=0.40]{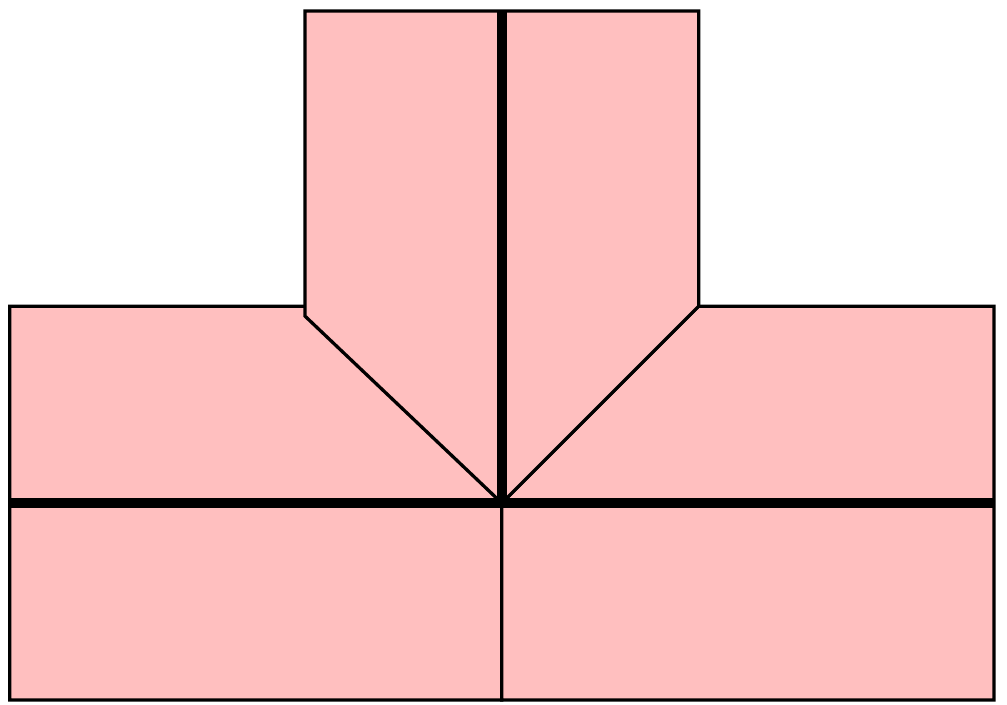}
\caption{Cells $E'_\ell$, $E'_{\ell'}$, and $E'_{\ell''}$ meeting at $\{0\}\times \R^{n-2}\times \{0\}$ and partition of $\cN(E'_{[\ell]})$.} 
\label{fig:N_E_3}
\end{figure}

In terms of this function $\nu$ on $E'_{[\ell]}$, we fix, for every $m=0,1,2$, a Lipschitz pillow $\hat E'_{\ell_m}\subset B_\infty(E'_{\ell_m},1/3)$. By Lemma \ref{lemma:planar_complements}, $\cN(E'_{\ell_m}) \setminus \hat E'_{\ell_m}$ has three components and there exists a unique component $D'_m\subset \cN(E'_{\ell_m})\setminus \hat E'_{\ell_m}$ which does not meet $\partial \cN(E'_{\ell_m})$ essentially; that is, the intersection $D'_m \cap \cN(E'_{\ell_m})$ does not contain $(n-1)$-simplices.

\begin{figure}[h!]
\includegraphics[scale=0.40]{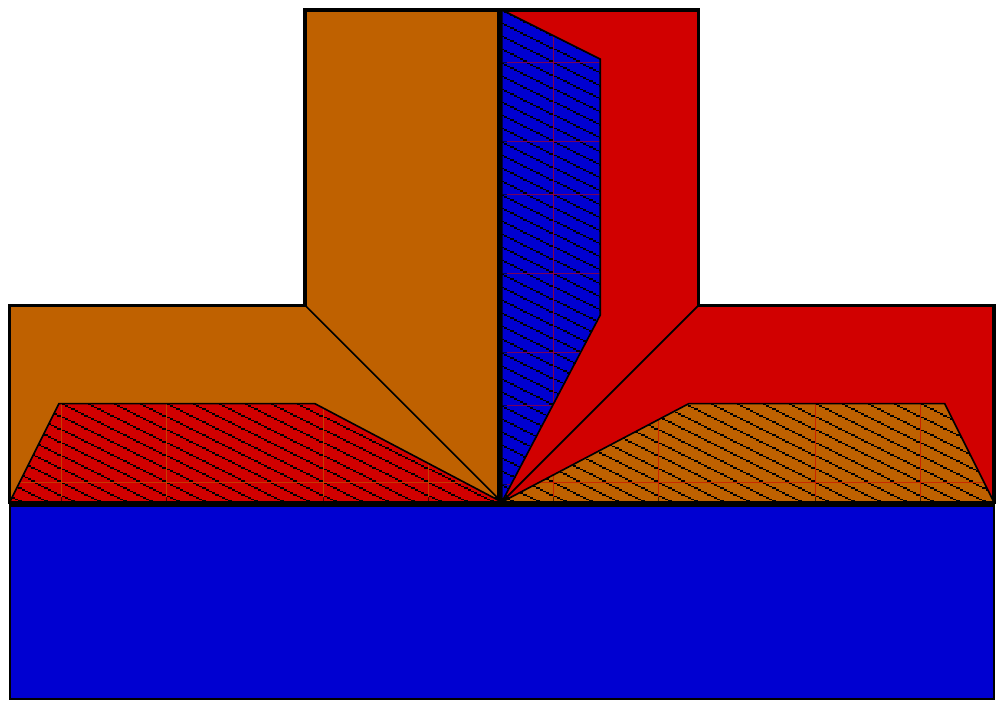}
\caption{Three components $D'_m$ waiting to be connected to corresponding components $U_m$.} 
\label{fig:N_E_3_C}
\end{figure}

We observe that the set $\bigcup_{m=0}^2 \hat E'_{\ell_m}$ has $6$ complementary components in $\cN(E'_{[\ell]})$; see Figure \ref{fig:N_E_3_C}. We now modify the pillows $\hat E'_{\ell_m}$; informally, by connecting each $D'_m$ to $U_m$, there will be only three complementary components.

For $m=0,1,2$, let $\sigma_{\ell_m}\subset E'_{\ell_m}$ be simplices meeting on a common face $\tau \subset \sigma_{\ell_0}\cap \sigma_{\ell_1}\cap \sigma_{\ell_2}$. By Lemma \ref{lemma:parity}, all simplices $\sigma_{\ell_m}$ have the same $\nu$-parity. For notational simplicity, we consider only the case $\nu(\sigma_{\ell_m}) = -1$; the case $\nu(\sigma_{\ell_m})=1$ is similar and is left to the reader. 

We fix three subsimplices by $\tau_0$, $\tau_1$, $\tau_2$ of $\tau$ by subdiving $\phi(\tau)$ into congruent subsimplices of side length $1/3$ and then fixing three of the preimages of these subsimplices in $\tau$.

Since $\nu(\sigma_{\ell_0})=\nu(\sigma_{\ell_1}) = -1$, the sheets $\hat \sigma_{\ell_0}$ and $\hat \sigma_{\ell_1}$ of $\sigma_0$ and $\sigma_1$, respectively, are determined by functions $\Psi_{\sigma_{\ell_0}}$ and $\Psi_{\sigma_{\ell_1}}$. We modify these functions so that 
\[
\Psi_{\sigma_{\ell_r}}(\interior\tau_r\times \{2\})\subset (0,1/3)
\]
for $r=0,1$, and denote the new sheets obtained in this manner as $\tilde \sigma_{\ell_r}$ for $r=0,1$. We denote also by $\tilde D'_r$ the component of $\cN(E'_{\ell_r})\setminus \tilde \sigma_{\ell_r}$ which does not meet $\partial \cN(E'_{\ell_r})$ essentially. 

For $r=0,1$, let $\tilde U_{k_r}$ be the components of $\cN(E'_{\ell_r})\setminus \tilde \sigma_{\ell_r}$ contained in $U_{k_r}$. It is now easy to observe that $\tilde D'_r \subset \tilde U_{k_r}$ is connected. Indeed, the $(n-2)$-cell 
\[
D_r = \{ (x,t)\in \tau_r \times \R \colon \Psi_{\sigma_{\ell_r}}(x,1) \le t \le \Psi_{\sigma_{\ell_r}}(x,2)\}
\]
for $r=0,1$, lies on the boundary of $\tilde D'_r$ and is contained in $\tilde U_{k_r}$. Furthermore, we have that the interior of $\cl(\tilde D'_r \cup U_{k_r})$ is bilipschitz to $\bB^n$ in the inner metric.

Without changing notation, we modify the sheet $\hat \sigma_{\ell_2}$ accordingly in order to preserve compatibility with other sheets after this change on $\tau_0\cup \tau_1$. The sheet modification is now applied to $\hat \sigma_{\ell_2}$ to obtain a new compatible sheet $\tilde \sigma_{\ell_2}$ so that the component $D'_2$ of $B_\infty(E'_2,1/3)\setminus \tilde \sigma_{\ell_2}$ is connected to $U_{k_2}$. We leave the details of this step to the interested reader.

\begin{figure}[h!]
\includegraphics[scale=0.40]{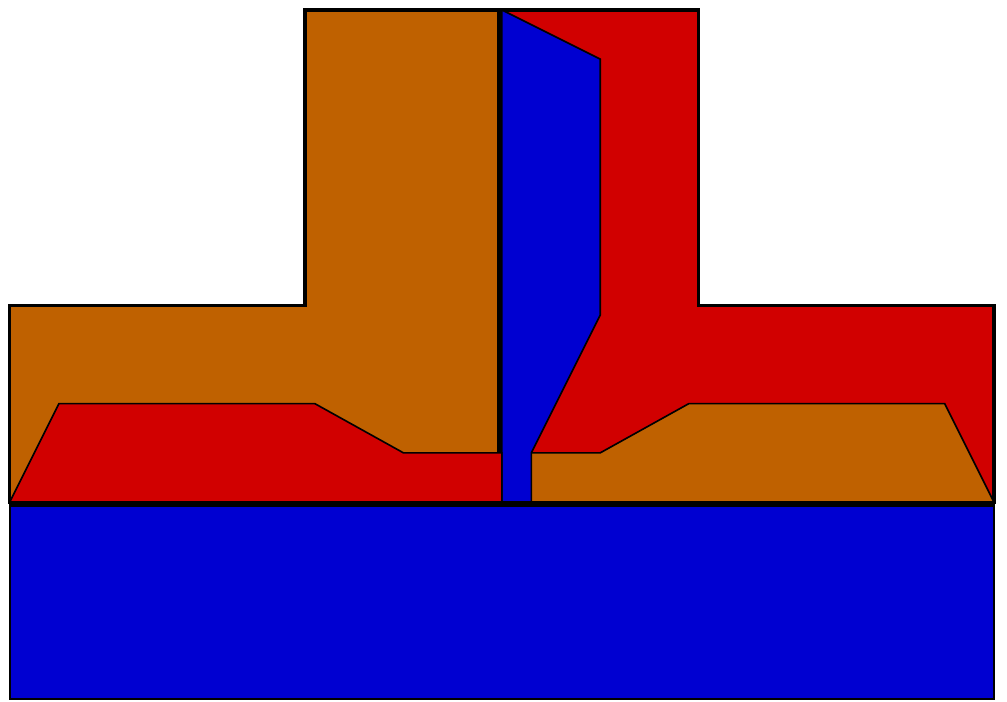}
\caption{Domains after modification; a side view.} 
\label{fig:N_E_3_C_2}
\end{figure}

We make some observations on the construction of the modified sheets $\tilde \sigma_{\ell_m}$ for $m=0,1,2$.  First note that, although $\tilde \sigma_{\ell_m}$ is not homeomorphic to $\hat \sigma_{\ell_m}$ there exist maps $h_{\ell_m} \colon \tilde \sigma_{\ell_m} \to \hat \sigma_{\ell_m}$ so that $h_{\ell_m}$ is a homeomorphism in the interior of $\tilde \sigma_{\ell_m}$ and $h_{\ell_m}|(\tilde \sigma_{\ell_m}\cap \hat \sigma_{\ell_m}) = \id$. In particular, $\tilde \sigma_{\ell_m}$ has the same number of singular simplices as does $\hat \sigma_{\ell_m}$ and the map $h_{\ell_m}$ restricts to a map between singular simplices. 

Second, let
\[
\tilde E'_{[\ell]} = \tilde \sigma_{\ell_0}\cup \tilde \sigma_{\ell_1}\cup \tilde \sigma_{\ell_2}.
\]
Then 
\[
\cN(E'_{[\ell]}) \setminus \tilde E'
\]
has three connected components $\tilde U_1,\tilde U_2,\tilde U_3$ with the property 
\[
\partial \tilde U_r \cap \partial U_r = B_\infty(Y,1/3) \cap \partial U_r,
\]
and for every $r=1,2,3$, there exists a bilipschitz homeomorphism 
\[
(\tilde U_r, d_{\tilde U_r}) \to (U_r,d_{U_r}),
\]
which is the identity on $\partial \tilde U_r \cap \partial U_r$. 

Let
\[
\tilde E_{[\ell]} = \phi_{[\ell]}(\tilde E'_{[\ell]}).
\]
Due to the convention on closed edges on the boundary of $\partial( \bigcup_{r=0}^2 E_{\ell_r})$, we have that
\[
\tilde E_{[\ell]} \cap \tilde E_{[\ell']} = E_{[\ell]} \cap E_{[\ell']}
\]
for all $\ell$ and $\ell'$.

Write
\[
X = \bigcup_{[\ell]} \tilde E_{[\ell]},
\]
the union over equivalence classes $[\ell]$ of indices. Then $\R^n\setminus X$ has components $\tilde \Omega_1, \tilde \Omega_2, \tilde \Omega_3$. Using the congruence classes of pillows $\hat E_{[\ell]}$, we may assume that pillows $\tilde E_{[\ell]}$ are uniformly Lipschitz. Then the components $\tilde \Omega_1$, $\tilde \Omega_2$, and $\tilde \Omega_3$ are bilipschitz equivalent to the components $\Omega_1$, $\Omega_2$, and $\Omega_3$ of our original Rickman partition, respectively, in their inner metric. Furthermore, these bilipschitz homeomorphisms $(\Omega_m,d_{\Omega_m}) \to  (\tilde \Omega_m,d_{\tilde\Omega_m})$, $m=1,2,3$, extend to BLD-maps $\cl(\Omega_m) \to \cl(\tilde \Omega_m)$. If we set $\tilde \bfOmega = (\tilde \Omega_1,\tilde \Omega_2,\tilde \Omega_3)$, then $X=\partial_\cup \tilde \bfOmega$.

Finally, we obtain a BLD-map $f \colon \partial_\cup \tilde \bfOmega \to \hat \bS^{n-1}$. Relabel the components of $\R^n\setminus \hat \bS^n$ by $D_1,D_2,D_3$ so that $D_1 = D^U$, $D_2=D^L$, and $D_3 = D^M$.

By Remark \ref{rmk:planar_BLD-map}, we may fix a map $g_{[\ell]} \colon \tilde E'_{\ell} \to \hat \bS^{n-1}$ as in Lemma \ref{lemma:planar_BLD-map}. By Lipschitz uniformity of the pillows $\tilde E'_{[\ell]}$, we may assume that $g_{[\ell]}|\interior \tilde E'_{[\ell]}$ is BLD with BLD-constant depending only on $n$. 

Let $f_{[\ell]} \colon \tilde E_{[\ell]} \to \hat \bS^{n-1}$ be the unique map satisfying $f_{[\ell]} \circ \phi_{[\ell]} = g_{[\ell]}$.

Given adjacent pillows $\tilde E_{[\ell]}$ and $\tilde E_{[\ell']}$, the mappings $f_{[\ell]}$ and $f_{[\ell']}$ are both defined on $\tilde E_{[\ell]}\cap \tilde E_{[\ell']}$. By uniformity of the BLD-constants, we may modify one of the mappings $f_{[\ell]}$ and $f_{[\ell']}$ slightly to obtain a new collection of uniformly BLD-mappings so that mappings $f_{[\ell]}$ and $f_{[\ell']}$ agree on $\tilde E_{[\ell]}\cap \tilde E_{[\ell']}$ for every $\ell\ne \ell'$. The map $f$, defined by $f|\tilde E_{[\ell]} = f_{[\ell]},$ is BLD.

This concludes the proof of Proposition \ref{prop:special_4}.



\section{Finishing touch}
\label{sec:FT}

In this section we prove Theorem \ref{thm:3} and Proposition \ref{prop:4}. The proofs are slight generalizations of Theorem \ref{thm:RP} and Proposition \ref{prop:special_4}. The proof of Proposition \ref{prop:4} is a straightforward modification, so we merely indicate the differences. For Theorem \ref{thm:3}, we introduce a particular class of rough Rickman partitions, called \emph{skewed Rickman partitions}, and show that the method to obtain a rough Rickman partition in the proof of Theorem \ref{thm:RP} may be modified to obtain skewed Rickman partitions. 
 
\subsection{Skewed Rickman partitions}

For general $p>2$, choose points $\{y_0,\ldots, y_p\}$ in $\bS^n$ as in the introduction, that is, $y_0 = e_{n+1}$ and $y_r = (0,t_r)\in \R^n$, where $-1/2=t_1 < 0 < t_2 < \cdots < t_p = 1/2$. Take $n$-cells $E_0,\ldots, E_p$ as in the introduction, i.e.\;$E_0 = \mathrm{cl}(\bS^n \setminus \bB^n)$, $E_1\cup \ldots \cup E_p = \bB^n$, $y_r \in \interior E_r$, so that $D_r = E_r\cap E_{r+1}$ is an $(n-1)$-cell for $r=0,\ldots p$ ($\mathrm{mod}(p+1)$). 
Note that $\partial E_r$ is an $(n-1)$-sphere consisting of $(n-1)$-cells $D_r\cup D_{r-1}$ ($\mathrm{mod}(p+1)$). 

Let
\[
\hat \bS^{n-1}_p = \bigcup_{r=0}^p \partial E_r.
\]
We emphasize that $E_i \cap E_j = \bS^{n-2}$ for $|i-j|>1$. Let $\cE_p = (E_0,E_1,\ldots, E_p)$. Then $\cE_p$ is an essential partition of $\bS^n$, $\partial_\cup \cE_p = \hat \bS^{n-1}_p$, $\partial_\cap \cE_p = \bS^{n-2}$ and the adjacency graph $\Gamma(\cE_p)$ is cyclic. 

\begin{figure}[h!]
\includegraphics[scale=0.40]{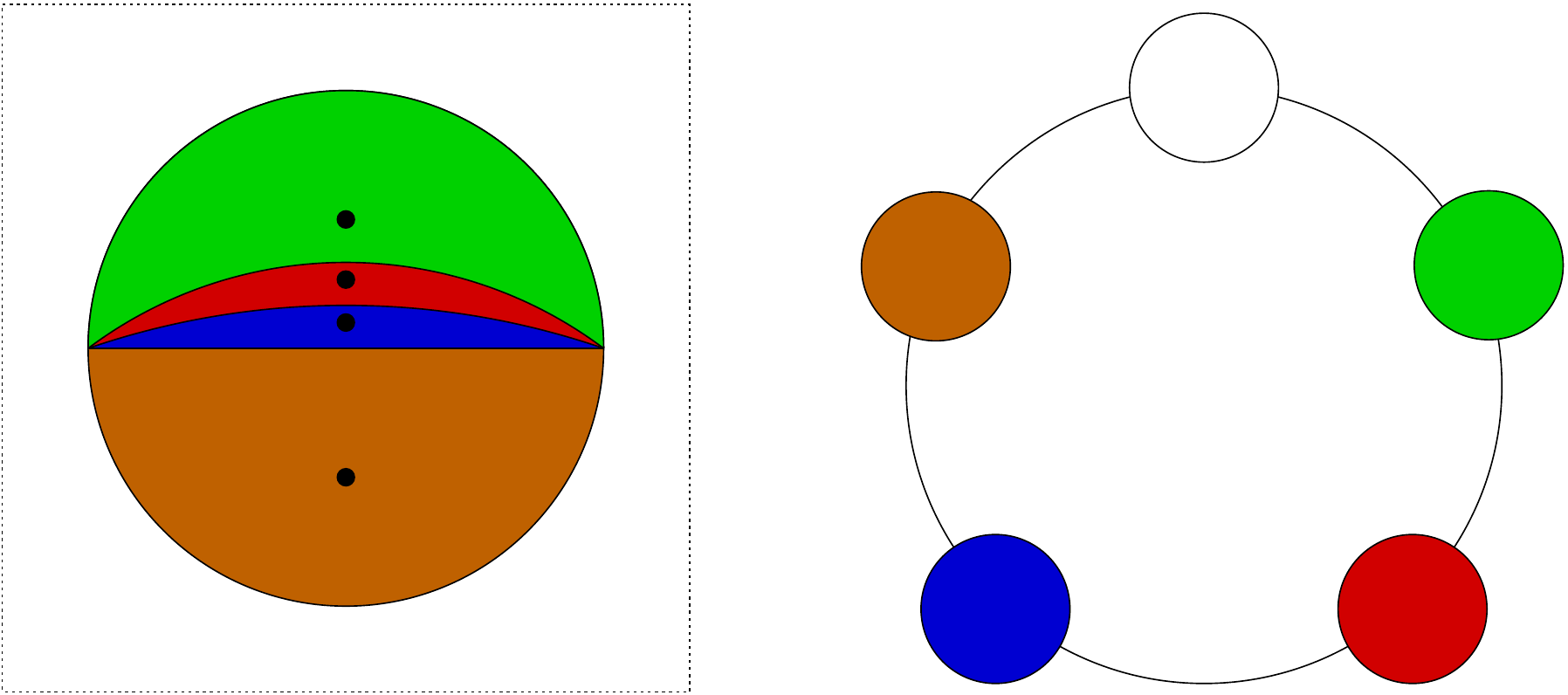}
\caption{The adjacency graph for cells in Figure \ref{fig:Gp_cells_intro}; $p=4$.} 
\label{fig:Gp_graph}
\end{figure}

Let $q$ be a $k$-cube. A PL embedding $\varphi\colon q\to \R^n$ is a \emph{PL $k$-cube} and a complex composed of PL cubes a \emph{skew complex} if the PL $k$-cubes are PL and uniformly bilipschitz equivalent. A Rickman partition $\bfOmega$ is \emph{skew} if $\partial_\cup \bfOmega$ is a skew complex.

The tripod property (Definition \ref{def:tripod}) admits a straightforward generalization to skew compexes $\bfOmega=(\Omega_0,\ldots, \Omega_p)$. Indeed, instead of having three elements in an equivalence class, we require that $\partial_\cup \bfOmega$ have an essential partition $\Delta$ into (skew) $(n-1)$-cells, and we require that $\Delta$ in turn admit a partition into groups of $p+1$ elements, each $(n-1)$-cell between different elements of $\bfOmega$, and all having a common intersection containing an $(n-2)$-cell. In this case we say that the skew complex satisfies a \emph{generalized tripod property}.

We show that $\R^n$ admits a skew Rickman partition $\bfOmega = (\Omega_0,\ldots, \Omega_p)$ for each $p>2$.
\begin{proposition}
\label{prop:3_skew}
Given $n\ge 3$ and $p>2$ there exists a skew Rickman partition $\bfOmega=(\Omega_0,\ldots,\Omega_p)$ supporting the (generalized) tripod property. 
\end{proposition}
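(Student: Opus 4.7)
The plan is to repeat the proof of Theorem \ref{thm:RP}, replacing the three-cube seed $\bfOmega_0$ by a skew seed consisting of $p+1$ wedges arranged cyclically around a common $(n-2)$-cube, and then running the same inductive rearrangement machinery.

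To construct the seed, I would fix a PL partition of $\R^2$ into $p+1$ sectors $S_0,\ldots,S_p$ meeting at the origin, each PL-bilipschitz to the standard quadrant $[0,3]^2$ with constant depending only on $p$ (e.g.\ via a piecewise linear straightening of the angular coordinate). Setting $\Omega_r^0 = S_r\times[0,3]^{n-2}$ (suitably truncated) yields $p+1$ skew $n$-cells, uniformly PL-bilipschitz to $[0,3]^n$, meeting cyclically along the common $(n-2)$-cube $\{0\}^2\times[0,3]^{n-2}$. This gives a skew seed $\bfOmega_0^{(p)}=(\Omega_0^0,\ldots,\Omega_p^0)$ whose adjacency graph matches the cyclic graph $\Gamma(\cE_p)$, and each $\Omega_r^0$ inherits a skew cubical structure via a fixed PL-bilipschitz homeomorphism with $[0,3]^n$.

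I would then apply the inductive machinery of Section \ref{sec:RP} to this skew seed. The local rearrangements of Section \ref{sec:LRA} (the $\cC$-, $\cD$-, $\cN$-modifications and their secondary versions) operate within single cubes and their immediate neighbors, so each transfers to the skew setting via the uniform PL-bilipschitz equivalence between skew and standard cubes, with constants depending only on $n$ and $p$. In a $\cC$-cube of color $i$, the two ``complementary colors'' are now interpreted as the cyclic neighbors $i\pm 1 \bmod (p+1)$, reflecting the cyclic structure of $\Gamma(\cE_p)$. The combinatorial and metric estimates in Proposition \ref{prop:induction_clean}, in particular Lemmas \ref{lemma:hull_regularity} and \ref{lemma:f}, remain valid: the valence and collapsibility bounds are preserved under uniform PL-bilipschitz deformation of cubes, and the bilipschitz restoration of dented molecules in Section \ref{sec:cond_f} transfers formally. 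The generalized tripod property is maintained at each stage: in the seed all $p+1$ colors meet cyclically along the common $(n-2)$-cube, and every subsequent local modification involves only three consecutive cyclic colors, so the essential partition of $\partial_\cup\bfOmega_m^{(p)}$ into $(n-1)$-cells organizes globally into $(p+1)$-tuples along the common boundary.

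The main obstacle is ensuring that the cyclic combinatorics of $\Gamma(\cE_p)$---as opposed to the triangle combinatorics used implicitly in the $p=2$ case---is respected throughout the iteration, so that at every $(n-2)$-corner of the partition exactly $p+1$ consecutive colors meet and the generalized tripod structure assembles correctly. The key observation is that each local modification only connects a cube of color $i$ to its cyclic neighbors $i\pm 1$, so cyclic adjacency is never violated, and no $(n-2)$-corner involving non-adjacent colors is ever created. The secondary difficulty, maintaining uniform bilipschitz bounds in the skew setting, is handled by the finiteness of congruence classes of elementary configurations (combined with the uniform PL-bilipschitz control built into the skew cubical structure at the seed).
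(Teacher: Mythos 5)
There is a genuine gap at the heart of your plan: the sector seed together with modifications that ``only connect a cube of color $i$ to its cyclic neighbors $i\pm 1$'' cannot produce the generalized tripod property, nor even the rough Rickman condition that $\partial_\cup\bfOmega$ and $\partial_\cap\bfOmega$ stay at finite Hausdorff distance. The generalized tripod property forces all $p+1$ colors to meet along $(n-2)$-cells within uniformly bounded distance of \emph{every} point of $\partial_\cup\bfOmega$. In the $p=2$ case this is achievable by the machinery of Section \ref{sec:RP} precisely because the cyclic graph on three colors is complete: a $\cC$-modification in a cube of color $i$ imports \emph{both} remaining colors, so the tripod configuration is re-created at unit scale inside every relevant cube after each scaling. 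For $p>2$ the cyclic graph is not complete, and your local moves can only extend a color into a cube that it already touches. Consider a point $x$ on the wall between $\Omega_i$ and $\Omega_{i+1}$, and let $d_m$ be the distance at stage $m$ from $x$ (or its rescaled position) to the nearest point of a non-neighboring color such as $\Omega_{i+3}$. Each induction step first scales by $3$ and then performs modifications confined to cubes of side at most $27$ covering $\partial_\cup\bfOmega_m$, so $d_{m+1}\ge 3d_m-C$ with $C$ independent of $m$; unless $d_m$ is already uniformly small, the gap diverges geometrically. Starting from $p+1$ sectors around a common $(n-2)$-cube, the walls far from the corner see only two colors, so nothing in the iteration ever makes the remaining $p-2$ colors boundedly close there, and your assertion that the $(n-1)$-cells ``organize globally into $(p+1)$-tuples'' along the whole pair-wise boundary is unsupported and false for this seed. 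The same divergence shows $\dist_\haus(\partial_\cup\bfOmega,\partial_\cap\bfOmega)=\infty$, so the limit is not even a rough Rickman partition in the generalized sense.

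What is missing is precisely the new idea the paper introduces: keep the three-domain machinery of Theorem \ref{thm:RP} essentially intact, and realize the extra $p-2$ domains as \emph{thin interleaved layers}. The paper replaces the third domain (the atom $A_3$ and its later extensions) by the skew partition $\cS(A)=(A_1,\ldots,A_{p-1})$ of Section \ref{sec:skew}, a stack of $p-1$ uniformly bilipschitz sheets all meeting along the $(n-2)$-cell where floor and ceiling of the atom meet (Lemmas \ref{lemma:skew_atoms} and \ref{lemma:skew_molecules}); crossing the stack transversally passes through every color within bounded distance, which is what makes the generalized tripod property sustainable under scaling. To iterate, the paper then needs the coarsification of skew partitions, the smallness condition of Definition \ref{def:smallness}, and the generalized $\cC$-modification that re-shuffles the layers at the fine scales $3^{-\alpha},3^{-\beta}$ via the atoms $a_i$, the cubes $q_i$ and the graph isomorphisms $\theta_i$, so that the cyclic adjacency and the collapsibility/valence bounds of Proposition \ref{prop:induction_clean} survive. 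None of this is obtained ``for free'' by a bilipschitz reinterpretation of the $p=2$ construction; if you want to salvage your approach you must explain how all $p+1$ colors are interleaved at unit scale along every wall at every stage, which in effect reproduces the paper's layered skew construction.
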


\subsubsection{Skew structures on atoms and molecules}
\label{sec:skew}

An essential partition $\cS$ of an $n$-cell $C$ is \emph{skew} if elements of $\cS$ are skew $n$-cells. Before proceeding further, we discuss skew partitions for (flat) atoms and (non-flat) molecules.

Let $A$ be an $r$-fine $\R^{n-1}$-based atom in $\R^n$; let $F=A\cap \R^{n-1}$ and $C = \partial A - F$, where 'F' refers to 'floor' and 'C' to 'ceiling'. Note that $F$ and $C$ are $(n-1)$-cells. We partition $A$ into skew atoms $A_1,\ldots, A_{p-1}$ as follows.

Let $L_1 = F$, $L_p=C$, and define, for $j=2,\ldots, p-1$, $L_j = \{ (x,\delta_{B,j}(x)) \in A \colon x\in F\}$, where $\delta_{B,j} \colon F \to [0,r/3]$ is the function
\[
\delta_{B,j}(x) = \frac{j}{p} \max\{ \frac{r}{3}, \dist(x,F\cap C)\}
\]  
for $x\in F$. For every $j=1,\ldots,p-1$, $L_j\cup L_{j+1}$ bounds a unique $n$-cell $A_j$ with boundary $L_j\cup L_{j+1}$.

\begin{figure}[h!]
\includegraphics[scale=1]{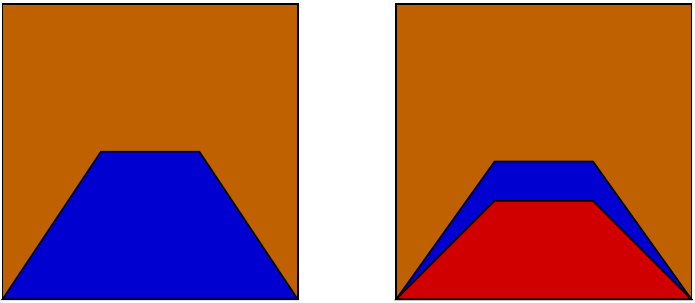}
\caption{Schematic figure on $n$-cells $A_j$ in $A$ for $p=3$ and $p=4$.} 
\label{fig:graphs_L_j}
\end{figure}

Now the essential partition 
\begin{equation}
\label{eq:skewpartition}
\cS(A) = (A_1,\ldots, A_{p-1})
\end{equation}
is a skew partition of $A$. Note that $F\subset \partial A_1$, $C\subset \partial A_{p-1}$, and $A_j\cap A_{j+1}$ is the $(n-1)$-cell $L_{j+1}$ for all $j=1,\ldots, p-1$. 

We leave the details of the following lemma to the interested reader.

\begin{lemma}
\label{lemma:skew_atoms}
Let $A$ be an $r$-fine $\R^{n-1}$-based atom in $\R^n$ and $\cS(A) = (A_1,\ldots, A_{p-1})$ a skew partition of $A$ in \eqref{eq:skewpartition} for $p>2$. Then there exist $L$-bilipschitz homeomorphisms $\varphi_j \colon A \to A_j$ for $j=1,\ldots, p-1$, where $L=L(n,p)$, so that on $F=A\cap \R^{n-1}$ and $C = \partial A-F$ we have
\begin{itemize}
\item[(i)] $\varphi_1|F = \id$, $\varphi_{p-1}|C = \id$, $\varphi_j|F\cap C = \id$ for each $j$, and 
\item[(ii)] $\varphi_j(F)=L_j$ and $\varphi_j(C)=L_{j+1}$ for each $j$.
\end{itemize}
\end{lemma}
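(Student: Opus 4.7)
The atom $A$ is $F$-based, so I identify $A$ with the cylinder $P \times [0,r]$, where $P$ is a cubical $(n-1)$-cell; then $F = P \times \{0\}$ and $C$ is the union of the top face $P\times\{r\}$ with the lateral face $\partial P\times[0,r]$. Under this identification each $L_j$ (for $2\le j\le p-1$) is a Lipschitz graph over $P$: the map $\delta_{B,j}$ is $(j/p)$-Lipschitz, and the consecutive gaps $\delta_{B,j+1}-\delta_{B,j}$ are bounded below by $r/(3p)$ and above by $M(n,p)\,r$, using that the atom length $\ell(A)$ is bounded in terms of $n$.

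The plan is to construct each $\varphi_j$ as a fibre-wise vertical affine interpolation
\[
\varphi_j(x,t) = \bigl(x,\; a_j(x) + (t/r)\bigl(b_j(x)-a_j(x)\bigr)\bigr),
\]
where $[a_j(x), b_j(x)]$ parametrizes the vertical cross-section of $A_j$ at $x\in P$. For intermediate indices $2\le j\le p-2$ take $a_j = \delta_{B,j}$, $b_j = \delta_{B,j+1}$; for $j=1$ take $a_1 \equiv 0$, $b_1 = \delta_{B,2}$, so that $\varphi_1|F = \id$ is automatic; for $j = p-1$ take $a_{p-1}=\delta_{B,p-1}$, $b_{p-1} \equiv r$, so that $\varphi_{p-1}$ fixes the top face $P\times\{r\}$ pointwise. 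A direct Jacobian computation gives the uniform bilipschitz bound: the vertical scale $(b_j-a_j)/r$ lies in a positive compact interval depending only on $n$ and $p$, and the horizontal correction involves the uniformly $(1/p)$-Lipschitz gradient $\nabla\delta_{B,j}$. The image conditions $\varphi_j(F)=L_j$, $\varphi_j(C)=L_{j+1}$, and the fixing of $F\cap C$ are then built into the formula.

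The hard part will be realizing the full condition $\varphi_{p-1}|C = \id$, because $C$ contains the entire lateral face $\partial P \times [0,r]$ on which the vertical affine formula above is \emph{not} the identity. The resolution is to modify $\varphi_{p-1}$ on a horizontal collar of $\partial P$ of width $r/3$: pin the lateral face by the identity, use the fibre-wise affine formula outside the collar, and interpolate across the collar with an explicit PL-homeomorphism on each unit cube of $P$ abutting $\partial P$. The existence of a uniformly bilipschitz interpolation here is a shelling argument in the spirit of Lemma~\ref{lemma:fRt_PL}, and the finite number of congruence classes of the cubes involved delivers a bilipschitz bound depending only on $n$ and $p$.
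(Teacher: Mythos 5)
The paper offers no argument for this lemma (it is explicitly left to the reader), so your proposal must stand on its own, and it has a genuine gap at its core. Your uniform lower bound $\delta_{B,j+1}-\delta_{B,j}\ge r/(3p)$ comes from taking the displayed formula literally with the ``max''; but that reading is incompatible with the lemma itself: with ``max'' the stated range $[0,r/3]$ of $\delta_{B,j}$ is violated, $L_j\cup L_{j+1}$ does not bound an $n$-cell with boundary $L_j\cup L_{j+1}$ for $j\le p-2$, and $F\cap C\not\subset A_j$ for $j\ge 2$, so the condition $\varphi_j|F\cap C=\id$ could not even be posed. The intended functions are the truncated distances $\delta_{B,j}(x)=\frac jp\min\{\frac r3,\dist(x,F\cap C)\}$, which vanish on $F\cap C$; this is also what the application requires, since non-adjacent pieces of the skew partition must meet only in a codimension-two seam, matching $E_i\cap E_j=\bS^{n-2}$ for $|i-j|>1$. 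Under this reading all of $L_1,\ldots,L_p$ meet along the seam $F\cap C$, the cells $A_1,\ldots,A_{p-2}$ are pinched there, and your fibre-wise vertical affine map is not bilipschitz --- over points of $\partial F$ it collapses the whole fibre $\{x\}\times[0,r]$ to a point, so for $j=1,\ldots,p-2$ it is not even a homeomorphism onto $A_j$. The quantitative claim that the vertical scale $(b_j-a_j)/r$ stays in a compact subset of $(0,\infty)$ fails exactly where the lemma is nontrivial.

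The difficulty is in fact the opposite of the one you single out. For $j=p-1$, with the correct $\delta$'s your formula with $a_{p-1}=\delta_{B,p-1}$ and $b_{p-1}\equiv r$ is already the identity on all of $C$ (lateral face included, since $\delta_{B,p-1}=0$ on $\partial F$) and has vertical scale at least $2/3$, so no collar correction is needed there. What needs work is $j\le p-2$: near the seam the target $A_j$ is a wedge of angle comparable to $1/p$ between two Lipschitz graphs meeting along $F\cap C$, while the source $A$ has a right-angle corner there, so the map must open the corner angularly --- e.g.\ a polar/conical rescaling in the $2$-planes orthogonal to $\partial F$, or an explicit PL map on the boundary cubes in the spirit of Lemma~\ref{lemma:fRt_PL} --- with your vertical affine interpolation used only outside the collar $\{\dist(\cdot,F\cap C)\le r/3\}$, where the gap is the constant $r/(3p)$. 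Uniformity of $L(n,p)$ then follows from the finitely many congruence classes of boundary configurations, as you anticipate. As written, however, the central bilipschitz estimate is false and the construction does not produce the required homeomorphisms for $j=1,\ldots,p-2$.
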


Skew partitions of atoms merge to produce skew partitions of molecules.

\begin{lemma}
\label{lemma:skew_molecules}
There exists $L=L(n,p)$ with the following properties. Let $M$ be a molecule consisting of building blocks on the boundary of an $n$-cube $Q$ so that pair-wise unions of adjacent building blocks of different scales are planar. Then there exist an essential skew partition $\cS(M)=(M_1,\ldots, M_{p-1})$ of $M$ into $n$-cells and $L$-bilipschitz homeomorphisms $\psi_j \colon M \to M_j$, $j=1,\ldots, p-1$, for which 
\begin{itemize}
\item[(a)] $\partial M\cap \partial Q\subset \partial M_1$, $\partial M -  \partial Q\subset \partial M_{p-1}$,
\item[(b)] $\psi_i(M)\cap \psi_j(M)$ is an $(n-1)$-cell if $j=i+1$, and
\item[(c)] $\psi_i(M)\cap \psi_j(M) = \partial M \cap \partial Q$ for $|i-j|>1$.
\end{itemize}
\end{lemma}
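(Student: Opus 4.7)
The plan is to apply Lemma \ref{lemma:skew_atoms} to each building block of $M$ individually, then glue the block-level data together using the structural hypothesis on scale transitions. Decompose $M = \bigcup_\alpha B_\alpha$ into its constituent building blocks. Each $B_\alpha$ has a canonical floor $F_\alpha := B_\alpha \cap \partial Q$ and ceiling $C_\alpha := \mathrm{cl}(\partial B_\alpha \setminus (\partial Q \cup \bigcup_{\beta\neq\alpha} B_\beta))$, so after an isometry placing $F_\alpha$ in $\R^{n-1}\times\{0\}$, Lemma \ref{lemma:skew_atoms} supplies a skew partition $B_{\alpha,1},\ldots,B_{\alpha,p-1}$ of $B_\alpha$ and bilipschitz homeomorphisms $\varphi_{\alpha,j}\colon B_\alpha \to B_{\alpha,j}$ whose constants depend only on $n$ and $p$. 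The target is then $M_j := \bigcup_\alpha B_{\alpha,j}$ and $\psi_j := \bigcup_\alpha \varphi_{\alpha,j}$.

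The central technical step is ensuring that the per-block partitions agree on every interface $I_{\alpha\beta} := B_\alpha \cap B_\beta$. The hypothesis dichotomizes the possibilities: either (i) $B_\alpha$ and $B_\beta$ have the same scale and bend around an $(n-2)$-face of $Q$, or (ii) $B_\alpha\cup B_\beta$ is planar (which covers every scale-change interface as well as the coplanar same-scale case). In the bent case the two blocks have identical scale and symmetric geometry, and the formula $\delta_{B,j}$ is invariant under the bending symmetry, so the level sets $L_{\alpha,j}$ and $L_{\beta,j}$ automatically meet coherently on $I_{\alpha\beta}$. In the planar case both blocks share a common floor hyperplane, and I would replace the per-block height functions $\delta_{B_\alpha,j},\delta_{B_\beta,j}$ by a single height function on the maximal planar flat containing $B_\alpha\cup B_\beta$ (namely, signed distance from that hyperplane, normalized to agree with the per-block profiles away from $I_{\alpha\beta}$). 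A uniform PL interpolation within each block reconciles this global height function with the intrinsic one, at the cost of a bilipschitz distortion that depends only on $n$ and $p$ since only finitely many local configurations of adjacent blocks can occur, up to scaling, on the boundary of an $n$-cube.

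With compatibility secured on every interface, I would verify that $M_j$ is an $n$-cell by induction on the atom tree $\Gamma(M)$: attaching the next block's $j$-piece along an $(n-1)$-face to an already-constructed $n$-cell preserves the $n$-cell property, and this inductive procedure is available because $\Gamma(M)$ is a tree. The pasted map $\psi_j$ is a well-defined bilipschitz homeomorphism with constant depending only on $n$ and $p$. Condition (a) then follows from $\partial M \cap \partial Q = \bigcup_\alpha F_\alpha \subset \bigcup_\alpha \partial B_{\alpha,1}$ (since each $\psi_{\alpha,1}$ sends $F_\alpha$ to itself by Lemma \ref{lemma:skew_atoms}(i)) and the analogous statement for ceilings with $M_{p-1}$; condition (b) follows because $M_j \cap M_{j+1}$ is the union of the $(n-1)$-cells $B_{\alpha,j}\cap B_{\alpha,j+1} = L_{\alpha,j+1}$, which assemble into an $(n-1)$-cell by the interface compatibility; condition (c) follows because each block-level intersection $B_{\alpha,i}\cap B_{\alpha,j}$ for $|i-j|>1$ lies in $F_\alpha\cap C_\alpha \subset \partial M \cap \partial Q$.

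The main obstacle will be handling the compatibility at scale-change interfaces: the height function $\delta_{B,j}$ from Lemma \ref{lemma:skew_atoms} is intrinsic to each atom and uses the distance to $F\cap C$, so it does not transfer cleanly when adjacent blocks have different side lengths. The planarity hypothesis on scale transitions is precisely what makes the fix possible: it allows me to replace the intrinsic height on each maximal planar flat by a single signed-distance function to a common hyperplane, and then rescale block by block to match the desired per-block profile. Carrying this out while keeping the bilipschitz constant independent of the structure of $\Gamma(M)$ reduces to a finite-configuration argument, but verifying that the PL interpolation produces matching bilipschitz maps on overlaps (rather than merely matching level sets) is where the care is needed.
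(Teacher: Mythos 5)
Your overall scheme --- apply Lemma \ref{lemma:skew_atoms} block by block, reconcile the block-level partitions on the interfaces, glue along the tree $\Gamma(M)$, and extract uniform constants from the finiteness of local configurations --- is the same as the paper's, and your treatment of the planar (in particular scale-change) interfaces is in line with the paper's local modification of $\varphi_j$ and $\varphi_j'$ on $A\cap A'$. The genuine gap is the bent case. The claim that $\delta_{B,j}$ is ``invariant under the bending symmetry'', so that the level sets meet coherently on a bent interface, is false; in fact no bent configuration is symmetric, since the reflection interchanging the two base faces of the cube fixes the diagonal hyperplane, and a common face of two axis-parallel cubes is never contained in that hyperplane, so the reflection never preserves the interface. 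Concretely, take $n=3$, $Q'=[0,3]^3$, $B$ based on $[0,3]^2\times\{0\}$ consisting of its center $[1,2]^2\times[0,1]$ and the leaf $q_B=[0,1]\times[1,2]\times[0,1]$, and $B'$ based on $\{0\}\times[0,3]^2$ with center $q_{B'}=[0,1]\times[1,2]\times[1,2]$; then $I=B\cap B'=[0,1]\times[1,2]\times\{1\}$. Relative to $B$, $I$ is the top face of $q_B$, and since $\delta_{B,j}\le \frac{p-1}{p}\max\{\frac13,\dist(\cdot,F\cap C)\}<1$ on all of $F_B$, the only piece of $\cS(B)$ whose boundary meets $I$ is $B_{p-1}$. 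Relative to $B'$, $I$ is a wall lying over an edge of $F_{B'}\cap C_{B'}$, so \emph{every} piece $B'_1,\ldots,B'_{p-1}$ meets $I$, in strips of thickness about $1/(3p)$. Hence the naive unions $B_j\cup B'_j$ are disconnected for $j\le p-2$ (they meet in at most an $(n-2)$-set), (b) fails, and for $p\ge 4$ also (c) fails outright because $B_{p-1}\cap B'_1$ contains an $(n-1)$-dimensional strip of the interior interface $I$. Your planar-flat device cannot repair this, because at a bend there is no common base hyperplane on which to define a single signed-distance height.

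What is needed at a bend is a genuine local rearrangement, and this is exactly where the paper's proof differs from your proposal: it singles out the cube $q\subset B$ with $q\cap B'=B\cap B'$ (in the example above $q=q_B$, whose face $\{0\}\times[1,2]\times[0,1]$ lies on the second base face), observes that $A'=B'\cup q$ is again a planar atom based on that face, and then chooses the skew structures of $B$ and $A'$ so that inside $q$ the layers turn the corner, each $A'_j\cup B_j$ being an $n$-cell with $A'_1$ and $B_1$ meeting $\partial Q'$; cf.\;Figure \ref{fig:Gp_blocks_join_1}. In other words, near a bend the layers must be L-shaped in cross-section inside the bridging cube, interpolating between the graph-over-$F$ stacking on the $B$-side and the graph-over-$F'$ stacking on the $B'$-side; no choice of per-block height functions alone produces this. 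Once such a corner interpolation is added (it is again a finite list of configurations up to scaling, so uniformity of $L$ is preserved), your tree induction and the verification of (a)--(c) go through essentially as you describe.
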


\begin{proof}
It suffices to consider two cases: (i) a non-planar atom in $\Gamma(M)$, and (ii) two adjacent atoms in $\Gamma(M)$. 
 
Suppose first that $A$ is a non-planar atom in $\Gamma(M)$. Then $A$ consists of planar parts, all meeting in pairs of building blocks. Thus the general case follows from the special case of building blocks $B$ and $B'$ based on different faces of an $n$-cube, say $Q'$ (see Figure \ref{fig:Gp_blocks_join_1}). There exists a cube $q$ of side length $r$ in $B\cup B'$ contained in one of the building blocks, say $B$, so that $q\cap B'=B\cap B'$. Since $A' = B'\cup q$ is an atom, we find skew atoms $A'_j$ and $B_j$ for $j=1,\ldots, p-1$, in $A'$ and $B$, respectively, so that $A'_j\cup B_j$ is an $n$-cell for each $j$ and $A'_1$ and $B_1$ meet $\partial Q'$. Since $A'_j\cup B_j$ are $n$-cells for $j=1,\ldots, p-1$, it is now easy to define non-planar $n$-cells $A_1,\ldots, A_{p-1}$ forming an essential partition of $A$.

\begin{figure}[h!]
\includegraphics[scale=0.25]{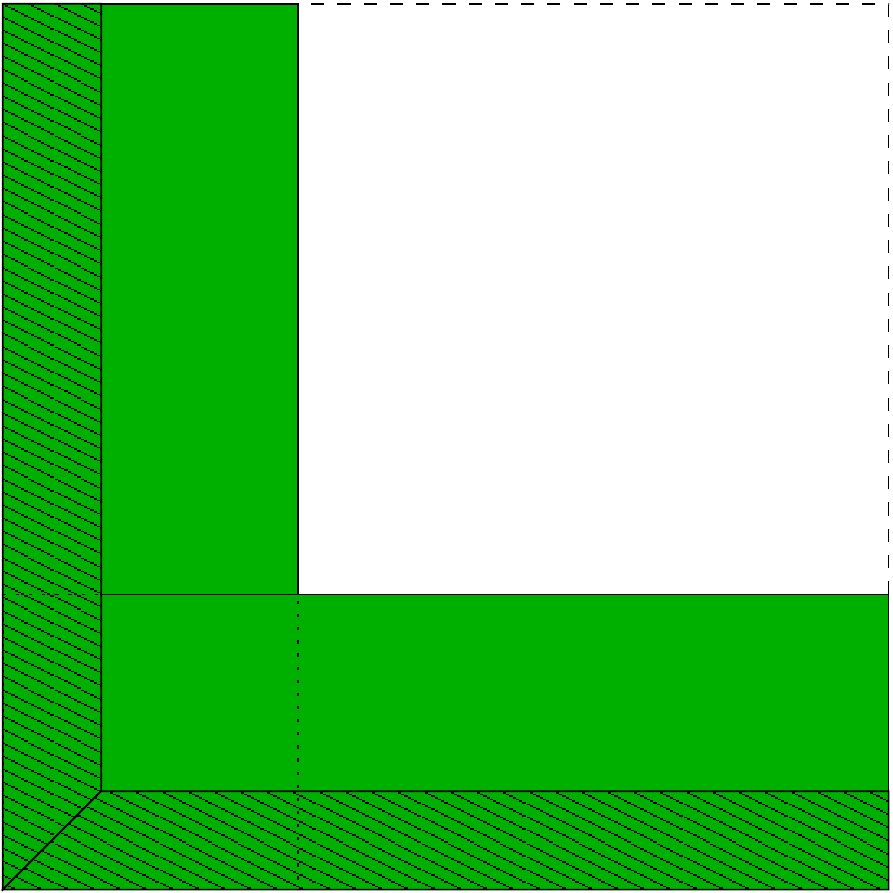}
\caption{Join of two skew non-planar building blocks.} 
\label{fig:Gp_blocks_join_1}
\end{figure}

Suppose now that $A$ is an $r$-fine atom adjacent to an $(r/3)$-fine atom $A'$. Again, there exist building blocks $B\subset A$ and $B'\subset A'$ so that $A'\cap A = B\cap B'$. We may assume that $B\cup B'$ is $\R^{n-1}$-based. Let $\cS(B)=(B_1,\ldots, B_{p-1})$ and $\cS(B')=(B'_1,\ldots, B'_{p-1})$ be skew partitions of $B$ and $B'$. Let $\varphi_j \colon B\to B_j$ and $\varphi'_j \colon B'\to B'_j$ be homeomorphisms as in Lemma \ref{lemma:skew_atoms}. It is now easy to modify these homeomorphisms on $A\cap A'$ to obtain homeomorphisms $\tilde \varphi_j$ and $\tilde \varphi'_j$ for $j=1,\ldots, p-1$, so that each $\tilde \varphi_j(B)\cup \tilde \varphi_j(B')$ is an $n$-cell. Since the modification is local, we may also assume that mappings $\tilde \varphi_j$ and $\tilde\varphi'_j$ are uniformly bilipschitz with constant depending only on $n$. We leave further details to the interested reader; see Figure \ref{fig:Gp_blocks_join_2}.
\end{proof}

\begin{figure}[h!]
\includegraphics[scale=0.35]{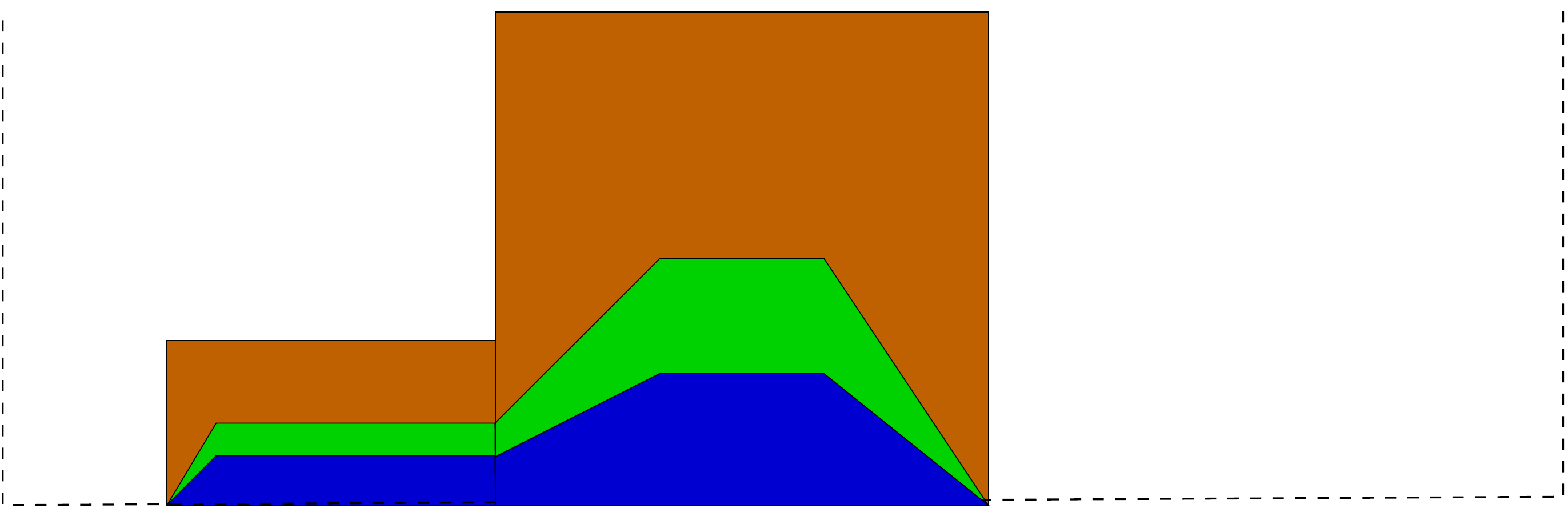}
\caption{A skew partitioned molecule; $p=4$.} 
\label{fig:Gp_blocks_join_2}
\end{figure}

\subsubsection{Coarsification of skew partitions}

In the proof of Proposition \ref{prop:3_skew} we use generalizations of primary and secondary modifications introduced in Section \ref{sec:RP}. The rearrangements are given using skew partitions having properties similar to cubical partitions. We introduce now the necessary terminology.

In this section, let $C$ be a cubical $n$-cell and $\cS=(S_1,\ldots, S_{p-1})$ a skew essential partition of $C$ into $n$-cells. 

Let $\alpha\in \Z_+$ and let $\cQ_\alpha(C)$ be a subdivision of $C$ into pair-wise disjoint $n$-cubes of side length $3^{-\alpha}$. We assign to each $q\in \cQ_\alpha(C)$ an index $i_q \in \{1,\ldots p-1\}$ with $i_q$ the minimal index for which $q\cap S_i$ has non-empty interior. Let $\cM_\alpha(\cS)$ be the set of cubes $q$ in $\cQ_\alpha(C)$ for which $\mathrm{int}(q\cap S_i)\ne \emptyset$ for more than $2$ indices $i\in \{1,\ldots, p-1\}$, and let 
\[
E_{\alpha,i}(\cS) = |\{ q\in \cQ_\alpha(C)\setminus \cM_\alpha(\cS) \colon i_q = i\}|.
\]

\begin{remark}
Clearly, $(E_{\alpha,1}(\cS),\ldots,E_{\alpha,p-1}(\cS))$ is an essential partition of $C-|\cM_\alpha(\cS)|$. Although the cubical sets $E_{\alpha,i}(\cS)$ need not be $n$-cells for all $\alpha\in \Z_+$, since $\cS$ is a skew partition, there exists $\alpha_0\in \Z_+$ for which $(E_{\alpha,1}(\cS),\ldots,E_{\alpha,p-1}(\cS))$ is an essential partition of $C-|\cM_\alpha(\cS)|$ into $n$-cells for $\alpha\ge \alpha_0$.
\end{remark}

\begin{definition}
Let $\alpha\in \Z_+$ and  $\cS=(S_1,\ldots, S_{p-1})$ be a skew partition of an $n$-cell $C$. The essential partition $\hat \cS_\alpha=(\hat S_1,\ldots, \hat S_{p-1})$ of $C$, where
\begin{equation}
\label{eq:cS_alpha}
\hat S_i = E_{\alpha,i}(\cS)\cup (|\cM_\alpha(\cS)|\cap S_i),
\end{equation}
is an \emph{$\varepsilon$-coarsification of $\cS$ for $\varepsilon>0$} if, for each $i=1,\ldots, p-1$, $\hat S_i$ is an $n$-cell, $E_{\alpha,i}(\cS)\ne \emptyset$, $\hat S_i \cap \hat S_{i-1}$ is an $(n-1)$-cell, and $\dist_\haus(S_i, \hat S_i) < \varepsilon$.
\end{definition}

\begin{figure}[h!]
\includegraphics[scale=0.35]{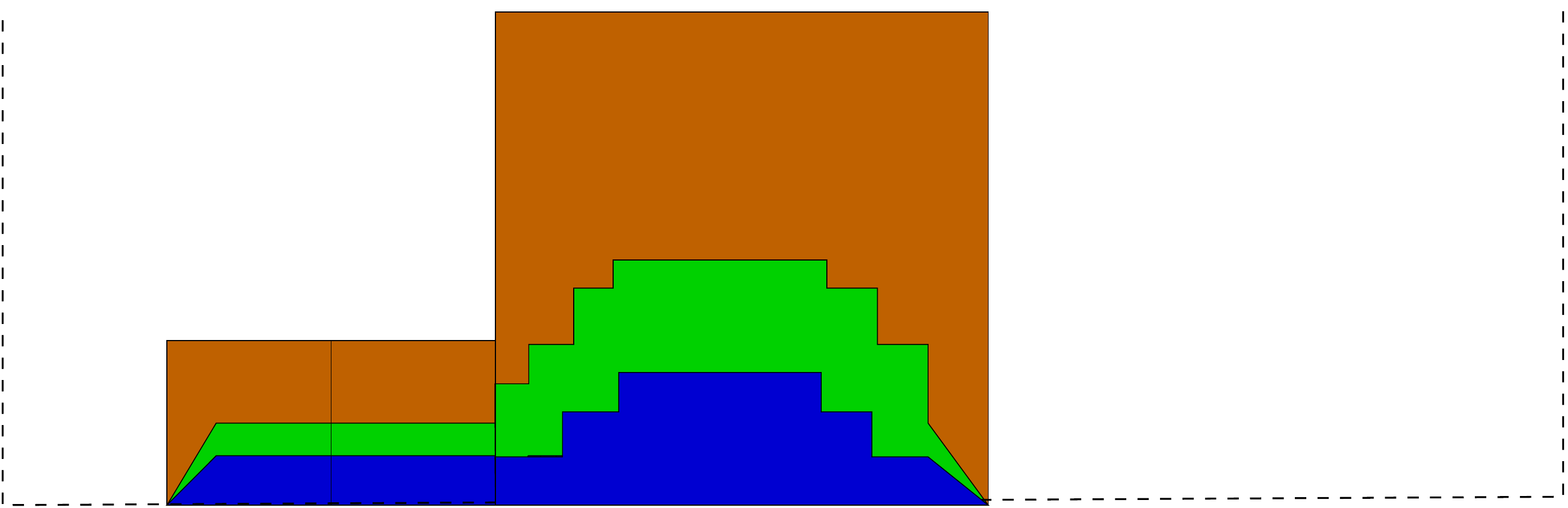}
\caption{A coarsification of the skew partition in Figure \ref{fig:Gp_blocks_join_2}.} 
\label{fig:coarsified}
\end{figure}

In the proof of Proposition \ref{prop:3_skew}, we modify the earlier $\cC$-modifications to produce skew partitions.  Heuristically, in generalized $\cC$-modification, we rearrange the domain $\hat S_i$ of a skew partition $\hat \cS_\alpha=(\hat S_1,\ldots,\hat S_{p-1})$ of a cube using atoms along common boundaries $\hat S_{i-1}\cap \hat S_i$. To obtain a repartition of a cube satisfying a collapsibility condition analogous to $\lambda$-collapsibility, we must restrict the combinatorial length of the atoms created by this generalized $\cC$-modification. With this aim in mind, we introduce now an additional condition for coarsified skew partions.

To motivate this condition, consider a $\cC$-cube $Q$ of color $i$ in a rough Rickman partition $\wt\bfOmega=(\tilde \Omega_1,\tilde \Omega_2,\tilde \Omega_3)$ of $\R^n$. Then $Q\cap \tilde \Omega_j$ is contained in at most $2n-2$ faces of $Q$ for $j\ne i$; see e.g.\;Figure \ref{fig:BB_C_mods_4} for $n=3$. Thus $Q\cap \tilde \Omega_j$ would meet at most $3^{\alpha(n-1)}(2n-2)$ cubes in $\cQ_\alpha(Q)$.

Now, let $\cS=(S_1,\ldots, S_{p-1})$ be a skew partition of an $n$-cube $Q$ and $\hat \cS_{\alpha}=(\hat S_1,\ldots, \hat S_{p-1})$ an $\varepsilon$-coarsification of $\cS$ for some $\alpha\in \Z_+$, and $\varepsilon\in (0,1)$. For $|i-j|=1$, let 
\[
P_{ij}(\hat \cS_\alpha) = \left\{ q\in \cQ_{\alpha}(Q) \colon q\subset \hat S_i\ \mathrm{and}\ q\cap \hat S_j\ \mathrm{contains\ a\ face\ of}\ q\right\}.
\]

\begin{definition}
\label{def:smallness}
The coarsification $\hat \cS_\alpha$ of $\cS$ is \emph{small} if, for each $|i-j|=1$, there exists a tree 
\begin{equation}
\label{eq:alpha_tree}
\Gamma \subset \Gamma\left(P_{i,j}(\hat \cS_\alpha) \cup P_{j,i}(\hat \cS_\alpha)\right)
\end{equation}
containing $P_{ij}(\hat \cS_\alpha)$ in its vertex set with $\# \Gamma < 3^{\alpha(n-1)}(2n-2)$.
\end{definition}

\begin{figure}[h!]
\includegraphics[scale=0.30]{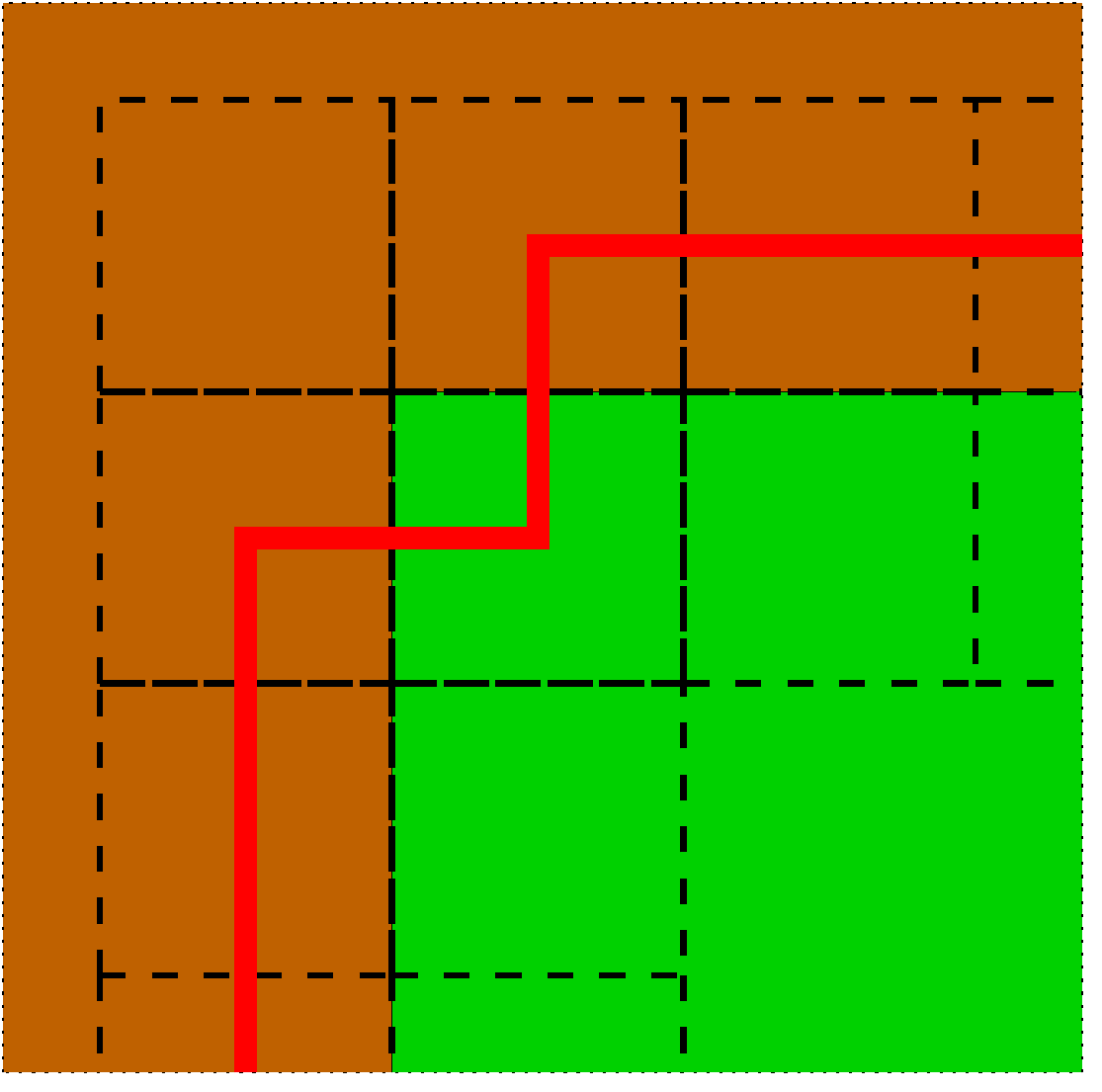}
\caption{Schematic figure on tree $\Gamma$; a detail.} 
\label{fig:c-tree}
\end{figure}

\begin{remark}
It is straightforward to check that for the skew partition $\cS(M)$ of Lemma \ref{lemma:skew_molecules} the coarsified partitions $\hat\cS_\alpha(M)\cap Q$ are small for all cubes $Q\in \Gamma^\icl(M)$.
\end{remark}

Before confronting the proof of Proposition \ref{prop:3_skew}, we introduce a combinatorial notion related to skew partitions. We say that an essential partition $\cS=(S_1,\ldots, S_{p-1})$ of a cube is \emph{linear} if the adjacency graph $\Gamma(\cS)$ is an arc, that is, each vertex in $\cS$ has valence at most $2$. Furthermore, we may also assume from now on that elements in $\cS$ are indexed so that $S_1$ and $S_{p-1}$ have valence $1$ in $\Gamma(\cS)$ and neighbors of $S_i$ are $S_{i-1}$ and $S_{i+1}$ for each $i=2,\ldots, p-2$.

\begin{remark}
The skew partition $\cS(M)$ in Lemma \ref{lemma:skew_molecules} is linear and $\cS(M)\cap Q$ is linear for each $Q\in \Gamma^{\icl}(M)$. Note also that, for $\alpha\in \Z_+$ large enough, coarsifications $\hat \cS_\alpha(M)$ of $\cS(M)$ are also linear.
\end{remark}

\subsubsection{Proof of  Proposition \ref{prop:3_skew}}

We construct the essential partition $\bfOmega$ with the same scheme as in Section \ref{sec:RP} but now with skew partitions and coarsification methods. Apart from coarsification, this approach is similar to Rickman's in \cite[Section 8.1]{R}. Since the methods are based on those of Section \ref{sec:RP} with modifications already introduced in Section \ref{sec:pillows}, we merely sketch the argument.

Mimicking the proof of Theorem \ref{thm:RP}, we construct a sequence $(\cS_m)$ of essential partitions of $n$-cells $3^m([0,3]^{n-1}\times [-3,3])$ analogous to the sequence $(\bfOmega_m)$. Recall that $\bfOmega_0= (\Omega_{0,1},\Omega_{0,2},\Omega_{0,3})= ([0,3]^n, [0,3]^{n-1}\times [-3,0], [3,6]\times [0,3]^{n-1})$ and $\bfOmega_1=(3\Omega_{0,1}-(A_2\cup A_3),3\Omega_{0,2}\cup A_2, 3\Omega_{0,3}\cup A_3)$, where $A_2$ and $A_3$ are atoms.

It is not necessary to define $\cS_0$, and we set directly
\[
\cS_1 = ([0,9]^n-A_3,[0,9]^{n-1}\times[-9,0], S_{1,2},\ldots, S_{1,p}),
\]
where $(S_{1,2},\ldots, S_{1,p})$ is the skew partition $\cS(3A_3)$  into $p-1$ $n$-cells as in \eqref{eq:skewpartition}.

\medskip
\noindent\emph{Construction of $S_2$; first generalized modifications}

We construct $\cS_2$ from $\cS_1$ by independent generalized $\cD$-modifications; note that $\bfOmega_2$ is obtained from $\bfOmega_1$ by a secondary $\cC$-modification, as observed in Remark \ref{rmk:flat_top}. In this particular case it suffices to observe that, in the construction of $\bfOmega_2$, we extend atom $3A_3$ to a molecule $M$ by attaching $1$-atoms. 
Thus, to obtain $\cS_2$ from $\cS_1$, it is enough to extend the skew partition $3(S_{1,2},\ldots,S_{1,p})$ to a skew partition of $M$; cf.\;Lemma \ref{lemma:skew_molecules}. This extension of the skew partition $3\cS_1$ into each $1$-fine atom is the \emph{generalized $\cD$-modification}. Thus 
\[
\cS_2 = (S_{2,0},\ldots, S_{2,p}) = ([0,27]^n - M, [0,27]^{n-1}\times [-27,0], S_{2,2},\ldots, S_{2,p}),
\]
where $(S_{2,2},\ldots, S_{2,p})$ is a skew partition of $M$. 

In later steps, we also use similar generalizations of secondary modifications. Note that we use these generalizations alongside with (original) $\cD$-modifications and secondary modifications.

\smallskip
\noindent\emph{Construction of $\cS_3$; generalized $\cC$-modifications.}

To obtain $\cS_3=(S_{3,0},\ldots, S_{3,p})$ from $\cS_2$, we use generalized $\cD$-modifications and generalized $\cC$-modifications in rescaled $\cC$-cubes. Note that, for $Q\in \Gamma^\icl(3A_3)$, the essential partition $Q\cap \cS_2$ is a skew partition of $Q$ into $p-1$ skew $n$-cells meeting the remaining two elements of $\cS_2$ analogously as in the situation with a $\cC$-cube; recall that $3A_3\subset \Omega_{2,3}$ was adjacent to domains $\Omega_{2,1}$ and $\Omega_{2,2}$ in $\bfOmega_2$ (see Section \ref{sec:2ra}). We call $Q$ therefore a \emph{generalized $\cC$-cube}.

Using notations related to $Q$ and the skew partition $\cS_2$, we now describe generalized $\cC$-modification in $3Q$. For this modification, we consider cubes in two scales $3^{-\beta}$ and $3^{-\alpha}$ for $\alpha >\beta\ge p$. Thus we divert here from the convention that side lengths of cubes are at least $1$.

First, let $\beta\ge p$ be an integer, to be determined later, for which we may choose, for $i=2,\ldots, p$, a cube $q_i\in\cM_{\beta}(3(Q\cap \cS_2))$ so that $\dist_{\!\infty}(q_i,q_j)\ge 3^{-\beta}$ for $i\ne j$. Note that each cube in $\cM_\beta(3(Q\cap \cS_2))$ is adjacent to $3S_{2,0}$ and $3S_{2,1}$. 

Second, let $\alpha > \beta$, to be determined later, so that $\hat \cS_\alpha =  (\hat S_1,\ldots,\hat S_{p-1})$, where
\[
\hat S_i = E_{\alpha,i}(3(Q\cap \cS_2))\cup (|\cM_\alpha(3(Q\cap \cS_2))|\cap S_i),
\]
is a $1$-coarsification of $\cS=3(Q\cap \cS_2) = (S_1,\ldots, S_{p-1})$ as in \eqref{eq:cS_alpha}. By increasing $\alpha$, if necessary, there exists for each $i=1,\ldots, p-1$ adjacent cubes $q'_i, q''_i\in \cQ_{\alpha}(Q)$ so that $q'_i \subset q_i$ and $q''_i \in P_{i,i-1}(\hat \cS_{\alpha})$; when $i=1$, we assume that $q''_1$ meets $\partial (3Q)$.

We modify now the cells $3\hat S_2,\ldots, 3\hat S_{p-1}$ in $3Q$ as follows; modification of $3\hat S_1$ is similar and postponed to the end of the process.

For each $i=2,\ldots, p-1$, let $\Gamma_i$ be a maximal tree as in \eqref{eq:alpha_tree}. Let $a'_i$ be the associated $3^{-\alpha-2}$-fine atom, and let $a_i = a'_i \cup q'_i$; then this allows $a_i$ to enter both $\hat S_i$ and $\hat S_{i-1}$, see Figure \ref{fig:c-atom}. Fix also a small skew partition $(a_{i,1},\ldots, a_{i,p-1})$ of $a_i$ so that $(\hat S_{i-1},a_{i,1},\ldots, a_{i,p-1},\hat S_i-a_i)$ is a skew partition of $\hat S_{i-1}\cup \hat S_i$ with cyclic adjacency graph; cf.\;Lemma \ref{lemma:skew_molecules}.

\begin{figure}[h!]
\includegraphics[scale=0.50]{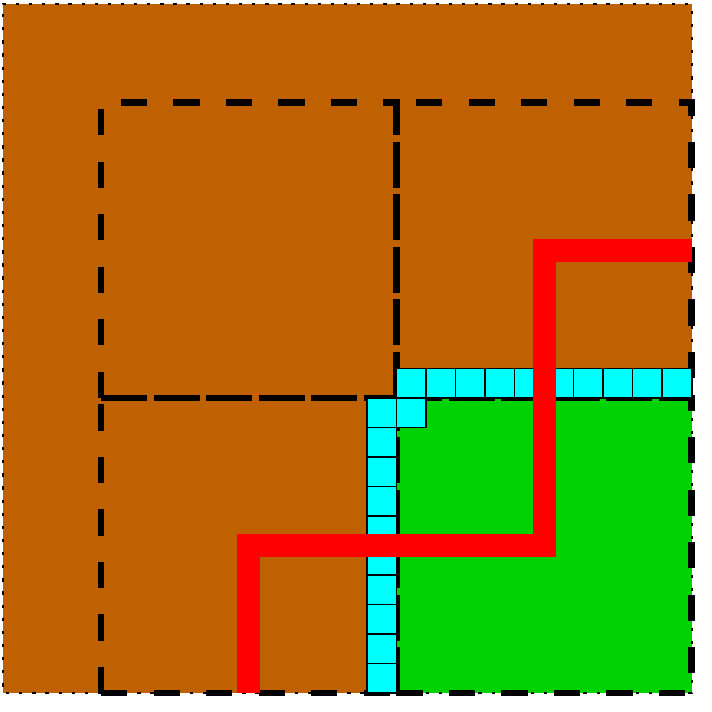}
\caption{Schematic figure on an atom $a_i$ associated to a tree $\Gamma_i$; a detail.} 
\label{fig:c-atom}
\end{figure}

To connect cells $a_{i,k}$ to cells $\hat S_j$ for $j\ne i$ and $k\in \{1,\ldots,p-1\}$, note that for each $i=2,\ldots, p-1$ there exists a unique graph isomorphism 
\[
\theta_i \colon \Gamma(\hat \cS_\alpha) \to \Gamma(\hat S_{i-1}-a_i,a_{i,1},\ldots, a_{i,p-1},\hat S_i-a_i)
\]
satisfying $\theta_i(\hat S_{i-1})=\hat S_{i-1}-a_i$ and $\theta_i(\hat S_i)=\hat S_i-a_i$.

We fix now, on each cube $q_i$, a small skew partition $(q_{i,1},\ldots, q_{i,p-1})$ so that $q_{i,j} \cup (\hat S_j-q_i)$ and $q_{i,j}\cup \theta_i(a_j)$ are skew $n$-cells. 

Then, by attaching cells $q_{i,j}\cup \theta_i(a_j)$ to cells  $\hat S_j$ for $2\le j \le  p-1$, we obtain cells $Q_j$ for which the system $(\hat S_1,Q_2,\ldots Q_{p-1})$ produces the desired skew partition of $3Q$ after we make an analogous extension of both $\hat S_1$ and $Q_j$ along $\partial (3Q)$. We leave this last detail to the interested reader.

We conclude by noting that, since the atoms $a_i$ have side length $3^{-\alpha}$, we do not need to rearrange their scaled copies before constructing $\cS_{3+(\alpha+2)}$. At that stage, cubes in $\Gamma^\icl(3^{\alpha+1}a_i)$ are generalized $\cC$-cubes. A similar comment applies to cubes $q_i$ and the construction of $\cS_{3+(\beta+2)}$. Note also that it suffices to fix, up to an isometry, one essential partition for a cube of side length $3^{-\beta}$ for all generalized $\cC$-modifications. In particular, we may fix parameters $\alpha$ and $\beta$ to depend only on $n$ and $p$.

\medskip
\noindent\emph{Construction of $\bfOmega$; inductive process}

With these generalized primary and secondary rearrangements at our disposal, we proceed as in Section \ref{sec:RP} and obtain an essential partition $\cS_m$ from $\cS_{m-1}$ for every $m>3$. Similarly as in the proof of Theorem \ref{thm:3} (for $p=2$) we may arrange that these essential partitions yield an essential partition $\bfOmega=(\Omega_0,\ldots, \Omega_p)$ of $\R^n$ satisfying the (generalized) tripod property; again $\Omega_0$ and $\Omega_1$ are connected and $\Omega_2,\ldots, \Omega_p$ have $2^{n-1}$ components each. 

Note that the combinatorial length estimate for small skew partitions yields that $\Omega_0$, $\Omega_1$, and each component of $\Omega_r$ for $r\ge 2$ are $\lambda$-collapsible in a natural generalized sense; in dimension $n=3$ we use again particular configurations illustrated in Section \ref{sec:IC_dim3} to obtain collapsibility. Analogously as in Sections \ref{sec:cond_f} and \ref{sec:proof_RP}, we obtain that $\Omega_0$, $\Omega_1$, and each component of $\Omega_r$ are bilipschitz to $\R^{n-1}\times [0,\infty)$. Thus $\bfOmega$ is a skew Rickman partition satisfying the (generalized) tripod property and we have proved Proposition \ref{prop:3_skew}. $\qed$

\subsection{Proof of Proposition \ref{prop:4}}
\label{sec:FINAL}

Let $\bfOmega = (\Omega_0,\ldots, \Omega_p)$ be a skew Rickman partition as in Proposition \ref{prop:3_skew}. Then $\partial_\cup \bfOmega$ carries a uniformly bilipschitz triangulation into $(n-1)$-simplices together with an associated labeling function. 

Due to the cyclic combinatorics of domains in $\bfOmega$, that is, since $\Omega_j \cap \Omega_{j+1}$ is locally an $(n-1)$-cell for $j=0,\ldots, p$ ($\!\!\!\!\mod(p+1)$), we define a parity function $\nu_{\partial_\cup \bfOmega} \colon (\partial_\cup \bfOmega)^{(n-1)} \to \{\pm 1\}$  for $p>2$ analogous to the case $p=2$ in Section \ref{sec:triangulation}.

To construct a pillow cover over the triangulation of $\partial_\cup \bfOmega$ it suffices to discuss pillows over pairs of adjacent $(n-1)$-simplices. We merely describe differences from the case $p=2$; apart from these slight modifications we proceed as in Section \ref{sec:pillows}.

Let $\sigma$ and $\sigma'$ be an adjacent pair of $(n-1)$-simplices on $\partial_\cup \bfOmega$ and suppose $\nu_{\partial_\cup \bfOmega}(\sigma)=-1$. We may also assume, to simplify  notation, that $\sigma\cup \sigma'\subset \R^{n-1}\times \{0\}$. In this case the sheets $\hat \sigma_1,\ldots, \hat \sigma_p$ on $\sigma$ are given by the graph of a function $\Psi_\sigma \colon \sigma \times \{1,\ldots, p\} \to \R$ similarly as in Section \ref{sec:pillow_simplex}. Sheets $\hat \sigma'_1,\ldots \hat \sigma'_{p+2}$ on $\sigma'$ are similarly given by the graph of a function $\Psi_{\sigma'} \colon \sigma' \times \{1,\ldots, p+2\}\to \R$. We require that these pillows satisfy compatibility conditions analogous to those of Definition \ref{def:compatible_pillows} in Section \ref{sec:pcas}. Since local modifications of pillows are similar to the case $p=2$, we leave the finer details to the interested reader and discuss in detail only the 'shuffle' of domains. 

Suppose for now that we have fixed functions $\Phi_\sigma$ and $\Phi_{\sigma'}$ providing us sheets for simplices $\sigma$ and $\sigma'$, respectively. Let $D_0,D_1,\ldots, D_p$ be the components of 
\[
\sigma \times \R\setminus (\bigcup_{i=1}^p \hat \sigma_i)
\]
so that $\hat\sigma_1 \subset \partial D_0$, $\hat\sigma_i \cup \hat\sigma_{i+1}\subset \partial D_i$ for $i=1,\ldots, p-1$, and $\hat \sigma_p \subset \partial D_p$. Let $D'_0, \ldots, D'_{p+2}$ be the components of 
\[
\sigma'\times \R \setminus (\bigcup_{j=1}^{p+2} \hat \sigma'_j)
\]
in the same order, that is, $\hat \sigma'_0 \subset \partial D'_0$, $\hat\sigma'_j \cup \hat\sigma'_{j+1}\subset \partial D'_j$ for $j=1,\ldots, p+1$, and $\hat \sigma_{p+2}\subset \partial D'_{p+2}$.

Following the method in Section \ref{sec:pcas}, we may assume that, for functions $\Phi_\sigma$ and $\Phi_{\sigma'}$, the sets $D_0 \cup D'_0\cup D'_{p+1}$, $D_i \cup D'_{p+1-i}$ for $i=1,\ldots, p-1$, and $D_p\cup D'_{p+2}\cup D'_1$ are connected components of 
\begin{equation}
\label{eq:merge}
(\sigma\cup \sigma')\times \R  \setminus ( \bigcup_{i=1}^p \hat \sigma_i \cup \bigcup_{j=1}^{p+2} \hat \sigma_j ).
\end{equation}
Note that in order to merge the sets $D_i$ and $D'_j$ this way it suffices to subdivide the set $\tau_0\subset \tau = \sigma\cap \sigma'$, defined in Section \ref{sec:pillow_simplex}, into $(n-2)$-simplices and to define several openings this way.

This 'shuffle' allows the domains $D_p$ and $D'_p$ to be connected across $\bar \sigma\cup \sigma'$ and preserves the global adjacency structure on these domains when passing from $\bfOmega$ to $\tilde \bfOmega$.

To fix notation, suppose that simplices $\sigma$ and $\sigma'$ in $\partial_\cup \bfOmega$ are between domains $\Omega_\ell$ and $\Omega_{\ell+1}$ for $\ell \in \{0,\ldots, p\}$, where we understand $\ell+1=0$ if $\ell=p$. We may assume that locally near $\sigma\cup \sigma'$, $\Omega_\ell$ is contained in $(\sigma \cup \sigma')\times (-\infty,0]$.

We begin with the negative simplex $\sigma$. The adjacency graph $\Gamma(\bfOmega\cap (\sigma\times \R))$ near $\sigma$ consists only of an edge between $\Omega_\ell$ and $\Omega_{\ell+1}$. The adjacency graph of domains $D_0,\ldots, D_p$, on the other hand, is an arc from $D_0$ to $D_p$. By construction of the essential partition $\tilde \bfOmega$, the sets $D_0,\ldots, D_p$ are contained in elements of the essential partition $\tilde \bfOmega$. Since $\tilde \bfOmega$ has the same cyclic adjacency graph as $\bfOmega$ and $\Gamma(\tilde \bfOmega \cap (\sigma\times \R))$ is an arc of length $p$, we note that domains $D_0,\ldots, D_p$ belong to sets $\tilde \Omega_\ell, \tilde \Omega_{\ell-1},\ldots, \tilde \Omega_1, \tilde \Omega_p,\ldots, \tilde \Omega_{\ell+1}$, in this order.  

For the positive simplex $\sigma'$, we note that, by \eqref{eq:merge} and by the same argument, the domains $D'_0,\ldots, D'_{p+2}$ are contained in domains $\tilde \Omega_\ell, \tilde \Omega_{\ell+1}, \ldots, \tilde \Omega_p, \tilde \Omega_1,\ldots, \tilde \Omega_{\ell},\tilde \Omega_{\ell+1}$ in this order.

As a remark, we note that if we merge graphs $\Gamma(\tilde \bfOmega \cap (\sigma \times \R))$ and $\Gamma(\tilde \bfOmega\cup (\sigma'\times \R))$ by identifying vertices corresponding to domains $D_0$ and $D_p$ with $D'_0$ and $D'_{p+2}$, respectively, we obtain a cyclic graph which is a natural double cover of $\Gamma(\tilde \bfOmega)$.

\begin{figure}[h!]
\includegraphics[scale=0.20]{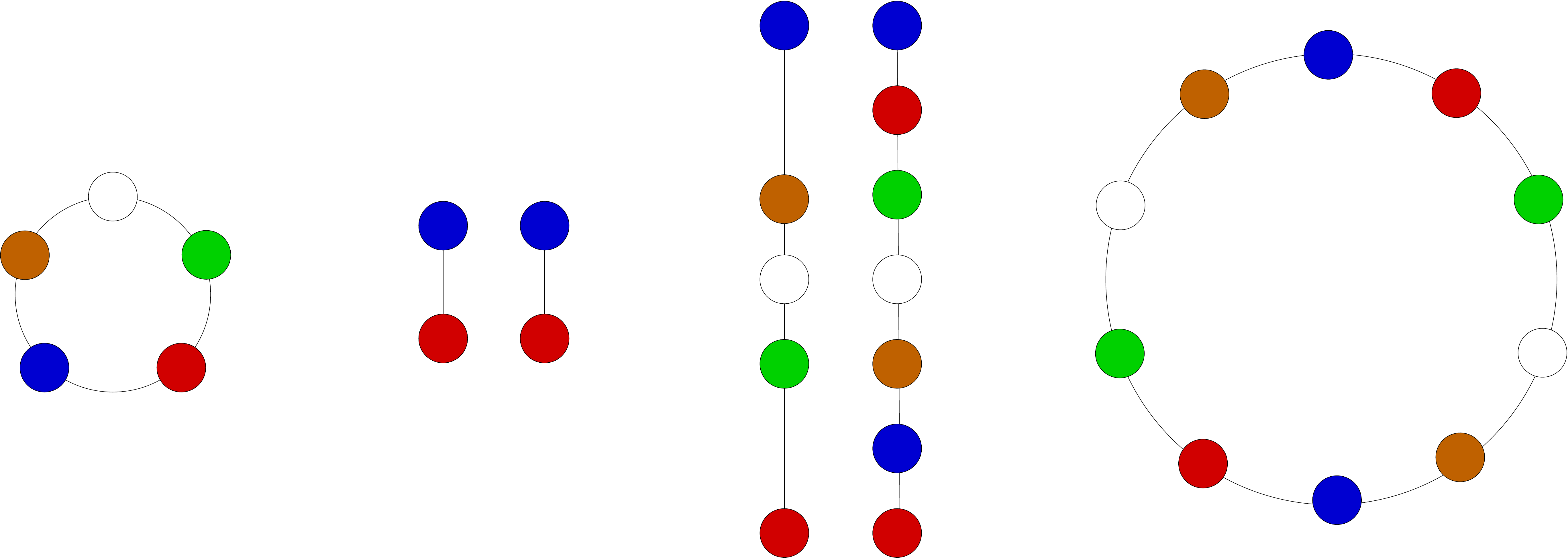}
\caption{Case $p=5$. From left to right: cyclic adjacency graph of $\tilde \bfOmega$, the adjacency graphs $\Gamma(\bfOmega\cap (\sigma\times \R))$ and $\Gamma(\bfOmega\cap (\sigma'\times \R))$,  the adjacency graphs $\Gamma(\tilde\bfOmega \cap (\sigma \times \R))$ and $\Gamma(\tilde \bfOmega\cap (\sigma'\times \R))$, and the merge of $\Gamma(\tilde\bfOmega \cap (\sigma \times \R))$ and $\Gamma(\tilde \bfOmega\cap (\sigma'\times \R))$.}
\label{fig:new_local_adjacency}
\end{figure}
 
This remark concludes the construction of the essential partition $\tilde \bfOmega$ and the proof of Proposition \ref{prop:4}. $\qed$

\begin{corollary}
\label{cor:John-final}
The domains $\interior \wt\Omega_0,\ldots,\interior \wt\Omega_p$, as well as $\interior \Omega_0,\ldots, \interior \Omega_p$, are uniform domains.
\end{corollary}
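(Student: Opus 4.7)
The claim has two parts: uniformity of the domains $\interior \Omega_i$ from the skew Rickman partition produced by Proposition~\ref{prop:3_skew}, and uniformity of the pillow-modified domains $\interior \wt\Omega_i$ produced by Proposition~\ref{prop:4}. I would handle them in this order, reducing the second to the first by a bilipschitz transfer.

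For the first part, I would carry over the argument sketched for Corollary~\ref{cor:John-domains} essentially verbatim. The skew partition $\bfOmega=(\Omega_0,\ldots,\Omega_p)$ is built by the same inductive scheme as in Section~\ref{sec:RP}, using the generalized $\cC$- and $\cD$-modifications introduced in Section~\ref{sec:FT}; the combinatorial data used in the proof of Corollary~\ref{cor:John-domains}---bounded valence of $\Gamma(\hull(\Omega_{m,i}))$, $\lambda$-collapsibility, bounded atom length $\ell_{\atom}$, and the John property of the side-length function $D\mapsto \rho(\hull(D))$ along geodesics in $\Gamma(\Omega_i)$---are preserved, with constants depending only on $n$ and $p$. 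Given $x,y\in \interior\Omega_i$, let $D_x,D_y$ be the (dented) atoms containing them and let $D_1,\ldots,D_r$ be the geodesic in $\Gamma(\Omega_i)$ from $D_x$ to $D_y$; concatenating short curves through the hulls $\hull(D_j)$ produces a path $\gamma\colon[0,1]\to \Omega_i$ satisfying $\ell(\gamma)\le C|x-y|$ and $\min\{\ell(\gamma|_{[0,t]}),\ell(\gamma|_{[t,1]})\}\le C\,\dist(\gamma(t),\partial\Omega_i)$ with $C=C(n,p)$, which is exactly the double-cone (uniformity) condition.

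For the second part, the pillow construction of Section~\ref{sec:pillows}, extended to $p>2$ as in Section~\ref{sec:FINAL}, provides for each $i$ a bilipschitz homeomorphism
\[
\Phi_i\colon (\Omega_i,d_{\Omega_i})\longrightarrow (\wt\Omega_i,d_{\wt\Omega_i})
\]
in the inner metrics, with constants depending only on $n$ and $p$; this map is the identity outside the unit neighborhood $\cN(\partial_\cup\bfOmega)$ of the pair-wise common boundary and extends to a BLD map on the closures. Indeed, away from $\cN(\partial_\cup\bfOmega)$, $\Omega_i$ and $\wt\Omega_i$ coincide; near $\partial_\cup\bfOmega$ the equivalence is assembled from the uniformly bilipschitz identifications of complementary components supplied by Lemmas~\ref{lemma:biliph} and~\ref{lemma:planar_BLD}, glued consistently across adjacent simplices as in the proof of Proposition~\ref{prop:special_4}. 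Since $\Omega_i$ is uniform by the first part, its inner metric is bilipschitz equivalent to the restriction of the Euclidean metric; transporting double-cone curves through $\Phi_i$ then yields the uniformity of $\interior\wt\Omega_i$.

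The main technical obstacle is to ensure that the inner-metric bilipschitz equivalence indeed delivers the \emph{Euclidean} cone condition in $\wt\Omega_i$. This rests on two observations. First, the pillow covers are chosen uniformly Lipschitz with constants depending only on $n$ and $p$ (Lemmas~\ref{lemma:planar_complements}--\ref{lemma:planar_BLD}), so the middle pillow domains $P_\sigma^M$ are bilipschitz $n$-cells and do not create narrow pinches; this already implies $\wt\Omega_i$ is a John domain, hence its inner and Euclidean metrics are bilipschitz equivalent. Second, the Hausdorff bound $\dist_\haus(\partial_\cup\bfOmega,\partial_\cup\wt\bfOmega)\le 1$ together with the uniform Lipschitz control on $\Phi_i$ gives $\dist(\Phi_i(x),\partial\wt\Omega_i)\asymp \dist(x,\partial\Omega_i)$ for $x\in\Omega_i$ with universal constants, so the cone condition transfers directly. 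Combining these two observations with the uniformity of $\Omega_i$ completes the proof.
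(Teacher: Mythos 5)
Your overall route coincides with the paper's: the paper disposes of this corollary in two sentences, citing Corollary \ref{cor:John-domains} together with bilipschitz invariance of uniformity to handle $\interior\Omega_0,\ldots,\interior\Omega_p$, and then transferring to $\interior\wt\Omega_k$ through the inner-metric bilipschitz equivalence $(\interior\Omega_k,d_{\interior\Omega_k})\to(\interior\wt\Omega_k,d_{\interior\wt\Omega_k})$ furnished by the pillow construction. Your first part, which re-runs the geodesic/John argument of Corollary \ref{cor:John-domains} inside the adjacency structure of the skew partition instead of invoking bilipschitz invariance, is a harmless and somewhat more self-contained variant, and you correctly identify the delicate point that the paper leaves implicit: an inner-metric bilipschitz equivalence does not by itself transport the Euclidean cigar condition.

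However, your repair of that point contains a genuine error. You claim that since the pillows are uniformly Lipschitz, $\wt\Omega_i$ is a John domain, ``hence its inner and Euclidean metrics are bilipschitz equivalent.'' A John domain need not be quasiconvex: the slit disk $\bB^2\setminus\bigl([0,1)\times\{0\}\bigr)$ is John, yet points on opposite sides of the slit are Euclidean-close and inner-far. So quasiconvexity of $\wt\Omega_i$ --- which is exactly half of the uniformity you are trying to establish --- cannot be deduced this way, and as written this step is circular. The gap is repairable with ingredients you already list: since $\Phi_i$ is the identity off a bounded neighborhood of $\partial_\cup\bfOmega$, displaces points by a uniformly bounded amount, is assembled from uniformly Euclidean-bilipschitz local maps supported in simplex columns (Lemma \ref{lemma:biliph}), and satisfies $\dist(\Phi_i(x),\partial\wt\Omega_i)\asymp\dist(x,\partial\Omega_i)$, you should verify the cigar condition for $\wt\Omega_i$ directly by pushing forward the uniform curves of $\Omega_i$: curve lengths are preserved up to a constant because inner-bilipschitz maps are length-bilipschitz, endpoint separations are comparable either by the local Euclidean-bilipschitz control (for nearby points in the pillow region) or by the bounded displacement (for well-separated points), and distance to the boundary along the curve is comparable by the stated estimate. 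With that substitution in place of the false implication, your argument fills in precisely what the paper's brief proof takes for granted.
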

\begin{proof}
Since domains $\interior \Omega'_1,\interior \Omega'_2, \interior \Omega'_3$ are uniform domains by Corollary \ref{cor:John-domains}, we have that domains $\interior \Omega_0,\ldots, \interior \Omega_p$ are uniform domains by bilipschitz invariance of the uniformity condition. Since $\interior \Omega_k$ is bilipschitz to $(\interior \wt\Omega_k,d_{\interior \wt\Omega_k})$, we have that $\interior \wt\Omega_k$ is a uniform domain for each $k=0,\ldots, p$. 
\end{proof}



\def\cprime{$'$}

\end{document}